\documentclass[a4paper, 12pt]{article}

\usepackage{LatexDefinitions}
\usepackage{extarrows}
\usepackage[all]{xy}
\usepackage{bbold}
\usepackage{algorithm}
\usepackage[font=small,labelfont=bf]{caption}

\usepackage{xcolor} 
\definecolor{ocre}{RGB}{243,102,25} 
\definecolor{royalblue}{RGB}{0,78,156}

\usepackage{subcaption}

\usepackage[colorinlistoftodos,prependcaption,textsize=tiny]{todonotes}

\newcommand*{\Cont}{\mathcal C}
\newcommand*{\Contc}{\Cont_\textup c} 
\newcommand{\bnorm}[1]{\left\lVert#1\right\rVert} 
\newcommand{\norm}[1]{\lVert#1\rVert} %
\newcommand{\babs}[1]{\left\lvert #1\right\rvert}
\newcommand{\inv}{^{-1}}
\newcommand{\pair}[2]{\langle#1,#2\rangle}
\newcommand{\bpair}[2]{\left\langle#1,#2\right\rangle}
\newcommand{\setgiven}[2]{\left\{#1\,\middle|\,#2\right\}}
\newcommand{\dd}[2]{\frac{\mathrm{d} #1}{\mathrm{d} #2}}
\newcommand{\textweakstar}{weak\nobreakdash-\(\ast\)}

\newcommand*{\TV}{\mathrm{TV}} 

\newcommand{\OT}{\mathrm{OT}}
\newcommand{\g}{\mathbf{g}}
\newcommand{\tg}{\Tilde{\mathbf{g}}}
\newcommand{\Lip}{\mathrm{Lip}} 

\newcommand{\supp}{\mathrm{supp}}

\newcommand*{\calB}{\mathcal B}

\newcommand*{\calE}{\mathcal E}
\newcommand*{\calM}{\mathcal M}

\newcommand*{\calO}{\mathcal O}
\newcommand*{\calR}{\mathcal R}

\newcommand*{\calX}{\mathcal X}
\newcommand*{\calY}{\mathcal Y}
\newcommand*{\Hil}{\mathcal H}
\newcommand*{\cone}{K_+}
\newcommand*{\prm}{\mathcal P}

\newcommand{\mudot}{\dot{\mu}}

\newcommand{\betadot}{\dot{\beta}}

\newcommand{\fmumu}{{f_{\mu,\mu}}}

\newcommand{\fmunu}{{f_{\mu,\nu}}}
\newcommand{\gmunu}{{g_{\mu,\nu}}}
\newcommand{\fnumu}{{f_{\nu,\mu}}}

\newcommand{\fnunu}{{f_{\nu,\nu}}}

\newcommand*{\Pall}{\mathrm{Adm}}

\newcommand*{\A}{A}

\newcommand*{\B}{B}

\newcommand*{\dS}{\mathsf{d}_S}
\newcommand{\dSM}[1]{\mathsf{d}_{S, #1}}
\newcommand*{\vdot}{\cdot}

\newcommand{\revbegin}{\begin{color}{teal}}
	\newcommand{\revend}{\end{color}}

\title{Approximation and computation \\ of the geodesic Sinkhorn distance}
\author{Hugo Lavenant\thanks{Bocconi University, Department of Decision Sciences and BIDSA, Via Sarfatti 25, 20136 Milan, Italy.}, Jonas Luckhardt\thanks{Georg-August-Universität Göttingen, Institute for Computer Science, Goldschmidstraße 7, 37077, Göttingen, Germany. \\ \emph{E-mail addresses}: hugo.lavenant@unibocconi.it, jonas.luckhardt@uni-goettingen.de, schmitzer@cs.uni-goettingen.de}, Bernhard Schmitzer\addtocounter{footnote}{-1}\addtocounter{Hfootnote}{-1}\footnotemark\addtocounter{footnote}{1}\addtocounter{Hfootnote}{1} \\
	\date{\today}
}
\begin{document}
	
	
	\maketitle
	
	\begin{abstract}
		In [H. Lavenant, J. Luckhardt, G. Mordant, B. Schmitzer, L. Tamanini, The Riemannian geometry of Sinkhorn divergences. \emph{Ann. Inst. H. Poincaré Anal. Non Linéaire} 43 (2026)] we introduced a Riemannian metric $\dS$ on the space of probability distributions obtained from entropic optimal transport, specifically from the Sinkhorn divergence $S_\varepsilon$. In the present work we discuss how to approximate and compute $\dS$. Spatially, we prove Gromov--Hausdorff convergence of the metric and convergence of geodesics for increasingly fine Eulerian discretization of the base space. Temporally, we show $\Gamma$-convergence of the chain discretization $N \sum_{k=0}^{N-1} S_\varepsilon(\mu_k, \mu_{k+1})$ to the energy functional defining $\dS$. We deduce and implement numerical schemes to compute approximations of $\dS$.

        \medskip

        \noindent \textbf{2020 MSC}. 49Q22, 49J45, 49M25, 53C22, 65K10. 

        \medskip

        \noindent \textbf{Keywords}. Entropic optimal transport, Sinkhorn divergence, geodesic, Gromov--Hausdorff convergence, $\Gamma$-convergence, variational time discretization.
	\end{abstract}

    \newpage
    
	\setcounter{tocdepth}{2}
	
	\tableofcontents

	\section{Introduction}
	
	\subsection{Context: Sinkhorn divergence and its geometry}
	
	\paragraph{Optimal transport.}
	With optimal transport, we can lift a distance $d$ over a space $X$ into a meaningful distance on probability measures \(\prm(X)\).
	This theory has several applications in pure and applied mathematics, economics and data science: see e.g. the textbooks \cite{Villani2009,Santambrogio2015,PeyreCuturi,galichon2018optimal,chewi2025statistical} and references therein.
	Given a cost function \(c \colon X \times X \to \R\), the Kantorovich formulation of optimal transport between two measures $\mu, \nu \in \prm(X)$ is given by
	\begin{equation} \label{eq:OT}
		\OT(\mu, \nu) \coloneqq \inf_{\pi \in \Pi(\mu, \nu)} \iint_{X \times X} c(x,y) \diff \pi(x,y),
	\end{equation}
	where we optimize over the set \(\Pi(\mu,\nu)\) of all \emph{transport plans}, which are probability measures \(\pi \in \prm(X \times X)\) with first marginal \(\mu\) and second marginal \(\nu\). 
	When \((X,d)\) is a metric space and \(c(x,y) =  d(x,y)^p\) for some \(p \in [1,\infty)\), this construction leads to the famous Wasserstein distance 
	\begin{equation} \label{eq:Wasserstein_distance}
		W_p(\mu,\nu) \assign \OT(\mu,\nu)^\frac{1}{p}.
	\end{equation}
	This is a metric on the space of probability measures, 
	which metrizes the \textweakstar~convergence of probability measures (in the noncompact case, some conditions on $p$-th moments are required)~\cite[Sec.~6]{Villani2009}.

	\paragraph{The Riemannian flavour of $W_2$-transport.} 
	For the squared distance cost on \(\R^d\), the space of probability measures with the \(W_2\)-metric resembles a Riemannian manifold. 
	The famous Benamou--Brenier formulation of \(W_2\) optimal transport~\cite[Prop.~1]{BB2000} shows that for measures \(\mu,\nu\)  with finite second moment,
	\begin{equation} \label{eq:Benamou_Brenier}
		W_2(\mu,\nu)^2 = \inf \int_0^1 \int_{\R^d} \norm{v_t(x)}^2 \diff \mu_t(x) \diff t,
	\end{equation}
	where the infimum ranges over all paths \((\mu_t)_t\) of probability densities that connect \(\mu\) to \(\nu\) and time dependent velocity fields \(v \colon [0,1] \times \R^d \to \R^d\), satisfying the continuity equation $\partial_t \mu_t + \ddiv(v_t \mu_t) = 0$
	in a weak sense. As the optimization in $v$ forces it to be curl free, the reformulation~\eqref{eq:Benamou_Brenier} formally resembles a Riemannian energy functional associated to the metric tensor defined as (see e.g.~\cite{Otto2001})
	\begin{equation}
		\label{eq:metric_tensor_ot}
		\g_\mu^0(\ddiv(v\mu),\ddiv(v\mu)) = \norm{v}_{L^2(\R^d,\mu)}^2 \qquad \text{if } v = \nabla \phi.
	\end{equation}

	\paragraph{Entropic regularization and Sinkhorn divergence.}
	Despite being convex, numerically solving the optimal transport problem~\eqref{eq:OT} can be challenging due to its high dimensionality (after discretization). The objective in \eqref{eq:OT} is linear in \(\pi\), hence not strongly convex. One can smoothen the problem by adding a strongly convex regularization term to the right-hand side of~\eqref{eq:OT}. A popular choice is the Kullback--Leibler divergence 
	\begin{equation}
		\label{eq:def_kl}
		\KL(\pi \, | \, \rho) \coloneqq 
		\begin{cases}
			\displaystyle{\int_X h \left( \dd{\pi}{\rho} \right) \diff \rho} &\tn{if } \pi \ll \rho,\, \rho \geq 0,\\
			+ \infty &\tn{else,}
		\end{cases}
	\end{equation}
	with \(h(s) \coloneqq s \log(s) - s + 1\) for \(s > 0\), \(h(0) \coloneqq 1\), \(h(s) \coloneqq + \infty\) for \(s < 0\). This leads to the entropic optimal transport problem
	\begin{equation} \label{eq:OTeps}
		\OT_\varepsilon(\mu, \nu) \coloneqq \inf_{\pi \in \Pi(\mu, \nu)} \left\{ \iint_{X \times X} c \diff \pi + \varepsilon \KL(\pi \, | \, \mu \otimes \nu) \right\}
	\end{equation}
	with a regularization parameter \(\varepsilon > 0\). 
	As \(\varepsilon \to 0\) we have (see, for instance,~\cite{Leonard2012,Carlier2017,Carlier2023} and references therein)
	\begin{equation*}
		\OT_\varepsilon(\mu,\nu) \to \OT(\mu,\nu),
	\end{equation*} 
	which makes \(\OT_\varepsilon\) a good substitute for \(\OT\). \(\OT_\varepsilon\) also enjoys a better sampling complexity than \(\OT\) \cite{Genevay:Samples,MenaWeed}. 
	The theory of convex optimization yields a corresponding dual problem to~\eqref{eq:OTeps} with the same optimal value~\cite{Nutz:IntroEOT}, see also Section~\ref{sec:sub:dual_OTeps} below. The dual problem is a maximization problem over a pair of functions \(f,g \in \Cont(X)\).
	It can be solved efficiently for instance via the renowned Sinkhorn algorithm~\cite{Sinkhorn1964,Cuturi2013,PeyreCuturi}, which performs alternating optimization on \(f\) and \(g\). We call the optimal \(f\) and \(g\) \emph{Schrödinger potentials}.

	An issue that arises from entropic regularization is that \(\OT_\varepsilon\) is not a metric. Generally \(\OT_\varepsilon(\mu,\mu) > 0\) and \(\inf_{\nu} \, \OT_\varepsilon(\mu,\nu)\) is attained not at \(\mu\) itself, but at a somewhat contracted version of \(\mu\) ~\cite[Sec.~3.3.2]{Feydy2020}. As proposed in~\cite{Genevay:Loss}, this problem can be partially addressed with a simple de-biasing, leading to the Sinkhorn divergence:
	\begin{equation} \label{eq:Seps}
		S_\varepsilon(\mu,\nu) \assign \OT_\varepsilon(\mu,\nu) - \frac{1}{2} \OT_\varepsilon(\mu,\mu) - \frac{1}{2} \OT_\varepsilon(\nu,\nu).
	\end{equation}
	Clearly \(S_\varepsilon(\mu,\mu) = 0\) for all probability measures \(\mu\). Symmetry of \(\OT\), \(\OT_\varepsilon\), and \(S_\varepsilon\) is equivalent to symmetry of \(c\). Positive definiteness is nontrivial but holds in case of a suitably regular cost function \(c\) as shown in \cite{Feydy2018}.
	
	\begin{theorem}[{\cite[Thm.~1]{Feydy2018}}]
		\label{theorem:FeydyEtAl}
		If \(k_c=\exp(-c/\varepsilon)\) is a positive definite, universal, jointly Lipschitz continuous kernel over a compact space \(X\), then \(S_\varepsilon\) defines a symmetric, positive definite and smooth loss function that is convex in each of its input variables. It also `metrizes' the \textweakstar~convergence.
	\end{theorem}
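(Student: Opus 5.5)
The plan is to work with the dual formulation of $\OT_\varepsilon$ and to write $S_\varepsilon$ in terms of the symmetric Schrödinger potentials. For $\mu,\nu\in\prm(X)$,
\[
\OT_\varepsilon(\mu,\nu)=\max_{f,g\in\Cont(X)} \int f\diff\mu+\int g\diff\nu-\varepsilon\iint \bigl(e^{(f\oplus g-c)/\varepsilon}-1\bigr)\diff\mu\otimes\nu .
\]
Maximizers exist and are unique up to an additive constant. They are Lipschitz uniformly in $\mu,\nu$, because they satisfy the Schrödinger fixed-point equations $f=-\varepsilon\log\int e^{(g-c)/\varepsilon}\diff\nu$ and the analogous equation for $g$, and $c$ is Lipschitz. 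Compactness of $X$ and Arzelà--Ascoli then give continuity of the potentials under \textweakstar{} convergence. As a consequence, $\OT_\varepsilon$ and hence $S_\varepsilon$ are \textweakstar{} continuous. Smoothness also follows: by the envelope theorem, the first variation of $\OT_\varepsilon$ in $\mu$ is $f_{\mu,\nu}$, so the first variation of $S_\varepsilon$ is $f_{\mu,\nu}-f_{\mu,\mu}$. Symmetry of $S_\varepsilon$ is immediate from symmetry of $c$, which follows from symmetry of $k_c$ as a kernel.

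For positive definiteness and convexity, the key step is an inequality for the dual objective. I would use the dual objective of $\OT_\varepsilon(\mu,\mu)$ at the optimal symmetric potential $p=f_{\mu,\mu}$ and the objective for $\OT_\varepsilon(\nu,\nu)$ at $q=f_{\nu,\nu}$. By the fixed-point equations, the exponential terms integrate to $1$ at optimality, so $\OT_\varepsilon(\mu,\mu)=2\int p\diff\mu$ and $\OT_\varepsilon(\nu,\nu)=2\int q\diff\nu$. Next I would plug the suboptimal pair $(p,q)$ into the dual for $\OT_\varepsilon(\mu,\nu)$. Setting $\alpha=e^{p/\varepsilon}\mu$ and $\beta=e^{q/\varepsilon}\nu$, this gives
\[
S_\varepsilon(\mu,\nu)\ge -\varepsilon\langle\alpha,k_c\beta\rangle+\tfrac\varepsilon2\langle\alpha,k_c\alpha\rangle+\tfrac\varepsilon2\langle\beta,k_c\beta\rangle=\tfrac\varepsilon2\norm{\alpha-\beta}_{k_c}^2\ge0,
\]
using $\langle\alpha,k_c\alpha\rangle=1=\langle\beta,k_c\beta\rangle$ (the normalization again). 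If $S_\varepsilon(\mu,\nu)=0$, then positive definiteness of $k_c$ forces $\alpha=\beta$, that is, $e^{p/\varepsilon}\mu=e^{q/\varepsilon}\nu$. Taking supports shows $\mu\sim\nu$. The fixed-point equation $e^{-p/\varepsilon}=k_c\alpha$ then gives $e^{-p/\varepsilon}=k_c\alpha=k_c\beta=e^{-q/\varepsilon}$ on the support, hence $p=q$ and $\mu=\nu$.

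For convexity in each variable, I would show that $\mu\mapsto \OT_\varepsilon(\mu,\nu)-\tfrac12\OT_\varepsilon(\mu,\mu)$ is convex. The first term is concave as a supremum of affine-plus-terms. The harder part is to rewrite $-\tfrac12\OT_\varepsilon(\mu,\mu)$, via the symmetric dual as a $\min$ over $p$ or by a change of variables in $\alpha=e^{p/\varepsilon}\mu$, as $\varepsilon$ times a convex functional of the form $\tfrac12\langle\alpha,k_c\alpha\rangle-\int\log(\cdot)$ in the Gibbs variable. This should make the whole expression jointly convex in $\mu$. I expect this to be the main obstacle and would follow Feydy et al.'s trick of expressing everything through $\alpha$, or alternatively verify a nonnegative second variation.

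Finally, for metrization: \textweakstar{} convergence implies $S_\varepsilon\to0$ by the continuity established above. Conversely, if $S_\varepsilon(\mu_n,\mu)\to0$, compactness yields subsequential limits $\mu'$. Continuity gives $S_\varepsilon(\mu',\mu)=0$, so $\mu'=\mu$ by positive definiteness, and therefore the whole sequence converges. Universality of $k_c$ enters precisely here and in the kernel-norm step, because it ensures that $\norm{\cdot}_{k_c}$ separates measures.
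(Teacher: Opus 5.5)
The paper does not prove this statement; it quotes it from Feydy et al. Most of your sketch is sound. Plugging the self-transport potentials $(p,q)$ into the dual of $\OT_\varepsilon(\mu,\nu)$ gives exactly the bound \eqref{eq:Seps_lower_bound_Hc}, which the paper quotes and sharpens in Lemma~\ref{lemma:Seps_KL_reformulation}. In the paper's notation, your $e^{p/\varepsilon}\mu$ and $e^{q/\varepsilon}\nu$ are $\alpha_\mu,\alpha_\nu$, and your normalization is $\norm{\beta_\mu}_{\Hil_c}=1$. The recovery of $\mu=\nu$ from $\alpha_\mu=\alpha_\nu$, the continuity of the potentials, the envelope-theorem smoothness and the metrization argument are fine.

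The genuine gap is convexity, in two places.

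\textbf{The sign is wrong.} The map $\mu\mapsto\OT_\varepsilon(\mu,\nu)$ is not concave. For fixed $\nu,f,g$ the dual objective is affine in $\mu$, so $\OT_\varepsilon(\cdot,\nu)$ is a pointwise supremum of affine functionals and hence \emph{convex}. This matters because your argument has the structure ``convex plus convex''; with the sign as you wrote it, showing the second term convex would not give convexity of the sum.

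\textbf{The key step is missing.} After this correction, the whole content of the claim is the convexity of $F(\mu)=-\tfrac12\OT_\varepsilon(\mu,\mu)$, which you leave at ``should''. It is not automatic. The symmetric dual gives $F(\mu)=\inf_p\bigl[-\pair{\mu}{p}+\tfrac\varepsilon2\norm{e^{p/\varepsilon}\mu}_{k_c}^2-\tfrac\varepsilon2\bigr]$, an infimum of convex functions of $\mu$, and such an infimum is not convex in general. The missing idea is to treat the Gibbs measure $\alpha=e^{p/\varepsilon}\mu$ as an independent variable:
\[
F(\mu)=\inf_{\alpha}\ \varepsilon\Bigl(\int_X\log\frac{\diff\mu}{\diff\alpha}\diff\mu+\tfrac12\norm{\alpha}_{k_c}^2-\tfrac12\Bigr).
\]
The objective here is \emph{jointly} convex in $(\mu,\alpha)$:
\begin{itemize}
\item the first term integrates the perspective function $(a,b)\mapsto a\log(a/b)$;
\item the second term is convex because $k_c$ is positive definite.
\end{itemize}
Partial minimization of a jointly convex function over $\alpha$ then yields a convex function of $\mu$. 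Convexity in $\alpha$ alone, which is all your phrase ``a convex functional \ldots{} in the Gibbs variable'' asserts, does not suffice. To make the set of admissible pairs $(\mu,\alpha)$ convex, let $p$ range over bounded measurable functions rather than $\Cont(X)$; this does not change the value of the dual.
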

	
	The combination of differentiability, positive definiteness, and computational feasibility makes \(S_\varepsilon\) a suitable candidate for the loss function in machine learning tasks that involve learning a probability distribution: the motivation in~\cite{Genevay:Loss} was generative modelling. 
	Positive definite universal kernels include the Gaussian \(\exp(-\frac{1}{2\varepsilon}\norm{x-y}_2^2)\) and the Laplacian kernel \(\exp(-\frac{1}{\varepsilon}\norm{x-y}_1)\) on domains in \(\R^d\). For an overview of more such kernels, we refer to~\cite[Table~3.1]{Muandet2017}.

	\paragraph{The Riemannian geometry of Sinkhorn divergences.}
	While a great loss function, \(S_\varepsilon\), or more precisely its square root, does not satisfy the triangle inequality \cite[Sec.~7.1]{RGSD}. This is the starting point of our previous work~\cite{RGSD}. 
	Since the Sinkhorn divergence is positive definite, its Hessian at \(\mu=\nu\) can be used as a metric tensor \(\g_\mu\): we introduced in \cite{RGSD} 
	\begin{equation*}
		\g_\mu(\mudot,\mudot) \assign \lim_{t \to 0} \frac{S_\varepsilon(\mu,\mu + t \mudot)}{t^2},
	\end{equation*}
	where say for simplicity $\mudot$ is a signed measure such that $\mu + t \mudot$ is a probability measure for $t$ small enough.
	The precise expression of $\g_\mu$ can be found below in~\eqref{eq:expansion_Sinkhorn_def}. We use this metric tensor to induce a Riemannian metric $\dS$ by minimizing the action functional $E$ over a suitable class of paths.
	Namely, inspired by~\eqref{eq:Benamou_Brenier}, for \(\mu, \nu \in \prm(X)\) we set
	\begin{align} \label{eq:intro:dS}
		E((\mu_t)_t) & \assign \int_0^1 \g_{\mu_t}(\mudot_t,\mudot_t) \diff t, & \dS(\mu, \nu) & \assign \inf_{(\mu_t)_t \in \Pall(\mu, \nu)} \, E((\mu_t)_t)^\frac{1}{2},
	\end{align}
	where $\Pall(\mu, \nu)$ denotes the set of admissible paths connecting $\mu$ to $\nu$.
	This construction is explored rigorously in \cite{RGSD} and we summarize the main results that are relevant for the present article in Section~\ref{sec:sub:RGSD}.
	Most importantly, under the same regularity assumptions as in Theorem~\ref{theorem:FeydyEtAl}, \(\dS\) is indeed a metric that metrizes the \textweakstar~topology on \(\prm(X)\), minimizers in~\eqref{eq:intro:dS} exist and we call these curves geodesics.

	\paragraph{A key change of variables.}
	Both in our article \cite{RGSD} as well as in~\cite{Hardion2025} studying gradient flows in the geometry of Sinkhorn divergences, a key role is played by a change of variables \(\mu \mapsto \beta_\mu \assign \exp(-\fmumu/\varepsilon)\), where \(\fmumu\) is the entropic self-transport potential of \(\mu\), see Definition~\ref{def:alpha_beta_variables}. It embeds $\prm(X)$ into a segment of the unit sphere in the Reproducing Kernel Hilbert Space (RKHS) \(\Hil_c\) induced by the cost kernel \(k_c = \exp(-c/\varepsilon)\). Such a change of variables also plays a prominent role in the present work. 

    \begin{remark}
    All our analysis is performed for a fixed $\varepsilon > 0$, thus we actually mean that $\dS = \mathsf{d}_{S_\varepsilon}$, which depends on $\varepsilon$, defines a Riemannian-like metric for all $\varepsilon > 0$. As $\varepsilon \to 0$ some results on the convergence of $\g_\mu$ to $\g^0_\mu$ as defined in~\eqref{eq:metric_tensor_ot} hold (see \cite[Sec.~4.4]{RGSD} for formal arguments and \cite{mordant2026convergence} for a positive result in a particular case), but we will not focus on it in the present work. 
    \end{remark}
	
	\subsection{Outline and contributions}
	
	While \cite{RGSD} defined the metric $\dS$, this paper focuses on how one can approximate it, both from a theoretical and numerical point of view. In formula~\eqref{eq:intro:dS} we can approximate the space variable (that is, substitute $X$ with a simpler, e.g. finite, subset of it), or the temporal variable (that is, substitute the temporal integral with some form of Riemann sum). We study both these approximations, and explain how they lead to implementable algorithms in order to compute $\dS$ and its geodesics. Along the way we develop new estimates relating $\dS$, $S_\varepsilon$ and the metric tensor, and discuss a concept of approximate geodesics for our metric. As in \cite{RGSD,Hardion2025} we restrict $X$ to be a compact space. 
	
	\paragraph{Gromov--Hausdorff convergence for spatial discretization.}
	Let \((x_m)_m\) be a sequence that is dense in \(X\). For \(M \in \N\) set \(X_M = \{x_1, x_2, \dots, x_M\}\). We denote by \(\dSM{M}\) the geodesic restriction of \(\dS\) to paths in \(\prm(X_M)\). Importantly, even if $X_M$ is a finite space, $\dSM{M}$ is still a geodesic distance on $\prm(X_M)$. In Section~\ref{sec:Gromov_Hausdorff} we show Gromov--Hausdorff convergence \((\prm(X_M),\dSM{M}) \to (\prm(X),\dS)\). Specifically, we find a projection \(R_M\) of \(\prm(X)\) to \(\prm(X_M)\) (Definition~\ref{def:cone_projection}), such that
	\begin{equation*}
		\lim_{M \to \infty} \sup_{\mu,\nu \in \prm(X)} \, \babs{\dS(\mu,\nu) - \dSM{M}(R_M(\mu),R_M(\nu))} = 0,
	\end{equation*}  
	see Theorem~\ref{theorem:unif_conv_RM}.
	It implies that geodesics in \(\dSM{M}\) converge to geodesics of \(\dS\) as \(M \to \infty\) (Corollary~\ref{cor:GH_convergence_geodesics}). The projection \(R_M\) is induced by an orthogonal projection in the RKHS \(\Hil_c\) which we study in Section~\ref{sec:projection}. This projection $R_M$ is also a key technical tool for the study of the temporal discretization. 
	
	Note that \(\dSM{M}\) remains a geodesic distance on \(\prm(X_M)\) for any finite \(M\), whereas the Wasserstein-2 metric does not even have any finite length, non-constant continuous paths in \(\prm(X_M)\) \cite{GigliMaas-GromovHausdorff2013}. That means that even if the Gromov--Hausdorff convergence \((\prm(X_M), W_2) \to (\prm(X),W_2)\) as \(M \to \infty\) holds, convergence of geodesics is meaningless. As an alternative, \cite{GigliMaas-GromovHausdorff2013,GlKoMa2018} consider a carefully designed dynamic variant of Wasserstein transport on graphs and show convergence to \(W_2\). For our metric, no such effort is necessary: \(\dS\) and \(\dSM{M}\) are defined in one and the same way, and the proof of Gromov-Hausdorff convergence is arguably simpler.

	\paragraph{Local expansion of the Sinkhorn divergence and its geodesic distance.}
	
	Given how the metric tensor is defined and the definition of the distance $\dS$ we expect to have
	\begin{equation}
		\label{eq:to_prove_approx}
		S_\varepsilon(\mu,\nu) \approx \g_\mu(\nu - \mu, \nu - \mu) \approx \tg_\mu(\beta_\nu - \beta_\mu, \beta_\nu - \beta_\mu) \approx \dS^2(\mu,\nu) \qquad \text{ as } \nu \to \mu.
	\end{equation}
	Here $\tg_\mu$ corresponds to the metric tensor in the $\beta$ variable, as defined below in~\eqref{eq:def:tgmu}. We make this intuition quantitative: the whole challenge is to understand in which norms the small $o$'s can be taken. As we prove in Theorem~\ref{theorem:expansion_dS}, in all equalities but the first one in~\eqref{eq:to_prove_approx}, the $\approx$ can be substituted by an $o(\dS^2(\mu,\nu))$. On the other hand for the Sinkhorn divergence we prove in Theorem~\ref{theorem:estimates_with_Wc} that $S_\varepsilon(\mu,\nu) \geq \dS^2(\mu,\nu) + o(\dS^2(\mu,\nu))$, but for the opposite inequality we only obtain an upper bound of $S_\varepsilon(\mu,\nu)$ by $\dS^2(\mu,\nu)$ with a remainder in a stronger norm, namely a one-Wasserstein distance. 
	As a corollary, we study approximate geodesics with respect to $\dS$ at small scales. This serves to gain a better intuition for the geometry induced by $\dS$: both vertical interpolation (in $\prm(X)$) and arc interpolation (in $\Hil_c$) give paths with approximately minimal energy.  
	
	The core of the proofs in this section is continuity estimates of the metric tensors with respect to the base point: the approach is close to \cite[Sec.~3 \& Sec.~4]{RGSD}, but the study is finer than this previous work.

	\paragraph{Chain discretization and \texorpdfstring{$\Gamma$}{Gamma}-convergence for temporal discretization.}
	Given the approximation~\eqref{eq:to_prove_approx}, it makes sense to discretize the temporal integral in~\eqref{eq:intro:dS} with the Sinkhorn divergence $S_\varepsilon$.
	Doing this with time steps of size \(\Delta t = 1/N\) yields the time-discrete functional
	\begin{equation} \label{eq:intro:Sinkhorn_chain}
		\ol{E}^N(\mu_0,\mu_1,\dots,\mu_N) \assign N \cdot \sum_{k=0}^{N-1} S_\varepsilon(\mu_k, \mu_{k+1})
	\end{equation}
	for an \((N+1)\)-tuple \((\mu_k)_{k=0}^N\) in \(\prm(X)\).
	The Sinkhorn chain~\eqref{eq:intro:Sinkhorn_chain} is reminiscent of the setting studied in~\cite{RumpfWirth2015} for discretizing the action of curves on Riemannian manifolds.
	Importantly, as $S_\varepsilon$ is intensively used as a loss function in machine learning, there are mature libraries available to evaluate and optimize the energy $\ol{E}^N$ in~\eqref{eq:intro:Sinkhorn_chain} \cite{Feydy2018,cuturi2022optimal}. 
	
	In Theorem~\ref{theorem:gamma_convergence} we show that $\ol{E}^N$ $\Gamma$-converges to $E$ as $N\to\infty$ on the set of admissible paths for \eqref{eq:intro:dS} under a suitable identification of $(N+1)$-tuples and paths. It implies
	\begin{equation} \label{eq:intro:chain_convergence}
		\dS^2(\mu,\nu) = \lim_{N \to \infty} \inf \, \setgiven{N \cdot \sum_{k=0}^{N-1} S_\varepsilon(\mu_k,\mu_{k+1})}{\mu_0,\ldots,\mu_N \in \prm(X), \, \mu_0=\mu,\, \mu_N = \nu}
	\end{equation}
	with convergence of a suitable interpolation of a minimizing sequence to a geodesic curve (\Cref{cor:chain_energy_approx}), thus providing a theoretical foundation to our proposed numerical scheme for approximating the distance \(\dS\) and its geodesics. 
	
	The \(\Gamma\)-liminf is relatively easy once we have the lower bound $S_\varepsilon(\mu,\nu) \geq \g_\mu(\nu - \mu, \nu - \mu) + o(\dS^2(\mu,\nu))$. The $\Gamma$-limsup is harder because we do not have the analogue upper bound for $S_\varepsilon$, so we cannot directly follow the techniques of~\cite{RumpfWirth2015} for the finite dimensional case. To circumvent this issue, we first prove the $\Gamma$-limsup when the base space $X$ is finite, as many smoothness issues disappear in this case. We combine this with our projection $R_M$ of Section~\ref{sec:projection} onto a finite base space to handle the general case.

	A careful reading of our proofs reveals that we can prove a joint limit of spatial and temporal discretization. That is, if in~\eqref{eq:intro:chain_convergence} we restrict competitors to $\prm(\{x_1, x_2, \dots, x_M\})$ where $(x_m)_m$ is a sequence dense in $X$, then the joint limit $N, M \to \infty$ also converges to the energy $E$, without any restriction on the rate at which $N,M$ grow (Remark~\ref{rmk:joint_limit}). Similar joint discretization results have been proved for discretization of the classical Benamou-Brenier energy~\eqref{eq:Benamou_Brenier} \cite{lavenant2021unconditional}.
	
	\paragraph{Numerical scheme.} 
	We approximate \(\dS(\mu,\nu)\) with~\eqref{eq:intro:chain_convergence}. For the measures, we implement Eulerian, or Lagrangian discretization (i.e.~\(\mu_t = \sum_{i=1}^M m_i(t) \delta_{x_i}\), or \(\mu_t = \sum_{i=1}^M m_i \delta_{x_i(t)}\)). In Eulerian discretization, optimization over each individual chain link is a convex Sinkhorn barycenter problem which can be solved using~\cite[Alg.~1]{Janati20}. We perform alternating optimization over the even and odd links. For numerical efficiency, we propose to begin with a low number of midpoints and refine the chain at convergence until the desired depth is reached. 
	
	Both approaches capture the prominent splitting effects of our metric. Convergence of the score is relatively fast. However, visually, the curves require many iterations to approach their limit curve. This is, in part, because locally undirected deviations (akin to blur) are barely noticed by the Sinkhorn divergence.

	\section{Preliminaries} \label{sec:preliminaries}
	
	\subsection{Setting and notation.} 
	
	\begin{assumption*}[Valid throughout the whole article] 
		We adopt the setting of \cite{RGSD}. Let \(X\) be a compact metrizable space. We fix \(\varepsilon > 0\) and a symmetric, continuous, non-negative cost function \(c \in \Cont(X \times X; [0,\infty))\) that induces a positive definite universal kernel \(k_c(x,y) = \exp(-c(x,y)/\varepsilon)\). 
	\end{assumption*}
	
	\noindent Though not explicitly written, most of the quantities we introduce depend on $\varepsilon$. The limit $\varepsilon \to 0$ is not our concern here. 
	
	\begin{remark}
		As shifting $c$ by a constant does not change $S_\varepsilon$ (hence it does not change $\g_\mu$ either), the non-negativity assumption on \(c\) is purely cosmetic and only simplifies a few multiplicative constants.     
	\end{remark}
	
	The spaces of Borel probability measures and Borel signed measures on $X$ are denoted by $\prm(X)$ and $\mathcal{M}(X)$ respectively, and is always endowed with the topology of \textweakstar~convergence. In particular $\prm(X)$ is compact. 
	The space of continuous functions is $\Cont(X)$, with supremum norm $\| \vdot \|_\infty$. We denote the dual pairing between a space and its dual space by $\langle \vdot,\vdot \rangle$ and write $\langle \vdot,\vdot \rangle_\Hil$ for the inner product on a Hilbert space $\Hil$.
	
	On a product space, we denote by $\proj^1$ and $\proj^2$ the projections onto the first and second coordinates respectively. We write $f_\# \mu$ for the push-forward of the measure $\mu$ by the measurable map $f$, defined as $(f_\# \mu)(A) = \mu(f^{-1}(A))$.

	We write $\ones_X$ for the constant function equal to $1$ over $X$. If $(Y, \| \vdot \|_Y)$ is a space of functions defined over $X$ which contains $\ones_X$ we define $Y/\R$ as the set of $y \in Y$ quotiented by the equivalence relation $y \sim y'$ if $y - y' \in \R \ones_X$. 
	We denote by $Y^*$ the topological dual of $Y$, and $Y^*_0$ the subset of $\sigma \in Y^*$ such that $\langle \sigma, \ones_X \rangle = 0$. The pairing $\langle \sigma, y\rangle$ between an element $\sigma \in Y^*_0$ and a function $y \in Y/\R$ is defined without ambiguity.
	
	By convention, all curves $(\mu_t)_t$, $(\beta_t)_t$, etc. are indexed by the time parameter $t \in [0,1]$.
	We use Landau's notations of ``small $o$'' and ``big $\calO$'', where we will underline over which parameter they can be taken uniformly.

	\subsection{The entropic dual problem} \label{sec:sub:dual_OTeps}
	We recall some basic facts about entropic optimal transport as already collected in~\cite[Sec.~2]{RGSD}.
	The entropic optimal transport problem, as defined in~\eqref{eq:OTeps}, is a constrained convex optimization problem. A dual formulation in the sense of convex analysis is given by
	\begin{equation} \label{eq:entropic_dual}
		\OT_\varepsilon(\mu, \nu) = \sup_{f, g \in \Cont(X)} \, \pair{\mu}{f} + \pair{\nu}{g} - \varepsilon \bpair{\mu \otimes \nu}{\exp \left( \frac{1}{\varepsilon} (f \oplus g - c) \right) - 1}.
	\end{equation}
	Here $f \oplus g$ denotes the function $X \times X \ni (x,y) \mapsto f(x)+g(y)$.
	The dual problem admits maximizers \(\fmunu\), \(\gmunu\), which we call \emph{Schrödinger potentials}: they are solutions to
	\begin{equation} \label{eq:Schroedinger_system}
		\fmunu = T_\varepsilon(\gmunu,\nu), \qquad
		\gmunu = T_\varepsilon(\fmunu, \mu),
	\end{equation}
	where $T_\varepsilon(f,\mu)(y) \coloneqq - \varepsilon \log \int_X \exp \left( \frac{1}{\varepsilon} (f(x) - c(x,y)) \right) \diff \mu(x)$ is an operator from $\Cont(X) \times \prm(X)$ to $\Cont(X)$.
	The Schrödinger potentials are unique in the space \(\Cont(X) \times \Cont(X) / \sim\) with the relation \((f,g)~\sim~(f~+~\lambda\ones_X,~g~-~\lambda\ones_X)\) for all \(\lambda \in \R\). In particular the sum $\fmunu \oplus \gmunu$ is unique. 
	By~\cite[Thm.~1.2]{DiMarino2019} one can choose the constant \(\lambda\) in such a way that
	\begin{equation} \label{eq:sup_bound_potentials}
		\norm{\fmunu}_\infty, \norm{\gmunu}_\infty \leq \frac{3}{2} \norm{c}_\infty.
	\end{equation}
	For Schrödinger potentials \(\fmunu\), \(\gmunu\) the unique optimal minimizer to the primal problem \eqref{eq:OTeps} is given by $\pi_{\mu,\nu} = k_{\mu,\nu} \cdot \mu \otimes \nu$, where
	\begin{equation} \label{eq:kmunu}
		k_{\mu,\nu} (x,y) \assign \exp \left( \frac{1}{\varepsilon}\left(\fmunu(x) + \gmunu(y) - c(x,y) \right) \right).    
	\end{equation}

	\subsection{The Riemannian geometry of Sinkhorn divergences} \label{sec:sub:RGSD}
	In this section we recall the main definitions and results of \cite{RGSD} that will be used throughout the present article. We start by introducing kernel operators which play a major role. We introduce them in a slightly greater generality than in \cite{RGSD}, which only deals with the case $\mu = \nu$ in the definition below.
	
	\begin{definition}[Integral operators induced by transport kernels {\cite[Def.~3.3]{RGSD}}] \label{def:kernel_operators}
		Given the kernels $k_{\mu,\nu}$ associated with measures $\mu,\nu \in \prm(X)$, \eqref{eq:kmunu}, and $k_c = \exp(-c/\varepsilon)$, we define the kernel operators \(K_{\mu,\nu}\), \(H_{\mu,\nu}\) and \(H_c\) by 
		\begin{align}
			\label{eq:KerneOps}
			\begin{split}
				&K_{\mu,\nu} \colon \Cont(X) \to \Cont(X), \quad K_{\mu,\nu} [\phi] (y) = \int_X \phi(x) k_{\mu,\nu}(x,y) \diff \mu(x),\\
				&H_{\mu,\nu} \colon \calM(X) \to \Cont(X), \quad H_{\mu,\nu} [\eta] (y) = \int_X k_{\mu,\nu}(x,y) \diff \eta(x) = \pair{\eta}{k_{\mu,\nu}(\vdot,y)}, \\
				&H_c \colon \calM(X) \to \Cont(X), \quad H_c[\eta](y) = \int_X k_c(x,y) \diff \eta(x) = \pair{\eta}{k_c{(\vdot,y)}}.
			\end{split}
		\end{align}
		When \(\nu = \mu\), we simplify the notation and write \(k_\mu\), \(K_\mu\) and \(H_\mu\) respectively.
	\end{definition}
	
	The three operators are continuous and compact \cite[Prop.~3.6]{RGSD}, and we will extend their domain of definition below. It is \(K_{\mu,\nu}[\ones_X] = \ones_X\) by the Schrödinger system~\eqref{eq:Schroedinger_system}. On the quotient space \(\Cont(X)/\R\) with its norm $\norm{\phi}_{\Cont/\R} = \inf_{\lambda \in \R} \, \norm{\phi - \lambda \ones_X}_\infty$ 
	the operator \(K_{\mu,\nu}\) is a contraction with 
	\begin{equation} \label{eq:contraction_Kmunu}
		\norm{K_{\mu,\nu}[\phi]}_{\Cont/\R} \leq q \cdot \norm{\phi}_{\Cont/\R} \quad \tn{for all} \quad \phi \in \Cont(X),
	\end{equation}
	where \(q = 1 - \exp(-4 \norm{c}_\infty/\varepsilon) \in (0,1)\) \cite[Prop.~3.8]{RGSD} (phrased for the diagonal \(K_\mu\) in~\cite{RGSD}, but the proof remains the same). As a consequence of~\eqref{eq:contraction_Kmunu}, we obtain existence of the inverse operator \((\id - K_{\mu,\nu} K_{\nu,\mu})\inv\) on \(\Cont(X)/\R\) \cite[Thm.~3.9]{RGSD}.

	\paragraph{The expansion of the Sinkhorn divergence}
	Take \((\mu_t)_{t \in (-\tau,\tau)}\) valued in \(\prm(X)\), which we assume to be continuously differentiable in $\mathcal{M}(X)$ endowed with the \textweakstar{}~topology. That means that, for any $\phi \in \Cont(X)$, the function $t \mapsto \langle \mu_t, \phi \rangle$ is differentiable, that the derivative is linear in $\phi$, and that the map $(t,\phi) \in (- \tau, \tau) \times \Cont(X) \to \langle \dot{\mu}_t,\phi \rangle \in \R$ is jointly continuous. Then \cite[Thm. 3.4]{RGSD} proves that, with $\mu = \mu_0$ and $\mudot = \mudot_0$, 
	\begin{equation}
		\label{eq:expansion_Sinkhorn}
		\lim_{t \to 0} \frac{S_\varepsilon(\mu, \mu_t)}{t^2} =  \frac{\varepsilon}{2} \bpair{ \mudot }{ (\id - K_\mu^2)^{-1} H_\mu \left[ \mudot \right] }.
	\end{equation}
    The right hand side is taken as the definition of the metric tensor: at least if $\mudot \in \calM_0(X)$,
    \begin{equation}
    \label{eq:expansion_Sinkhorn_def}
     \g_\mu(\mudot,\mudot) \assign  \frac{\varepsilon}{2} \bpair{ \mudot }{ (\id - K_\mu^2)^{-1} H_\mu \left[ \mudot \right] }.  
    \end{equation}
	
	\begin{remark}
		\label{rmk:diff_setting}
		In \cite{RGSD} this expansion is extended to a larger class of paths if $c$ is more regular. Namely, assume $X$ is the closure of a bounded open set in \(\R^d\) and that all mixed partial derivatives \(\partial_x^{\alpha_1} \partial_y^{\alpha_2} c(x,y)\) exist and are continuous for all multiindices \(\alpha_1,\alpha_2 \in \N_0^d\) with \(\abs{\alpha_1}, \abs{\alpha_2} \leq m\). With $\Cont^m(X)$ the space of $m$-times continuously differentiable functions, take a path \((\mu_t)_{t \in (-\tau,\tau)}\) valued in \(\prm(X)\), and continuously differentiable in the space $\Cont^m(X)^*$ endowed with the \textweakstar~topology (the previous case corresponding to $m=0$). Again with $\mu = \mu_0$ and $\mudot = \mudot_0$, \cite{RGSD} proves that~\eqref{eq:expansion_Sinkhorn} still holds in this case.     
	\end{remark}

	\paragraph{The reproducing kernel Hilbert spaces $\Hil_c$ and $\Hil_\mu$.}
	The natural smoothness condition for admissible paths in the definition of the metric \(\dS\) in~\eqref{eq:intro:dS} is described in terms of a reproducing kernel Hilbert space (RKHS). For convenience of the reader, we briefly recall the basic concept here. For more details on RKHS, we refer to~\cite{Micchelli2006, Sriperumbudur2011}. The following is a brief version of~\cite[Sec.~4.1,~App.~B]{RGSD}.
	
	Let \(k \colon X \times X \to [0,\infty)\) be a continuous non-negative and symmetric universal kernel. Then the RKHS \(\Hil \subseteq \Cont(X)\) induced by \(k\) is the closure of \(\vecspan\setgiven{k(x,\vdot)}{x \in X}\) with respect to the inner product defined as $\langle k(x,\vdot), k(y,\vdot) \rangle_{\Hil} = k(x,y)$ for $x,y \in X$, and extended by linearity. This forms a space of functions on \(X\). Under the assumption of universality that we made, the space \(\Hil\) is dense in \(\Cont(X)\) in supremum norm. There holds \(\phi(x) = \pair{\phi}{k(x,\vdot)}_{\Hil}\), this is the reproducing property. The embedding \(\Hil \xhookrightarrow{} \Cont(X)\) is continuous, hence \(\calM(X) \xhookrightarrow{} \Hil^*\). 
	
	\begin{definition}
		We write \(\Hil_c\) and \(\Hil_\mu\) for the RKHS induced by the kernels \(k_c\) and \(k_\mu\).     
	\end{definition}
	
	\noindent The map $\phi \mapsto \phi \cdot \exp \left( \fmumu / \varepsilon \right)$ is an isometry from $\Hil_c$ onto $\Hil_\mu$ (\cite[Prop.~4.7]{RGSD}). Very importantly \(H_c\) and \(H_\mu\), defined in \eqref{eq:KerneOps} as acting on signed measures, are the Riesz embedding operators from \(\calM(X)\) into \(\Hil_c\) and \(\Hil_\mu\) respectively. Indeed, they satisfy the relation 
	\begin{equation*}
		\pair{H_c[\eta]}{\phi}_{\Hil_c} = \pair{\eta}{\phi}
	\end{equation*}
	for any \(\eta \in \calM(X)\) and \(\phi \in \Hil_c\) and likewise for \(H_\mu\). In particular $H_\mu$ can be defined for any input in $\Hil_\mu^*$, and takes values in $\Hil_\mu$, and likewise for \(H_c\).
	
	\begin{lemma}[{\cite[App.~B]{RGSD}}]
		\label{lemma:appendixBRGSD}
		The embedding $\Hil_c \to \Cont(X)$ is weak-to-norm continuous, and the map $H_c : \calM(X) \to \Hil_c$ is \textweakstar-to-norm continuous.  
	\end{lemma}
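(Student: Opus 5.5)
We prove the two claims separately, both from the structure of $\Hil_c$ as an RKHS with continuous kernel $k_c$ on the compact space $X$.

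\emph{Weak-to-norm continuity of $\Hil_c \hookrightarrow \Cont(X)$.} Since $\Hil_c$ is a Hilbert space and the claim is about the weak topology, we understand it on bounded sets, i.e.\ as compactness of the embedding. This is the form used later. Take a bounded sequence (or net) $\phi_n \rightharpoonup \phi$ weakly in $\Hil_c$, with $\norm{\phi_n}_{\Hil_c} \le C$.
\begin{enumerate}
\item By the reproducing property, $\phi_n(x) = \pair{\phi_n}{k_c(x,\vdot)}_{\Hil_c} \to \phi(x)$ for every $x \in X$.
\item The family is equicontinuous. By Cauchy--Schwarz,
\[
\abs{\phi_n(x)-\phi_n(y)} \le C\,\norm{k_c(x,\vdot)-k_c(y,\vdot)}_{\Hil_c} = C\,\bigl(k_c(x,x)-2k_c(x,y)+k_c(y,y)\bigr)^{1/2}.
\]
The right-hand side tends to $0$ as $y\to x$, uniformly in $x$, because $k_c$ is uniformly continuous on the compact set $X\times X$.
\item The family is uniformly bounded, since $\norm{\phi_n}_\infty \le C\sup_x k_c(x,x)^{1/2}$.
\item Pointwise convergence together with equicontinuity on a compact space upgrades to uniform convergence, by a standard finite $\delta$-net argument (Arzelà--Ascoli). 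Hence $\phi_n \to \phi$ in $\Cont(X)$.
\end{enumerate}

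\emph{\textweakstar-to-norm continuity of $H_c\colon \calM(X)\to\Hil_c$.} Again the statement is meant on bounded subsets of $\calM(X)$, for example on $\prm(X)$. By the uniform boundedness principle, a weak-$\ast$ convergent sequence is bounded anyway. Let $\eta_n \overset{*}{\rightharpoonup} \eta$ with $\norm{\eta_n}_{\TV}\le C$.
\begin{enumerate}
\item Using the Riesz identity $\pair{H_c[\eta]}{\phi}_{\Hil_c} = \pair{\eta}{\phi}$ with $\phi = H_c[\eta']$, we obtain
\[
\norm{H_c[\eta_n]-H_c[\eta]}_{\Hil_c}^2 = \iint_{X\times X} k_c \,\diff(\eta_n-\eta)\otimes(\eta_n-\eta).
\]
\item Expand the right-hand side into the four terms $\iint k_c\,\diff\eta_n\otimes\eta_n$, $\iint k_c\,\diff\eta_n\otimes\eta$, $\iint k_c\,\diff\eta\otimes\eta_n$ and $\iint k_c\,\diff\eta\otimes\eta$.
\item Each term converges to $\iint k_c\,\diff\eta\otimes\eta$, provided $\eta_n\otimes\eta_n \overset{*}{\rightharpoonup} \eta\otimes\eta$ against the continuous function $k_c$.
\item This product convergence is the one genuine step. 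It holds because bounded weak-$\ast$ convergence on compact $X$ implies weak-$\ast$ convergence of the products. One argues via Stone--Weierstrass: functions of the form $\sum_i a_i(x)b_i(y)$ are dense in $\Cont(X\times X)$, and the uniform $\TV$ bound on $\eta_n\otimes\eta_n$ controls the approximation error.
\item The four terms therefore cancel in the limit, so $\norm{H_c[\eta_n]-H_c[\eta]}_{\Hil_c}\to 0$.
\end{enumerate}

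There is an alternative route to the second claim. Equicontinuity of $\{k_c(x,\vdot)\}_x$ in $\Hil_c$ makes $x\mapsto k_c(x,\vdot)$ a continuous map from $X$ into $\Hil_c$ with compact image. Then $H_c[\eta]$ is the Bochner integral $\int k_c(x,\vdot)\,\diff\eta(x)$, and weak-$\ast$ convergence against uniformly approximable vector-valued integrands gives norm convergence.

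\emph{Main obstacle.} I expect the only real subtlety to be fixing the correct meaning of the statement: the restriction to bounded sets, or to sequences, which are automatically bounded. Beyond that, the product-measure step above is the one point that needs a genuine argument.
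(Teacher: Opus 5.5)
Your proof is correct. The paper gives no proof of this lemma; it imports it from \cite[App.~B]{RGSD}, so there is no in-paper argument to compare yours against. Judged on its own, both halves are sound.

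Your decision to read the statement sequentially, or equivalently on bounded sets, is not just convenient but necessary. A linear map that is genuinely continuous from the weak (or \textweakstar) topology to a norm topology vanishes on an intersection of finitely many kernels, hence has finite rank. That is impossible here as soon as $X$ is infinite: then $\Hil_c$, being dense in $\Cont(X)$, is infinite-dimensional, the embedding is injective, and $H_c$ has dense range. The sequential form is exactly what is used later, in Lemma~\ref{lemma:continuous_potentials_Lipc} and Proposition~\ref{prop:metric_tensor_beta_smooth}.

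You can bypass the step you flag as the only genuine one, convergence of $\eta_n\otimes\eta_n$ via Stone--Weierstrass, by reusing your first part. Let $\eta_n \to \eta$ \textweakstar{} with $\norm{\eta_n}_{\TV} \leq C$. Then $\norm{H_c[\eta_n]}_{\Hil_c}^2 = \iint k_c \diff (\eta_n \otimes \eta_n) \leq C^2$. Moreover, for every $\phi \in \Hil_c \subseteq \Cont(X)$,
\[
\pair{H_c[\eta_n]}{\phi}_{\Hil_c} = \pair{\eta_n}{\phi} \to \pair{\eta}{\phi} = \pair{H_c[\eta]}{\phi}_{\Hil_c},
\]
so $H_c[\eta_n]$ converges weakly to $H_c[\eta]$ in $\Hil_c$. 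By your first part this convergence is uniform on $X$. Hence
\[
\norm{H_c[\eta_n] - H_c[\eta]}_{\Hil_c}^2 = \pair{\eta_n - \eta}{H_c[\eta_n] - H_c[\eta]} \leq 2C \, \norm{H_c[\eta_n] - H_c[\eta]}_\infty \to 0 .
\]
This is the concrete form of the fact that $H_c$ is, up to the Riesz isomorphism, the adjoint of the compact embedding $\Hil_c \hookrightarrow \Cont(X)$, so it inherits compactness by duality. Your product-measure and Bochner-integral arguments reach the same conclusion, but they redo work the first part has already done.
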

	
	The metric tensor $\g_\mu(\mudot,\mudot)$, defined in~\eqref{eq:expansion_Sinkhorn_def}, can be extended to a larger space: the completion of $\calM_0(X)$ with respect to \(\g_{\mu}\) is \(\Hil^*_{\mu,0}\) the set of linear forms on $\Hil_\mu$ which vanish on the function $\ones_X$ (see \cite[Sec.~4.2]{RGSD}). The latter \(\Hil^*_{\mu,0}\) can be thought of as some admissible tangent space to $\prm(X)$ at $\mu$.

	\paragraph{The metric introduced by the RKHS}
	We may not have a canonical metric on \(X\). However, any cost function \(c\) of the regularity we assume throughout induces its own metric \(d_c\) on the space \(X\) defined as \cite[Eq. (4.21)]{SteinwartChristmann2008} 
	\begin{equation}
		\label{eq:def_dc}
		d_c(x,y) \assign \norm{H_c[\delta_x] - H_c[\delta_y]}_{\Hil_c} = \sqrt{k_c(x,x) + k_c(y,y) - 2 k_c(x,y)}. 
	\end{equation}
	By~Lemma~\ref{lemma:appendixBRGSD}, \(\norm{H_c[\vdot]}_{\Hil_c}\) metrizes the \textweakstar~topology on \(\prm(X)\), so that $d_c$ metrizes the original topology on $X$.
	When \(c = d^2\) for a metric \(d\) on \(X\), then \(d_c = \sqrt{2}{\varepsilon} d + o(d) \) as $d \to 0$, so that the two metrics are asymptotically the same for small distances up to a multiplicative factor.

	\paragraph{The change of variables in a common Hilbert space}
	One difficulty when working with $\g_\mu$ is that the tangent space \(\Hil^*_{\mu,0}\) changes with $\mu$, making it difficult to describe the regularity of a path. We tackle this issue by using the following changes of variables.
	
	\begin{definition} \label{def:alpha_beta_variables}
		For any \(\mu \in \prm(X)\) denote
		\begin{equation*} 
			\A(\mu) \assign \alpha_\mu \assign \exp \left( \frac{1}{\varepsilon} \fmumu \right) \mu \in \calM(X), \quad
			\B(\mu) \assign \beta_\mu \assign H_c[\alpha_\mu] = \exp \left( - \frac{1}{\varepsilon} \fmumu \right) \in \Hil_c.
		\end{equation*}
	\end{definition} 
	
	\noindent The change of variables \(\mu \leftrightarrow \alpha_\mu\) already plays an important role in~\cite{Feydy2018}, and was a key tool in \cite{RGSD,Hardion2025}. By construction \(\alpha_\mu = \mu / \beta_\mu\).

	\begin{theorem}[{\cite[Thm.~4.8]{RGSD}}] \label{theorem:alpha_beta_homeo}
		The maps $A \colon \prm(X) \to \calM(X)$ and \(\B \colon \prm(X) \to \Hil_c\) are respectively \textweakstar-to-\textweakstar{}~and \textweakstar-to-weak homeomorphisms onto their images. Moreover, the image of $B$ is $\calB~\assign~\setgiven{\beta~\in~H_c[\calM_+(X)]}{\norm{\beta}_{\Hil_c} = 1}$. Weak convergence and norm convergence agree on \(\calB\), and the set $\calB$ is weakly and norm compact in $\Hil_c$.
	\end{theorem}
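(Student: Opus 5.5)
The plan is to reduce everything to the self-transport Schrödinger equation and then use compactness of $\prm(X)$.

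\textbf{Image of $\B$.} For $\nu=\mu$, the system~\eqref{eq:Schroedinger_system} admits a symmetric solution $f=g=\fmumu$. It is unique: the pairs $(f+\lambda,g-\lambda)$ are symmetric only for $\lambda=0$. The equation $\fmumu=T_\varepsilon(\fmumu,\mu)$ reads, after exponentiating,
\[
\exp(-\fmumu(y)/\varepsilon)=\int_X k_c(x,y)\exp(\fmumu(x)/\varepsilon)\diff\mu(x),
\]
that is, $\beta_\mu=H_c[\alpha_\mu]$ with $\alpha_\mu\ge0$. By the Riesz property of $H_c$, $\norm{\beta_\mu}_{\Hil_c}^2=\pair{\alpha_\mu}{H_c[\alpha_\mu]}=\pair{\alpha_\mu}{\beta_\mu}=\mu(X)=1$. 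Hence $\B(\prm(X))\subseteq\calB$.

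Conversely, take $\beta=H_c[\alpha]$ with $\alpha\in\calM_+(X)$ and $\norm{\beta}_{\Hil_c}=1$. Then $\alpha\ne0$, and since $k_c>0$ everywhere, $\beta>0$ is continuous and bounded below. Set $\mu\assign\beta\alpha$. It is nonnegative with mass $\pair{\alpha}{\beta}=\norm{\beta}^2_{\Hil_c}=1$. With $f\assign-\varepsilon\log\beta$, a direct computation gives $T_\varepsilon(f,\mu)=-\varepsilon\log H_c[\alpha]=f$. By uniqueness, $f=\fmumu$, so $\B(\mu)=\beta$ and the image is exactly $\calB$.

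\textbf{Injectivity.} Injectivity of $\A$ and $\B$ follows from $\mu=\beta_\mu\alpha_\mu$ together with injectivity of $H_c$ on $\calM(X)$. The latter holds because $\Hil_c$ is dense in $\Cont(X)$ (universality), so $H_c[\eta]=0$ forces $\pair{\eta}{\phi}=0$ for all $\phi\in\Hil_c$, hence $\eta=0$.

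\textbf{Continuity.} Let $\mu_n\to\mu$ weakly-$\ast$. By~\eqref{eq:sup_bound_potentials} and the representation $f_{\mu_n,\mu_n}=T_\varepsilon(f_{\mu_n,\mu_n},\mu_n)$, the potentials are uniformly bounded. They inherit a common modulus of continuity from the uniform continuity of $c$ on the compact $X\times X$. By Arzel\`a--Ascoli a subsequence converges uniformly to some $f$. Passing to the limit in the fixed-point equation, where the integrand converges uniformly and $\mu_n\to\mu$ weakly-$\ast$, shows that $f$ is a symmetric self-potential of $\mu$. By uniqueness $f=\fmumu$, so the whole sequence converges uniformly. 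Consequently $\alpha_{\mu_n}=e^{f_n/\varepsilon}\mu_n\to\alpha_\mu$ weakly-$\ast$, and by Lemma~\ref{lemma:appendixBRGSD}, $\beta_{\mu_n}=H_c[\alpha_{\mu_n}]\to\beta_\mu$ in norm, in particular weakly.

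\textbf{Homeomorphism and compactness.} Since $\prm(X)$ is compact and the targets (with the weak-$\ast$, respectively weak, topology) are Hausdorff, continuous bijections onto the images are homeomorphisms. On the unit sphere, weak convergence plus $\norm{\beta_n}=\norm{\beta}=1$ gives $\norm{\beta_n-\beta}^2=2-2\pair{\beta_n}{\beta}_{\Hil_c}\to0$, so weak and norm convergence agree on $\calB$. Finally, $\calB=\B(\prm(X))$ is the continuous image, even into the norm topology, of a compact set, hence norm and weakly compact.

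The main obstacle I expect is the continuity step, specifically obtaining equicontinuity of the potentials and exploiting uniqueness of the symmetric potential to upgrade subsequential to full convergence.
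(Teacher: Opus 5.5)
Your proof is correct and complete. The computation $T_\varepsilon(-\varepsilon\log\beta,\beta\alpha)=-\varepsilon\log\beta$ identifies the image as $\calB$, injectivity of $H_c$ (from universality) gives bijectivity, and Arzel\`a--Ascoli together with uniqueness of the symmetric self-potential gives continuity. Compactness of $\prm(X)$ and the identity $\norm{\beta_n-\beta}_{\Hil_c}^2=2-2\pair{\beta_n}{\beta}_{\Hil_c}$ on the sphere then finish the argument. The paper does not prove this statement itself; it quotes it from \cite[Thm.~4.8]{RGSD}, so there is no in-paper proof to compare against, and your argument is a sound self-contained proof of it.
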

	
	Geometrically $\calB$ is the intersection between a convex cone $H_c[\calM_+(X)]$ and the unit sphere of $\Hil_c$. 
	The image of $A$ can also be characterized, but we will not need it in this work. 
	With this change of variables, a path \((\mu_t)_t\) in \(\prm(X)\) gives a corresponding path \((\beta_t)_t\) in \(\calB\), where we simplify the notation with \(\beta_t = \beta_{\mu_t}\).
	The map $B^{-1}$ pulls back $\g$ into a metric tensor on $\Hil_c$. We give it a specific name: for $\mu \in \prm(X)$ and $\betadot \in \Hil_c$ with $\pair{\betadot}{\beta_\mu}_{\Hil_c} = 0$, we define the quadratic form $\tg_\mu(\betadot, \betadot)$ by 
	\begin{equation} \label{eq:def:tgmu}
		\tg_\mu(\betadot, \betadot) = \frac{\varepsilon}{2} \left( \norm{\betadot}_{\Hil_c}^2 + 2 \bpair{\betadot / \beta_\mu}{ (\id - K_{\mu})\inv [\betadot / \beta_\mu] }_{L^2(X,\mu)} \right). 
	\end{equation}
	When \(\pair{\dot{\beta}}{\beta_\mu}_{\Hil_c} = 0\), we have \(\pair{\betadot/\beta_\mu}{\ones_X}_{L^2(X,\mu)} = \pair{\alpha_\mu}{\betadot} = \pair{\betadot}{\beta_\mu}_{\Hil_c} = 0\), making \(\tg_\mu\) well defined despite the inverse of \(\id - K_\mu\) only being defined in \(\Cont(X) / \R\). We state that $\tg$ indeed corresponds to the pull-back of $\g$ by $B^{-1}$.

	\begin{theorem}[{\cite[Thm.~4.12,~Lem.~4.13]{RGSD}}] \label{theorem:metric_tensor_in_betadot}
		Let \((\mu_t)_t\) be a path in \(\prm(X)\), such that the associated path \((\beta_t)_t = (B(\mu_t))_t\) is differentiable in $\Hil_c$ at $t_0$. Then $\pair{\betadot_{t_0}}{\beta_{t_0}}_{\Hil_c} = 0$, the path $(\mu_t)_t$ is differentiable at $t_0$ in \(\Hil_{\mu_{t_0}}^*\), and \(H_{\mu_{t_0}}[\mudot_{t_0}] = (\id + K_{\mu_{t_0}})[\betadot_{t_0}/\beta_{t_0}]\) as well as
		\begin{equation*} 
			\g_{\mu_{t_0}}(\mudot_{t_0},\mudot_{t_0}) = \tg_{\mu_{t_0}}(\betadot_{t_0}, \betadot_{t_0}).
		\end{equation*} 
	\end{theorem}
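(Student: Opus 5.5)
The argument has three parts: the orthogonality relation from differentiating a norm, the formula for $H_{\mu_{t_0}}[\mudot_{t_0}]$ from differentiating the Schrödinger equation written in the $\alpha,\beta$ variables, and the identity of the metric tensors by a purely algebraic manipulation of the operators. Throughout write $\mu=\mu_{t_0}$, $\beta=\beta_{t_0}$, $K=K_\mu$, and $u=\betadot_{t_0}/\beta$.

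\emph{Orthogonality.} By Theorem~\ref{theorem:alpha_beta_homeo}, $\norm{\beta_t}_{\Hil_c}=1$ for all $t$. Differentiating $\norm{\beta_t}^2_{\Hil_c}$ at $t_0$ gives $2\pair{\betadot_{t_0}}{\beta}_{\Hil_c}=0$.

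\emph{Formula for $H_\mu[\mudot]$.} I would use three facts.
\begin{itemize}
\item $H_c[\alpha_t]=\beta_t$ and $\mu_t=\beta_t\alpha_t$ (Definition~\ref{def:alpha_beta_variables}).
\item The Schrödinger system \eqref{eq:Schroedinger_system} gives $k_\mu(x,y)=k_c(x,y)/(\beta(x)\beta(y))$, hence $H_\mu[\eta]=H_c[\eta/\beta]/\beta$.
\item The map $\phi\mapsto\phi/\beta$ is the isometry $\Hil_c\to\Hil_\mu$ of \cite[Prop.~4.7]{RGSD}.
\end{itemize}
Since $H_c$ is the Riesz isometry, differentiability of $\beta_t$ in $\Hil_c$ is equivalent to differentiability of $\alpha_t$ in $\Hil_c^*$, with $H_c[\dot\alpha]=\betadot_{t_0}$. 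Then
\begin{equation*}
H_\mu[\mu_t-\mu]=\frac{1}{\beta}\Bigl(H_c[\alpha_t-\alpha_{t_0}] + H_c\bigl[\alpha_{t_0}(\beta_t-\beta)/\beta\bigr] + H_c\bigl[(\alpha_t-\alpha_{t_0})(\beta_t-\beta)/\beta\bigr]\Bigr).
\end{equation*}
Divided by $t-t_0$, the first term tends to $u$ in $\Hil_\mu$. The second term is $K[(\beta_t-\beta)/\beta]$, and it tends to $K[u]$. For this I use that $K$ is continuous from $\Cont(X)$ to $\Hil_\mu$ and that $\Hil_c$-convergence implies uniform convergence, together with $\beta\ge\exp(-\tfrac32\norm{c}_\infty/\varepsilon)>0$ by \eqref{eq:sup_bound_potentials}. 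This yields $H_\mu[\mudot]=(\id+K)[u]$, provided the third term is $o(t-t_0)$ in $\Hil_\mu$. By Riesz, that amounts to differentiability of $\mu_t$ in $\Hil_\mu^*$.

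\emph{Main obstacle.} The third term pairs $\alpha_t-\alpha_{t_0}$, which is $\mathcal O(t-t_0)$ only in $\Hil_c^*$, against $\psi(\beta_t-\beta)/\beta$. That function is $o(1)$ in sup norm but not obviously in $\Hil_c$, because multiplication does not preserve the RKHS. I would try two routes.
\begin{itemize}
\item Bound it instead by $\norm{\alpha_t-\alpha_{t_0}}_{\TV}\cdot\norm{\beta_t-\beta}_\infty$. This needs a Lipschitz-type estimate on $\norm{\alpha_t-\alpha_{t_0}}_{\TV}$, or at least an $\mathcal O(\norm{\beta_t-\beta}_{\Hil_c}^{1/2})$ bound, obtained from the fixed-point relation $\alpha=\mu/H_c[\alpha]$ and the positivity of $\alpha_t$.
\item Alternatively, exploit the positive-definite structure: write the $\Hil_c^*$-norm squared of $(\alpha_t-\alpha_{t_0})g$ as a double integral of $k_c$. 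Then use the symmetric splitting $\mu_t-\mu=\tfrac12[(\alpha_t-\alpha_{t_0})(\beta_t+\beta)+(\alpha_t+\alpha_{t_0})(\beta_t-\beta)]$, which cancels the problematic cross term exactly. With this splitting only products of a bounded positive measure with an $\Hil_c$-small function, or of an $\Hil_c^*$-small measure with the smooth factor $(\beta_t+\beta)/\beta$, appear. Since $\beta_t\to\beta$, the latter factor converges uniformly to $2$, so the leading terms converge and the rest is handled as before.
\end{itemize}

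\emph{Identity of the metric tensors.} Using $H_\mu[\mudot]=(\id+K)u$ and the Riesz property,
\begin{equation*}
\g_\mu(\mudot,\mudot)=\tfrac\varepsilon2\bpair{(\id+K)u}{(\id-K^2)^{-1}(\id+K)u}_{\Hil_\mu}=\tfrac\varepsilon2\bpair{(\id+K)u}{(\id-K)^{-1}u}_{\Hil_\mu}.
\end{equation*}
Since $K v=H_\mu[v\mu]$, we have $\pair{Kv}{w}_{\Hil_\mu}=\pair{v}{w}_{L^2(\mu)}$. Writing $(\id-K)^{-1}u=u+K(\id-K)^{-1}u$ and expanding, the expression equals $\tfrac\varepsilon2\bigl(\norm{u}_{\Hil_\mu}^2+2\pair{u}{(\id-K)^{-1}u}_{L^2(\mu)}\bigr)$. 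Finally $\norm{u}_{\Hil_\mu}=\norm{\betadot_{t_0}}_{\Hil_c}$ by the isometry, which gives \eqref{eq:def:tgmu}. All of this is well posed on $\Cont(X)/\R$ because $\pair{u}{\ones_X}_{L^2(\mu)}=\pair{\betadot_{t_0}}{\beta}_{\Hil_c}=0$.
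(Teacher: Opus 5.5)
Your orthogonality argument, your treatment of the first two terms of $H_{\mu}[\mu_t-\mu]$, and the operator algebra turning $H_\mu[\mudot]=(\id+K)u$ into \eqref{eq:def:tgmu} are all correct. The gap is the step you yourself flag as the main obstacle, and it is the only nontrivial part of the statement. You never show that $\sigma_t=(\alpha_t-\alpha_{t_0})(\beta_t-\beta)/\beta$ is $o(t-t_0)$ in $\Hil_c^*$, and neither of your routes can show it.

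The first route needs $\norm{\alpha_t-\alpha_{t_0}}_{\TV}\to0$, and this is false. Take $c(x,y)=\lvert x-y\rvert^2$ and $\mu_t=\delta_{x_t}$ with $t\mapsto x_t$ smooth. Then $\beta_t=k_c(x_t,\cdot)$ is differentiable in $\Hil_c$. But $\alpha_t=\delta_{x_t}$, so $\norm{\alpha_t-\alpha_{t_0}}_{\TV}=2$ whenever $x_t\neq x_{t_0}$.

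The symmetric splitting does not cancel the cross term either. Since $\frac{\beta_t+\beta}{2\beta}=1+\frac{\beta_t-\beta}{2\beta}$, each half of the splitting contains $\frac12\sigma_t$. So proving $\frac{\alpha_t-\alpha_{t_0}}{t-t_0}\cdot\frac{\beta_t+\beta}{2\beta}\to\dot\alpha$ in $\Hil_c^*$ is the original problem again. Uniform convergence of the multiplier to $2$ does not help here. Multiplication by a continuous function is not controlled on $\Hil_c^*$ by its sup norm, because products leave $\Hil_c$.

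The missing idea is to use \textweakstar{} convergence of $\alpha_t$, not any norm smallness of $\alpha_t-\alpha_{t_0}$. Since $\beta_t\to\beta$ in $\Hil_c$, Theorem~\ref{theorem:alpha_beta_homeo} gives $\alpha_t\to\alpha_{t_0}$ \textweakstar, with masses bounded by $\exp(\frac32\norm{c}_\infty/\varepsilon)$. Write $D_t=(\beta_t-\beta)/(t-t_0)$, so that $D_t\to\betadot$ uniformly. Your double-integral idea, applied directly to $\sigma_t$ with no splitting, gives
\begin{equation*}
\frac{\norm{\sigma_t}_{\Hil_c^*}^2}{(t-t_0)^2}=\iint_{X\times X}k_c(x,y)\,\frac{D_t(x)D_t(y)}{\beta(x)\beta(y)}\,\mathrm{d}(\alpha_t-\alpha_{t_0})(x)\,\mathrm{d}(\alpha_t-\alpha_{t_0})(y)\longrightarrow0.
\end{equation*}
This holds because $(\alpha_t-\alpha_{t_0})\otimes(\alpha_t-\alpha_{t_0})\to0$ \textweakstar{} with bounded mass, and the integrand converges uniformly on $X\times X$.

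An equivalent argument: replace $D_t$ by $\betadot$ at a $\TV$-cost $\lesssim\norm{D_t-\betadot}_\infty$, then apply Lemma~\ref{lemma:appendixBRGSD} to $(\alpha_t-\alpha_{t_0})\betadot/\beta\to0$.

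Since division by $\beta$ is an isometry $\Hil_c\to\Hil_\mu$, the third term is then $o(t-t_0)$ in $\Hil_\mu$. This gives differentiability of $(\mu_t)_t$ in $\Hil_\mu^*$, and the rest of your argument goes through unchanged.
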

	
	Importantly, $\tg_{\mu}$ is defined as a quadratic form on a subspace of the common Hilbert space $\Hil_c$, making the analysis much easier when working with this variable.
	We recall a key estimate: in the variable $\beta$, the metric tensor is equivalent, up to constants, to the flat metric tensor on $\Hil_c$. Specifically, \cite[Prop.~4.15]{RGSD} states that there exists some \(C > 0\) only depending on \(\norm{c}_\infty\) and \(\varepsilon\), such that for all $\mu \in \prm(X)$ and all $\betadot$ with $\langle \betadot,  \beta_\mu \rangle_{\Hil_c} = 0$, 
	\begin{equation} \label{eq:tg_equiv_Hc}
		\frac{\varepsilon}{2} \norm{\betadot}_{\Hil_c}^2 \leq \tg_{\mu}(\betadot,\betadot) \leq C \frac{\varepsilon}{2}  \norm{\betadot}_{\Hil_c}^2.
	\end{equation}

	\paragraph{The metric $\dS$.}
	To define the Sinkhorn metric, we need to specify the space of admissible curves. In the variable $\beta$, it corresponds to a Hilbert-valued Sobolev space. We recall the basics, see e.g.~\cite{Kreuter2015}. We write \(L^2((0,1);\Hil_c)\) for the space of measurable functions \(u \colon (0,1) \to \Hil_c\) for which \(\norm{u(\vdot)}_{\Hil_c}\) is 2-integrable, up to equality almost everywhere. It carries the obvious inner product. We say that a function \(u \in L^2((0,1);\Hil_c)\) has the distributional derivative \(v \in L^2((0,1);\Hil_c)\) and write \(v = u'\) if for all \(\phi \in \Contc^\infty((0,1);\R)\) it holds $ \int_0^1 u(t) \phi'(t) \diff t = - \int_0^1 v(t) \phi(t) \diff t$ as Bochner integrals. The first Sobolev space is given by 
	\begin{equation*}
		\Hil^1((0,1);\Hil_c) = \setgiven{u \colon (0,1) \to \Hil_c}{u \textup{ has a distr.~derivative } u' \textup{ and } u, u' \in L^2((0,1);\Hil_c)}.
	\end{equation*}
	It is a Hilbert space with the inner product \(\pair{u_1}{u_2}_{\Hil^1} = \pair{u_1}{u_2}_{L^2} + \pair{u_1'}{u_2'}_{L^2}\).

	\begin{definition}[{\cite[Def.~5.1]{RGSD}}] \hfill \label{def:distance}
		\begin{enumerate}
			\item 
			A path \((\mu_t)_{t \in [0,1]}\) is admissible when the corresponding path \((\beta_t)_{t \in (0,1)} = (\B(\mu_t))_{t \in (0,1)}\) belongs to the Sobolev space \(\Hil^1((0,1);\Hil_c)\). We denote the set of admissible paths as \(\Pall\) and write \(\Pall(\mu, \nu)\) for the subset of \(\Pall\) with fixed start and end points \(\mu, \nu \in \prm(X)\).
			\item 
			The energy of a path \((\mu_t)_t \in \Pall\) is defined as
			\begin{equation*}
				E((\mu_t)_t) \assign \int_0^1 \tg_{\mu_t}(\betadot_t,\betadot_t) \diff t = \int_0^1 \g_{\mu_t}(\mudot_t,\mudot_t) \diff t.
			\end{equation*}    
			\item 
			For \(\mu,\nu \in \prm(X)\) we define
			\begin{equation} \label{eq:def_dS}
				\dS(\mu,\nu) \assign \left( \inf_{(\mu_t)_t\in\Pall(\mu,\nu)} E((\mu_t)_t) \right)^\frac{1}{2}.
			\end{equation}
		\end{enumerate}
	\end{definition}
	
	This is nothing but mimicking the classical definition of Riemannian distance on a manifold. In the second point, the two integral expressions coincide thanks to Theorem~\ref{theorem:metric_tensor_in_betadot}.

	\begin{theorem}[{\cite[Thm.~5.2 \& Thm.~5.4]{RGSD}}] \label{theorem:dS_metric}
		The function \(\dS\) is a metric on \(\prm(X)\) that metrizes the \textweakstar~topology. The variational problem~\eqref{eq:def_dS} defining \(\dS\) has a minimizer. 
		Further, for all \(\mu, \nu \in \prm(X)\), there holds, for \(C > 0\) as in~\eqref{eq:tg_equiv_Hc}, 
		\begin{equation*}
			\sqrt{\frac{\varepsilon}{2}} \, \norm{ \beta_\mu - \beta_\nu }_{\Hil_c} \leq \dS(\mu,\nu) \leq \frac{\pi}{2} \sqrt{\frac{\varepsilon}{2} C} \, \norm{ \beta_\mu - \beta_\nu }_{\Hil_c}.
		\end{equation*}
	\end{theorem}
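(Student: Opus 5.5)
This statement is quoted from \cite{RGSD}, so the plan is to reconstruct its proof from the tools recalled above, mainly the change of variables $\mu \mapsto \beta_\mu$ (Theorem~\ref{theorem:alpha_beta_homeo}) and the tensor equivalence~\eqref{eq:tg_equiv_Hc}. Everything is transferred to the set $\calB$, which is a compact piece of the unit sphere of $\Hil_c$. There $E$ is comparable to the flat Sobolev energy $\frac{\varepsilon}{2}\int_0^1 \norm{\betadot_t}_{\Hil_c}^2 \diff t$, with constants $1$ and $C$.

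\emph{Lower bound.} Take an admissible path $(\mu_t)_t$. Its $\beta$-path is in $\Hil^1((0,1);\Hil_c)$, hence absolutely continuous. The fundamental theorem of calculus for Bochner integrals and Cauchy--Schwarz give
\[
\norm{\beta_\mu-\beta_\nu}_{\Hil_c}^2 \le \int_0^1 \norm{\betadot_t}_{\Hil_c}^2 \diff t \le \frac{2}{\varepsilon} E((\mu_t)_t),
\]
where the second inequality uses the lower bound in~\eqref{eq:tg_equiv_Hc}. Taking the infimum over admissible paths gives the left inequality of the statement.

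\emph{Upper bound.} I connect $\beta_\mu$ and $\beta_\nu$ by the great-circle arc $\gamma_t$ on the unit sphere of $\Hil_c$. This arc stays in $\calB$ because $H_c[\calM_+(X)]$ is a convex cone. Indeed, $\gamma_t = a_t\beta_\mu + b_t\beta_\nu$ with $a_t,b_t\ge 0$, which lies in the cone, and $\norm{\gamma_t}_{\Hil_c}=1$. So $\gamma_t = \beta_{\mu_t}$ for a unique $\mu_t\in\prm(X)$ by Theorem~\ref{theorem:alpha_beta_homeo}. The arc is smooth with constant speed equal to the angle $\theta\in[0,\pi/2]$. The angle is at most $\pi/2$ because the inner product $\langle \beta_\mu,\beta_\nu\rangle_{\Hil_c} = \langle \alpha_\mu, \beta_\nu\rangle \ge 0$. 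Hence the path is admissible, and by Theorem~\ref{theorem:metric_tensor_in_betadot} and the upper bound in~\eqref{eq:tg_equiv_Hc},
\[
E \le C\frac{\varepsilon}{2}\theta^2 .
\]
The chord satisfies $\norm{\beta_\mu-\beta_\nu}_{\Hil_c} = 2\sin(\theta/2)$, and for $\theta\le\pi/2$ concavity gives $\theta \le \frac{\pi}{2}\cdot 2\sin(\theta/2)$ up to the constant $\frac{\pi}{2\sqrt2}\le\frac{\pi}{2}$. This yields the right inequality.

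\emph{Metric, topology and minimizers.} Positivity and finiteness follow from the two bounds together with injectivity of $B$. Symmetry follows by reversing time, and the triangle inequality by concatenating and reparametrizing paths. The two bounds also show that $\dS$ is equivalent to the norm distance on $\calB$. On $\calB$ the norm topology agrees with the weak topology, and $B$ is a \textweakstar-to-weak homeomorphism, so $\dS$ metrizes the \textweakstar{} topology.

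For existence of a minimizer, I use the direct method. A minimizing sequence is bounded in $\Hil^1((0,1);\Hil_c)$, so a subsequence converges weakly in $\Hil^1$ and uniformly in $t$ for the weak topology of $\Hil_c$, and the limit stays in the weakly compact set $\calB$. The remaining task is lower semicontinuity of $(\beta_t)_t \mapsto \int_0^1 \tg_{\mu_t}(\betadot_t,\betadot_t)\diff t$ under this convergence. This is the main obstacle, because the tensor depends on the base point. I would handle it as follows:
\begin{itemize}
\item The quadratic form $\tg_\mu$ is convex in $\betadot$.
\item It depends continuously on $\mu$, via continuity of $K_\mu$ and $1/\beta_\mu$ in $\mu$.
\item Combined with uniform (strong) convergence of the base points, this gives weak lower semicontinuity of the integral functional by a standard Ioffe-type argument.
\end{itemize}
The strong convergence of base points holds because weak and norm convergence agree on $\calB$.
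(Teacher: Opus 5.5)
Your proposal is correct and follows the same route as the proof in \cite{RGSD} that this paper cites: you compare $E$ with the flat energy $\frac{\varepsilon}{2}\int_0^1 \norm{\betadot_t}_{\Hil_c}^2 \diff t$ through~\eqref{eq:tg_equiv_Hc}, use the great-circle arc inside $\calB$ for the upper bound, and conclude by the direct method via Theorem~\ref{theorem:coercivity} and the weak lower semicontinuity of Theorem~\ref{theorem:Energy_lsc}; incidentally, Jordan's inequality $\sin x \geq 2x/\pi$ on $[0,\pi/2]$ already gives $\theta \leq \frac{\pi}{2}\cdot 2\sin(\theta/2)$ for all $\theta \in [0,\pi]$, so you do not need $\theta \leq \pi/2$. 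For the lower semicontinuity step, the continuity of $\mu \mapsto \tg_\mu$ must hold in (at least strong) operator topology on $\Hil_c$; this relies on the \textweakstar-to-norm continuity of $H_c$ (Lemma~\ref{lemma:appendixBRGSD}) and is exactly Proposition~\ref{prop:metric_tensor_beta_smooth}, and with it and the positive semidefiniteness of $\tg_\mu$ your Ioffe-type argument reduces to expanding $\tg_{\mu^n_t}(\betadot^n_t,\betadot^n_t)$ around the limit $\betadot_t$.
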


	\noindent We call minimizers in~\eqref{eq:def_dS} \emph{Riemannian geodesics} for $\dS$. The proof of existence of geodesics relies on some coercivity of the functional $E$, together with its lower semicontinuity. We state explicitly the first result, which is an easy consequence of Sobolev embeddings. 
	
	\begin{theorem}[{Proof of Thm.~5.4 in \cite{RGSD}, Thm~B.2 in \cite{Hardion2025}}]
		\label{theorem:coercivity}
		Assume $(\mu^n_t)_t$ is a sequence of curves in $\Pall$, and call $(\beta^n_t)_t = (B(\mu^n_t)_t)$. If $\sup_n E((\mu^n_t)_t) < \infty$ (or equivalently $\sup_n \| \betadot^n_t \|_{L^2((0,1);\Hil_c)} < \infty$ given~\eqref{eq:tg_equiv_Hc}), then up to extraction of a subsequence $(\beta^n_t)_t$ converges to a limit $(\beta_t)_t$ strongly in $\Cont([0,1];\Hil_c)$ and weakly in $\Hil^1((0,1);\Hil_c)$. As a consequence, $(\betadot^n_t)_t$ converges weakly to $(\betadot_t)_t$ in $L^2((0,1);\Hil_c)$, and $(\mu^n_t)_t$ converges in $\Cont([0,1],(\prm(X),\dS))$ to $(B\inv(\beta_t))_t$.     
	\end{theorem}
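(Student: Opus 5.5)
Since $\sup_n \int_0^1 \tg_{\mu^n_t}(\betadot^n_t,\betadot^n_t)\,\diff t<\infty$, the lower bound in~\eqref{eq:tg_equiv_Hc} gives a uniform bound $\sup_n\|\betadot^n\|_{L^2((0,1);\Hil_c)}<\infty$, which is the bracketed equivalence. Each $\beta^n_t$ lies in $\calB$, which sits in the unit sphere of $\Hil_c$, so $\|\beta^n\|_{L^2((0,1);\Hil_c)}\le 1$. Hence $(\beta^n)_n$ is bounded in the Hilbert space $\Hil^1((0,1);\Hil_c)$. Extracting a subsequence, it converges weakly in $\Hil^1$ to some $\beta$. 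In particular $\betadot^n\rightharpoonup\betadot$ weakly in $L^2((0,1);\Hil_c)$, because differentiation is a bounded linear map $\Hil^1\to L^2$ and hence weak-to-weak continuous.

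For strong convergence in $\Cont([0,1];\Hil_c)$ I would use Arzelà--Ascoli for $\Hil_c$-valued maps. Every $\Hil^1$ function has an absolutely continuous representative with $\beta^n_t-\beta^n_s=\int_s^t\betadot^n_r\,\diff r$, so Cauchy--Schwarz gives
\[
\|\beta^n_t-\beta^n_s\|_{\Hil_c}\le |t-s|^{1/2}\sup_m\|\betadot^m\|_{L^2}.
\]
The family is therefore uniformly equicontinuous. For pointwise relative compactness, each $\beta^n_t$ lies in $\calB$, which is norm compact by Theorem~\ref{theorem:alpha_beta_homeo}. Arzelà--Ascoli then yields a further subsequence converging uniformly in norm to a continuous path. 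This path must agree with the weak $\Hil^1$ limit $\beta$: for fixed $t$, point evaluation is a continuous linear map $\Hil^1\to\Hil_c$ (through the embedding into $\Cont$), so $\beta^n_t\rightharpoonup\beta_t$ weakly, and weak limits are unique. Since $\calB$ is closed, $\beta_t\in\calB$ for all $t$.

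Finally, set $\mu_t=B\inv(\beta_t)$, which is well defined because $\beta_t\in\calB=B(\prm(X))$. Theorem~\ref{theorem:dS_metric} gives
\[
\dS(\mu^n_t,\mu_t)\le \tfrac{\pi}{2}\sqrt{\tfrac{\varepsilon}{2}C}\,\|\beta^n_t-\beta_t\|_{\Hil_c},
\]
so uniform norm convergence of $\beta^n$ yields uniform convergence of $\mu^n$ in $(\prm(X),\dS)$. Continuity of $t\mapsto\mu_t$ follows from the same inequality. Moreover $(\mu_t)_t\in\Pall$, since $\beta\in\Hil^1$.

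The only delicate step is the Arzelà--Ascoli argument in infinite dimensions, which needs pointwise compactness. Norm-boundedness alone would not suffice; it is supplied precisely by the norm compactness of $\calB$.
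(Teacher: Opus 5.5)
Your proof is correct. It is essentially the argument the paper has in mind: the paper gives no proof of its own, cites \cite{RGSD,Hardion2025}, and calls the result an easy consequence of Sobolev embeddings. The uniform $L^2$ bound on $\betadot^n$ gives boundedness in $\Hil^1((0,1);\Hil_c)$ and a uniform $1/2$-H\"older modulus. The norm compactness of $\calB$ (Theorem~\ref{theorem:alpha_beta_homeo}) supplies the pointwise compactness that Arzel\`a--Ascoli needs in infinite dimensions. The weak $\Hil^1$ limit identifies the uniform limit, and Theorem~\ref{theorem:dS_metric} transfers the convergence to $\dS$. One small omission: the ``equivalently'' clause also needs the upper bound in~\eqref{eq:tg_equiv_Hc}, not only the lower one you invoke.
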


	Eventually we comment on a few canonical paths in our geometry that we will use frequently.
	We will also use the following paths frequently throughout this paper.
	
	\begin{definition} \label{def:vert_arc}
		Given \(\mu, \nu \in \prm(X)\) we define
		\begin{itemize}
			\item the vertical interpolation by \(\mu^V_t \assign (1-t) \mu + t \nu\),
			\item the arc interpolation \(\mu^A_t\) as \(\B\inv(\beta^A_t)\), where \((\beta^A_t)_t\) is the arc connecting \(\beta_\mu\) and \(\beta_\nu\) on the unit sphere of \(\Hil_c\): elementary trigonometry yields, with $\cos(\theta) = \langle \beta_\mu, \beta_\nu \rangle$,
			\begin{equation}
				\label{eq:explicit_formula_arc}
				\beta^A_t = \frac{\sin((1-t) \theta)}{\sin(\theta)} \beta_\mu + \frac{\sin(t \theta)}{\sin(\theta)} \beta_\nu. 
			\end{equation}
		\end{itemize}
	\end{definition}
	
	\noindent On $\R^d$ we can also consider horizontal motion, in particular constant speed translations of measures, i.e.~\(\mu_t = (\id + t v)_\# \mu\) for \(\mu \in \prm(X)\) and some fixed \(v \in \R^d\). For the squared distance cost function, these curves are geodesics of \(\dS\) \cite[Thm.~5.16]{RGSD}.

	\section{Gromov--Hausdorff convergence of spatial dis\-cret\-ization} \label{sec:Gromov_Hausdorff}

	Any $Z \subseteq X$ finite subset of our base space comes with its own intrinsic metric \(\dSM{Z}\) on \(\prm(Z) \subseteq \prm(X)\). We can either see it by applying all our theory with $Z$ in place of $X$, Theorem~\ref{theorem:dS_metric} shows that $(\prm(Z), \dSM{Z})$ is always a geodesic space. Or we can see $\prm(Z)$ as a subset of $\prm(X)$, and in Definition~\ref{def:distance} restrict the space of admissible paths to \( \setgiven{(\mu_t)_t \in \Pall(\mu,\nu)}{\forall t \colon \mu_t \in \prm(Z)}\). These two constructions are equivalent: this is because the values of $f_{\mu,\nu}$ and $g_{\mu,\nu}$ on $\supp(\mu)$ and $\supp(\nu)$ respectively do not depend on the ambient space $X$, only on the restriction of $c$ to $\supp(\mu) \cup \supp(\nu)$. Thus, if $\mu \in \prm(Z) \subseteq \prm(X)$ and $\mudot \in \calM_0(Z) \subseteq \calM_0(X)$, then the value of $\g_\mu(\mudot,\mudot)$ only depends on the restriction of $c$ to $Z$.

	The main result of this section is that finer and finer discretizations $Z$ of \(X\) yield convergence of $\dSM{Z}$ to \(\dS\). 
	Specifically, throughout this section, let \((x_m)_{m}\) be a sequence that is dense in \(X\), and for \(M \in \N\) set \(X_M \assign \{x_1, x_2, \dots, x_M\}\). Denoting \(\dSM{M} \assign \dSM{X_M}\), we will show the convergence of the metric space $(\prm(X_M), \dSM{M})$ to $(\prm(X),\dS)$. 
	Specifically we build explicitly a surjective map $R_M$ from $\prm(X)$ onto $\prm(X_M)$ which almost preserves distances. Given our geometry is almost Hilbertian once we look at $\beta_\mu = B(\mu) \in \Hil_c$, we build the map $R_M$ in the geometry of $\Hil_c$. Recalling that $\alpha_\mu = A(\mu)$ as in Definition~\ref{def:alpha_beta_variables} shares the same support as $\mu$, an element $\mu \in \prm(X)$ belongs to $\prm(X_M)$ if and only if $\alpha_\mu \in \mathcal{M}_+(X_M)$, where $\mathcal{M}_+(X_M)$ is the set of non-negative measures on $X_M$. 
	Applying the operator $B$, we obtain that $\mu \in \prm(X_M)$ if and only if
	\begin{equation*}
		\beta_\mu = H_c[\alpha_\mu] \in H_c[\calM_+(X_M)] = \vecspan_{\R_+} \setgiven{k_c(x,\vdot)}{x \in X_M} \assignRe \cone^M.
	\end{equation*}
	This set \(\cone^M\) is a convex cone of finite dimension, hence closed in $\Hil_c$. The orthogonal projection $P_M$ onto \(\cone^M\) is well behaved, however we are not guaranteed that $P_M(\beta_\mu)$ belongs to $\calB$, as the latter also encodes a constraint of unit norm. We simply normalize $P_M(\beta_\mu)$, to keep us in $\cone^M$. To recover a probability measure in $\prm(X_M)$, we apply the inverse change of variables \(\B\inv\) to the normalization of $P_M(\beta_\mu)$. It is summarized in the following definition.

	\begin{definition}
		\label{def:cone_projection}
		With $P_M : \Hil_c \to \Hil_c$ the orthogonal (metric) projection onto $\cone^M \assign H_c[\calM_+(X_M)]$, we define $R_M : \prm(X) \to \prm(X_M)$ by 
		\begin{equation*}
			R_M(\mu) = \B\inv \left( \frac{P_M(\B(\mu))}{\norm{P_M(\B(\mu))}_{\Hil_c}} \right).    
		\end{equation*}
	\end{definition}
	
	\noindent By construction, as $B(\mu) \in \cone^M$ if $\mu \in \prm(X_M)$, we have $R_M(\mu) = \mu$ for $\mu \in \prm(X_M)$. Our main result is the following.

	\begin{theorem}
		\label{theorem:unif_conv_RM}
		Uniformly for all $\mu, \nu \in \prm(X)$, there holds
		\begin{equation*}
			\lim_{M \to \infty} \dS(R_M(\mu),\mu) = 0, \qquad \lim_{M\to \infty} \dSM{M}(R_M(\mu), R_M(\nu)) = \dS(\mu,\nu). 
		\end{equation*}
	\end{theorem}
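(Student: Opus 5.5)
The first claim is about $\Hil_c$ geometry; the second combines it with a projection of admissible paths. Throughout, Theorem~\ref{theorem:dS_metric} lets us pass between $\dS$ and the norm $\norm{\beta_\mu-\beta_\nu}_{\Hil_c}$ up to constants. The inequality $\dSM{M}(R_M\mu,R_M\nu)\ge \dS(R_M\mu,R_M\nu)$ is trivial, because restricting the admissible paths can only raise the infimum.

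\textbf{Step 1: uniform approximation of $\calB$ by the cones.}
Set $\delta_M\assign\sup_{\beta\in\calB}\norm{\beta-P_M\beta}_{\Hil_c}$. I claim $\delta_M\to0$. The cones are nested, $\cone^M\subseteq\cone^{M+1}$, so $\beta\mapsto\norm{\beta-P_M\beta}$ is pointwise nonincreasing in $M$. Each of these maps is $1$-Lipschitz, since it is a distance to a closed convex set. By Theorem~\ref{theorem:alpha_beta_homeo}, $\calB$ is norm compact. Dini's theorem therefore reduces the claim to pointwise convergence to $0$.

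For pointwise convergence, write $\beta=H_c[\alpha]$ with $\alpha\ge0$. Partition $X$ into small Borel cells and take the nearest point among $x_1,\dots,x_M$ in the metric $d_c$. Moving the mass of $\alpha$ to these points gives $\alpha^M\in\calM_+(X_M)$ with $\alpha^M\to\alpha$ \textweakstar, using density of $(x_m)$. Lemma~\ref{lemma:appendixBRGSD} then yields $H_c[\alpha^M]\to\beta$ in norm.

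Normalization costs little. Since $\norm{\beta}=1$, we have $\abs{\norm{P_M\beta}-1}\le\delta_M$. Hence $\tilde\beta_M\assign P_M\beta/\norm{P_M\beta}$ is an element of $\calB\cap\cone^M$ with $\norm{\tilde\beta_M-\beta}\le 2\delta_M/(1-\delta_M)$. Theorem~\ref{theorem:dS_metric} then gives $\dS(R_M\mu,\mu)\le C'\delta_M\to0$ uniformly, which is the first claim.

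\textbf{Step 2: the upper bound $\dSM{M}(R_M\mu,R_M\nu)\le \dS(\mu,\nu)+o(1)$ uniformly.}
First reduce to the endpoints: by the triangle inequality and Step 1 it suffices to bound $\dSM{M}(R_M\mu,R_M\nu)$ by $\dS(\mu,\nu)$ plus a uniform error. Take a geodesic $(\beta_t)$ from $\beta_\mu$ to $\beta_\nu$ and push it through $\tilde\beta_t\assign P_M\beta_t/\norm{P_M\beta_t}$.

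This projected path is admissible in $\prm(X_M)$ for $M$ large. The map $P_M$ is $1$-Lipschitz, and $\norm{P_M\beta_t}\ge1-\delta_M$ is bounded away from $0$. So $\tilde\beta\in\Hil^1((0,1);\Hil_c)$, it lies in $\calB\cap\cone^M$, and $\norm{\dot{\tilde\beta}_t}\le \norm{\betadot_t}/(1-\delta_M)$.

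\textbf{Step 3 (main obstacle): controlling the energy in $\tg$ rather than in the flat norm.}
A flat-norm bound on $\dot{\tilde\beta}$ alone only gives equivalence up to the constant $C$, not the sharp limit. I see two ways forward.

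\emph{Route (a): uniform continuity of the tensor.} I would aim for a modulus $\omega$ such that $\tg_{\mu'}(\dot b,\dot b)\le(1+\omega(\norm{\beta_{\mu'}-\beta_\mu}))\,\tg_\mu(\dot b',\dot b')+\omega(\cdot)\norm{\dot b}^2$. Here $\dot b'$ is the component of $\dot b$ orthogonal to $\beta_\mu$. This would follow from continuity of $\mu\mapsto K_\mu$, $\fmumu$ and $\beta_\mu$, using compactness of $\calB$ and the uniform contraction~\eqref{eq:contraction_Kmunu}. The difficulty is that $\tilde\beta_t$ itself is not a velocity of the original path, so this estimate alone does not close the argument.

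\emph{Route (b): a local chain argument, which I would try first.} Split $[0,1]$ into $N$ pieces and compare only nearby points. For close points the local expansion gives $\dS^2(\mu,\nu)\approx\tg_\mu(\beta_\nu-\beta_\mu)$, with $\tg_\mu$ continuous in $\mu$. This yields $\dSM{M}(\tilde\beta_{t_k},\tilde\beta_{t_{k+1}})\le(1+o(1))\,\dS(\beta_{t_k},\beta_{t_{k+1}})+o(1/N)$. To obtain it, I would join the consecutive projected points by the projected arc inside the cone. Its $\tg$-energy is close to that of the arc between the originals, because $\tilde\beta_{t_k}-\beta_{t_k}$ is uniformly small. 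Comparing $\tg$ at nearby base points, via continuity of $\mu\mapsto(\id-K_\mu)^{-1}$ and of $1/\beta_\mu$ on the compact set $\calB$, keeps the quadratic forms close.

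Summing over $k$ and letting $M\to\infty$ and then $N\to\infty$ would give the claim. I expect making these local comparisons uniform in $\mu,\nu$ to be the delicate part.
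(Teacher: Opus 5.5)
Your Steps 1 and 2 are sound. The Dini argument gives $\sup_{\beta\in\calB}\norm{\beta-P_M\beta}_{\Hil_c}\to0$; the paper gets this more directly from the nearest-neighbour push-forward and the uniform bound on $\norm{\alpha_\mu}_{\TV}$. Your estimate $\norm{\dot{\tilde\beta}_t}_{\Hil_c}\le\norm{\betadot_t}_{\Hil_c}/(1-\delta_M)$ is also correct, since normalization differentiates to $1/\norm{\phi}$ times the orthogonal projection onto $\phi^\perp$.

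\textbf{The gap is in Step 3.} You never prove $\limsup_M\dSM{M}(R_M\mu,R_M\nu)\le\dS(\mu,\nu)$, which is the core of the theorem, and you leave uniformity open. You discard route (a) yourself. Route (b) asserts the decisive comparison without proof. If ``projected arc'' means $\tilde R_M$ applied to the arc between $\beta_{t_k}$ and $\beta_{t_{k+1}}$, then the justification ``because $\tilde\beta_{t_k}-\beta_{t_k}$ is uniformly small'' is exactly the inference you rightly rejected in (a): closeness of positions gives no control over the velocity of a projected curve.

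Route (b) can be repaired. Join $R_M\mu_{t_k}$ and $R_M\mu_{t_{k+1}}$ by the great-circle arc \emph{between the projected points}. This arc stays in $\cone^M\cap\calB$, and its velocity depends explicitly and Lipschitz-continuously on the endpoints. As $M\to\infty$ at fixed $N$, its energy converges to that of the arc between $\mu_{t_k}$ and $\mu_{t_{k+1}}$, which is $\dS^2(\mu_{t_k},\mu_{t_{k+1}})(1+o(1))$ by \eqref{eq:approximate_geodesics}. However, this is not what you wrote, and the error bookkeeping is not done.

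\textbf{The missing idea} is that route (a) does close, and it needs no pointwise-in-time control of $\dot{\tilde\beta}_t$.
\begin{itemize}
\item You already have $\tilde\beta_t\to\beta_t$ uniformly in $t$ and $\norm{\dot{\tilde\beta}}_{L^2}\le\norm{\betadot}_{L^2}/(1-\delta_M)$.
\item Boundedness in $\Hil^1((0,1);\Hil_c)$, together with $L^2$ convergence of the curves, forces every weak limit of $\dot{\tilde\beta}$ to equal $\betadot$.
\item Since also $\limsup_M\norm{\dot{\tilde\beta}}_{L^2}\le\norm{\betadot}_{L^2}$, weak convergence upgrades to strong convergence in $L^2((0,1);\Hil_c)$.
\item $E$ is continuous under strong $\Hil^1$ convergence. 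This uses a.e.\ convergence along subsequences, continuity of $(\mu,\betadot)\mapsto\tg_\mu(\betadot,\betadot)$, and domination by $C\norm{\dot{\tilde\beta}_t}^2$.
\end{itemize}
Hence $\dSM{M}(R_M\mu,R_M\nu)^2\le E((R_M(\mu_t))_t)\to\dS^2(\mu,\nu)$ for each fixed pair.

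Uniformity then follows from your own velocity bound. Combined with \eqref{eq:tg_equiv_Hc}, it gives $\dSM{M}(R_M\mu,R_M\nu)\lesssim\dS(\mu,\nu)$ with a constant independent of $M$. By the triangle inequality, the functions $h_M(\mu,\nu)=\dSM{M}(R_M\mu,R_M\nu)$ are therefore equi-Lipschitz with respect to $\dS$ and uniformly bounded on the compact space $\prm(X)^2$. Arzel\`a--Ascoli then turns pointwise convergence into uniform convergence. This is the paper's argument.
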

	
	We recall that the Gromov--Hausdorff distance between two metric spaces \((\calX,d_\calX)\) and \((\calY,d_\calY)\) is
	\begin{equation*}
		d_{\mathrm{GH}}((\calX,d_\calX), (\calY,d_\calY)) \assign \frac{1}{2} \inf_{\calR} \sup_{\substack{(x,y) \in \calR \\ (x',y') \in \calR}} \, \abs{d_\calX(x,x') - d_\calY(y,y')},
	\end{equation*}
	where the infimum ranges over all subsets \(\calR \subseteq \calX \times \calY\) with surjective projections \(\proj_1(\calR) = \calX\), \(\proj_2(\calR) = \calY\) \cite[Sec.~7.3]{Burago2001}. 
	As $R_M$ is surjective, using for $\mathcal{R}$ the graph of $R_M$, Theorem~\ref{theorem:unif_conv_RM} shows
	\begin{equation*}
		\lim_{M \to \infty} d_{\mathrm{GH}}((\prm(X),\dS), (\prm(X_M),\dSM{M})) = 0,  
	\end{equation*}
	hence the Gromov-Hausdorff convergence of $(\prm(X_M),\dSM{M})$ to $(\prm(X),\dS)$.
	An easy implication is the convergence of the geodesics in $X_M$ to the ones in $X$, at least along subsequences.  
	
	\begin{corollary}
		\label{cor:GH_convergence_geodesics}
		For any $M \geq 1$, assume that we have \((\mu^M_t)_t\) in $\Pall$ taking values in $\prm(X_M)$, and that $\mu^M_0$, $\mu^M_1$ converge respectively to $\mu$, $\nu$ as $M \to \infty$. We also assume that \((\mu^M_t)_t\) is asymptotically geodesic, in the sense \(E((\mu^M_t)_t) - \dSM{M}(\mu^M_0, \mu^M_1)^2 \to 0\) as \(M \to \infty\). Then, up to extraction of a subsequence, the curve \((\mu^M_t)_t\) converges in $\Cont([0,1]; (\prm(X),\dS))$ to a curve $(\mu_t)_{t}$ which is a geodesic between $\mu$ and $\nu$, and it holds
		\begin{equation*}
			\dS^2(\mu,\nu) = E((\mu_t)_t) = \lim_{M \to \infty} E((\mu^M_t)_t).
		\end{equation*}
	\end{corollary}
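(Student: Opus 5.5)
The plan is to combine the compactness result Theorem~\ref{theorem:coercivity}, lower semicontinuity of the energy $E$, and the uniform convergence of Theorem~\ref{theorem:unif_conv_RM}. Set $\mu^M = \mu^M_0$, $\nu^M = \mu^M_1$.

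\emph{Step 1: uniform energy bound.} Since $\mu^M_t \in \prm(X_M)$, the definition of $\dSM{M}$ gives $E((\mu^M_t)_t) \geq \dSM{M}(\mu^M,\nu^M)^2$. We need $\dSM{M}(\mu^M,\nu^M) \to \dS(\mu,\nu)$. Because $R_M$ is the identity on $\prm(X_M)$, we have $\dSM{M}(\mu^M,\nu^M) = \dSM{M}(R_M(\mu^M),R_M(\nu^M))$. Theorem~\ref{theorem:unif_conv_RM} holds uniformly over all pairs, so this equals $\dS(\mu^M,\nu^M) + o(1)$. Theorem~\ref{theorem:dS_metric} says $\dS$ metrizes the \textweakstar{} topology, so $\dS(\mu^M,\nu^M) \to \dS(\mu,\nu)$. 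Together with the asymptotic geodesic assumption, this yields
\begin{equation*}
\lim_{M\to\infty} E((\mu^M_t)_t) = \dS^2(\mu,\nu),
\end{equation*}
and in particular $\sup_M E((\mu^M_t)_t) < \infty$.

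\emph{Step 2: compactness.} Theorem~\ref{theorem:coercivity} applies to the sequence. Along a subsequence, $\beta^M = B(\mu^M_t)$ converges strongly in $\Cont([0,1];\Hil_c)$ and weakly in $\Hil^1((0,1);\Hil_c)$ to some $(\beta_t)_t$. Also $(\mu^M_t)_t \to (\mu_t)_t := (B^{-1}(\beta_t))_t$ in $\Cont([0,1];(\prm(X),\dS))$. The limit curve is admissible, since $\beta \in \Hil^1$ and $\calB$ is closed. Its endpoints are $\mu_0 = \lim \mu^M = \mu$ and $\mu_1 = \nu$, by uniqueness of limits in the metric $\dS$. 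Hence $(\mu_t)_t \in \Pall(\mu,\nu)$.

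\emph{Step 3: lower semicontinuity, the main point.} We need $E((\mu_t)_t) \leq \liminf_M E((\mu^M_t)_t)$ under strong convergence of $\beta^M$ and weak $L^2$ convergence of $\betadot^M$. This is the lower semicontinuity of $E$ used in \cite{RGSD} for the existence of geodesics (Theorem~\ref{theorem:dS_metric}), and I would invoke it directly. If one wanted to reprove it, the argument runs as follows. Write the integrand $\tg_{\mu^M_t}(\betadot^M_t,\betadot^M_t)$ as a quadratic form that is convex in $\betadot$ with operators depending continuously on $\mu_t$. Replace $\mu^M_t$ by $\mu_t$ at a cost that vanishes by the uniform convergence $\mu^M_t \to \mu_t$. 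Then use weak lower semicontinuity of a convex continuous functional on $L^2((0,1);\Hil_c)$. The delicate part is the dependence of $\betadot/\beta_\mu$ and $(\id-K_\mu)^{-1}$ on $\mu$, but this is exactly what was handled in \cite{RGSD}.

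\emph{Conclusion.} Combining the steps,
\begin{equation*}
\dS^2(\mu,\nu) \leq E((\mu_t)_t) \leq \liminf_M E((\mu^M_t)_t) = \dS^2(\mu,\nu).
\end{equation*}
All inequalities are therefore equalities, so $(\mu_t)_t$ is a geodesic and $E((\mu_t)_t) = \lim_M E((\mu^M_t)_t)$.
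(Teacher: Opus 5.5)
Your proof is correct and follows the paper's route. You get a uniform energy bound, extract a limit via Theorem~\ref{theorem:coercivity}, apply lower semicontinuity (Theorem~\ref{theorem:Energy_lsc}), and close the sandwich $\dS^2(\mu,\nu)\le E((\mu_t)_t)\le\liminf_M E((\mu^M_t)_t)=\liminf_M\dSM{M}(\mu^M_0,\mu^M_1)^2\le\dS^2(\mu,\nu)$.

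The only difference is how you show $\dSM{M}(\mu^M_0,\mu^M_1)\to\dS(\mu,\nu)$. You do it in one stroke: you apply the uniform second statement of Theorem~\ref{theorem:unif_conv_RM} at the $M$-dependent pair $(\mu^M_0,\mu^M_1)$, using that $R_M$ is the identity on $\prm(X_M)$. The paper instead uses the triangle inequality $\dSM{M}(\mu^M_0,\mu^M_1)\le\dSM{M}(\mu^M_0,R_M(\mu))+\dSM{M}(R_M(\mu),R_M(\nu))+\dSM{M}(R_M(\nu),\mu^M_1)$. It handles the outer terms with Lemma~\ref{lemma:comp_ds_dsm} and the first part of that theorem, so it only needs pointwise convergence in the second part. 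Both arguments are valid.
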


	The rest of this section is dedicated to the proofs of Theorem~\ref{theorem:unif_conv_RM} and Corollary~\ref{cor:GH_convergence_geodesics}. We first prove some additional result on the energy functional, study carefully the operator $R_M$, and eventually move to the actual proofs.
	
	\subsection{Regularity of the energy functional} 

Due to the identification of admissible paths \((\mu_t)_t \in \Pall\) with their corresponding paths \((\beta_t)_t \in \Hil^1((0,1);\Hil_c)\), we can naturally let \(\Pall\) inherit the weak or norm topology of \(\Hil^1((0,1);\Hil_c)\). In our previous work we already proved lower semi-continuity of the energy.

\begin{theorem}[{\cite[Proof of Thm.~4.5]{RGSD}}]
	\label{theorem:Energy_lsc}
	The energy \(E\) is lower semi-continuous in the weak topology of \(\Hil^1((0,1);\Hil_c)\).
\end{theorem}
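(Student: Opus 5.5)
We have to prove lower semicontinuity of $E$ in the weak topology of $\Hil^1((0,1);\Hil_c)$. The plan is a standard convex-integrand argument: a strong-times-weak limit in the base point, plus convexity in the velocity.

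\textbf{Step 1: reduction.} Let $(\mu^n_t)_t \to (\mu_t)_t$ weakly in $\Hil^1$ with $\liminf_n E((\mu^n_t)_t)<\infty$. Pass to a subsequence realizing the liminf with bounded energy. By Theorem~\ref{theorem:coercivity}, after a further extraction, $\beta^n \to \beta$ strongly in $\Cont([0,1];\Hil_c)$ and $\betadot^n \rightharpoonup \betadot$ weakly in $L^2((0,1);\Hil_c)$. Then Theorem~\ref{theorem:alpha_beta_homeo} gives $\mu^n_t \to \mu_t$ \textweakstar{} uniformly in $t$.

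\textbf{Step 2: rewrite the integrand as a supremum of affine functionals.} Extend $\tg_\mu$ to a nonnegative quadratic form $Q_\mu$ on all of $\Hil_c$: apply the formula \eqref{eq:def:tgmu} to $\betadot - \langle \betadot,\beta_\mu\rangle_{\Hil_c}\beta_\mu$, which equals $\betadot$ along admissible curves by Theorem~\ref{theorem:metric_tensor_in_betadot}. Since $Q_\mu$ is a bounded, nonnegative, symmetric form, it has the dual representation
\[
Q_\mu(\betadot)=\sup_{w\in\Hil_c}\bigl(2\langle w,\betadot\rangle_{\Hil_c}-Q^*_\mu(w)\bigr).
\]
Hence for any measurable $w\colon(0,1)\to\Hil_c$ (say simple functions, which suffice by density and measurability),
\[
E((\mu^n_t)_t)\ \ge\ \int_0^1 2\langle w_t,\betadot^n_t\rangle\,dt-\int_0^1 Q^*_{\mu^n_t}(w_t)\,dt .
\]

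\textbf{Step 3: pass to the limit.} The first term converges by weak $L^2$ convergence. For the second term we need $Q^*_{\mu^n_t}(w)\to Q^*_{\mu_t}(w)$ for each fixed $w$, with a uniform bound. The bound comes from \eqref{eq:tg_equiv_Hc}. It is simpler to obtain the convergence directly: the associated operators should converge in operator norm as $\mu^n\to\mu$ \textweakstar. The ingredients are:
\begin{itemize}
\item $\beta_{\mu^n}\to\beta_\mu$ in norm, hence in sup norm, and uniformly bounded below by \eqref{eq:sup_bound_potentials};
\item $K_{\mu^n}$ composed with multiplication by $1/\beta$ converges, since $k_{\mu^n}\to k_\mu$ uniformly (potentials converge uniformly);
\item the inverses $(\id-K_{\mu^n})^{-1}$ are uniformly bounded by the contraction \eqref{eq:contraction_Kmunu}.
\end{itemize}
The delicate point is the $L^2(\mu^n)$ pairing with a moving measure. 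I would handle it by writing the pairing as $\langle \mu^n,\phi_n\psi_n\rangle$ with $\phi_n,\psi_n$ converging uniformly, using that $\Hil_c\hookrightarrow\Cont(X)$. Dominated convergence in $t$ then yields
\[
\liminf_n E\ \ge\ \int_0^1\bigl(2\langle w_t,\betadot_t\rangle-Q^*_{\mu_t}(w_t)\bigr)\,dt .
\]
Taking the supremum over $w$ (via a measurable selection of near-optimizers, or the explicit maximizer $w_t=\mathcal Q_{\mu_t}\betadot_t$, where $\mathcal Q_{\mu_t}$ is the operator of $Q_{\mu_t}$) gives $E((\mu_t)_t)$.

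\textbf{Main obstacle.} The hardest step is Step 3: establishing norm continuity of $\mu\mapsto \mathcal Q_\mu$ (or of $Q_\mu$ on bounded sets) under \textweakstar{} convergence. The difficulty is the $L^2(X,\mu)$ term, since the measure itself moves. I expect uniform convergence of the Schrödinger potentials $f_{\mu^n,\mu^n}\to f_{\mu,\mu}$ to make this work, combined with compactness of the embedding $\Hil_c\to\Cont(X)$ on weakly convergent sequences (Lemma~\ref{lemma:appendixBRGSD}).
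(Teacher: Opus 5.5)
\textbf{Gap in Steps 2--3.} Your extension $Q_\mu(\betadot)=\tg_\mu(P_\mu^\perp\betadot,P_\mu^\perp\betadot)$, with $P_\mu^\perp\betadot=\betadot-\langle\betadot,\beta_\mu\rangle_{\Hil_c}\beta_\mu$, vanishes on the whole line $\R\beta_\mu$. Its conjugate $Q^*_\mu(w)=\sup_v\bigl(2\langle w,v\rangle_{\Hil_c}-Q_\mu(v)\bigr)$ is therefore $+\infty$ whenever $\langle w,\beta_\mu\rangle_{\Hil_c}\neq0$.

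In Step~3 you test with a field $w_t$ that does not depend on $n$. This includes your explicit choice $w_t=\mathcal Q_{\mu_t}\betadot_t$, which lies in $\beta_{\mu_t}^\perp$ but not in $\beta_{\mu^n_t}^\perp$. Since $\beta_{\mu^n_t}$ moves with $n$, generically $Q^*_{\mu^n_t}(w_t)=+\infty$, and your inequality degenerates to $E((\mu^n_t)_t)\ge-\infty$.

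The ``uniform bound from \eqref{eq:tg_equiv_Hc}'' is not available either. That estimate is a coercivity statement only on $\beta_\mu^\perp$, so it gives $Q^*_\mu(w)\le\frac{2}{\varepsilon}\|w\|^2_{\Hil_c}$ only for $w\perp\beta_\mu$. Hence neither the pointwise convergence nor the domination you need in Step~3 holds as written.

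\textbf{How to repair it.} There are two routes.
\begin{enumerate}
\item Use an extension that is coercive on all of $\Hil_c$. The one of Definition~\ref{def:tg_extended} works: it satisfies $\tg_\mu(\betadot,\betadot)\ge\frac{\varepsilon}{2}\|\betadot\|^2_{\Hil_c}$ for every $\betadot\in\Hil_c$. Indeed $\langle\betadot,\tilde G_\mu[\betadot]\rangle_{\Hil_c}=\langle\psi,(\id-K_\mu)\psi\rangle_{L^2(X,\mu)}\ge0$, where $\psi$ is the $\mu$-centred solution of $(\id-K_\mu)\psi=\betadot/\beta_\mu-\langle\mu,\betadot/\beta_\mu\rangle$. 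Alternatively, simply add $\frac{\varepsilon}{2}\langle\betadot,\beta_\mu\rangle^2_{\Hil_c}$ to your form. Then $Q^*_\mu(w)=\langle\mathcal Q_\mu^{-1}w,w\rangle_{\Hil_c}$ with $\|\mathcal Q_\mu^{-1}\|\le 2/\varepsilon$, and your duality argument goes through.
\item Avoid conjugates altogether. Nonnegativity $Q_{\mu^n_t}(\betadot^n_t-\betadot_t)\ge0$ gives $Q_{\mu^n_t}(\betadot^n_t)\ge Q_{\mu^n_t}(\betadot_t)+2\langle\mathcal Q_{\mu^n_t}\betadot_t,\betadot^n_t-\betadot_t\rangle_{\Hil_c}$. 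This needs only nonnegativity, so even your degenerate form is fine. It passes to the limit using $\sup_t\|\mathcal Q_{\mu^n_t}-\mathcal Q_{\mu_t}\|\to0$ and weak $L^2$ convergence of $\betadot^n$.
\end{enumerate}

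Both routes need operator-norm continuity of $\mu\mapsto\mathcal Q_\mu$, and your bullet list does not deliver it. Since $\mu^n\to\mu$ only \textweakstar, uniform convergence of $k_{\mu^n}$ does not make $K_{\mu^n}\to K_\mu$ in operator norm on $\Cont(X)$, because the measure one integrates against moves. What works is the compactness argument you allude to at the end: weakly convergent sequences in $\Hil_c$ become uniformly convergent in $\Cont(X)$ (Lemma~\ref{lemma:appendixBRGSD}). That is exactly the content of Proposition~\ref{prop:metric_tensor_beta_smooth}, and citing it completes the proof.
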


\noindent We now prove the following related (and easier) result.

\begin{proposition} \label{prop:Energy_cont}
	The energy \(E\) is continuous in the norm topology of \(\Hil^1((0,1);\Hil_c)\).
\end{proposition}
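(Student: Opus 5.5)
Let $(\beta^n)_n$ converge to $\beta$ in norm in $\Hil^1((0,1);\Hil_c)$, with $\beta^n_t = B(\mu^n_t)$ and $\beta_t = B(\mu_t)$. We want $E((\mu^n_t)_t) \to E((\mu_t)_t)$. The plan is to write
\begin{equation*}
	\babs{E((\mu^n_t)_t) - E((\mu_t)_t)} \leq \int_0^1 \babs{\tg_{\mu^n_t}(\betadot^n_t,\betadot^n_t) - \tg_{\mu^n_t}(\betadot_t,\betadot_t)} \diff t + \int_0^1 \babs{\tg_{\mu^n_t}(\betadot_t,\betadot_t) - \tg_{\mu_t}(\betadot_t,\betadot_t)} \diff t.
\end{equation*}
One has to be careful that $\betadot_t$ need not be orthogonal to $\beta^n_t$, so $\tg_{\mu^n_t}(\betadot_t,\betadot_t)$ must be read through the explicit formula~\eqref{eq:def:tgmu}. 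To make it well defined, replace the inner-product term by its version on the quotient: compose with the projection $\phi \mapsto \phi - \langle \phi,\ones_X\rangle_{L^2(\mu)}$, or equivalently extend $\tg_{\mu}$ to all of $\Hil_c$ as a bounded symmetric bilinear form $b_\mu$. This extension is bounded uniformly in $\mu$ by a constant $C'$ depending only on $\norm{c}_\infty$ and $\varepsilon$, using~\eqref{eq:sup_bound_potentials} (which gives $\beta_\mu$ bounded above and below), the bound $\norm{\cdot}_\infty \leq C\norm{\cdot}_{\Hil_c}$, and the uniform contraction~\eqref{eq:contraction_Kmunu}.

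For the first term, use $|b(u,u)-b(v,v)| = |b(u-v,u+v)| \leq C'\norm{u-v}\norm{u+v}$ pointwise in $t$. Integrating and applying Cauchy--Schwarz bounds it by
\begin{equation*}
	C' \norm{\betadot^n-\betadot}_{L^2}\,\bigl(\norm{\betadot^n}_{L^2}+\norm{\betadot}_{L^2}\bigr) \to 0.
\end{equation*}

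For the second term, the integrand tends to $0$ for each $t$ and is dominated by $2C'\norm{\betadot_t}^2_{\Hil_c}$, so dominated convergence concludes. The pointwise convergence is the main obstacle: we need $\mu \mapsto b_\mu(v,v)$ to be continuous along $\mu^n_t \to \mu_t$, for fixed $v \in \Hil_c$.

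First, the $\Hil^1$ convergence implies $\beta^n \to \beta$ in $\Cont([0,1];\Hil_c)$ by Sobolev embedding. Theorem~\ref{theorem:alpha_beta_homeo} then gives $\mu^n_t \to \mu_t$ weak-$*$. Next:
\begin{itemize}
	\item $\beta_{\mu^n_t} \to \beta_{\mu_t}$ in $\Hil_c$, hence uniformly by Lemma~\ref{lemma:appendixBRGSD}. This yields $f_{\mu^n_t,\mu^n_t} \to f_{\mu_t,\mu_t}$ uniformly, and therefore $k_{\mu^n_t} \to k_{\mu_t}$ uniformly.
	\item Consequently $v/\beta_{\mu^n_t} \to v/\beta_{\mu_t}$ uniformly.
	\item $K_{\mu^n_t} \to K_{\mu_t}$ in operator norm on $\Cont(X)$. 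This follows from the uniform kernel convergence together with weak-$*$ convergence of $\mu^n_t$ and the equicontinuity of $\{k_{\mu^n_t}(\cdot,y)\}$ (compactness of $X$, Arzelà--Ascoli).
	\item The inverses $(\id-K_{\mu^n_t})^{-1}$ on $\Cont/\R$ are uniformly bounded by $(1-q)^{-1}$. A resolvent identity then shows that they converge in operator norm.
\end{itemize}
Finally, the pairing $\langle \phi_n, \psi_n\rangle_{L^2(\mu^n_t)}$ converges when $\phi_n \to \phi$ and $\psi_n \to \psi$ uniformly and $\mu^n_t \to \mu_t$ weak-$*$. Combining these gives the pointwise convergence, and hence the continuity of $E$.
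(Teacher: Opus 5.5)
Your decomposition is a valid alternative to the paper's argument, but one intermediate claim is false as written. In the third bullet you assert that $K_{\mu^n_t}\to K_{\mu_t}$ in operator norm on $\Cont(X)$. Weak-$*$ convergence of the base measures does not give this. Take $c(x,y)=\abs{x-y}^2$ on $X=[0,1]$, $\mu=\frac12(\delta_a+\delta_b)$ and $\mu^n=\frac12(\delta_{a_n}+\delta_b)$ with $a_n\to a$, $a_n\neq a$. Choose $\phi_n\in\Cont(X)$ with $\norm{\phi_n}_\infty=1$, $\phi_n(a_n)=1$, $\phi_n(a)=-1$, $\phi_n(b)=0$. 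Then $(K_{\mu^n}-K_\mu)[\phi_n]=\frac12 k_{\mu^n}(a_n,\cdot)+\frac12 k_\mu(a,\cdot)$, which converges uniformly to the non-constant function $k_\mu(a,\cdot)$. So the operator norm does not tend to zero, even on $\Cont(X)/\R$. What your Arzel\`a--Ascoli argument actually proves is strong convergence: $K_{\mu^n}[\phi_n]\to K_\mu[\phi]$ uniformly whenever $\phi_n\to\phi$ uniformly. Operator-norm continuity does hold for $\tilde G_\mu$ acting on $\Hil_c$ (Proposition~\ref{prop:metric_tensor_beta_smooth}), but only because the embedding $\Hil_c\hookrightarrow\Cont(X)$ is compact.

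The repair is routine, because you only need strong convergence. Let $w_n=Q_{\mu^n_t}[v/\beta_{\mu^n_t}]$ and $w=Q_{\mu_t}[v/\beta_{\mu_t}]$, so that $w_n\to w$ uniformly, and set $\psi=(\id-K_{\mu_t})^{-1}[w]$. Then in $\Cont(X)/\R$
\[
(\id-K_{\mu^n_t})^{-1}[w_n]-\psi=(\id-K_{\mu^n_t})^{-1}\bigl[(w_n-w)+(K_{\mu^n_t}-K_{\mu_t})[\psi]\bigr].
\]
Combined with the uniform bound $(1-q)^{-1}$ on the inverses, this gives convergence. Strong convergence of $K_{\mu^n_t}$ is used only at the single fixed element $\psi$. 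With this change, the pointwise continuity of $\mu\mapsto b_\mu(v,v)$ holds and the rest of your proof goes through.

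Compared with the paper's route:
\begin{itemize}
\item The paper extracts a subsequence with $\betadot^n_t\to\betadot_t$ in $\Hil_c$ for a.e.\ $t$. It then uses the joint continuity of $(\mu,\betadot)\mapsto\tg_\mu(\betadot,\betadot)$ on orthogonal pairs from \cite[Prop.~4.14]{RGSD}. It concludes by uniform integrability of $\norm{\betadot^n_t}_{\Hil_c}^2$ and a subsequence argument.
\item You split into a fixed-base-point term and a fixed-vector term. The first is controlled by a uniform bound on an extension $b_\mu$ of $\tg_\mu$ to all of $\Hil_c$, the same kind of extension as the paper's later Definition~\ref{def:tg_extended}. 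The second is handled by plain dominated convergence.
\item Your split avoids the a.e.\ extraction and needs only separate continuity in $\mu$. The price is that you must construct $b_\mu$ and prove its uniform bound and continuity yourself, rather than citing them.
\end{itemize}
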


\begin{proof}
	Take a sequence $(\mu^n_t)_t$ such that the associated curve $(\beta^n_t)_t$ converges strongly in $\Hil^1((0,1);\Hil_c)$ to a limit curve $(\beta_t)_t = (B(\mu_t))_t$. In particular $(\betadot^n_t)_t$ converges strongly in $L^2((0,1);\Hil_c)$ to $(\betadot_t)_t$. So up to subsequence, convergence of $\betadot^n_t$ to $\betadot_t$ holds in $\Hil_c$ for a.e. $t$. From \cite[Proposition 4.14]{RGSD}, it implies that, for a.e. $t$, we have $\tg_{\mu^n_t}(\betadot^n_t, \betadot^n_t) \to \tg_{\mu_t}(\betadot_t, \betadot_t)$. In addition $\tg_{\mu^n_t}(\betadot^n_t, \betadot^n_t)$ is upper bounded up to a multiplicative constant by $\| \betadot^n_t \|_{\Hil_c}^2$, and the latter is uniformly integrable from the convergence of $(\betadot^n_t)_t$ in $L^2((0,1);\Hil_c)$. Thus we can switch limit and integration and obtain 
	\begin{equation*}
		E((\mu^n_t)_t) = \int_0^1 \tg_{\mu^n_t}(\betadot^n_t, \betadot^n_t) \diff t \to \int_0^1 \tg_{\mu_t}(\betadot_t, \betadot_t) \diff t = E((\mu_t)_t). 
	\end{equation*}
	As the limit does not depend on the subsequence extracted from $(\betadot^n_t)_t$, a standard argument shows that this holds along the whole sequence $(\mu^n_t)_t$. 
\end{proof}

\subsection{Projection to a finite base space}\label{sec:projection}

In this subsection we study in detail the operator $R_M$ as introduced in Definition~\ref{def:cone_projection}, which not only serves as the correspondence between \(\prm(X)\) and \(\prm(X_M)\) to prove Gromov--Hausdorff convergence, but also plays a crucial role later for proving the limsup condition for \(\Gamma\)-convergence. To simplify notations, recalling that $P_M$ is the projection on the cone $\cone^M$, we also introduce $\tilde{R}_M : \B \to  \cone^M \cap \calB$ by  
\begin{equation*}
	\tilde{R}_M(\beta) = \frac{P_M(\beta)}{\| P_M(\beta) \|_{\Hil_c}},
\end{equation*}
such that $R_M = B\inv \circ \tilde{R}_M \circ B$. The map $\tilde{R}_M$ actually corresponds to the metric projection onto $\calB$ \cite[Thm.~8.1]{bauschke2018projecting}.  
With $d_c$ as in~\eqref{eq:def_dc}, we define
\begin{equation} \label{eq:dM}
	\delta_M = d_c(X,X_M) = \max_{x \in X} \min_{y \in X_M} d_c(x,y).
\end{equation}
It is \(\lim_{M \to \infty} \delta_M = 0\) by density of the sequence \((x_m)_{m}\).

\begin{lemma} \label{lemma:PM_properties}
	The projection \(P_M\)  satisfies
	\begin{enumerate}
		\item \label{item:PM_contraction} \(\norm{P_M(\beta) - P_M(\beta')}_{\Hil_c} \leq \norm{\beta - \beta'}_{\Hil_c}\) for all \(\beta, \beta' \in \Hil_c\),
		\item \label{item:PM_orthogonal} \(\pair{P_M(\beta)}{\beta - P_M(\beta)}_{\Hil_c} = 0\) for all \(\beta \in \Hil_c\),
		\item \label{item:PM_orthogonal_decomp} \(\norm{\beta}_{\Hil_c}^2 = \norm{P_M(\beta)}_{\Hil_c}^2 + \norm{\beta - P_M(\beta)}_{\Hil_c}^2\) for all \(\beta \in \Hil_c\),
		\item \label{item:PM_bound_1} \(\norm{P_M(\beta)}_{\Hil_c} > 0\) uniformly for \(\beta \in \calB\) and \(M \in \N\). 
		\item \label{item:PM_bound_dM} \(\norm{\beta - P_M(\beta)}_{\Hil_c} \leq C \cdot \delta_M\) for all \(\beta \in \calB\) with $\delta_M$ as in \eqref{eq:dM}, and some \(C > 0\) 
		independent of \(M\). In particular, for the same constant $C$,  \(\norm{\beta - \tilde{R}_M(\beta)}_{\Hil_c} \leq 2 C \cdot \delta_M\) for all \(\beta \in \calB\).
	\end{enumerate}
\end{lemma}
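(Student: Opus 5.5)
Items 1--3 are the standard properties of metric projections onto closed convex cones in a Hilbert space, and I would prove them first. Since $\cone^M$ is a closed convex set (finite-dimensional cone), $P_M$ is well defined and characterized by the variational inequality $\pair{\beta - P_M(\beta)}{\gamma - P_M(\beta)}_{\Hil_c} \le 0$ for all $\gamma \in \cone^M$. Item 1 follows by adding the variational inequalities at $\beta$ and $\beta'$ (tested with $\gamma = P_M(\beta')$ and $\gamma = P_M(\beta)$ respectively) and applying Cauchy--Schwarz. For item 2, the cone property lets us test with $\gamma = 0$ and $\gamma = 2P_M(\beta)$, giving both inequalities $\pm\pair{\beta - P_M(\beta)}{P_M(\beta)}_{\Hil_c} \le 0$, hence equality to zero. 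Item 3 is then Pythagoras applied to $\beta = P_M(\beta) + (\beta - P_M(\beta))$, using item 2.

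Item 5 comes next, since it implies item 4. Fix $\beta = \beta_\mu = H_c[\alpha_\mu] \in \calB$ and construct an explicit competitor in $\cone^M$. Take a measurable nearest-point map $T_M : X \to X_M$ with $d_c(x, T_M(x)) \le \delta_M$ (a Borel partition of $X$ into cells around the $x_m$ suffices), and set $\gamma = H_c[(T_M)_\# \alpha_\mu] \in \cone^M$. Then, by Bochner/Jensen and the definition~\eqref{eq:def_dc},
\begin{equation*}
\norm{\beta - \gamma}_{\Hil_c} = \bnorm{\int_X \bigl(H_c[\delta_x] - H_c[\delta_{T_M(x)}]\bigr) \diff \alpha_\mu(x)}_{\Hil_c} \le \alpha_\mu(X)\, \delta_M .
\end{equation*}
The mass $\alpha_\mu(X) = \int \exp(\fmumu/\varepsilon)\diff\mu \le \exp(3\norm{c}_\infty/(2\varepsilon))$ by~\eqref{eq:sup_bound_potentials}, which gives a uniform constant $C$. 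Since $P_M(\beta)$ is the closest point of $\cone^M$, we get $\norm{\beta - P_M(\beta)}_{\Hil_c} \le C\delta_M$.

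For item 4, item 3 combined with $\norm{\beta}=1$ gives $\norm{P_M(\beta)}^2 \ge 1 - C^2\delta_M^2$, which handles all $M$ beyond the point where $C\delta_M \le 1/2$. For the finitely many smaller $M$ I would use a direct bound, valid for every $M \ge 1$: $\norm{P_M(\beta)} \ge \pair{\beta}{k_c(x_1,\cdot)}_{\Hil_c}/\norm{k_c(x_1,\cdot)}_{\Hil_c}$. This holds because the projection onto a cone has norm at least the projection onto any ray it contains, by the same variational characterization. The right-hand side equals $\beta(x_1)/\sqrt{k_c(x_1,x_1)} \ge e^{-3\norm{c}_\infty/(2\varepsilon)}$, using $k_c \le 1$ (as $c \ge 0$) and $\beta = \exp(-\fmumu/\varepsilon)$. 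This second bound alone already gives item 4 uniformly.

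Finally, the statement about $\tilde R_M$ follows from the triangle inequality. Writing $p = P_M(\beta)$, we have
\begin{equation*}
\norm{\beta - p/\norm{p}} \le \norm{\beta - p} + \bigl|\norm{p} - 1\bigr|, \qquad \bigl|1 - \norm{p}\bigr| = \bigl|\norm{\beta} - \norm{p}\bigr| \le \norm{\beta - p},
\end{equation*}
so $\norm{\beta - \tilde R_M(\beta)} \le 2C\delta_M$. The only delicate point I foresee is measurability and the Bochner-integral estimate in item 5. This can be avoided by approximating $\alpha_\mu$ weakly by discrete measures and using the \textweakstar-to-norm continuity of $H_c$ (Lemma~\ref{lemma:appendixBRGSD}).
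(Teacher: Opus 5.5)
Your proposal is correct and follows essentially the paper's route. Item 5 uses the same nearest-neighbour push-forward competitor \((T_M)_\#\alpha_\mu\) with the mass bound \(\alpha_\mu(X)\le \exp(\tfrac32\norm{c}_\infty/\varepsilon)\), and item 4 uses the same comparison with the projection onto the ray \(\R_+ k_c(x_1,\cdot)\); your division by \(\sqrt{k_c(x_1,x_1)}\) is slightly more careful than the paper's equality \(\norm{P_1(\beta)}_{\Hil_c}=\beta(x_1)\). The remaining differences are cosmetic: you derive items 1--3 from the variational inequality instead of citing them, and you bound \(\tilde R_M\) by the triangle inequality rather than the paper's identity \(\norm{\tilde R_M(\beta)-\beta}_{\Hil_c}^2=2(1-\norm{P_M(\beta)}_{\Hil_c})\), which gives the sharper constant \(\sqrt{2}\,C\).
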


\begin{proof}
	\emph{Points 1 to 3.} They are valid for any orthogonal projection onto a convex cone in Hilbert space, see e.g. \cite[Lem.~2]{Ingram1991}.
	
	\emph{Point 4.} 
	With Point~\ref{item:PM_orthogonal_decomp} it is \(\norm{P_M(\beta)}_{\Hil_c} \geq \norm{P_1(\beta)}_{\Hil_c} = \pair{\beta}{k_c(x_1,\vdot)}_{\Hil_c} = \beta(x_1) \geq \exp(-\frac{3}{2} \norm{c}_\infty / \varepsilon)\), where the last inequality is~\eqref{eq:sup_bound_potentials}.
	
	\emph{Point 5.} For \(x \in X\) let \(\proj_{X_M}(x) \in \argmin_{y \in X_M} d_c(x,y)\) be (a measurable selection of) the nearest neighbour projection. Then
	\begin{equation*}
		\norm{k_c(x,\vdot) - k_c(\proj_{X_M}(x),\vdot)}_{\Hil_c} = d_c(x,\proj_{X_M}(x)) \leq \delta_M.
	\end{equation*}
	For any \(\beta=\beta_\mu = H_c[\alpha_\mu] \in \calB \subseteq \Hil_c\), 
	the candidate \(\alpha_\mu^M = (\proj_{X_M})_\# \alpha_\mu \in \calM_+(X_M)\) yields the bound 
	\begin{align*}
		\norm{\beta_\mu - P_M(\beta_\mu)}_{\Hil_c} & \leq
		\norm{\beta_\mu - H_c[\alpha_\mu^M]}_{\Hil_c} \\
		& = \bnorm{\int_X \left( k_c(x,\vdot) - k_c(\proj_{X_M}(x),\vdot) \right) \diff \alpha_\mu(x)}_{\Hil_c} 
		\leq \delta_M \cdot \norm{\alpha_\mu}_\TV.
	\end{align*}
	This gives the claim, as \(\norm{\alpha_\mu}_\TV \leq \norm{\exp(\fmumu/\varepsilon)}_\infty \leq \exp(\frac{3}{2} \norm{c}_\infty / \varepsilon)\) by~\eqref{eq:sup_bound_potentials}. The claim for $\tilde{R}_M$ follows from
	\begin{align*}
		\| \tilde{R}_M(\beta) - \beta \|^2_{\Hil_c} &= 2 \left( 1 - \frac{\langle \beta, P_M(\beta) \rangle_{\Hil_c}}{\| P_M(\beta) \|_{\Hil_c} } \right) \\
		&=  2 (1 - \| P_M(\beta) \|_{\Hil_c}) \leq 2 (1 - \norm{P_M(\beta)}_{\Hil_c}^2) = 2 \| \beta - P_M(\beta) \|_{\Hil_c}^2
	\end{align*}
	with the second equality coming from Point \ref{item:PM_orthogonal} and the last one from Point~\ref{item:PM_orthogonal_decomp}.   
\end{proof}

\begin{proposition} \label{prop:proj_path_conv_H1}
	If \((\beta_t)_t \in \Hil^1((0,1);\calB)\) is given, then $(\tilde{R}_M(\beta_t))_t$ converges in norm to $(\beta_t)_t$ in \(\Hil^1((0,1);\Hil_c)\) as $M \to \infty$.  
\end{proposition}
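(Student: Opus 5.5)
The plan is to treat the two components of the \(\Hil^1\)-norm separately: first the unnormalized projections \(P_M(\beta_t)\), then the normalization. The \(L^2\) part is immediate from Lemma~\ref{lemma:PM_properties}, Point~\ref{item:PM_bound_dM}, which gives \(\sup_t \norm{\tilde R_M(\beta_t)-\beta_t}_{\Hil_c}\le 2C\delta_M\to 0\). Hence \(\tilde R_M(\beta_t)\to\beta_t\) uniformly in \(\Cont([0,1];\Hil_c)\), and in particular in \(L^2((0,1);\Hil_c)\). Likewise \(\sup_t\norm{P_M(\beta_t)-\beta_t}_{\Hil_c}\le C\delta_M\). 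The remaining work is convergence of the time derivatives in \(L^2((0,1);\Hil_c)\).

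\emph{Step 1: the curve \(u^M_t \assign P_M(\beta_t)\).} By Point~\ref{item:PM_contraction}, \(P_M\) is \(1\)-Lipschitz. Since \((\beta_t)_t\in\Hil^1\) is absolutely continuous, \(u^M\) is absolutely continuous as well. A Hilbert space has the Radon--Nikodym property, so \(u^M\) is differentiable a.e. and belongs to \(\Hil^1((0,1);\Hil_c)\), with \(\norm{\dot u^M_t}_{\Hil_c}\le\norm{\betadot_t}_{\Hil_c}\) for a.e. \(t\). Thus \((\dot u^M)_M\) is bounded in \(L^2\). Combined with the uniform convergence \(u^M\to\beta\), passing to the limit in the distributional identity \(\int u^M\phi' = -\int \dot u^M\phi\) shows that every weak limit point of \(\dot u^M\) equals \(\betadot\). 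Hence \(\dot u^M\rightharpoonup\betadot\) weakly in \(L^2((0,1);\Hil_c)\). Moreover
\[
\limsup_M\norm{\dot u^M}_{L^2}\le\norm{\betadot}_{L^2},
\]
and in a Hilbert space weak convergence together with this norm bound upgrades to strong convergence. So \(\dot u^M\to\betadot\) strongly in \(L^2((0,1);\Hil_c)\). This norm-inequality trick is the step I expect to carry the argument; the rest is bookkeeping.

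\emph{Step 2: normalization.} Set \(n^M_t \assign \norm{u^M_t}_{\Hil_c}\). By Point~\ref{item:PM_bound_1}, \(n^M_t\ge c_0>0\) uniformly in \(t\) and \(M\). By Point~\ref{item:PM_orthogonal_decomp} and \(\norm{\beta_t}_{\Hil_c}=1\), we also have \(n^M_t\le 1\) and \(n^M_t\to 1\) uniformly. The map \(u\mapsto u/\norm{u}_{\Hil_c}\) is smooth on \(\{\norm{u}\ge c_0\}\), so the chain rule gives \(\tilde R_M(\beta_t)\in\Hil^1\) with
\[
\frac{\diff}{\diff t}\tilde R_M(\beta_t) = \frac{\dot u^M_t}{n^M_t} - \frac{\pair{u^M_t}{\dot u^M_t}_{\Hil_c}}{(n^M_t)^3}\,u^M_t .
\]
As \(M\to\infty\), \(u^M\to\beta\) and \(n^M\to 1\) uniformly, and \(\dot u^M\to\betadot\) in \(L^2\). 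Therefore the right-hand side converges in \(L^2((0,1);\Hil_c)\) to \(\betadot_t-\pair{\beta_t}{\betadot_t}_{\Hil_c}\beta_t\). Since \(\norm{\beta_t}_{\Hil_c}=1\) for all \(t\), differentiating gives \(\pair{\beta_t}{\betadot_t}_{\Hil_c}=0\) a.e. (as in Theorem~\ref{theorem:metric_tensor_in_betadot}), so this limit is exactly \(\betadot_t\). Together with the uniform convergence of the curves, this yields the claimed norm convergence in \(\Hil^1((0,1);\Hil_c)\).
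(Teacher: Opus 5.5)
Your proof is correct. It uses the same two ingredients as the paper's proof, but arranges them differently:
\begin{itemize}
\item the Hilbert-space upgrade from weak convergence plus a $\limsup$ bound on norms to strong convergence;
\item the chain rule for $u\mapsto u/\norm{u}_{\Hil_c}$, together with $\pair{\beta_t}{\betadot_t}_{\Hil_c}=0$.
\end{itemize}

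The paper applies the weak-to-strong upgrade directly to the normalized curve $\tilde R_M(\beta_t)$. To obtain the needed norm bound, it estimates the two terms of the chain-rule formula separately. This forces it to control $\pair{\tfrac{\diff}{\diff t}P_M(\beta_t)}{P_M(\beta_t)}_{\Hil_c}$, which it does by differentiating the orthogonality relation of Point~\ref{item:PM_orthogonal} in time and using the tangency of $\betadot_t$.

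You instead apply the upgrade to the unnormalized curve $P_M(\beta_t)$. There the norm bound comes for free from the contraction property. You then push the resulting strong $L^2$ convergence of $\tfrac{\diff}{\diff t}P_M(\beta_t)$ through the explicit normalization formula, using the uniform convergence of $P_M(\beta_t)$ and of $n^M_t$. Tangency is needed only to identify the limit $\betadot_t-\pair{\beta_t}{\betadot_t}_{\Hil_c}\beta_t=\betadot_t$. This avoids the differentiated orthogonality estimate entirely, and every step you wrote checks out.

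What the paper's route buys is the explicit, non-asymptotic pointwise bound \eqref{eq:bound_betadot_nonasymp}. This bound is reused in Lemma~\ref{lemma:comp_dsmrm_ds} to get $\dSM{M}(R_M(\mu),R_M(\nu))\lesssim\dS(\mu,\nu)$ uniformly in $M$, which provides the equicontinuity in the proof of Theorem~\ref{theorem:unif_conv_RM}. Your argument, as written, is purely qualitative.

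You can, however, extract such a bound from your own formula at no cost. Write $\hat u^M_t = u^M_t/n^M_t$. Then $\tfrac{\diff}{\diff t}\tilde R_M(\beta_t)$ equals $1/n^M_t$ times the orthogonal projection of $\dot u^M_t$ onto the orthogonal complement of $\hat u^M_t$. Combined with $\norm{\dot u^M_t}_{\Hil_c}\le\norm{\betadot_t}_{\Hil_c}$ and $n^M_t\ge 1-C\delta_M$, this gives $\norm{\tfrac{\diff}{\diff t}\tilde R_M(\beta_t)}_{\Hil_c}\le\norm{\betadot_t}_{\Hil_c}/n^M_t$. That is even slightly sharper than \eqref{eq:bound_betadot_nonasymp}, again without the differentiated orthogonality step.
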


\begin{proof}
	We write $\beta^M_t = \tilde{R}_M(\beta_t)$.
	With Point~\ref{item:PM_bound_dM} of Lemma~\ref{lemma:PM_properties} we obtain convergence of \(\beta^M_t \) to \( \beta_t\) for all $t$, hence norm convergence in \(L^2((0,1);\Hil_c)\) as all curves take values in the unit sphere. 
	
	Next we prove that \((\beta^M_t)_t \in \Hil^1((0,1);\Hil_c)\):
	As $P_M$ is a contraction, we obtain that \((P_M(\beta_t))_t \in \Hil^1((0,1);\Hil_c)\) for any \(M \in \N\). We can even say more: 
	by Point~\ref{item:PM_bound_1} of Lemma~\ref{lemma:PM_properties} we can bound \(\norm{P_M(\beta_t)}_{\Hil_c} \geq C > 0\) (uniformly in \(t\)). Using the Lipschitz function \(F \colon \Hil_c \to \Hil_c,~ F(\phi) = \min \{ 1/C , 1/\norm{\phi}_{\Hil_c} \} \cdot \phi\) in~\cite[Cor.~3.14]{Kreuter2015} yields that \((\beta^M_t)_t \in \Hil^1((0,1);\Hil_c)\). In particular, we also have \((\norm{P_M(\beta_t)}_{\Hil_c})_t \in \Hil^1((0,1);\R)\). 
	
	We first claim that we have 
	\begin{equation}
		\label{eq:to_prove_convergence_curve}
		\limsup_{M \to \infty} \int_0^1 \norm{ \betadot^M_t }_{\Hil_c}^2 \diff t \leq \int_0^1 \norm{ \betadot_t }_{\Hil_c}^2 \diff t,
	\end{equation}
	that will be enough to yield the conclusion. We compute with the chain rule
	\begin{equation} 
		\label{eq:betaMdot}
		\betadot^M_t = \frac{ \frac{\diff}{\diff t} P_M(\beta_t)}{\norm{P_M(\beta_t)}_{\Hil_c}} -  \langle \tfrac{\diff}{\diff t} P_M(\beta_t), P_M(\beta_t)  \rangle_{\Hil_c} \frac{P_M(\beta_t)}{\norm{P_M(\beta_t)}^3}.
	\end{equation}
	Since \(P_M\) is a contraction, we must have \(\norm{\frac{\diff}{\diff t} P_M(\beta_t)}_{\Hil_c} \leq \norm{\betadot_t}_{\Hil_c}\). On the other hand, we have $\langle \dd{}{t} P_M(\beta_t), P_M(\beta_t) - \beta_t \rangle_{\Hil_c} + \langle  P_M(\beta_t), \dd{}{t} P_M(\beta_t) - \betadot_t \rangle_{\Hil_c} = 0$, obtained by  differentiating Point~\ref{item:PM_orthogonal} of Lemma~\ref{lemma:PM_properties} in time. As moreover $\langle \betadot_t, \beta_t \rangle_{\Hil_c} = 0$ whenever $\betadot_t$ exists, using again that $P_M$ is a contraction,
	\begin{align*}
		\left| \langle \tfrac{\diff}{\diff t} P_M(\beta_t), P_M(\beta_t)  \rangle_{\Hil_c} \right|  & = \left| \langle \tfrac{\diff}{\diff t} P_M(\beta_t) - \betadot_t, \beta_t - P_M(\beta_t) \rangle_{\Hil_c} \right| \\
        & \leq \left( \| \tfrac{\diff}{\diff t} P_M(\beta_t)  \|_{\Hil_c} + \| \betadot_t \|_{\Hil_c} \right) \|  \beta_t - P_M(\beta_t) \|_{\Hil_c} \leq 2 C \delta_M \| \betadot_t \|_{\Hil_c}    
	\end{align*}
	with \(C\) from Point~\ref{item:PM_bound_dM} of Lemma~\ref{lemma:PM_properties}. The same point shows that \(1 \geq \norm{P_M(\beta_t)}_{\Hil_c} \geq 1 - C \delta_M \to 1\) as \(M \to \infty\), uniformly in \(t\). From~\eqref{eq:betaMdot} we deduce 
	\begin{equation}
		\label{eq:bound_betadot_nonasymp}
		\| \betadot^M_t \|_{\Hil_c} \leq \left( \frac{1}{1 - C \delta_M} + \frac{2 C \delta_M}{(1 - C \delta_M)^2} \right) \| \betadot_t \|_{\Hil_c}. 
	\end{equation}
	The limit of the multiplicative prefactor is $1$ as $\delta_M \to 0$. As $(\betadot_t)_t$ is in $L^2((0,1);\Hil_c)$, we can use Fatou's lemma for limsup and obtain~\eqref{eq:to_prove_convergence_curve}.

	We now conclude using~\eqref{eq:to_prove_convergence_curve}. We have that \((\beta^M_t)_t \to (\beta_t)_t\) strongly in $L^2((0,1);\Hil_c)$. Moreover, from~\eqref{eq:to_prove_convergence_curve}, we see that \((\beta^M_t)_t \) is bounded in $\Hil^1((0,1);\Hil_c)$. By the Banach-Alaoglu theorem and the $L^2$ convergence, we deduce that the convergence of \((\beta^M_t)_t \) to \( (\beta_t)_t\) holds weakly in $\Hil^1((0,1);\Hil_c)$. Combined with the strong $L^2$ convergence and~\eqref{eq:to_prove_convergence_curve}, we have $\limsup_{M \to \infty} \| (\beta^M_t)_t \|_{\Hil^1((0,1);\Hil_c)} \leq \| (\beta_t)_t \|_{\Hil^1((0,1);\Hil_c)}$,
	which is enough to upgrade weak convergence to strong convergence \cite[Prop.~3.32]{Brezis2011}. 
\end{proof}

\begin{corollary}
	\label{crl:cv_energy_M}
	If $(\mu_t)_t \in \Pall$, then $E((R_M(\mu_t))_t) \to E((\mu_t)_t)$ as $M \to \infty$.    
\end{corollary}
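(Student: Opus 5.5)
The plan is to combine Proposition~\ref{prop:proj_path_conv_H1} with Proposition~\ref{prop:Energy_cont}. Let $(\mu_t)_t \in \Pall$ and write $(\beta_t)_t = (B(\mu_t))_t$. By Definition~\ref{def:distance} this curve lies in $\Hil^1((0,1);\Hil_c)$, and by Theorem~\ref{theorem:alpha_beta_homeo} it takes values in $\calB$. So $(\beta_t)_t \in \Hil^1((0,1);\calB)$ and Proposition~\ref{prop:proj_path_conv_H1} applies.

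The first step is to identify the curve associated with $(R_M(\mu_t))_t$. Since $R_M = B\inv \circ \tilde{R}_M \circ B$ and $\tilde{R}_M$ maps $\calB$ into $\cone^M \cap \calB \subseteq \calB = B(\prm(X))$, we get $B(R_M(\mu_t)) = \tilde{R}_M(\beta_t)$ for every $t$. The proof of Proposition~\ref{prop:proj_path_conv_H1} shows that $(\tilde{R}_M(\beta_t))_t \in \Hil^1((0,1);\Hil_c)$. Hence $(R_M(\mu_t))_t$ is an admissible path, and its energy is well defined.

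The second step invokes Proposition~\ref{prop:proj_path_conv_H1} again: $(\tilde{R}_M(\beta_t))_t \to (\beta_t)_t$ in the norm of $\Hil^1((0,1);\Hil_c)$ as $M \to \infty$. The limit curve is exactly the $\beta$-curve of $(\mu_t)_t$. Proposition~\ref{prop:Energy_cont} states that $E$ is norm-continuous on $\Pall$ identified with its $\beta$-curves. Therefore $E((R_M(\mu_t))_t) \to E((\mu_t)_t)$.

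There is no real obstacle. The only point needing care is that Proposition~\ref{prop:Energy_cont} is stated for sequences of admissible paths whose $\beta$-curves converge in norm to the $\beta$-curve of an admissible path. That is precisely the situation established in the two steps above, where admissibility of each $(R_M(\mu_t))_t$ comes from the first step.
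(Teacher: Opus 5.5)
Your proposal is correct and is essentially the paper's proof: the paper also obtains the corollary by combining the $\Hil^1((0,1);\Hil_c)$-norm convergence from Proposition~\ref{prop:proj_path_conv_H1} with the norm continuity of $E$ from Proposition~\ref{prop:Energy_cont}. Your additional checks, that $B(R_M(\mu_t)) = \tilde{R}_M(\beta_t)$ and that $(R_M(\mu_t))_t$ is admissible, just spell out steps the paper leaves implicit.
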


\begin{proof}
	It follows from the convergence in $\Hil^1((0,1);\Hil_c)$ of Proposition~\ref{prop:proj_path_conv_H1} and the continuity of $E$ in Proposition~\ref{prop:Energy_cont}.
\end{proof}

\subsection{Proof of Gromov--Hausdorff convergence}

In these proofs, to simplify notations we write $A \lesssim B$ if there exists a constant $C > 0$ depending only on $\| c \|_\infty$, $X$ and $\varepsilon$ (but not on $\mu$ nor $M$) such that $A \leq C B$. We start with comparisons up to multiplicative constants.

\begin{lemma}
	\label{lemma:comp_ds_dsm}
	There holds, for all $\mu, \nu \in \prm(X_M)$,
	\begin{equation*}
		\dS(\mu,\nu) \leq \dSM{M}(\mu,\nu) \lesssim \dS(\mu,\nu). 
	\end{equation*}
\end{lemma}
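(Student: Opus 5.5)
The first inequality $\dS(\mu,\nu) \leq \dSM{M}(\mu,\nu)$ follows directly from the description of $\dSM{M}$ given at the beginning of Section~\ref{sec:Gromov_Hausdorff}. There, $\dSM{M}$ is the infimum of the same energy $E$ over the smaller class of admissible paths $\setgiven{(\mu_t)_t \in \Pall(\mu,\nu)}{\forall t \colon \mu_t \in \prm(X_M)}$. Indeed, the metric tensor $\g_\mu$ on $\calM_0(X_M)$ only depends on $c$ restricted to $X_M$, so the energy of a path in $\prm(X_M)$ is the same whether computed in $X_M$ or in $X$. An infimum over a subset is larger, which gives the claim.

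For the second inequality I would avoid constructing near-geodesics in $X$ and projecting them. Instead I would use the two-sided comparison with the Hilbert norm from Theorem~\ref{theorem:dS_metric}, applied twice.

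First, apply Theorem~\ref{theorem:dS_metric} on the base space $X_M$. This gives
\begin{equation*}
\dSM{M}(\mu,\nu) \leq \frac{\pi}{2}\sqrt{\frac{\varepsilon}{2} C_M}\, \norm{\beta_\mu - \beta_\nu}_{\Hil_{c,M}},
\end{equation*}
where $\Hil_{c,M}$ is the RKHS of $k_c$ restricted to $X_M \times X_M$. The potentials $f_{\mu,\mu}$ on $\supp\mu \subseteq X_M$ do not depend on the ambient space. Moreover, $\beta_\mu = H_c[\alpha_\mu]$ with $\alpha_\mu$ supported on $X_M$, so $\beta_\mu$ lies in $\vecspan\setgiven{k_c(x,\cdot)}{x\in X_M}$. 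On this span the $\Hil_c$-norm equals the norm of the restricted RKHS, since both are computed as $\sum_{i,j} a_i a_j k_c(x_i,x_j)$. Hence $\norm{\beta_\mu-\beta_\nu}_{\Hil_{c,M}} = \norm{\beta_\mu-\beta_\nu}_{\Hil_c}$.

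Second, the lower bound of Theorem~\ref{theorem:dS_metric} on $X$ gives $\sqrt{\varepsilon/2}\,\norm{\beta_\mu-\beta_\nu}_{\Hil_c} \leq \dS(\mu,\nu)$. Chaining the two estimates yields $\dSM{M}(\mu,\nu) \leq \frac{\pi}{2}\sqrt{C_M}\,\dS(\mu,\nu)$.

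The main point to check is that $C_M$, the constant of \eqref{eq:tg_equiv_Hc} on $X_M$, is bounded independently of $M$. The paper states that this constant depends only on $\norm{c}_\infty$ and $\varepsilon$, and $\norm{c|_{X_M\times X_M}}_\infty \leq \norm{c}_\infty$. I would verify this by looking at the proof of \cite[Prop.~4.15]{RGSD}. The upper bound there comes from $\tg_\mu \leq \frac{\varepsilon}{2}(\norm{\betadot}^2 + 2\norm{(\id-K_\mu)^{-1}}\,\norm{\betadot/\beta_\mu}^2_{L^2(\mu)})$. It combines the contraction factor $q = 1-\exp(-4\norm{c}_\infty/\varepsilon)$ from \eqref{eq:contraction_Kmunu}, the bounds \eqref{eq:sup_bound_potentials} on $\beta_\mu$, and a comparison $\norm{\phi}_{L^2(\mu)}$ versus the RKHS norm via the reproducing property ($\abs{\phi(x)} \leq \norm{\phi}_{\Hil_c}\sqrt{k_c(x,x)} \leq \norm{\phi}_{\Hil_c}$). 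The operator norm step, passing from $\Cont/\R$ to an $L^2(\mu)$ bound, is the only delicate part. Each of these ingredients is monotone in $\norm{c}_\infty$ only, so $C_M \leq C$ with the same $C$ as for $X$, and the implicit constant in $\lesssim$ is $\frac{\pi}{2}\sqrt{C}$.
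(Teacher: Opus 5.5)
Your proposal is correct and follows the paper's proof. The first inequality comes from restricting the class of admissible paths. The second comes from the two-sided comparison of Theorem~\ref{theorem:dS_metric} with $\norm{\beta_\mu-\beta_\nu}_{\Hil_c}$, whose square $2-2\langle\alpha_\mu\otimes\alpha_\nu,k_c\rangle$ is the same on $X$ and on $X_M$, with constants independent of $M$. Your check that $C_M\le C$ is the point the paper leaves implicit. It can also be seen directly: for $\mu\in\prm(X_M)$ and $\betadot$ in the span of $\{k_c(x,\cdot)\}_{x\in X_M}$, the value of $\tg_\mu(\betadot,\betadot)$ is the same whether computed on $X$ or on $X_M$.
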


\begin{proof}
	The first inequality is a direct consequence of the definition of $\dSM{M}$. For the second one, we use that both $\dS(\mu,\nu)$ and $\dSM{M}(\mu,\nu)$ are comparable up to constants which do not depend on $M$ to $\| \beta_\mu - \beta_\nu \|_{\Hil_c}$ (see Theorem~\ref{theorem:dS_metric}), and the square of the latter is $2 - 2 \langle \alpha_\mu \otimes \alpha_\nu, \exp(-c) \rangle$, thus is identical whether we use $X$ or $X_M$ as base space.
\end{proof}

\begin{lemma}
	\label{lemma:comp_dsmrm_ds}
	There holds, for all $\mu,\nu \in \prm(X)$, 
	\begin{equation*}
		\dSM{M}(R_M(\mu), R_M(\nu)) \lesssim \dS(\mu,\nu).
	\end{equation*}
\end{lemma}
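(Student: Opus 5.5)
The plan is to reduce everything to Hilbert-space norms of the $\beta$ variables, where $\tilde R_M$ is well controlled, and to use the comparison of Theorem~\ref{theorem:dS_metric} in both directions. The key point is that the constants in Theorem~\ref{theorem:dS_metric} depend only on $\norm{c}_\infty$ and $\varepsilon$. Applied with base space $X_M$, the same theorem therefore holds for $\dSM{M}$ with constants independent of $M$, since $\norm{c|_{X_M}}_\infty \leq \norm{c}_\infty$. This is already the mechanism behind Lemma~\ref{lemma:comp_ds_dsm}.

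First, since $R_M(\mu), R_M(\nu) \in \prm(X_M)$, the upper bound of Theorem~\ref{theorem:dS_metric} on $X_M$ gives
\begin{equation*}
	\dSM{M}(R_M(\mu),R_M(\nu)) \lesssim \norm{\tilde R_M(\beta_\mu) - \tilde R_M(\beta_\nu)}_{\Hil_c}.
\end{equation*}
Here we use that $B(R_M(\mu)) = \tilde R_M(\beta_\mu)$, and that the $\Hil_c$-norm of the difference is the same whether it is computed over $X$ or over $X_M$ (as in the proof of Lemma~\ref{lemma:comp_ds_dsm}).

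Second, we show that $\tilde R_M$ is Lipschitz on $\calB$ with a constant independent of $M$. Write $\tilde R_M(\beta) = F(P_M(\beta))$ with $F(\phi) = \phi/\norm{\phi}_{\Hil_c}$. By Point~\ref{item:PM_bound_1} of Lemma~\ref{lemma:PM_properties}, $\norm{P_M(\beta)}_{\Hil_c} \geq c_0 > 0$ uniformly in $\beta \in \calB$ and $M$. On $\{\norm{\phi}_{\Hil_c} \geq c_0\}$ the map $F$ is Lipschitz with constant $2/c_0$, by the elementary estimate
\begin{equation*}
	\Bigl\| \tfrac{\phi}{\norm{\phi}} - \tfrac{\psi}{\norm{\psi}} \Bigr\| \leq \tfrac{2\norm{\phi-\psi}}{\max(\norm{\phi},\norm{\psi})}.
\end{equation*}
Point~\ref{item:PM_contraction} of Lemma~\ref{lemma:PM_properties} says $P_M$ is $1$-Lipschitz. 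Composing the two gives
\begin{equation*}
	\norm{\tilde R_M(\beta_\mu) - \tilde R_M(\beta_\nu)}_{\Hil_c} \leq \tfrac{2}{c_0}\norm{\beta_\mu - \beta_\nu}_{\Hil_c}.
\end{equation*}

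Finally, the lower bound of Theorem~\ref{theorem:dS_metric} on $X$ gives $\norm{\beta_\mu-\beta_\nu}_{\Hil_c} \leq \sqrt{2/\varepsilon}\,\dS(\mu,\nu)$, and chaining the three inequalities yields the claim.

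The only step needing care is the first one: checking that the constant $C$ of~\eqref{eq:tg_equiv_Hc}, which enters the upper bound of Theorem~\ref{theorem:dS_metric}, depends only on $\norm{c}_\infty$ and $\varepsilon$ and hence is uniform over the finite base spaces $X_M$. This is stated in the paper, so no real obstacle is expected.
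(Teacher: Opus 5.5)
Your proof is correct, but it is a static argument where the paper's is dynamic.

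The paper takes a $\dS$-geodesic $(\mu_t)_t$ from $\mu$ to $\nu$ and uses the projected curve $(R_M(\mu_t))_t$ as a competitor in $\prm(X_M)$. It then bounds the energy of that curve through the velocity estimate \eqref{eq:bound_betadot_nonasymp} from the proof of Proposition~\ref{prop:proj_path_conv_H1}, together with \eqref{eq:tg_equiv_Hc}. You instead pass through the chord $\norm{\beta_\mu-\beta_\nu}_{\Hil_c}$. You use the two-sided comparison of Theorem~\ref{theorem:dS_metric} on $X_M$ and on $X$, plus the fact that $\tilde R_M$ is Lipschitz on $\calB$ uniformly in $M$, by Points~\ref{item:PM_contraction} and~\ref{item:PM_bound_1} of Lemma~\ref{lemma:PM_properties}.

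What your route buys:
\begin{itemize}
\item It needs neither the existence of geodesics nor the Sobolev chain rule for the projected curve.
\item It is uniform in every $M$ at once. The prefactor in \eqref{eq:bound_betadot_nonasymp} is only finite once $C\delta_M<1$; for the finitely many remaining $M$ one would have to plug Point~\ref{item:PM_bound_1} into \eqref{eq:betaMdot}, which is essentially your Lipschitz bound.
\end{itemize}

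What you lose is the constant, through the factor $\frac{\pi}{2}$ of the arc/chord comparison and the factor $2/c_0$ from the normalization. This is irrelevant for a $\lesssim$ statement. It does explain the paper's choice, though. The same path-projection estimate, whose prefactor tends to $1$, is what gives the sharp limit $\dSM{M}(R_M(\mu),R_M(\nu))\to\dS(\mu,\nu)$ in Theorem~\ref{theorem:unif_conv_RM}. A chord-based argument cannot give that limit.

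Finally, the point you flag in your first step can be settled without tracking how the constant in \eqref{eq:tg_equiv_Hc} depends on $\norm{c}_\infty$. For measures in $\prm(X_M)$, the arc between their $\beta$-images has nonnegative coefficients. So it stays in $\cone^M$, hence corresponds to a path in $\prm(X_M)$. Its energy can therefore be bounded with the constant for $X$ itself.
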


\begin{proof}
	Consider $(\mu_t)_t$ the geodesic in $\prm(X)$ joining $\mu$ to $\nu$. Then $(R_M(\mu_t))_t$ is an admissible curve joining $R_M(\mu)$ to $R_M(\nu)$. Thus from the estimate~\eqref{eq:bound_betadot_nonasymp} and the comparison~\eqref{eq:tg_equiv_Hc}, 
	\begin{equation*}
		\dSM{M}(R_M(\mu),R_M(\nu))^2 \leq E((R_M(\mu_t))_t)  \lesssim \| (\betadot^M_t)_t  \|^2_{L^2} 
		\lesssim \| (\betadot_t)_t  \|^2_{L^2} \lesssim \dS^2(\mu,\nu). \qedhere
	\end{equation*}
\end{proof}

\noindent We are now ready to prove the main results of this section.

\begin{proof}[{Proof of Theorem~\ref{theorem:unif_conv_RM}}]
	For the first claim, we start from the estimate in Theorem~\ref{theorem:dS_metric} combined with Point~\ref{item:PM_bound_dM} in Lemma~\ref{lemma:PM_properties}:
	\begin{equation}
		\label{eq:precise_estimate_RM}
		\dS(\mu,R_M(\mu)) \lesssim \| B(\mu) - \tilde{R}_M(B(\mu)) \|_{\Hil_c} \lesssim \delta_M
	\end{equation}
	The conclusion $\dS(\mu,R_M(\mu)) \to 0$ uniformly follows as $\delta_M \to 0$.

	Then we move to the second estimate. We first prove pointwise convergence. Take $\mu, \nu \in \prm(X)$, and $(\mu_t)_t$ a constant-speed geodesic between $\mu$ and $\nu$. We want to estimate the distance between $R_M(\mu)$ and $R_M(\nu)$, and for that we use $(R_M(\mu_t))_t$ as an admissible curve joining $R_M(\mu)$ to $R_M(\nu)$. Using Corollary~\ref{crl:cv_energy_M}, as $M \to \infty$, 
	\begin{equation*}
		\dSM{M}(R_M(\mu),R_M(\nu))^2 \leq E((R_M(\mu_t))_t) \to E((\mu_t)_t) = \dS^2(\mu,\nu).
	\end{equation*}
	Combined with the trivial upper bound $\dSM{M}(R_M(\mu),R_M(\nu)) \geq \dS(R_M(\mu),R_M(\nu))$ (see Lemma~\ref{lemma:comp_ds_dsm}), and as $R_M(\mu) \to \mu$ and $R_M(\nu) \to \nu$ by the first point, we obtain the other inequality asymptotically. Thus $\dSM{M}(R_M(\mu),R_M(\nu)) \to \dS(\mu,\nu)$ for any $\mu, \nu$. 
	
	It remains to upgrade this pointwise convergence to uniform convergence. For this we remark with the triangle inequality and Lemma~\ref{lemma:comp_dsmrm_ds} that
	\begin{equation*}
		|\dSM{M}(R_M(\mu), R_M(\nu)) - \dSM{M}(R_M(\mu'), R_M(\nu'))| \lesssim \dS(\mu,\mu') + \dS(\nu,\nu').
	\end{equation*}
	With $h_M(\mu,\nu) \assign \dSM{M}(R_M(\mu), R_M(\nu))$, it proves that $(h_M)_M$ is equi-continuous over the compact set $\prm(X)^2$. As the functions $h_M$ take values in a bounded subset of $\R$ independent of $M$ (again with Lemma~\ref{lemma:comp_dsmrm_ds}), by the Arzelà–Ascoli theorem $(h_M)_M$ converges uniformly to a limit function $h$, up to extraction. As we already know that $(h_M)_M$ converges pointwise to $h(\mu,\nu) \colon (\mu,\nu) \mapsto \dS(\mu,\nu)$, we can conclude that the convergence of $(h_M)_M$ to $h$ holds uniformly, and without the need to extract a subsequence. 
\end{proof}

\begin{proof}[{Proof of Corollary~\ref{cor:GH_convergence_geodesics}}]
	As $E((\mu^M_t)_t)$ is uniformly bounded, combining Theorem~\ref{theorem:coercivity} and Theorem~\ref{theorem:Energy_lsc} (that is, following the proof of \cite[Thm.~5.4]{RGSD}) we obtain the existence of a limit curve $(\mu_t)_{t}$, with convergence in $\Cont([0,1]; (\prm(X),\dS))$, such that $E((\mu_t)_t) \leq \liminf_{M \to \infty} E((\mu^M_t)_t)$. In particular, 
	\begin{equation}
		\label{eq:aux_to_cite}
		\dS^2(\mu,\nu) \leq E((\mu_t)_t) \leq \liminf_{M \to \infty} \, E((\mu^M_t)_t) =\liminf_{M \to \infty} \, \dSM{M}(\mu^M_0, \mu^M_1)^2.
	\end{equation}
	On the other hand, with the triangle inequality, 
	\begin{equation*}
		\dSM{M}(\mu^M_0, \mu^M_1) \leq \dSM{M}(\mu^M_0, R_M(\mu)) + \dSM{M}(R_M(\mu), R_M(\nu)) + \dSM{M}(R_M(\nu), \mu^M_1).
	\end{equation*}
	The second summand converges to $\dS(\mu,\nu)$ by Theorem~\ref{theorem:unif_conv_RM}, while the first and last summands converge to $0$ given Lemma~\ref{lemma:comp_ds_dsm} and the first part of Theorem~\ref{theorem:unif_conv_RM}. Thus $\liminf_M \dSM{M}(\mu^M_0, \mu^M_1)^2 \leq \dS(\mu, \nu)^2$, all inequalities in~\eqref{eq:aux_to_cite} are equalities.   
\end{proof}

\section{Local expansion of the Sinkhorn divergence and its geodesic distance}
\label{sec:local_expansion}

As already explained in the introduction, the goal of this section is to quantify the approximations~\eqref{eq:to_prove_approx}, which we recall here:
\begin{equation*}
	S_\varepsilon(\mu,\nu) \approx \g_\mu(\nu - \mu, \nu - \mu) \approx \tg_\mu(\beta_\nu - \beta_\mu, \beta_\nu - \beta_\mu) \approx \dS^2(\mu,\nu) \qquad \text{ as } \nu \to \mu.    
\end{equation*}
Specifically we want to substitute the ``$\approx$'s'' with small $o$'s which are uniform over the whole space of probability distributions.
Note that we already face a small issue: $\tg_\mu$ is a priori only defined for vectors $\betadot$ such that $\langle \betadot, \beta_\mu \rangle_{\Hil_c} = 0$, so that we cannot directly plug $\beta_\nu - \beta_\mu$ as an argument. We first extend the definition of $\tg_\mu$ as a quadratic form on the whole space $\Hil_c$ by projecting on a relevant subspace. Let us introduce, for $\phi \in \Cont(X)$, $Q_\mu[\phi] = \phi - \langle \mu,\phi \rangle$.
Clearly $Q_\mu$ maps $\Cont(X)$ into $\Cont(X)$, as moreover $Q_\mu[\ones_X] = 0$, it descends $Q_\mu$ into an operator from $\Cont(X)/\R$ to $\Cont(X)$. 

\begin{definition}
	\label{def:tg_extended}
	If $\mu \in \prm(X)$ and $\betadot \in \Hil_c$, then we define 
	\begin{equation}
		\label{eq:def_tilde_G}
		\tilde{G}_\mu[\betadot] \assign  H_c[ (\mu / \beta_\mu) Q_\mu (\id - K_\mu )\inv [\betadot / \beta_\mu] ], \quad \text{and} \quad \tg_{\mu}(\betadot,\betadot) \assign \frac{\varepsilon}{2} \| \betadot \|_{\Hil_c}^2 + \varepsilon \langle \betadot, \tilde{G}_\mu[\betadot] \rangle_{\Hil_c}.
	\end{equation}
\end{definition}

\noindent As $\tg$ was already defined in~\eqref{eq:def:tgmu}, we need to check that Definition~\ref{def:tg_extended} coincides with~\eqref{eq:def:tgmu} when $\betadot$ is orthogonal to $\beta_\mu$.  In this case, using \(\beta_\mu = H_c[\mu/\beta_\mu]\), we get $Q_\mu[\betadot/\beta_\mu] = \betadot/\beta_\mu$ because $\langle \mu, \betadot / \beta_\mu \rangle = \langle \mu / \beta_\mu,\betadot \rangle = \langle \betadot, \beta_\mu \rangle_{\Hil_c} = 0$. Moreover, note that $Q_\mu$ is self-adjoint in $L^2(X,\mu)$ and
\begin{multline*}
	\langle \betadot, \tilde{G}_\mu[\betadot] \rangle_{\Hil_c} 
	= \langle (\mu / \beta_\mu) Q_\mu (\id - K_\mu )\inv [\betadot / \beta_\mu], \betadot \rangle  \\
	= \langle  Q_\mu (\id - K_\mu )\inv [\betadot / \beta_\mu], \betadot / \beta_\mu \rangle_{L^2(X,\mu)} 
	= \langle  (\id - K_\mu )\inv [\betadot / \beta_\mu], \betadot / \beta_\mu \rangle_{L^2(X,\mu)},   
\end{multline*}
so that the two definitions match. We state our first main result: the expansion of the distance $\dS$.

Write $\mu^V_t = (1-t) \mu + t \nu$ for the vertical interpolation between $\mu$ and $\nu$ and denote the arc connecting \(\beta_\mu\) to \(\beta_\nu\) on the unit sphere of \(\Hil_c\) by \((\beta^A_t)_t\) with corresponding path \((\mu^A_t)_t\) in \(\prm(X)\).

\begin{theorem} \label{theorem:expansion_dS}
    For any reference point \(\ol{\mu} \in \prm(X)\) and measures $\mu, \nu \in \prm(X)$ we get 
    \begin{align}
        \label{eq:expansion_dS_beta}
		\dS^2(\mu,\nu) 
		& = \tg_{\ol{\mu}}(\beta_{\nu} - \beta_\mu,\beta_{\nu} - \beta_\mu) + o(\dS^2(\mu,\nu))
    \end{align}
    uniformly as $\max \{\dS(\mu,\ol{\mu}), \dS(\nu,\ol{\mu})\} \to 0$. Moreover, for any \(t \in [0,1]\) we have
    \begin{align}
		\label{eq:expansion_dS_mu}
		\dS^2(\mu,\nu) & = \g_{\mu^V_t}(\nu - \mu, \nu - \mu) + o(\dS^2(\mu,\nu))
	\end{align}
    as $\dS(\mu,\nu) \to 0$, uniformly over $\prm(X)^2$. 
	In addition, there holds
	\begin{equation}
		\label{eq:approximate_geodesics}
		\dS^2(\mu,\nu) = E((\mu^A_t)_t) + o(\dS^2(\mu,\nu)) = E((\mu^V_t)_t) + o(\dS^2(\mu,\nu)).
	\end{equation}
\end{theorem}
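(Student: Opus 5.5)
The whole argument rests on one uniform continuity estimate for the metric tensor in the $\beta$ variable. I would first prove that
\[
\sup_{\|\betadot\|_{\Hil_c}\le 1} |\tg_{\mu}(\betadot,\betadot) - \tg_{\mu'}(\betadot,\betadot)| \to 0 \quad \text{as } \dS(\mu,\mu')\to 0,
\]
uniformly over $\prm(X)^2$, with $\tg$ in the extended sense of Definition~\ref{def:tg_extended}. Using \eqref{eq:def_tilde_G}, this reduces to operator-norm continuity of $\mu\mapsto \tilde G_\mu$ as a map $\Hil_c\to\Hil_c$. The ingredients are the following.
\begin{itemize}
\item The potentials $\fmumu$ depend continuously on $\mu$ in sup norm, hence so do $\beta_\mu$ and $1/\beta_\mu$; both are bounded above and below by \eqref{eq:sup_bound_potentials}.
\item The operators $K_\mu$ depend continuously on $\mu$ in operator norm on $\Cont(X)/\R$. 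The inverses $(\id-K_\mu)^{-1}$ are uniformly bounded by \eqref{eq:contraction_Kmunu}, so a resolvent identity gives their continuity.
\item The map $\phi\mapsto H_c[(\mu/\beta_\mu)\phi]$ depends continuously on $\mu$ in operator norm from $\Cont(X)$ to $\Hil_c$.
\end{itemize}
The last point needs the compactness of $\{k_c(x,\cdot)\}$ in $\Hil_c$: weak-$*$ convergence $\mu_n\to\mu$ together with equicontinuity yields uniform convergence of $\int \phi\, k_c(x,\cdot)\,d\mu_n$ over $\|\phi\|_\infty\le1$ equicontinuous families. The $\phi$ here are images of the compact operators $K_\mu$ and of $\betadot/\beta_\mu$, and these lie in an equicontinuous set because the unit ball of $\Hil_c$ embeds compactly into $\Cont(X)$. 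Uniformity over $\prm(X)^2$ then follows from compactness of $\prm(X)$ and the fact that $\dS$ metrizes the weak-$*$ topology (Theorem~\ref{theorem:dS_metric}). I expect this continuity lemma to be the main technical obstacle, particularly the correct handling of the quotient by constants and of $Q_\mu$.

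Granting the lemma, I would prove \eqref{eq:expansion_dS_beta} by a two-sided comparison. For the upper bound, use the arc $(\beta^A_t)_t$ as a competitor. Its velocity satisfies $\|\betadot^A_t\| = \theta$ and $\betadot^A_t = (\beta_\nu-\beta_\mu)+O(\theta^2)$ uniformly in $t$, where $\theta \approx \|\beta_\nu-\beta_\mu\|$. The path stays in a neighbourhood of $\ol\mu$, because by Theorem~\ref{theorem:dS_metric} all $\dS$-distances are comparable to $\Hil_c$-distances. The lemma then gives
\[
E(\mu^A)=\tg_{\ol\mu}(\beta_\nu-\beta_\mu,\beta_\nu-\beta_\mu)+o(\|\beta_\nu-\beta_\mu\|^2).
\]
For the lower bound, take a geodesic $(\mu_t)_t$ between $\mu$ and $\nu$. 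Its energy is $\dS^2(\mu,\nu)\lesssim\|\beta_\nu-\beta_\mu\|^2$, so it stays within $\dS$-distance $\lesssim \dS(\mu,\nu)$ of $\mu$, hence close to $\ol\mu$. Replacing $\tg_{\mu_t}$ by $\tg_{\ol\mu}$ costs $o(1)\int\|\betadot_t\|^2$. Since $\tg_{\ol\mu}$ is a nonnegative quadratic form, Jensen's inequality gives $\int_0^1 \tg_{\ol\mu}(\betadot_t,\betadot_t)\,dt \ge \tg_{\ol\mu}(\int\betadot_t,\int\betadot_t)$, and $\int_0^1\betadot_t\,dt = \beta_\nu-\beta_\mu$. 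Nonnegativity of the extended $\tg_{\ol\mu}$ on all of $\Hil_c$ is plausible from \eqref{eq:def:tgmu}, but I should check it. If it fails, I would instead decompose $\betadot$ into its components orthogonal and parallel to $\beta_{\ol\mu}$ and note that the parallel component is $O(\|\beta_\nu-\beta_\mu\|^2)$, since all $\beta$ lie on the unit sphere. Combining both bounds with the comparability of $\|\beta_\nu-\beta_\mu\|^2$ and $\dS^2$ proves \eqref{eq:expansion_dS_beta}. It also proves the arc part of \eqref{eq:approximate_geodesics}, taking $\ol\mu=\mu$.

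For \eqref{eq:expansion_dS_mu} and the vertical part of \eqref{eq:approximate_geodesics}, I would work with $\mu^V_s$. By Theorem~\ref{theorem:metric_tensor_in_betadot}, $\g_{\mu^V_s}(\nu-\mu,\nu-\mu)=\tg_{\mu^V_s}(\betadot^V_s,\betadot^V_s)$, where $\beta^V_s=B(\mu^V_s)$. The key step is to show that $\betadot^V_s = \beta_\nu-\beta_\mu + o(\|\beta_\nu-\beta_\mu\|)$ uniformly in $s$. This is a differentiability statement for $B$ along segments: $\betadot^V_s$ solves $(\id+K_{\mu^V_s})[\betadot^V_s/\beta^V_s]=H_{\mu^V_s}[\nu-\mu]$, and the solution operator is continuous in the base point. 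It follows that $\betadot^V_s-\betadot^V_{s'}=o(\|\betadot\|)$, and integrating over $s$ gives the claim. One subtlety is to control $\|\betadot^V_s\|_{\Hil_c}$ by $\|\beta_\nu-\beta_\mu\|$ rather than by the TV norm of $\nu-\mu$. I would get this from the previous uniform closeness estimate, which gives $\|\betadot^V_s\| \le \|\int_0^1 \betadot^V\| + o(\sup_s\|\betadot^V_s\|)$. Finally, the lemma together with \eqref{eq:expansion_dS_beta}, applied with $\ol\mu=\mu^V_t$, yields both \eqref{eq:expansion_dS_mu} and $E(\mu^V)=\dS^2+o(\dS^2)$.
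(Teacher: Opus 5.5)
Your treatment of \eqref{eq:expansion_dS_beta} and of the arc part of \eqref{eq:approximate_geodesics} matches the paper's argument. The paper also uses operator-norm continuity of $\mu\mapsto\tilde G_\mu$ on $\Hil_c$ (Proposition~\ref{prop:metric_tensor_beta_smooth}), the arc as competitor for the upper bound, and a geodesic with Jensen for the lower bound. Your worry about nonnegativity is unfounded: with $\phi=\betadot/\beta_{\ol{\mu}}$ and $\psi=Q_{\ol{\mu}}(\id-K_{\ol{\mu}})^{-1}\phi$, one has $\langle\psi,\phi\rangle_{L^2(X,\ol{\mu})}=\langle\psi,(\id-K_{\ol{\mu}})\psi\rangle_{L^2(X,\ol{\mu})}\ge 0$, since $K_{\ol{\mu}}$ is a self-adjoint Markov operator on $L^2(X,\ol{\mu})$.

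One ingredient of your continuity lemma is false as stated. $\mu\mapsto K_\mu$ is not operator-norm continuous on $\Cont(X)/\R$ for the weak-$*$ topology. Take $\mu=\frac12(\delta_a+\delta_b)$, $\mu'=\frac12(\delta_{a'}+\delta_b)$ with $a'\to a$, and $\phi$ with $\phi(a)=1$, $\phi(a')=-1$, $\phi(b)=0$. Then $(K_\mu-K_{\mu'})\phi\to k_\mu(a,\cdot)$, which is not constant, so no operator-norm resolvent argument is available. What does hold is strong convergence with uniform bounds, which gives uniform convergence on the compact set $\{\betadot/\beta_\mu:\|\betadot\|_{\Hil_c}\le1\}\subset\Cont(X)$. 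That is the repair, and it is what the paper's extraction argument via \cite[Prop.~4.14]{RGSD} does.

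\textbf{The genuine gap is in the vertical part}, namely \eqref{eq:expansion_dS_mu} and the second equality of \eqref{eq:approximate_geodesics}. The solution operator $\sigma\mapsto\beta_\mu(\id+K_\mu)^{-1}H_\mu[\sigma]$ has natural domain $\Hil^*_{\mu,0}$, which moves with $\mu$. Continuity in the base point can only be stated on a fixed space such as $\Lip_c(X)^*_0$ or $\calM_0(X)$. It then yields only $\betadot^V_s-\betadot^V_{s'}=o(\|\nu-\mu\|_{\Lip_c^*})=o(W_c(\mu,\nu))$.

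This is not $o(\dS(\mu,\nu))$. In Example~\ref{exm:W2_not_dS_geodesic_splitting_dirac}, $W_c(\delta_0,\nu_r)\sim r$ while $\dS(\delta_0,\nu_r)\sim r^2$. The relative bound $o(\|\betadot\|_{\Hil_c})$ you assert would require comparing solution operators at two base points on $\g$-bounded inputs. That means controlling compositions like $H_{\mu'}H_\mu^{-1}$ in norms built on $\Hil_c$, which is the same obstruction as in Remark~\ref{rem:why_not_better}. Your bootstrap $\|\betadot^V_s\|\le\|\int_0^1\betadot^V\|+o(\sup_s\|\betadot^V_s\|)$ presupposes that relative bound, so it is circular. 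As written, your route only gives $\g_{\mu^V_t}(\nu-\mu,\nu-\mu)=\dS^2(\mu,\nu)+o(W_c^2(\mu,\nu))$.

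The missing idea is to compare the vertical and arc velocities at the \emph{same} foot point, so that a fixed $\mudot$ never changes base point.
\begin{itemize}
\item From $\mu=\alpha_\mu\beta_\mu$ one gets $\mu^A_t=(u_t\alpha_\mu+v_t\alpha_\nu)(u_t\beta_\mu+v_t\beta_\nu)$. Apart from $\calO(\theta^2)$ corrections of the coefficients, the only deviation from the vertical interpolation is the bilinear term $u_tv_t(\alpha_\nu-\alpha_\mu)(\beta_\nu-\beta_\mu)$.
\item Its norm satisfies $\|(\alpha_\nu-\alpha_\mu)(\beta_\nu-\beta_\mu)\|_{\Lip_c^*}\le\|\alpha_\nu-\alpha_\mu\|_{\Lip_c^*}\,\|\beta_\nu-\beta_\mu\|_{\Lip_c}=o(1)\cdot\dS(\mu,\nu)$, because the $\beta$-factor is measured in $\Lip_c$, which $\Hil_c$ controls. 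Hence $\sup_t\|\mudot^V_t-\mudot^A_t\|_{\Lip_c^*}=o(\dS(\mu,\nu))$.
\item The foot-point-uniform bound $\g_\mu(\sigma,\sigma)\lesssim\|\sigma\|^2_{\Lip_c^*}$ from Proposition~\ref{prop:metric_tensor_mu_smooth} converts this into $\|\betadot^V_0-\betadot^A_0\|_{\Hil_c}=o(\dS(\mu,\nu))$.
\item Repeating this at each $\mu^V_t$ for the segment towards $\nu$ gives $\betadot^V_t=\frac{1}{1-t}(\beta_\nu-\beta^V_t)+o\bigl(\frac{1}{1-t}\|\beta_\nu-\beta^V_t\|_{\Hil_c}\bigr)$.
\item A Gr\"onwall comparison with the chord then yields $\betadot^V_t=\beta_\nu-\beta_\mu+o(\|\beta_\nu-\beta_\mu\|_{\Hil_c})$ uniformly in $t$, after which your final step goes through.
\end{itemize}
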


The more restrictive choice of foot point in~\eqref{eq:expansion_dS_mu} compared to~\eqref{eq:expansion_dS_beta} simply comes from expressing the metric tensor in the \(\mu\)-variable and not from the choice of the path itself.
Next, we state a similar result for $S_\varepsilon$, however in this case we are unable to obtain a two-sided $o(\dS^2(\mu,\nu))$ for the remainder. As a fallback we introduce 
\begin{equation*}
	W_c(\mu,\nu) = \min_{\pi \in \Pi(\mu,\nu)} \iint_{X \times X} d_c(x,y) \diff \pi(x,y)
\end{equation*}
the one-Wasserstein distance built on the distance $d_c$ as defined in~\eqref{eq:def_dc}. It metrizes the weak convergence as $d_c$ metrizes the topology on $X$.

\begin{theorem}
	\label{theorem:estimates_with_Wc}
	Uniformly over $\prm(X)^2$, we have
	\begin{align}
		S_\varepsilon(\mu,\nu) &\leq \dS^2(\mu,\nu) + o \left( W_c^2(\mu,\nu) \right) &\tn{as } W_c(\mu,\nu) \to 0, \label{eq:Seps_asymp_ubound_Wc} \\
		S_\varepsilon(\mu,\nu) &\geq \dS^2(\mu,\nu) + o(\dS^2(\mu,\nu)) &\tn{as } \dS(\mu,\nu) \to 0. \label{eq:Seps_asymp_lbound_dS}
	\end{align}
	Moreover, there exists a constant $C > 0$ depending only on $X$, $c$ and $\varepsilon$ such that for all \(\mu,\nu \in \prm(X)\) it holds
	\begin{equation} \label{eq:Seps_bound_Wc}
		S_\varepsilon(\mu,\nu) \leq C W_c^2(\mu,\nu).
	\end{equation}  
\end{theorem}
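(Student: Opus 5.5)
The plan is to work along the vertical interpolation $\mu^V_t = (1-t)\mu + t\nu$ and to Taylor-expand the scalar function $s(t) \assign S_\varepsilon(\mu, \mu^V_t)$. Since $S_\varepsilon(\mu,\cdot)$ is smooth and minimized at $\mu$ (Theorem~\ref{theorem:FeydyEtAl}), we have $s(0) = s'(0) = 0$. Hence
\begin{equation*}
	S_\varepsilon(\mu,\nu) = s(1) = \int_0^1 (1-t)\, s''(t) \diff t.
\end{equation*}

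\emph{Step 1: the second derivative.} Differentiating the Schrödinger system~\eqref{eq:Schroedinger_system} in the second argument (as in \cite[Thm.~3.4]{RGSD}, but off the diagonal) gives
\begin{equation*}
	s''(t) = \varepsilon \bpair{\nu-\mu}{(\id - K_{\mu_t,\mu}K_{\mu,\mu_t})\inv H_{\mu_t,\mu}[\nu-\mu]} - (\text{self-transport term at } \mu_t) + r(t).
\end{equation*}
At $\mu_t = \mu$ this reduces to $2\g_{\mu}(\nu-\mu,\nu-\mu)$. I then split $s''(t) = 2\g_{\mu^V_t}(\nu-\mu,\nu-\mu) + e(t)$. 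The error $e(t)$ collects the differences $K_{\mu,\mu_t} - K_{\mu_t}$ and $H_{\mu_t,\mu} - H_{\mu_t}$, together with the contribution coming from the cross dependence of the potentials $f_{\mu,\mu_t}$ on $\mu$.

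\emph{Step 2: sign and size of $e(t)$.} I expect $e(t)$ to decompose into a nonnegative part (a Schur-complement-like term, reflecting convexity of $\OT_\varepsilon$ in each argument) plus a part bounded by $\omega(\dS(\mu,\nu))\,\norm{\beta_\nu - \beta_\mu}_{\Hil_c}^2$. Here $\omega$ is a modulus coming from uniform continuity of $\mu \mapsto K_\mu, H_\mu$ on the compact $\prm(X)$, as in the continuity estimates underlying Theorem~\ref{theorem:expansion_dS}. Using $\norm{\beta_\nu-\beta_\mu}_{\Hil_c}\lesssim \dS(\mu,\nu)$ (Theorem~\ref{theorem:dS_metric}), this yields $s''(t) \geq 2\g_{\mu^V_t}(\nu-\mu,\nu-\mu) + o(\dS^2)$. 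Integrating against $(1-t)$ and using \eqref{eq:expansion_dS_mu} uniformly in $t$ then gives \eqref{eq:Seps_asymp_lbound_dS}.

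For the upper bound, the nonnegative part must also be controlled from above. I expect it to pair $\nu-\mu$ against functions such as $f_{\mu,\mu_t} - f_{\mu_t,\mu_t}$ or $k_{\mu,\mu_t}-k_{\mu_t}$. These are Lipschitz with respect to $d_c$ with constant tending to zero as $\mu_t\to\mu$, so they are only controllable by $W_c(\mu,\nu)\cdot o(1)$ via Kantorovich--Rubinstein, not by $\dS$. This step is the main obstacle: identifying exactly which term resists $\Hil_c^*$-control and proving its $d_c$-Lipschitz bound, e.g.\ by writing $k_{\mu,\nu}(x,\cdot)$ through the RKHS features $k_c(x,\cdot)$, whose increments are measured by $d_c$.

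\emph{Step 3: conversion to $W_c$ and the global bound.} To pass from $o(\dS^2)$ to $o(W_c^2)$ in the upper bound I need $\dS \lesssim W_c$. By Theorem~\ref{theorem:dS_metric} it suffices that $\norm{\beta_\mu-\beta_\nu}_{\Hil_c} \lesssim \norm{H_c[\mu-\nu]}_{\Hil_c}$, i.e.\ a Lipschitz property of the potentials, which I would derive from the implicit function argument of \cite{RGSD}. On the other hand,
\begin{equation*}
	\norm{H_c[\mu-\nu]}_{\Hil_c} = \Bnorm{\iint (k_c(x,\cdot)-k_c(y,\cdot))\diff\pi(x,y)}_{\Hil_c} \leq W_c(\mu,\nu)
\end{equation*}
for an optimal plan $\pi$, by \eqref{eq:def_dc}. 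This proves \eqref{eq:Seps_asymp_ubound_Wc}. Finally, \eqref{eq:Seps_bound_Wc} follows from two regimes:
\begin{itemize}
	\item For $W_c(\mu,\nu) \le \eta$ with $\eta$ small, \eqref{eq:Seps_asymp_ubound_Wc} and $\dS \lesssim W_c$ give $S_\varepsilon \lesssim W_c^2$.
	\item For $W_c(\mu,\nu) > \eta$, use that $S_\varepsilon$ is bounded on the compact $\prm(X)^2$, so $S_\varepsilon \le \sup S_\varepsilon\,\eta^{-2}\, W_c^2$.
\end{itemize}
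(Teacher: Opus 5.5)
Your proposal has genuine gaps. The lower bound \eqref{eq:Seps_asymp_lbound_dS} is the decisive one, because the derivative route has no mechanism that could give an $o(\dS^2)$ remainder.

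\textbf{The lower bound.} Your Step~1 can be made exact. Write $\mu_t=\mu^V_t$ and $F(t,s)=\OT_\varepsilon(\mu_t,\mu_s)$. Proposition~\ref{prop:time_derivative_of_potentials} gives $s''(t)=2\g_{\mu_t}(\nu-\mu,\nu-\mu)+e(t)$ with no remainder, where
\[
e(t)=\partial_t^2F(t,0)-\partial_t^2F(t,t)=\varepsilon\bpair{\nu-\mu}{\bigl[K_{\mu,\mu_t}(\id-K_{\mu_t,\mu}K_{\mu,\mu_t})\inv H_{\mu_t,\mu}-K_{\mu_t}(\id-K_{\mu_t}^2)\inv H_{\mu_t}\bigr][\nu-\mu]}.
\]
Note that your cross term lacks the leading $K_{\mu,\mu_t}$. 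So $e(t)$ is the difference of the one-argument Hessians of $\OT_\varepsilon(\cdot,\mu)$ and $\OT_\varepsilon(\cdot,\mu_t)$ at the same point $\mu_t$. Convexity in each argument makes both Hessians nonnegative but says nothing about the sign of their difference. Your announced split into a nonnegative part plus an $o(\dS^2)$ part therefore has nothing behind it.

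The only available way to make $e(t)$ small is continuity of these off-diagonal operators in the partner measure. That holds only between Lipschitz-type spaces, which is the obstruction of Remark~\ref{rem:why_not_better}. It gives $\lvert e(t)\rvert\leq\varepsilon\,\omega(W_c(\mu,\nu))\,W_c^2(\mu,\nu)$. This is not $o(\dS^2)$, because $W_c$ is not controlled by $\dS$: in Example~\ref{exm:W2_not_dS_geodesic_splitting_dirac}, $W_c^2\sim r^2$ while $\dS^2\sim r^4$.

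The missing idea is to drop derivatives for this inequality. The paper uses the exact identity $S_\varepsilon(\mu,\nu)=\frac{\varepsilon}{2}\norm{\beta_\nu-\beta_\mu}_{\Hil_c}^2+\varepsilon\KL(\pi_{\mu,\nu}\vert\hat\pi_{\mu,\nu})$ from Lemma~\ref{lemma:Seps_KL_reformulation}. It then bounds the $\KL$ term from below through its dual representation, with the explicit test function $h=(-\psi)\oplus\psi$ and $\psi=(\id-K_\mu)\inv[(\beta_\nu-\beta_\mu)/\beta_\mu]$. Since $\norm{\psi}_{\Lip_c}=\calO(\dS(\mu,\nu))$, every error term is such a Lipschitz-small quantity paired against something that is $o(\dS)$ or $o(1)$ in $\Lip_c^*$. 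This gives $S_\varepsilon(\mu,\nu)\geq\tg_\mu(\beta_\nu-\beta_\mu,\beta_\nu-\beta_\mu)+o(\dS^2(\mu,\nu))$, and \eqref{eq:expansion_dS_beta} concludes. Because only a lower bound is needed, the test function can be chosen freely, so the off-diagonal operators never have to be estimated in $\Hil_c$-based norms.

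\textbf{The upper bounds.} Your instinct to use Kantorovich--Rubinstein control is right, but you leave the key estimate open. What is needed is Proposition~\ref{prop:metric_tensor_mu_smooth}: continuity of $(\mu,\nu)\mapsto G_{\mu,\nu}$ in $\BB(\Lip_{c,0}^*(X);\Lip_c(X)/\R)$. It rests on:
\begin{itemize}
\item the product estimate \eqref{eq:Lipc_bound_product} and the bound \eqref{eq:Lipc_Hc_bound};
\item continuity of the potentials in $\Lip_c$ (Lemma~\ref{lemma:continuous_potentials_Lipc});
\item $W_c$-continuity of $\phi\mapsto\phi\mu$;
\item most delicately, a uniform bound on $(\id-K_{\mu,\nu}K_{\nu,\mu})\inv$ on $\Lip_c(X)/\R$. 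No contraction is available there, so the paper writes the inverse as $\id+K_{\mu,\nu}(\id-K_{\nu,\mu}K_{\mu,\nu})\inv K_{\nu,\mu}$ and combines the contraction on $\Cont(X)/\R$ with the smoothing $K_{\mu,\nu}\colon\Cont(X)\to\Lip_c(X)$.
\end{itemize}
With this in hand, your Taylor formula and the bound on $e(t)$ above give \eqref{eq:Seps_asymp_ubound_Wc} with no sign decomposition. This is a one-variable version of the paper's double integral in Proposition~\ref{prop:rewriting_Sinkhorn}. The uniform operator bound then gives \eqref{eq:Seps_bound_Wc} directly.

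\textbf{Step 3.} Your route to $\dS\lesssim W_c$ does not work. The inequality $\norm{\beta_\mu-\beta_\nu}_{\Hil_c}\lesssim\norm{H_c[\mu-\nu]}_{\Hil_c}$ does not follow from the implicit function argument and fails in general. Linearizing $\beta_\nu=H_c[\nu/\beta_\nu]$ at $\mu$ in direction $\sigma$ gives $\norm{\betadot}_{\Hil_c}\asymp\norm{H_c[\sigma/\beta_\mu]}_{\Hil_c}$. Your inequality would therefore essentially require $1/\beta_\mu$ to be a bounded multiplier on $\Hil_c$. This fails for instance for Gaussian kernels, whose RKHS has no nonconstant multipliers. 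Instead use $\dS^2(\mu,\nu)\leq E((\mu^V_t)_t)=\int_0^1\g_{\mu^V_t}(\nu-\mu,\nu-\mu)\diff t\lesssim W_c^2(\mu,\nu)$, which again follows from the uniform $\Lip_c$ operator bound.
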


\noindent An upper bound of $S_\varepsilon$ by a Wasserstein distance was already proved in \cite[Rem.~5.11]{RGSD}, but under an assumption of differentiability on the cost function. Here $X$ can be any compact metrizable space, without a differential structure.

\begin{remark}
We believe that by adapting our techniques, in the differentiable setting of \cite{RGSD} (see Remark~\ref{rmk:diff_setting}), we can upgrade $o(W_c^2(\mu,\nu))$ to $o(\| \mu - \nu \|_{\Cont^m(X)^*}^2)$ in~\eqref{eq:Seps_asymp_ubound_Wc}. This is a better control if $m \geq 2$, though we still have $\dS(\mu,\nu) \leq C\| \mu - \nu \|_{\Cont^m(X)^*}$ \cite[Prop.~5.9]{RGSD}.
See Remark~\ref{rem:why_not_better}, after the proof, on why there is an obstruction in obtaining a remainder in $o(\dS^2(\mu,\nu))$ for the upper bound. 
\end{remark}

Lastly, we quote the work \cite[Proposition 16]{sejourne2019sinkhorn} which proved the lower bound
\begin{equation}
	\label{eq:Seps_lower_bound_Hc}
	S_\varepsilon(\mu,\nu) \geq  \frac{\varepsilon}{2} \norm{\beta_\nu - \beta_\mu}_{\Hil_c}^2,
\end{equation}
see also Lemma~\ref{lemma:Seps_KL_reformulation} below where we refine this lower bound. Before moving to the proofs we summarize all comparisons between distances up to multiplicative constants. Denoting $A \lesssim B$ a comparison $A \leq C B$ which holds up to a multiplicative $C$ depending only on $X$, $c$ and $\varepsilon$, and $A \asymp B$ if $A \lesssim B$ and $B \lesssim A$, Theorem~\ref{theorem:dS_metric}, Theorem~\ref{theorem:estimates_with_Wc} and~\eqref{eq:Seps_lower_bound_Hc} read
\begin{equation}
	\label{eq:bounds_distance}
	\norm{\beta_\nu - \beta_\mu}_{\Hil_c}^2 \; \asymp \; \dS^2(\mu,\nu) \; \lesssim \;  S_\varepsilon(\mu,\nu) \; \lesssim \; W_c(\mu,\nu)^2.
\end{equation}
The rest of this section is dedicated to the proof of Theorem~\ref{theorem:expansion_dS} and Theorem~\ref{theorem:estimates_with_Wc}. They rely on continuity estimates of the metric tensors in the relevant topologies.

\subsection{Continuity of the metric tensor}

If $Y,Z$ are Banach spaces we denote by $\BB(Y;Z)$ the set of bounded linear operators from $Y$ to $Z$.
We endow $\BB(Y;Z)$ with the topology of (operator) norm convergence. We denote by $\| \vdot \|_{Y \to Z}$ such operator norm, meaning $\| A \|_{Y \to Z} = \sup \{ \| Ay \|_Z \; | \; \| y \|_Y \leq 1 \}$, and operator convergence corresponds to convergence in operator norm. The operator norm is submultiplicative, and from the identity $\| A\inv-B\inv \| = \| A\inv (B - A) B\inv \| \leq \| A\inv \| \| B\inv \| \| A - B \|$ when the inverses exist, convergence in operator norm together with boundedness of the inverse implies convergence of the inverses in operator norm. 

Recall that $\tilde{G}_\mu$, characterizing $\tg_\mu$, is defined in~\eqref{eq:def_tilde_G}, while on the other hand we call
\begin{equation}
	\label{eq:def_Qmunu_simpler}
	G_{\mu,\nu} = (\id - K_{\mu,\nu} K_{\nu,\mu})\inv H_{\mu,\nu}.
\end{equation}
It is such that $\g_\mu(\sigma,\sigma) = \frac{\varepsilon}{2} \langle \sigma, {G}_{\mu,\mu}[\sigma] \rangle$, and as it will be clear later we will only need the ``off-diagonal'' operators for the proof of Theorem~\ref{theorem:estimates_with_Wc}. In this section we prove the continuity of $\mu \mapsto \tilde{G}_\mu$ and $(\mu,\nu) \mapsto G_{\mu,\nu}$ in appropriate operator norms. 

We start with $G_{\mu,\nu}$, and for that we need several preliminary results on $W_c$. 
As the $W_c$ distance has a dual formulation in terms of Lipschitz functions with respect to $d_c$, we introduce the corresponding spaces $\Lip_c(X)$ and $\Lip_c(X)/\R$ as subsets of $\Cont(X)$ and $\Cont(X)/\R$. We write
\begin{equation*}
	\Lip_c(\phi) \assign  \sup_{x \neq y} \frac{|\phi(x) - \phi(y)|}{d_c(x,y)}, \qquad \norm{\phi}_{\Lip_c} \assign  \| \phi \|_\infty + \Lip_c(\phi), \qquad \norm{\phi}_{\Lip_c/\R} \assign \Lip_c(\phi).
\end{equation*} 
The Lipschitz constant \(\Lip_c(\phi)\) defines a norm on \(\Lip_c(X) / \R\) which is equivalent to the quotient norm \(\inf_{\lambda \in \R} \norm{\phi - \lambda \ones_X}_{\Lip_c} \geq \Lip_c(\phi)\).  
We denote by $\Lip_c(X)^*$ the topological dual of $\Lip_c(X)$, and $\Lip_c(X)^*_0$ the subset made of linear forms vanishing on the constant function $\ones_X$. Define the associated norm of the latter as $\| \sigma \|_{\Lip_{c,0}^*} = \sup \setgiven{ \abs{\pair{\sigma}{\phi}} }{ \Lip_c(\phi) \leq 1 }$. By Kantorovich duality \cite[Remark 6.5]{Villani2009} we always have 
\begin{equation}
	\label{eq:Kantorovich_duality}
	W_c(\mu,\nu) = \| \mu - \nu \|_{\Lip_{c,0}^*} \geq \| \mu - \nu \|_{\Lip_{c}^*}.
\end{equation}
We can easily check the estimate
\begin{equation}
	\label{eq:Lipc_bound_product}
	\| \phi \psi \|_{\Lip_c} \leq \| \phi \| _{\Lip_c} \| \psi \| _{\Lip_c}.
\end{equation} 
This stability of the set $\Lip_c(X)$ by multiplication is actually why it is a useful extension of $\Hil_c$. In contrast, products of elements in $\Hil_c$ do not stay in $\Hil_c$.
For any \(\phi \in \Hil_c\), we get \(\abs{\phi(x) - \phi(y)} = \abs{\pair{\phi}{k_c(x,\vdot) - k_c(y,\vdot)}_{\Hil_c}} \leq \norm{\phi}_{\Hil_c} \cdot d_c(x,y)\). Moreover as we assume $c$ to be non-negative so that $k_c \leq 1$, for any $x \in X$ we have $|\phi(x)| = |\langle \phi, k_c(x,\vdot) \rangle_{\Hil_c}| \leq \| \phi \|_{\Hil_c} \sqrt{k_c(x,x)} \leq \| \phi \|_{\Hil_c}$. Hence
\begin{equation} \label{eq:Lipc_Hc_bound}
	\norm{\phi}_{\Lip_c/\R} \leq \norm{\phi}_{\Hil_c}, \qquad \norm{\phi}_{\Lip_c} \leq 2 \norm{\phi}_{\Hil_c}
\end{equation}
Using then the identity $\norm{\sigma}_{\Hil_{c}^*} = \sup \{ \langle \sigma, \phi \rangle \; | \; \| \phi \|_{\Hil_c} \leq 1 \}$, we can ``dualize'' the previous estimate and get
\begin{equation}
	\label{eq:Lip_Hc_bound_dual}
	\| \sigma \|_{\Hil_{c}^*} \leq 2 \| \sigma \|_{\Lip_{c}^*}.
\end{equation}


\begin{lemma}
	\label{lemma:continuous_potentials_Lipc}
	We can choose the representative $f_{\mu,\nu}$ of the Schrödinger potentials such that the maps $(\mu,\nu) \mapsto f_{\mu,\nu} \in \Lip_c(X)$ and $(\mu,\nu) \mapsto \exp(f_{\mu,\nu}/\varepsilon) \in \Lip_c(X)$ are continuous. 
\end{lemma}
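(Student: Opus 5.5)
The plan has three parts: fix a normalization, prove uniform Lipschitz bounds, then get continuity from uniform convergence and Arzelà--Ascoli.

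\emph{Normalization.} We normalize the potentials, for instance by requiring $\langle \mu, f_{\mu,\nu}\rangle = \langle \nu, g_{\mu,\nu}\rangle$. This pins down the constant $\lambda$ uniquely. With the bound \eqref{eq:sup_bound_potentials} (up to adjusting constants), it also gives $\norm{f_{\mu,\nu}}_\infty,\norm{g_{\mu,\nu}}_\infty \lesssim \norm{c}_\infty$.

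\emph{Uniform Lipschitz bounds.} By the Schrödinger system \eqref{eq:Schroedinger_system}, $\exp(-f_{\mu,\nu}/\varepsilon) = \int_X \exp(g_{\mu,\nu}(y)/\varepsilon)\, k_c(\cdot,y)\diff\nu(y) = H_c[e^{g_{\mu,\nu}/\varepsilon}\nu]$. This function lies in $\Hil_c$, with norm at most $\norm{e^{g_{\mu,\nu}/\varepsilon}\nu}_{\TV}\sup_y \norm{k_c(y,\cdot)}_{\Hil_c} \leq \exp(\tfrac32\norm{c}_\infty/\varepsilon)$. By \eqref{eq:Lipc_Hc_bound}, $\exp(-f_{\mu,\nu}/\varepsilon)$ is then bounded in $\Lip_c$ uniformly in $(\mu,\nu)$. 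It is also bounded below by $\exp(-\tfrac32\norm{c}_\infty/\varepsilon)$. Composing with $-\varepsilon\log$ and with $s\mapsto 1/s$, both Lipschitz on the relevant compact range, shows that $f_{\mu,\nu}$ and $\exp(f_{\mu,\nu}/\varepsilon)$ are uniformly bounded in $\Lip_c$.

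\emph{Continuity in $\Cont(X)$.} Take $(\mu_n,\nu_n)\to(\mu,\nu)$ \textweakstar. By the previous step and Arzelà--Ascoli on the compact space $(X,d_c)$, a subsequence of $(f_{\mu_n,\nu_n},g_{\mu_n,\nu_n})$ converges uniformly to some $(f,g)$. Passing to the limit in \eqref{eq:Schroedinger_system} is legitimate because integrands converge uniformly and the measures converge \textweakstar. Hence $(f,g)$ solves the Schrödinger system for $(\mu,\nu)$. It also satisfies the normalization, so by uniqueness $(f,g)=(f_{\mu,\nu},g_{\mu,\nu})$. The whole sequence therefore converges uniformly.

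\emph{Upgrade to $\Lip_c$ norm.} This is the main obstacle, since uniform convergence with bounded Lipschitz constants does not by itself give convergence of the Lipschitz seminorm. I will instead use the $\Hil_c$ representation. Write $\phi_n = e^{g_{\mu_n,\nu_n}/\varepsilon}\nu_n$. Then $\phi_n \to e^{g_{\mu,\nu}/\varepsilon}\nu$ \textweakstar{} in $\calM(X)$, because the densities converge uniformly and the measures converge \textweakstar. By Lemma~\ref{lemma:appendixBRGSD}, $H_c$ is \textweakstar-to-norm continuous, so $\exp(-f_{\mu_n,\nu_n}/\varepsilon)=H_c[\phi_n]\to \exp(-f_{\mu,\nu}/\varepsilon)$ in $\Hil_c$. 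By \eqref{eq:Lipc_Hc_bound} the convergence then holds in $\Lip_c$.

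\emph{Conclusion.} It remains to transfer the convergence through $-\varepsilon\log$ and through $s\mapsto 1/s$. For a $C^2$ function $F$ on a compact interval containing all the values, we have $\norm{F(u)-F(v)}_{\Lip_c}\lesssim \norm{u-v}_{\Lip_c}$ for $u,v$ bounded in $\Lip_c$ with values in that interval. To see this, write $F(u)-F(v)=(u-v)\int_0^1F'(v+s(u-v))\diff s$ and apply \eqref{eq:Lipc_bound_product}, noting that the integral factor is bounded in $\Lip_c$. This yields continuity of $(\mu,\nu)\mapsto f_{\mu,\nu}$ and of $(\mu,\nu)\mapsto\exp(f_{\mu,\nu}/\varepsilon)$ into $\Lip_c(X)$.
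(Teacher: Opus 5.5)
Your proof is correct, and its core is the paper's argument: write $\exp(-f_{\mu,\nu}/\varepsilon)=H_c[\exp(g_{\mu,\nu}/\varepsilon)\nu]$, use the \textweakstar-to-norm continuity of $H_c$ (Lemma~\ref{lemma:appendixBRGSD}) together with $\norm{\cdot}_{\Lip_c}\le 2\norm{\cdot}_{\Hil_c}$, and then pass through $-\varepsilon\log$ and $s\mapsto 1/s$ using the uniform two-sided bounds. The only differences are that you prove continuity of the normalized potentials in $\Cont(X)$ yourself (uniform $\Lip_c$ bounds, Arzel\`a--Ascoli and uniqueness) where the paper cites \cite[Prop.~B.1]{nutz2023stability}, and that you spell out the composition step via the product estimate~\eqref{eq:Lipc_bound_product}, which the paper leaves implicit.
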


\begin{proof}
	We can choose representatives such that $(\mu,\nu) \mapsto g_{\mu,\nu} \in \Cont(X)$ is continuous \cite[Prop.~B.1]{nutz2023stability}. 
	As $\exp(-\fmunu/\varepsilon) = H_c[\exp(\gmunu/\varepsilon) \nu]$ by the Schrödinger system~\eqref{eq:Schroedinger_system}, with Lemma~\ref{lemma:appendixBRGSD} we deduce that $\exp(-\fmunu/\varepsilon)$ depends continuously on $\mu,\nu$ in $\Hil_c$. With~\eqref{eq:Lipc_Hc_bound}, we deduce that it does in $\Lip_c(X)$. As moreover the Schrödinger potentials are uniformly bounded, we deduce that also $f_{\mu,\nu} \in \Lip_c(X)$ depends continuously on $\mu,\nu$. 
\end{proof}

As multiplication by $\beta_\mu$ is an isometry from $\Hil_\mu$ to $\Hil_c$, from~\eqref{eq:Lipc_bound_product} we can extend the estimate~\eqref{eq:Lipc_Hc_bound} to \(\phi \in \Hil_\mu\):  
\begin{equation} \label{eq:Lipc_Hmu_bound}
	\norm{\phi}_{\Lip_c} \leq \norm{1/\beta_\mu}_{\Lip_c} \cdot 2 \norm{\phi \beta_\mu}_{\Hil_c} \leq C \norm{\phi}_{\Hil_\mu}
\end{equation}
for some \(C > 0\).
We can now state and prove our first continuity result. 

\begin{proposition}
	\label{prop:metric_tensor_mu_smooth}
	The operator $G_{\mu,\nu}$ belongs to $\BB(\Lip^*_{c,0}(X);\Lip_c(X)/\R)$ and is continuous in operator norm as a function of $\mu,\nu$.
\end{proposition}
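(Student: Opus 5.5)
We factor $G_{\mu,\nu} = (\id - K_{\mu,\nu}K_{\nu,\mu})\inv H_{\mu,\nu}$. The plan is to show that each factor is bounded between the right spaces, that the factor depends continuously on $(\mu,\nu)$ in operator norm, and that the inverse is uniformly bounded. The remark before the statement then gives continuity of the inverse, and submultiplicativity gives continuity of the product.

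\emph{Step 1: the operator $H_{\mu,\nu}$.} We have $H_{\mu,\nu}[\sigma](y) = \langle \sigma, k_{\mu,\nu}(\vdot,y)\rangle$ and $k_{\mu,\nu}(x,y) = \exp(f_{\mu,\nu}(x)/\varepsilon)\exp(g_{\mu,\nu}(y)/\varepsilon)k_c(x,y)$. For $\sigma \in \Lip^*_{c,0}(X)$ we test against the $\Lip_c$ function $x \mapsto \exp(f_{\mu,\nu}(x)/\varepsilon)\, k_c(x,y)$. Its $\Lip_c$ norm is bounded uniformly in $y$ by~\eqref{eq:Lipc_bound_product}, \eqref{eq:Lipc_Hc_bound} (since $\|k_c(\vdot,y)\|_{\Hil_c}\le 1$) and Lemma~\ref{lemma:continuous_potentials_Lipc}. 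We also need $y \mapsto H_{\mu,\nu}[\sigma](y)$ to be $\Lip_c$, with constant $\lesssim \|\sigma\|_{\Lip^*_{c,0}}$. The cleanest route is to write $H_{\mu,\nu}[\sigma] = e^{g_{\mu,\nu}/\varepsilon}\, \psi_\sigma$ with $\psi_\sigma(y) = \langle e^{f_{\mu,\nu}/\varepsilon}\sigma, k_c(\vdot,y)\rangle$, that is, $\psi_\sigma = H_c[e^{f_{\mu,\nu}/\varepsilon}\sigma]$. Multiplication by a $\Lip_c$ function maps $\Lip_c^*$ to itself boundedly (dual of~\eqref{eq:Lipc_bound_product}). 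By~\eqref{eq:Lip_Hc_bound_dual}, and since $H_c$ is the Riesz map $\Hil_c^*\to\Hil_c$, we get $\|\psi_\sigma\|_{\Hil_c} \le 2\|e^{f/\varepsilon}\sigma\|_{\Lip_c^*}$, and then $\|\psi_\sigma\|_{\Lip_c} \le 2\|\psi_\sigma\|_{\Hil_c}$. Multiplying by $e^{g/\varepsilon}\in\Lip_c$ (symmetric role via Lemma~\ref{lemma:continuous_potentials_Lipc}) keeps us in $\Lip_c$. Continuity in operator norm follows because the map is a composition of multiplication operators, whose operator norms are controlled by $\|e^{f_{\mu,\nu}/\varepsilon}-e^{f_{\mu',\nu'}/\varepsilon}\|_{\Lip_c}$ and the analogous quantity for $g$, together with the fixed bounded map $H_c$. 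Here we use $\|\sigma\|_{\Lip_c^*}\le \|\sigma\|_{\Lip^*_{c,0}}$ on $\Lip^*_{c,0}$, up to a constant, which follows from the quotient-norm equivalence.

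\emph{Step 2: the operator $K_{\mu,\nu}$ on $\Lip_c/\R$.} We have $K_{\mu,\nu}[\phi] = H_{\mu,\nu}[\phi\mu]$, and we must bound it in terms of $\Lip_c(\phi)$ only, modulo constants. Since $K_{\mu,\nu}[\ones_X]=\ones_X$, it is well defined on quotients. For a representative with $\|\phi\|_\infty \le \Lip_c(\phi)\,\mathrm{diam}_{d_c}(X)$, the measure $\phi\mu$ has $\Lip_c^*$ norm at most $\|\phi\|_\infty$, so Step 1 applied to $\phi\mu$ (using only the $\Lip_c^*$ bound) gives boundedness. Continuity in $(\mu,\nu)$ needs the most care, because $\mu$ appears both in the kernel and as the integrating measure, and $\mu\mapsto\phi\mu$ is only weak-* continuous, not norm continuous. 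I will therefore use compactness. The map $(x,y)\mapsto k_{\mu,\nu}(x,y)$ is continuous in $(\mu,\nu)$ uniformly, in the sense that $y\mapsto k_{\mu,\nu}(\vdot,y)$ is valued in $\Hil_c$-type function classes. The unit ball of $\Lip_c/\R$, normalized, is equicontinuous, hence compact in $\Cont$ by Arzelà–Ascoli. Writing $K_{\mu,\nu}[\phi] = e^{g_{\mu,\nu}/\varepsilon}H_c[e^{f_{\mu,\nu}/\varepsilon}\phi\,\mu]$ and using the $\Hil_c$ bound $\|H_c[\eta]-H_c[\eta']\|_{\Hil_c}$ together with the weak-*-to-norm continuity of $H_c$ from Lemma~\ref{lemma:appendixBRGSD}, applied uniformly over the compact family $\{e^{f/\varepsilon}\phi\mu\}$, yields operator-norm continuity into $\Lip_c$. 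I expect this uniformity argument to be the main obstacle.

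\emph{Step 3: the inverse.} The contraction estimate~\eqref{eq:contraction_Kmunu} is stated in $\Cont/\R$, not in $\Lip_c/\R$, so it cannot be used directly. We instead invert in two stages. Writing $(\id-T)\inv = \id + T(\id-T)\inv$ with $T = K_{\mu,\nu}K_{\nu,\mu}$, the inverse on $\Cont/\R$ is uniformly bounded by $(1-q^2)\inv$. Moreover $T$ maps $\Cont/\R$ boundedly into $\Lip_c/\R$, by Step 2 with the $\Lip_c^*$ norm of $\phi\mu$ bounded by $\|\phi\|_\infty$. Hence $(\id-T)\inv$ is uniformly bounded on $\Lip_c/\R$. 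Continuity of $T$ in operator norm, from Step 2 and submultiplicativity, then yields continuity of the inverse, and composing with Step 1 concludes the proof.
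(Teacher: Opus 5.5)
Your proof is correct and follows the paper's structure. You factor $H_{\mu,\nu}$ into two multiplication operators around $H_c$, with continuity coming from Lemma~\ref{lemma:continuous_potentials_Lipc} and \eqref{eq:Lipc_bound_product}, and you use the identity $K_{\mu,\nu}[\phi]=H_{\mu,\nu}[\phi\mu]$. You bound the inverse uniformly by combining the $\Cont/\R$ contraction \eqref{eq:contraction_Kmunu} with the smoothing bound $\|K_{\mu,\nu}\|_{\Cont/\R\to\Lip_c/\R}\lesssim 1$. Your identity $(\id-T)^{-1}=\id+T(\id-T)^{-1}$ does the same job as the paper's $\id+K_{\mu,\nu}(\id-K_{\nu,\mu}K_{\mu,\nu})^{-1}K_{\nu,\mu}$.

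The one real divergence is the continuity part of your Step 2, and there the obstacle you anticipate does not exist. It is true that $\mu\mapsto\phi\mu$ is not continuous in total variation. But the multiplication operator $\phi\mapsto\phi\mu$ is norm-continuous in $\mu$ as an element of $\BB(\Lip_c(X);\Lip_c(X)^*)$, because by \eqref{eq:Lipc_bound_product} and \eqref{eq:Kantorovich_duality}
\[
\|\phi\mu-\phi\mu'\|_{\Lip_c^*}=\sup_{\|\psi\|_{\Lip_c}\le 1}\langle \mu-\mu',\phi\psi\rangle\le \|\phi\|_{\Lip_c}\,W_c(\mu,\mu').
\]
Composing this with Step 1 gives operator-norm continuity of $K_{\mu,\nu}$ on $\Lip_c(X)$ in one line, with explicit Lipschitz dependence on $\mu$ for that factor. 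This is exactly what the paper does.

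Your Arzel\`a--Ascoli route is also valid: you use uniform continuity of the jointly continuous map $(\mu,\nu,\phi)\mapsto H_c[e^{f_{\mu,\nu}/\varepsilon}\phi\mu]$ on the compact set $\prm(X)^2\times\{\phi:\Lip_c(\phi)\le 1,\ \phi(x_0)=0\}$, via Lemma~\ref{lemma:appendixBRGSD}. It is more roundabout and purely qualitative. Since $\prm(X)^2$ is compact, it still yields the modulus of continuity of $G_{\mu,\nu}$ with respect to $W_c$ that the proof of Theorem~\ref{theorem:estimates_with_Wc} needs, so nothing downstream is lost.
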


\begin{proof}
	We prove that $(\mu,\nu) \mapsto H_{\mu,\nu} \in \BB(\Lip_{c}(X)_0^*;\Lip_c(X)/\R)$ as well as $(\mu,\nu) \mapsto (\id - K_{\mu,\nu} K_{\nu,\mu})\inv \in \BB(\Lip_c(X)/\R;\Lip_c(X)/\R)$ are continuous operator-valued maps, as the conclusion follows by composition. 
	
	It is $H_{\mu,\nu} = \exp(f_{\nu,\mu}/\varepsilon) H_c \exp(f_{\mu,\nu}/\varepsilon)$. Multiplication by $\exp(\fmunu/\varepsilon)$ and \(\exp(\fnumu/\varepsilon)\) varies continuously with $\mu,\nu$ in $\BB(\Lip_c(X)^*; \Lip_c(X)^*)$ and $\BB(\Lip_{c}(X); \Lip_c(X))$ respectively by Lemma~\ref{lemma:continuous_potentials_Lipc} and~\eqref{eq:Lipc_bound_product}. The operator $H_c$ does not depend on $\mu,\nu$ and belongs to $\BB(\Lip_c(X)^*;\Lip_c(X))$ given $H_c \in \BB(\Hil_c^*;\Hil_c)$ and~\eqref{eq:Lipc_Hc_bound} and~\eqref{eq:Lip_Hc_bound_dual}. 
    We deduce that $H_{\mu,\nu} \in \BB(\Lip_{c}(X)^*;\Lip_c(X))$ varies continuously with $\mu,\nu$. Composing with the inclusion \(\Lip_c(X)^*_0 \to \Lip_c(X)^*\) and the projection $\Lip_c(X) \to \Lip_c(X)/\R$, together with the equivalence of \(\norm{\phi}_{\Lip_c/\R} = \Lip_c(\phi)\) to the quotient norm, we get our conclusion for $H_{\mu,\nu}$.   
	
	We turn to $(\id - K_{\mu,\nu} K_{\nu,\mu})\inv$. We only need to prove that $(\mu,\nu) \mapsto (\id - K_{\mu,\nu} K_{\nu,\mu})$ is continuous, and that $\| (\id - K_{\mu,\nu} K_{\nu,\mu})\inv \|_{\Lip_c/\R \to \Lip_c/\R}$ remains uniformly bounded. Note that $K_{\mu,\nu}[\phi] = H_{\mu,\nu}[\phi \mu]$. Multiplication by $\mu$ belongs to $\BB(\Lip_c(X);\Lip_c(X)^*)$ and varies continuously with $\mu$: if $\mu, \mu' \in \prm(X)$, using Kantorovich duality~\eqref{eq:Kantorovich_duality} and~\eqref{eq:Lipc_bound_product} 
	\begin{equation*}
		\| \phi \mu - \phi \mu' \|_{\Lip_c^*} = \sup_{\| \psi \|_{\Lip_c \leq 1}} \langle \phi (\mu - \mu'), \psi \rangle \leq \sup_{\| \psi \|_{\Lip_c \leq 1}} \| \phi \psi \|_{\Lip_c} \| \mu - \mu' \|_{\Lip_{c}^*} \leq \| \phi \|_{\Lip_c} W_c(\mu,\mu').
	\end{equation*}
	Thus by composition $K_{\mu,\nu} \in \BB(\Lip_c(X);\Lip_c(X))$ varies continuously with $\mu,\nu$. From this and by taking the quotient with constant functions we conclude that $(\mu,\nu) \mapsto (\id - K_{\mu,\nu} K_{\nu,\mu}) \in \BB(\Lip_c(X)/\R;\Lip_c(X)/\R)$ is continuous. Eventually, we need to uniformly bound the norm of $(\id - K_{\mu,\nu} K_{\nu,\mu})\inv$. We start by writing
	\begin{equation*}
		(\id - K_{\mu,\nu} K_{\nu,\mu})\inv = \id + K_{\mu,\nu} (\id - K_{\nu,\mu} K_{\mu,\nu})\inv K_{\nu,\mu}
	\end{equation*}         
    using the Neumann series as operators on the larger space \(\Cont(X)/\R\).
    Taking appropriate operator norms gives 
	\begin{align}
    \begin{split}
        \label{eq:boundedness_inverse}
		\| (\id - K_{\mu,\nu} &K_{\nu,\mu})\inv \|_{\Lip_c/\R \to \Lip_c/\R}   \\
		&\leq 1 + \norm{K_{\mu,\nu}}_{\Cont/\R \to \Lip_c/\R}  \|(\id - K_{\nu,\mu} K_{\mu,\nu})\inv\|_{\Cont/\R \to \Cont/\R} \|K_{\nu,\mu} \|_{\Lip_c/\R \to \Cont/\R} . 
    \end{split}
	\end{align}
	By the continuity we just proved and the compactness of $\prm(X)^2$, $\|K_{\nu,\mu} \|_{\Lip_c/\R \to \Lip_c/\R}$ is uniformly bounded. This extends immediately to \(\norm{K_{\nu,\mu}}_{\Lip_c/\R \to \Cont/\R}\). 
	We also have that $\|(\id - K_{\nu,\mu} K_{\mu,\nu})\inv\|_{\Cont/\R \to \Cont/\R}$ is bounded uniformly for all $\mu,\nu \in \prm(X)$ from~\eqref{eq:contraction_Kmunu}. Eventually, to bound $\|K_{\mu,\nu} \|_{\Cont/\R \to \Lip_c/\R}$, from~\eqref{eq:Lipc_bound_product} and~\eqref{eq:Lipc_Hc_bound},
	\begin{align*}
		\|K_{\mu,\nu}[\phi] \|_{\Lip_c} & \leq \| \exp(\fnumu/\varepsilon) \|_{\Lip_c} \| H_c[ \exp(\fmunu/\varepsilon) \phi \mu ]  \|_{\Hil_c} \\
		& \leq \| \exp(\fnumu/\varepsilon) \|_{\Lip_c} \| \exp(\fmunu/\varepsilon) \phi \mu  \|_{\mathrm{TV}} \leq C \| \phi \|_\infty, 
	\end{align*}
	where the constant depends only on the uniform bounds on the Schrödinger potentials in $\Cont(X)$ and $\Lip_c(X)$. We deduce that $\|K_{\mu,\nu}\|_{\Cont \to \Lip_c}$ is bounded uniformly, thus $\|K_{\mu,\nu}\|_{\Cont/\R \to \Lip_c/\R}$ as $K_{\mu,\nu}[\ones_X] = \ones_X$. Plugging this back in~\eqref{eq:boundedness_inverse} we deduce that the left hand side is bounded, which, together with the continuity of $\id - K_{\mu,\nu} K_{\nu,\mu}$, yields the conclusion.  
\end{proof}

Next we move to the continuity result for $\tilde{G}_\mu$. It is phrased in the space $\BB(\Hil_c;\Hil_c)$ which is the canonical one for this operator. 

\begin{proposition}
	\label{prop:metric_tensor_beta_smooth}
	The operator $\tilde{G}_{\mu}$ belongs to $\BB(\Hil_c;\Hil_c)$ and is continuous in operator norm as a function of $\mu$.
\end{proposition}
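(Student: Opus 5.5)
We decompose $\tilde{G}_\mu$ into a chain of operators, each continuous in $\mu$ in operator norm, and conclude by composition, mirroring the proof of Proposition~\ref{prop:metric_tensor_mu_smooth}. Since $\Hil_c$-continuity with respect to $\mu$ in the \textweakstar{} topology is what we need, the natural intermediate space is again $\Lip_c(X)$. Concretely, we write
\begin{equation*}
	\tilde{G}_\mu = H_c \circ M_{\mu/\beta_\mu} \circ Q_\mu \circ (\id - K_\mu)\inv \circ M_{1/\beta_\mu},
\end{equation*}
where $M_\psi$ denotes multiplication by $\psi$. We view the factors as the maps
$\Hil_c \to \Lip_c(X)$, then $\Lip_c(X)/\R \to \Lip_c(X)/\R$, then $\Lip_c(X)/\R \to \Lip_c(X)$, then $\Lip_c(X) \to \Lip_c(X)^*$, and finally $\Lip_c(X)^* \to \Hil_c$.

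\emph{Step 1: the multiplications.} By Lemma~\ref{lemma:continuous_potentials_Lipc} with $\nu=\mu$, the function $1/\beta_\mu = \exp(\fmumu/\varepsilon) \in \Lip_c(X)$ depends continuously on $\mu$. Combined with~\eqref{eq:Lipc_Hc_bound} and~\eqref{eq:Lipc_bound_product}, this shows that $\mu \mapsto M_{1/\beta_\mu} \in \BB(\Hil_c;\Lip_c(X))$ is continuous. For the map $\phi \mapsto \phi\,\mu/\beta_\mu$ from $\Lip_c(X)$ to $\Lip_c(X)^*$, we split the difference at two base points $\mu,\mu'$ into two pieces. The first is a change of the measure, bounded by $\|\phi/\beta_\mu\|_{\Lip_c} W_c(\mu,\mu')$ exactly as in the proof of Proposition~\ref{prop:metric_tensor_mu_smooth}. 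The second is a change of the density, bounded by $\|\phi\|_\infty\|1/\beta_\mu - 1/\beta_{\mu'}\|_\infty$. Both pieces vanish as $\mu'\to\mu$, since $W_c$ metrizes the \textweakstar{} topology.

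\emph{Step 2: the projection and the last map.} Write $Q_\mu[\phi] = \phi - \langle \mu,\phi\rangle$. It is bounded from $\Lip_c/\R$ to $\Lip_c$: choosing the representative with $\phi(x_0)=0$, one has $\|\phi\|_\infty \le \mathrm{diam}_{d_c}(X)\,\Lip_c(\phi)$. Its dependence on $\mu$ comes only through the rank-one term $\langle\mu,\phi\rangle\ones_X$. This term differs between $\mu$ and $\mu'$ by at most $W_c(\mu,\mu')\Lip_c(\phi)$ by~\eqref{eq:Kantorovich_duality}, so $Q_\mu$ is continuous in operator norm. The final map $H_c \in \BB(\Lip_c(X)^*;\Hil_c)$ is independent of $\mu$, and its boundedness follows by dualizing~\eqref{eq:Lipc_Hc_bound}, as in~\eqref{eq:Lip_Hc_bound_dual}.

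\emph{Step 3: the inverse, which I expect to be the main obstacle.} We must show that $\mu \mapsto (\id - K_\mu)\inv \in \BB(\Lip_c(X)/\R;\Lip_c(X)/\R)$ is continuous. The proof of Proposition~\ref{prop:metric_tensor_mu_smooth} already gives continuity of $K_\mu = K_{\mu,\mu}$ in $\BB(\Lip_c/\R;\Lip_c/\R)$, so by the resolvent identity it suffices to bound the inverse uniformly. We use the same trick as in~\eqref{eq:boundedness_inverse}, namely $(\id - K_\mu)\inv = \id + K_\mu(\id-K_\mu)\inv$ on $\Cont(X)/\R$. Here $(\id-K_\mu)\inv$ is bounded on $\Cont/\R$ uniformly in $\mu$ by the contraction estimate~\eqref{eq:contraction_Kmunu}. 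Moreover $\|K_\mu\|_{\Cont/\R\to\Lip_c/\R}$ is uniformly bounded, by the estimate at the end of that proof, and the embedding $\Lip_c/\R \to \Cont/\R$ is bounded. Together these give the uniform bound, and hence continuity of the inverse.

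Composing the operators of Steps 1--3 gives both claims: $\tilde{G}_\mu \in \BB(\Hil_c;\Hil_c)$, and $\mu \mapsto \tilde{G}_\mu$ is continuous in operator norm.
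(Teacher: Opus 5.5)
Your proof is correct, but it takes a different route from the paper's.

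\emph{The paper's route.} The paper argues by compactness. For $\mu_n\to\mu$ it picks unit vectors $\betadot_n$ nearly realizing $\|\tilde G_{\mu_n}-\tilde G_\mu\|_{\Hil_c\to\Hil_c}$ and extracts $\betadot_n\rightharpoonup\betadot$ weakly in $\Hil_c$. It then chains three facts:
\begin{itemize}
\item $\betadot_n/\beta_{\mu_n}\to\betadot/\beta_\mu$ uniformly, by weak-to-norm continuity of $\Hil_c\hookrightarrow\Cont(X)$ (Lemma~\ref{lemma:appendixBRGSD});
\item $(\id-K_{\mu_n})\inv[\betadot_n/\beta_{\mu_n}]$ converges in $\Cont(X)/\R$, by \cite[Prop.~4.14]{RGSD};
\item $H_c$ is \textweakstar-to-norm continuous.
\end{itemize}
Together these give $\tilde G_{\mu_n}\betadot_n\to\tilde G_\mu\betadot$ and $\tilde G_\mu\betadot_n\to\tilde G_\mu\betadot$ in norm. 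That argument stays in $\Cont(X)$, $\calM(X)$ and $\Hil_c$, and needs no control of $(\id-K_\mu)\inv$ outside $\Cont(X)/\R$. It reuses an existing joint-continuity result, but it is purely qualitative.

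\emph{Your route.} You factor $\tilde G_\mu$ through $\Lip_c(X)/\R$ and $\Lip_c(X)^*$ and prove operator-norm continuity factor by factor, exactly as in Proposition~\ref{prop:metric_tensor_mu_smooth}. The extra cost is your Step~3, which you handle correctly. The identity $(\id-K_\mu)\inv=\id+K_\mu(\id-K_\mu)\inv$ also shows that the $\Cont/\R$-inverse maps $\Lip_c/\R$ into itself, so invertibility there is automatic.

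In exchange you get one technique for both continuity propositions, and an argument that can be made quantitative. Write $K_\mu[\phi]=\beta_\mu\inv H_c[\phi\,\alpha_\mu]$. Then every factor's modulus is controlled by $W_c(\mu,\mu')$ and $\|\beta_\mu-\beta_{\mu'}\|_{\Hil_c}$. The latter is itself $\lesssim W_c(\mu,\mu')$ by \eqref{eq:Seps_lower_bound_Hc} and \eqref{eq:Seps_bound_Wc}. Neither estimate depends on the present proposition, so this yields a Lipschitz bound in $W_c$ that the compactness argument cannot give.

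\emph{A small imprecision in Step~1.} Lemma~\ref{lemma:continuous_potentials_Lipc} fixes a representative of the potentials. On the diagonal, that representative may differ from the symmetric self-potential $\fmumu$ defining $\beta_\mu$ by a $\mu$-dependent constant, so the lemma does not literally give continuity of $1/\beta_\mu$. The fix is short:
\begin{itemize}
\item $\mu\mapsto\beta_\mu\in\Hil_c$ is norm continuous (Theorem~\ref{theorem:alpha_beta_homeo}), hence continuous in $\Lip_c(X)$ by \eqref{eq:Lipc_Hc_bound};
\item $\Lip_c(\beta_\mu)\leq\|\beta_\mu\|_{\Hil_c}=1$ and $\beta_\mu$ is uniformly bounded away from zero, so $\|1/\beta_\mu\|_{\Lip_c}$ is bounded uniformly in $\mu$;
\item therefore $\|1/\beta_\mu-1/\beta_{\mu'}\|_{\Lip_c}\leq\|\beta_\mu-\beta_{\mu'}\|_{\Lip_c}\|1/\beta_\mu\|_{\Lip_c}\|1/\beta_{\mu'}\|_{\Lip_c}\lesssim\|\beta_\mu-\beta_{\mu'}\|_{\Hil_c}$.
\end{itemize}
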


\begin{proof}
	Assume $\mu_n \to \mu$ weakly, write $\beta_n = \beta_{\mu_n}$ and $\tilde{G}_n = \tilde{G}_{\mu_n}$ and take $\betadot_n \in \Hil_c$ of unit norm such that $\| (\tilde{G}_n - \tilde{G}_\mu) \betadot_n \|_{\Hil_c} \geq \frac{1}{2} \| \tilde{G}_n - \tilde{G}_\mu \|_{\Hil_c \to \Hil_c}$. Up to extraction, $\betadot_n$ converges weakly in $\Hil_c$ to a limit $\betadot$. It follows from Lemma~\ref{lemma:appendixBRGSD} that $\betadot_n /\beta_n$ converges in $\Cont(X)$ to a $\betadot/\beta_\mu$. As proved in \cite[Proposition 4.14]{RGSD}, it implies that $(\id - K_{\mu_n} )\inv [\betadot_n / \beta_n]$ converges to $(\id - K_\mu )\inv [\betadot / \beta_\mu]$ as $n \to \infty$ in $\Cont(X)/\R$. We can pair weak convergence of measures with strong convergence of functions, so that $\alpha_{\mu_n} Q_{\mu_n} (\id - K_{\mu_n} )\inv [\betadot_n / \beta_n]$ converges weakly in $\calM(X)$ to $\alpha_\mu Q_\mu (\id - K_\mu )\inv [\betadot / \beta_\mu]$. Eventually, $H_c  : \calM(X) \to \Hil_c$ is \textweakstar-to-norm continuous (Lemma~\ref{lemma:appendixBRGSD}), thus $\tilde{G}_n \betadot_n \to \tilde{G}_\mu \betadot$ in norm in $\Hil_c$. The conclusion follows as also $\tilde{G}_\mu \betadot_n \to \tilde{G}_\mu \betadot$, so $\| (\tilde{G}_n - \tilde{G}_\mu) \betadot_n \|_{\Hil_c} \to 0$.      
\end{proof}

\subsection{Proof of the local expansion for the distance}

\begin{proof}[Proof of Theorem~\ref{theorem:expansion_dS}]
	We call $\gamma$ a modulus of continuity of $\mu \in (\prm(X),\dS) \mapsto \tg_\mu$ as a bilinear form on $\Hil_c$, by Definition~\ref{def:tg_extended} and Proposition~\ref{prop:metric_tensor_beta_smooth} it exists. In this proof, to simplify notations we write $A \lesssim B$ if there exists a constant $C > 0$ depending only on $c$, $X$ and $\varepsilon$ such that $A \leq C B$. 
	
	\medskip
	
	\emph{1st step: $(\mu^A_t)_t$ is asymptotically optimal.}
	Call $\varrho = \max \{\dS(\mu,\ol{\mu}), \dS(\nu,\ol{\mu})\}$. From the estimates in Theorem~\ref{theorem:dS_metric} and an easy geometric consideration for geodesics on the sphere, for any $t \in [0,1]$
	\begin{equation*}
		\dS(\ol{\mu},\mu^A_t) \lesssim \| \ol{\beta} - \beta_{t}^A \|_{\Hil_c} \lesssim \max \{ \| \ol{\beta} - \beta_\mu \|_{\Hil_c}, \| \ol{\beta} - \beta_\nu \|_{\Hil_c} \} \lesssim \varrho. 
	\end{equation*}
	Given the explicit expression of the geodesic on the sphere~\eqref{eq:explicit_formula_arc}, using that the angle $\theta$ between $\beta_\mu$ and $\beta_\nu$ satisfies $|\theta| \lesssim \dS(\mu,\nu)$, we have $\| \betadot^A_t \|_{\Hil_c} \lesssim \| \beta_\nu - \beta_\mu \|_{\Hil_c}$ and 
	\begin{equation*}
		\betadot^A_t = \beta_\nu - \beta_\mu + o(\dS(\mu,\nu)). 
	\end{equation*}
	as $\dS(\mu,\nu) \to 0$. Combining the expansion of $\mu^A_t$ and $\betadot^A_t$:
	\begin{align}
		\notag
		\dS^2(\mu,\nu) \leq E((\mu^A_t)_t) & = \int_0^1 \tg_{\mu^A_t}(\betadot^A_t, \betadot^A_t) \diff t \\
		\notag
		& = \int_0^1 \tg_{\ol{\mu}}(\betadot^A_t, \betadot^A_t) \diff t + \calO \left( \gamma(\varrho) \cdot \| \beta_\nu - \beta_\mu \|_{\Hil_c}^2 \right) \\
		\notag
		& = \int_0^1\tg_{\ol{\mu}}(\beta_\nu - \beta_\mu,\beta_\nu - \beta_\mu) \diff t + \calO \left( \gamma(\varrho) \cdot \dS^2(\mu,\nu) \right) + o(\dS^2(\mu,\nu)) \\
		\label{eq:zz_aux_upper_bound}
		& = \tg_{\ol{\mu}}(\beta_\nu - \beta_\mu,\beta_\nu - \beta_\mu) \diff t + o(\dS^2(\mu,\nu)),
	\end{align}
	where in the last equality we used $\dS(\mu,\nu) \leq 2 \varrho$.
	
	On the other hand let $(\beta_t)_t$ be the path in $\calB$ corresponding to a geodesic $(\mu_t)_t$ for $\dS$ between $\mu$ and $\nu$. We easily have
	\begin{equation*}
		\dS(\mu_t,\mu^A_t)  \leq \dS(\mu_t,\mu) + \dS(\mu,\ol{\mu}) + \dS(\ol{\mu},\mu^A_t)  \lesssim \dS(\mu,\nu) + 2 \varrho \lesssim \varrho. 
	\end{equation*}
	Using the continuity of $\mu \mapsto \tg_\mu(\cdot,\cdot)$, the convexity of the map $\betadot \mapsto \tg_{\ol{\mu}}(\betadot,\betadot)$ on \(\Hil_c\) and Jensen's inequality for Bochner integrals~\cite[Theorem~3.2]{Perlman1974}
	\begin{align}
		\notag
		\dS^2(\mu,\nu) = E((\mu_t)_t) & = \int_0^1 \tg_{\ol{\mu}}(\betadot_t,\betadot_t) \diff t + \calO\left( \gamma(\varrho) \cdot \norm{(\betadot_t)_t}_{L^2((0,1);\Hil_c)}^2\right) \\
		\notag
		& \geq \tg_{\ol{\mu}}\left(\int_0^1 \betadot_t \diff t,\int_0^1 \betadot_t \diff t\right) + \calO\left( \gamma(\varrho) \cdot \dS^2(\mu_0,\mu_1)\right) \\
		\label{eq:zz_aux_lower_bound}
		& = \tg_{\ol{\mu}}(\beta_\mu - \beta_\nu,\beta_\mu - \beta_\nu) +  o( \dS^2(\mu_0,\mu_1)).
	\end{align}
	Combining~\eqref{eq:zz_aux_upper_bound} and~\eqref{eq:zz_aux_lower_bound}, we obtain as $\varrho \to 0$
	\begin{equation*}
		\dS^2(\mu,\nu) = E((\mu^A_t)_t) + o( \dS^2(\mu,\nu)) = \tg_{\ol{\mu}}(\beta_\mu - \beta_\nu,\beta_\mu - \beta_\nu) +  o( \dS^2(\mu,\nu)).
	\end{equation*}
	This proves~\eqref{eq:expansion_dS_beta} as well as the first equality in~\eqref{eq:approximate_geodesics}.
	
	\medskip
	
	\emph{2nd step: $(\mu^V_t)_t$ is close to $(\mu^A_t)_t$}. Next we want to do the estimates in the $\mu$ variable. We will prove in this step that 
	\begin{equation}
		\label{eq:mudotV_mudotA_close}
		\sup_{t \in [0,1]} \| \mudot^V_t - \mudot^A_t \|_{\Lip_c^*} = o(\dS(\mu,\nu)). 
	\end{equation} 
	Recalling the explicit expression of the geodesic arc~\eqref{eq:explicit_formula_arc}, we write $u_t = \sin((1-t) \theta)/\sin(\theta)$ and $v_t = \sin(t \theta)/\sin(\theta)$, with $\theta$ being the angle between $\beta_\mu$ and $\beta_\nu$ and suppressing the dependence in $\theta$ in the notation. 
	From $\mu = \alpha_\mu \beta_\mu$, and $\alpha = H_c[\beta]$, 
	\begin{equation*}
		\mu^A_t = (u_t \alpha_\mu + v_t \alpha_\nu)(u_t \beta_\mu + v_t \beta_\nu) = u_t^2 \mu + v_t^2 \nu + u_t v_t (\mu + \nu) - u_t v_t (\alpha_\nu - \alpha_\mu)(\beta_\nu - \beta_\mu).
	\end{equation*}
	The remainder $(\alpha_\nu - \alpha_\mu)(\beta_\nu - \beta_\mu)$ is negligible in the space $\Lip_c(X)^*$. Indeed, using in particular~\eqref{eq:Lipc_bound_product} and~\eqref{eq:Lipc_Hc_bound}, 
	\begin{align*}
		\| (\alpha_\nu - \alpha_\mu)(\beta_\nu - \beta_\mu) \|_{\Lip_c^*} & \leq \| \alpha_\nu - \alpha_\mu \|_{\Lip_c^*} \; \| \beta_\nu - \beta_\mu \|_{\Lip_c} \\
		& \lesssim \| \alpha_\nu - \alpha_\mu \|_{\Lip_c^*} \; \| \beta_\mu - \beta_\nu \|_{\Hil_c} \lesssim  \|\alpha_\nu - \alpha_\mu \|_{\Lip_c^*} \; \dS(\mu,\nu). 
	\end{align*}
	As $\| \vdot \|_{\Lip_c^*}$ is nothing else than the bounded Lipschitz distance, it metrizes \textweakstar{} convergence over $\calM(X)$ (see e.g. \cite[Thm.~8.3.2]{bogachev2007measure}) and $\mu \mapsto \alpha_\mu$ is a homeomorphism (Theorem~\ref{theorem:alpha_beta_homeo}). Thus $\| \alpha_\nu - \alpha_\mu\|_{\Lip_c^*} = o(1)$ as $\dS(\mu,\nu) \to 0$, yielding 
	\begin{equation}
		\label{eq:p:dS_asymp:control_reminder}
		\| (\alpha_\nu - \alpha_\mu)(\beta_\nu - \beta_\mu) \|_{\Lip_c^*} = o(\dS(\mu,\nu)). 
	\end{equation}
	As $u_t = 1-t + o(\theta)$, $\dot{u}_t = -1 + o (\theta)$, $v_t = t + o(\theta)$ and $\dot{v}_t = 1 + o(\theta)$, with $|\theta| \lesssim \dS(\mu,\nu)$ we have in $\Lip_c(X)^*$: 
	\begin{equation*}
		\mudot^A_t = 2 \dot{u}_t u_t \mu + 2 \dot{v} v_t \nu + (\dot{u}_t v_t + u_t \dot{v_t})(\mu + \nu) + o(\dS(\mu,\nu)) = \nu - \mu + o(\dS(\mu,\nu)).     
	\end{equation*}
	The conclusion~\eqref{eq:mudotV_mudotA_close} follows as $\mudot^V_t = \nu - \mu$ for all $t$. 
	
	\medskip

        \emph{3rd step: $(\mu^V_t)_t$ is asymptotically optimal}. Write \(\mu_0 = \mu\), \(\mu_1 = \nu\). Proposition~\ref{prop:metric_tensor_mu_smooth} together with the compactness of $X$ yields $\g_\mu(\mudot,\mudot) \lesssim \| \mudot \|^2_{\Lip_c^*}$. Thus with~\eqref{eq:mudotV_mudotA_close} from the previous step applied at $t=0$ we obtain
        \begin{equation} \label{eq:p:expansion_dS_mu:betadot_0}
            \norm{\betadot^V_0 - \betadot^A_0}_{\Hil_c}^2 \asymp \tg_{\mu_0}(\betadot^V_0 - \betadot^A_0, \betadot^V_0 - \betadot^A_0) = \g_{\mu_0}(\mudot^V_0 - \mudot^A_0, \mudot^V_0 - \mudot^A_0) = o(\dS^2(\mu,\nu))
        \end{equation}
        We want to show that \((\beta^V_t)_t\) approximates the arc at all intermediate times as well.        
        We know that \(\betadot^A_0 = \beta_1 - \beta_0 + o(\norm{\beta_1 - \beta_0}_{\Hil_c})\) and \(\norm{\beta_1 - \beta_0}_{\Hil_c} \asymp \dS(\mu_0,\mu_1)\). Thus we can reformulate~\eqref{eq:p:expansion_dS_mu:betadot_0} as $\norm{\betadot^V_0 - (\beta_1 - \beta_0)}_{\Hil_c} = o(\norm{\beta_1 - \beta_0}_{\Hil_c})$
        uniformly as \(\norm{\beta_1 - \beta_0}_{\Hil_c} \to 0\), allowing only for slight deviations from the ``right'' direction. 
        We apply the same argument to the interpolation between $\mu^V_t$ and $\mu_1$, showing
        \begin{equation} \label{eq:p:expansion_dS_mu:betadot_t_o1}
           \left\| \betadot^V_t - \frac{1}{1-t} (\beta_1 - \beta^V_t) \right\|_{\Hil_c} = o \left( \frac{1}{1-t} \norm{\beta_1 - \beta^V_t}_{\Hil_c} \right)
        \end{equation} 
        as \( \frac{1}{1-t} \norm{\beta_1 - \beta^V_t}_{\Hil_c} \to 0\). 
        Write \(\beta^C_t = (1-t) \beta_0 + t \beta_1\) for the chord in \(\Hil_c\). Then \(\beta_1 - \beta_0 = \frac{1}{1-t} (\beta_1 - \beta^C_t)\). In particular, for any $t \in [0,1/2]$
        \begin{align*}
            \frac{\diff}{\diff t} \| \beta^V_t - \beta^C_t \|_{\Hil_c} & \leq \| \betadot^V_t - \betadot^C_t \|_{\Hil_c} \\
            & \leq  \bnorm{\betadot^V_t - \frac{1}{1-t} (\beta_1 - \beta^V_t)}_{\Hil_c} + \frac{1}{1-t} \bnorm{ \beta^V_t - \beta^C_t}_{\Hil_c} \\
            & \leq o(\norm{\beta_1 - \beta^V_t}_{\Hil_c}) + 2 \bnorm{ \beta^V_t - \beta^C_t}_{\Hil_c} \\
            & \leq o(\| \beta_1 - \beta_0 \|_{\Hil_c}) + (2+o(1)) \cdot \bnorm{ \beta^V_t - \beta^C_t}_{\Hil_c}.
        \end{align*}
        With Grönwall's inequality we deduce that, uniformly for $t \in [0,1/2]$, 
        \begin{equation*}
            \| \beta^V_t - \beta^C_t \|_{\Hil_c} = o(\| \beta_1 - \beta_0 \|_{\Hil_c}).
        \end{equation*}
        We deduce for the derivative, uniformly for $t \in [0,1/2]$, 
        \begin{equation*}
            \norm{\betadot^V_t - (\beta_1 - \beta_0)}_{\Hil_c} 
            \leq \norm{\betadot^V_t - \frac{1}{1-t} (\beta_1 - \beta^V_t)}_{\Hil_c} + \frac{1}{1-t} \norm{\beta^V_t - \beta^C_t}_{\Hil_c} 
            = o(\norm{\beta_1 - \beta_0}_{\Hil_c}).
        \end{equation*}
        The symmetric argument starting from \(\mu_1\) completes this bound for all \(t \in [0,1]\). That is, we have
        \begin{equation} \label{eq:p:expansion_dS_mu:approx}
            \norm{\beta^V_t - \beta^A_t}_{\Hil_c} = o(\norm{\beta_1 - \beta_0}_{\Hil_c}), \qquad \norm{\betadot^V_t - (\beta_1 - \beta_0)}_{\Hil_c} = o(\norm{\beta_1 - \beta_0}_{\Hil_c})
        \end{equation}
        as \(\norm{\beta_1 - \beta_0}_{\Hil_c} \to 0\), uniformly over \(t \in [0,1]\).

        With~\eqref{eq:p:expansion_dS_mu:approx} and~\eqref{eq:expansion_dS_beta} we can estimate the energy of the vertical interpolation as 
        \begin{align*}
            \g_{\mu^V_t}(\mudot^V_t,\mudot^V_t) 
            &= \tg_{\mu^V_t}(\betadot^V_t,\betadot^V_t) \\
            &= \tg_{\mu^V_t}(\beta_1 - \beta_0, \beta_1 - \beta_0) + o(\norm{\beta_1 - \beta_0}_{\Hil_c}^2) 
            = \dS^2(\mu_0,\mu_1) + o(\dS^2(\mu_0,\mu_1))
        \end{align*}
        uniformly in \(t \in [0,1]\) and thus also find \(E((\mu^V_t)_t) = \dS^2(\mu_0,\mu_1) + o(\dS^2(\mu_0,\mu_1))\). \qedhere
\end{proof}

\subsection{Proof of the local expansion for the Sinkhorn divergence}

We now proceed to the proof of Theorem~\ref{theorem:estimates_with_Wc}. We rely on two different rewritings of the Sinkhorn divergence, which give respectively the bounds~\eqref{eq:Seps_asymp_ubound_Wc} and~\eqref{eq:Seps_bound_Wc} on one hand, and~\eqref{eq:Seps_asymp_lbound_dS} on the other hand.

We start with the estimates involving \(W_c\). Recall $H_{\mu,\nu}$ and $K_{\mu,\nu}$ can be found in Definition~\ref{def:kernel_operators}.

\begin{proposition}
	\label{prop:rewriting_Sinkhorn}
	If $\mu,\nu \in \prm(X)$ and with $\mu_t = (1-t) \mu + t \nu$, 
	\begin{equation*}
		S_\varepsilon(\mu,\nu) = \frac{\varepsilon}{2} \int_0^1 \int_0^1  \pair{\nu - \mu}{(\id - K_{\mu_s,\mu_t} K_{\mu_t,\mu_s})\inv H_{\mu_s,\mu_t}[\nu - \mu]} \diff t \diff s. 
	\end{equation*}
\end{proposition}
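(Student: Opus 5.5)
The plan is to view $S_\varepsilon$ along the vertical segment as a function of two separate time variables and apply the fundamental theorem of calculus twice. Define $F(s,t) \assign \OT_\varepsilon(\mu_s,\mu_t)$ for $s,t \in [0,1]$. Then $S_\varepsilon(\mu,\nu) = F(0,1) - \tfrac12 F(0,0) - \tfrac12 F(1,1)$. By symmetry of $c$ we have $F(s,t) = F(t,s)$. The elementary identity
\begin{equation*}
F(0,1) - \tfrac12 F(0,0) - \tfrac12 F(1,1) = -\tfrac12 \int_0^1\int_0^1 \partial_s\partial_t F(s,t) \diff t \diff s
\end{equation*}
then holds. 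To check it, integrate the mixed derivative over the square to get $F(1,1)-F(1,0)-F(0,1)+F(0,0)$, and use symmetry. So the whole task reduces to computing the mixed second derivative of $F$ and checking that $-\partial_s\partial_t F(s,t) = \varepsilon \langle \nu-\mu, G_{\mu_s,\mu_t}[\nu-\mu]\rangle$, with $G$ as in \eqref{eq:def_Qmunu_simpler}.

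\textbf{First derivatives.} By the envelope theorem applied to the dual \eqref{eq:entropic_dual}, using that $\mu_t$ is affine in $t$ and that the marginal constraints make the exponential term's derivative collapse, one has
\begin{equation*}
\partial_t F(s,t) = \langle \nu-\mu, g_{\mu_s,\mu_t}\rangle, \qquad \partial_s F(s,t) = \langle \nu-\mu, f_{\mu_s,\mu_t}\rangle .
\end{equation*}
This is the standard first-order formula, e.g.\ as in \cite{Feydy2018}. These are well defined because $\nu-\mu$ has zero mass, so the additive constant in the potentials is irrelevant.

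\textbf{Derivative of the potentials.} Next I would differentiate $g_{\mu_s,\mu_t}$ in $s$, i.e.\ differentiate the Schrödinger system \eqref{eq:Schroedinger_system} with respect to the first measure. Write $\dot f,\dot g$ for the derivatives. Linearising $f = T_\varepsilon(g,\mu_t)$ gives $\dot f = -K_{\mu_t,\mu_s}[\dot g]$ modulo constants. Linearising $g = T_\varepsilon(f,\mu_s)$ gives $\dot g = -K_{\mu_s,\mu_t}[\dot f] - \varepsilon H_{\mu_s,\mu_t}[\nu-\mu]$. The $H$ term comes from differentiating the measure inside the logarithm and using the normalisation $\int k \diff\mu_s = 1$.

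Substituting the first relation into the second gives
\begin{equation*}
(\id - K_{\mu_s,\mu_t}K_{\mu_t,\mu_s})[\dot g] = -\varepsilon H_{\mu_s,\mu_t}[\nu-\mu]
\end{equation*}
in $\Cont(X)/\R$. Inverting with \eqref{eq:contraction_Kmunu} yields $\partial_s g_{\mu_s,\mu_t} = -\varepsilon G_{\mu_s,\mu_t}[\nu-\mu]$. Index conventions for which operator carries which pair need care, but the structure is forced. Pairing with $\nu-\mu$ then gives the integrand. The constant $\tfrac{\varepsilon}{2}$ arises as $\tfrac12\cdot\varepsilon$.

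\textbf{Main obstacle.} The hard part is rigor: showing $(s,t)\mapsto g_{\mu_s,\mu_t}$ is continuously differentiable so that the FTC and envelope arguments are legitimate. I would justify this by the implicit function theorem on $\Cont(X)/\R$. The required invertibility of $\id - K K$ is \eqref{eq:contraction_Kmunu}. Alternatively, I would reuse the argument behind \cite[Thm.~3.4]{RGSD}, which treats the diagonal case, extended verbatim to distinct measures. Continuity of the integrand in $(s,t)$ then follows from Proposition~\ref{prop:metric_tensor_mu_smooth}. This guarantees integrability and lets one pass from a.e.\ or one-sided derivatives to the integral identity.
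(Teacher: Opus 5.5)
Your proposal is correct and follows essentially the same route as the paper: set $F(s,t)=\OT_\varepsilon(\mu_s,\mu_t)$, use the envelope formula for the first derivative, differentiate the Schrödinger potentials, and integrate the mixed derivative over $[0,1]^2$ using the symmetry of $F$. For the $C^1$ regularity of the potentials, which you propose to obtain via the implicit function theorem, the paper simply cites \cite[Prop.~3.14]{RGSD}; your linearisation of the Schrödinger system, including the index placement in $K_{\mu_s,\mu_t}K_{\mu_t,\mu_s}$ and $H_{\mu_s,\mu_t}$, matches Proposition~\ref{prop:time_derivative_of_potentials} exactly.
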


\noindent We recognize the importance of considering the ``off-diagonal'' operators $G_{\mu,\nu}$ for $\mu \neq \nu$ as these are the ones which naturally appear when rewriting the Sinkhorn divergence.
The proof is a mild adaptation of the techniques in \cite{RGSD,Hardion2025}. We first state a slight extension of the result in \cite[Sec.~3]{RGSD}.  

\begin{proposition}[{\cite[Lem.~3.13,~Prop.~3.14]{RGSD}}] \label{prop:time_derivative_of_potentials} 
	Let \((\mu_t)_{t \in I}\) be weakly continuously differentiable in $\mathcal{M}(X)$. Then the potential \(f_{t,s} \assign f_{\mu_t,\mu_s}\) is continuously differentiable in \(\Cont(X)/\R\) with respect to the time parameters in \((t,s) \in I \times I\) and the derivatives satisfy
	\begin{align*}
		\begin{split} 
			&\frac{\partial f_{t,s}}{\partial s}  = - \varepsilon (\id - K_{\mu_s,\mu_t} K_{\mu_t,\mu_s})\inv H_{\mu_s,\mu_t} [ \mudot_s ] \in \Cont(X) /\R, \\ 
			&\frac{\partial f_{t,s}}{\partial t}  = \varepsilon K_{\mu_s,\mu_t} (\id - K_{\mu_t,\mu_s} K_{\mu_s,\mu_t})\inv H_{\mu_t,\mu_s} [ \mudot_t ] \in \Cont(X) /\R. 
		\end{split}
	\end{align*}
\end{proposition}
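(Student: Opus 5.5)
The plan is to follow the argument of \cite[Lem.~3.13,~Prop.~3.14]{RGSD}, with the two measures now moving independently. Consider the map $\Phi \colon \Cont(X)/\R \times \Cont(X)/\R \times I \times I \to \Cont(X)/\R \times \Cont(X)/\R$ given by
\begin{equation*}
	\Phi(f,g,t,s) = \bigl( f - T_\varepsilon(g,\mu_s),\; g - T_\varepsilon(f,\mu_t) \bigr).
\end{equation*}
The Schrödinger system~\eqref{eq:Schroedinger_system} says that $(f_{t,s}, g_{t,s}) \assign (f_{\mu_t,\mu_s}, g_{\mu_t,\mu_s})$ is a zero of $\Phi(\cdot,\cdot,t,s)$. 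On the quotient space, $T_\varepsilon$ is well defined up to the sign convention for constants. I will apply the implicit function theorem in the Banach space $\Cont(X)/\R$; alternatively, one can pass to representatives normalized by~\eqref{eq:sup_bound_potentials}.

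The first step is to check that $\Phi$ is $\Cont^1$ jointly in $(f,g,t,s)$. In the potentials this is standard, since $T_\varepsilon$ is a smooth log-integral operator. In the time variables it holds because $t \mapsto \langle \mu_t, \phi\rangle$ is $\Cont^1$ jointly continuously in $\phi$, and $T_\varepsilon(g,\mu)$ depends on $\mu$ only through the pairing of $\mu$ with the continuous family $\exp((g - c(\cdot,y))/\varepsilon)$. The compactness of $X$ makes these derivatives uniform in $y$, so the maps are $\Cont^1$ into $\Cont(X)$. Differentiating $T_\varepsilon(g,\mu_s)$ in $g$ gives the averaging operator $K_{\mu_s,\mu_t}$ with respect to the optimal plan, and the derivative in $s$ gives $-\varepsilon H_{\mu_s,\mu_t}[\mudot_s]$ (up to the normalizing factor, which is $1$ at the optimum). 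The analogous formulas hold for the $t$ derivative.

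The key step is invertibility of $D_{(f,g)}\Phi = \begin{pmatrix} \id & -K_{\mu_s,\mu_t} \\ -K_{\mu_t,\mu_s} & \id \end{pmatrix}$ on $(\Cont(X)/\R)^2$. Eliminating $g$ reduces this to invertibility of $\id - K_{\mu_s,\mu_t}K_{\mu_t,\mu_s}$. That follows from the contraction estimate~\eqref{eq:contraction_Kmunu}, valid for the off-diagonal kernels, via a Neumann series with bound $(1-q^2)^{-1}$ uniform in $(t,s)$. The implicit function theorem, together with uniqueness of the Schrödinger potentials in the quotient, then gives that $(t,s)\mapsto f_{t,s}$ is $\Cont^1$.

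The formulas come from solving the linear system. For the $s$ derivative, only the first component of $\Phi$ depends on $s$, which yields $\partial_s f = -\varepsilon(\id - K_{\mu_s,\mu_t}K_{\mu_t,\mu_s})\inv H_{\mu_s,\mu_t}[\mudot_s]$. For the $t$ derivative, only the second component depends on $t$, which gives $\partial_t g = -\varepsilon(\id - K_{\mu_t,\mu_s}K_{\mu_s,\mu_t})\inv H_{\mu_t,\mu_s}[\mudot_t]$, and then $\partial_t f = K\,\partial_t g$ up to sign. I expect the main obstacle to be careful bookkeeping of which kernel ($k_{\mu_t,\mu_s}$ versus its transpose) and which sign appears at each step, together with matching the stated convention $f_{t,s} = f_{\mu_t,\mu_s}$. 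The analytic steps themselves are direct adaptations of the diagonal case.
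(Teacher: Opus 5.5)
Your route is essentially the paper's. The paper cites the implicit-function-theorem argument of \cite[Prop.~3.14]{RGSD} for differentiability, with $\id - K_{\mu_s,\mu_t}K_{\mu_t,\mu_s}$ invertible on $\Cont(X)/\R$ by~\eqref{eq:contraction_Kmunu}, and obtains the formulas by differentiating the Schr\"odinger fixed-point relations. It differentiates the composed identity $f_{t,s} = T_\varepsilon(T_\varepsilon(f_{t,s},\mu_t),\mu_s)$, whose linearization is directly $\id - K_{\mu_s,\mu_t}K_{\mu_t,\mu_s}$. Your $2\times 2$ block system reaches the same operator as a Schur complement, which is equivalent.

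The sign you leave open does have to be fixed, because your matrix as written gives the wrong answer for $\partial_t f$.

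\begin{itemize}
\item Since $T_\varepsilon(g+\lambda,\mu) = T_\varepsilon(g,\mu)-\lambda$, the linearizations at the optimum are $D_g T_\varepsilon(g_{t,s},\mu_s) = -K_{\mu_s,\mu_t}$ and $D_f T_\varepsilon(f_{t,s},\mu_t) = -K_{\mu_t,\mu_s}$.
\item So the off-diagonal blocks of $D_{(f,g)}\Phi$ are $+K_{\mu_s,\mu_t}$ and $+K_{\mu_t,\mu_s}$, not $-K$.
\item This changes neither invertibility nor $\partial_s f$, since the product $K_{\mu_s,\mu_t}K_{\mu_t,\mu_s}$ appears with the same sign either way.
\item It does change the first row of the $t$-system, which becomes $\partial_t f + K_{\mu_s,\mu_t}\,\partial_t g = 0$, so $\partial_t f = -K_{\mu_s,\mu_t}\,\partial_t g$.
\item Combined with your correct $\partial_t g = -\varepsilon(\id - K_{\mu_t,\mu_s}K_{\mu_s,\mu_t})\inv H_{\mu_t,\mu_s}[\mudot_t]$, this gives the stated $+\varepsilon$. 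Your matrix would give $-\varepsilon$.
\end{itemize}

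A quick consistency check is the diagonal case $\mu_s=\mu_t=\mu$. With the correct signs, the two partial derivatives sum to $-\varepsilon(\id+K_\mu)\inv H_\mu[\mudot]$. This matches Theorem~\ref{theorem:metric_tensor_in_betadot} with $\beta_\mu = \exp(-\fmumu/\varepsilon)$. The other sign would give $-\varepsilon(\id-K_\mu)\inv H_\mu[\mudot]$.

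There is also a small step missing in identifying the implicit-function branch with the Schr\"odinger potentials. Because you quotient $f$ and $g$ separately, a zero of $\Phi$ only says $f - T_\varepsilon(g,\mu_s) \equiv a$ and $g - T_\varepsilon(f,\mu_t)\equiv b$ for some constants $a,b$. Integrating the two resulting marginal identities against $\mu_t$ and $\mu_s$ forces $a=b$. That is what makes the zeros of $\Phi$ exactly the Schr\"odinger classes.
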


\begin{proof}
	The differentiability was already proved in \cite[Prop.~3.14]{RGSD}. The formulas are obtained by differentiating the identity \(f_{t,s} = T_\varepsilon(T_\varepsilon(f_{t,s},\mu_t),\mu_s)\) in time and reordering terms as in \cite[Prop.~3.14]{RGSD}, the only added complexity is that one has to keep track of the distinction between \(K_{\mu_t,\mu_s}\) and \(K_{\mu_s,\mu_t}\). See in particular \cite[Prop.~7.5]{Hardion2025} where the first equation is derived.   
\end{proof}

\noindent With that, we proceed to prove Proposition~\ref{prop:rewriting_Sinkhorn}. 

\begin{proof}[Proof of Proposition~\ref{prop:rewriting_Sinkhorn}]
	Let \(F(t,s) = \OT_\varepsilon(\mu_t,\mu_s)\), which of course is symmetric. From \cite[Lem~3.2]{RGSD} (see also \cite[Prop.~2]{Feydy2018}), the function $F$ is differentiable once with $\partial_t F(t,s) = \langle \mu - \nu, f_{t,s} \rangle$. 
	With Proposition~\ref{prop:time_derivative_of_potentials} we see that $\partial_t F(t,s)$ is continuously differentiable, and moreover
	\begin{equation*}
		\partial_{s} \partial_t F(t,s) = - \varepsilon \langle \nu - \mu, (\id - K_{\mu_s,\mu_t} K_{\mu_t,\mu_s})\inv H_{\mu_s,\mu_t} [ \nu - \mu ] \rangle
	\end{equation*}
	As $F$ is of class $C^2$, by the fundamental theorem of calculus
	\begin{align*}
		\frac{1}{2} ( F(1,0) + F(0,1) - F(0,0) - F(1,1) ) &=  - \frac{1}{2}\int_{0}^1 \int_{0}^1 \partial_{s} \partial_t F(t,s) \diff t \diff s,
	\end{align*}
	and the conclusion follows as we recognize $S_\varepsilon(\mu,\nu)$ in the right hand side. 
\end{proof}

\begin{proof}[Proof of Theorem~\ref{theorem:estimates_with_Wc}: estimates~\eqref{eq:Seps_asymp_ubound_Wc} and~\eqref{eq:Seps_bound_Wc}]
We begin with~\eqref{eq:Seps_asymp_ubound_Wc}. 
Call $\omega$ a modulus of continuity of $G_{\mu,\nu}$ as defined in~\eqref{eq:def_Qmunu_simpler}, when we endow for simplicity $\prm(X)^2$ with the Wasserstein distance $W_c$, meaning 
\begin{equation*}
    \| G_{\mu,\nu} - G_{\mu',\nu'} \|_{\Lip_{c,0}^* \to \Lip_{c}/\R} \leq \omega \left( W_c(\mu,\mu') + W_c(\nu,\nu') \right).  
\end{equation*}
Its existence is guaranteed by Proposition~\ref{prop:metric_tensor_mu_smooth}. From the explicit formula of Proposition~\ref{prop:rewriting_Sinkhorn} together with the definition of the metric tensor, writing $\mu_t = (1-t) \mu + t \nu$ we have 
\begin{align*}
    |S_\varepsilon(\mu,\nu) - \g_\mu(\nu-\mu,\nu-\mu)| & = \left| \frac{\varepsilon}{2} \int_0^1 \int_0^1  \pair{\nu - \mu}{\{ G_{\mu_s,\mu_t} - G_{\mu,\mu} \}[\nu - \mu]} \diff t \diff s \right| \\
    & \leq \frac{\varepsilon}{2} W_c(\mu,\nu)^2 \int_0^1 \int_0^1 \| G_{\mu_s,\mu_t} - G_{\mu,\mu}  \|_{\Lip_{c,0}^* \to \Lip_{c}/\R} \diff t \diff s  \\
    & \leq \frac{\varepsilon}{2} W_c(\mu,\nu)^2 \; \omega( W_c(\mu,\nu)), 
\end{align*}
where we have used the Kantorovich duality~\eqref{eq:Kantorovich_duality} in the first inequality. We exchange \(\g_\mu(\nu-\mu,\nu-\mu)\) for \(\dS^2(\mu,\nu)\) using Theorem~\ref{theorem:expansion_dS} and obtain $S_\varepsilon(\mu,\nu) = \dS^2(\mu,\nu) + o \left( W_c(\mu,\nu)^2 \right)$,
which of course implies~\eqref{eq:Seps_asymp_ubound_Wc}.

We move on to the third estimate~\eqref{eq:Seps_bound_Wc}: as $(\mu,\nu) \mapsto G_{\mu,\nu} \in \BB(\Lip_c(X)^*_0; \Lip_c(X)/\R)$ is continuous, it is uniformly bounded on $\prm(X)^2$, call $C$ a constant bounding its operator norm. Again from the explicit formula of Proposition~\ref{prop:rewriting_Sinkhorn} and the Kantorovich duality~\eqref{eq:Kantorovich_duality}, 
\begin{align*}
    S_\varepsilon(\mu,\nu) & = \frac{\varepsilon}{2} \int_0^1 \int_0^1  \pair{\nu - \mu}{ G_{\mu_s,\mu_t} [\nu - \mu]} \diff t \diff s \\
    & \leq \frac{\varepsilon}{2} \left( \sup_{(\mu',\nu') \in \prm(X)^2} \| G_{\mu',\nu'}  \|_{\Lip_{c,0}^* \to \Lip_{c}/\R} \right) \;  W_c(\mu,\nu)^2. 
\end{align*}
The conclusion follows as the constant depends only on $c$, $X$ and $\varepsilon$.
\end{proof}

\begin{remark}[Why not a better remainder?]
\label{rem:why_not_better}
If we want the remainder in $o(\dS^2(\mu,\nu))$ in these bounds, we should work in the variable $\beta$ to try to measure all norms in $\Hil_c$. If $(\mu_t)_t$ is any reasonable path joining $\mu$ to $\nu$ (e.g. arc interpolation), then Proposition~\ref{prop:rewriting_Sinkhorn} should be valid with $\nu - \mu$ substituted with $\mudot_t$ or $\mudot_s$. Using the link between $\mudot_t$ and $\betadot_t$ recalled in Theorem~\ref{theorem:metric_tensor_in_betadot}, we can expect $S_\varepsilon$ to be
\begin{equation*}
    \frac{\varepsilon}{2} \int_0^1 \int_0^1  \pair{H_{\mu_s}\inv(\id + K_{\mu_s})[\betadot_s/\beta_s]}{(\id - K_{\mu_s,\mu_t} K_{\mu_t,\mu_s})\inv H_{\mu_s,\mu_t}H_{\mu_t}\inv(\id + K_{\mu_t})[\betadot_t/\beta_t]} \diff t \diff s.
\end{equation*}
The key obstacle is that we have the composition $H_{\mu_s,\mu_t}H_{\mu_t}\inv$. While it nicely simplifies when $t = s$, this is not the case here with the ``off-diagonal'' operators needed to rewrite the Sinkhorn divergence. That is, we do not know how to make sense of $H_{\mu_s,\mu_t}H_{\mu_t}\inv$ in Banach spaces built directly on $\Hil_c$. Similar issues arise when studying the ``Sinkhorn JKO'' scheme~\cite[Rem.~7.8]{Hardion2025}.  
\end{remark}

The proof of the tighter lower bound~\eqref{eq:Seps_asymp_lbound_dS} requires a reformulation of the Sinkhorn divergence. This reformulation can be seen as a strengthening of the inequality~\eqref{eq:Seps_lower_bound_Hc}, and its proof actually follows closely the one of \cite[Proposition 16]{sejourne2019sinkhorn}.
For \(\mu, \nu \in \prm(X)\), recall  \(\pi_{\mu,\nu} =  \exp(\frac{1}{\varepsilon} (\fmunu \oplus \gmunu - c)) \mu \otimes \nu\) is the entropic optimal transport plan between them. Denote \(\hat{\pi}_{\mu,\nu} = \exp(\frac{1}{\varepsilon} (\fmumu \oplus \fnunu - c)) \mu \otimes \nu\). Thanks to the Schrödinger system~\eqref{eq:Schroedinger_system} we can compute its marginals: 
\begin{align} 
    \begin{split}\label{eq:pihat_marginal}
        &(\proj_1)_\# \hat{\pi}_{\mu,\nu} = \exp((\fmumu - \fnunu) / \varepsilon) \mu = \alpha_\mu \cdot \beta_\nu, \\
        &(\proj_2)_\# \hat{\pi}_{\mu,\nu} = \exp((\fnunu - \fmumu) / \varepsilon) \nu = \alpha_\nu \cdot \beta_\mu.
    \end{split}
\end{align}

\begin{lemma} \label{lemma:Seps_KL_reformulation}
    The Sinkhorn divergence rewrites as 
    \begin{equation*}
        S_\varepsilon(\mu,\nu) = \frac{\varepsilon}{2} \norm{\beta_\nu - \beta_\mu}_{\Hil_c}^2 + \varepsilon \KL(\pi_{\mu,\nu} \vert \hat{\pi}_{\mu,\nu}).
    \end{equation*}
    In particular, for any \(\mu \otimes \nu\)-integrable function \(h \colon X \times X \to \R\), we have
    \begin{equation} \label{eq:lemma:Seps_KL_reformulation:lower_bound}
        S_\varepsilon(\mu, \nu) \geq \varepsilon \left( \frac{1}{2} \norm{\beta_\nu - \beta_\mu}_{\Hil_c}^2 +  \iint_{X \times X} h \diff \pi_{\mu,\nu} - \iint_{X \times X} (\exp(h) - 1) \diff \hat{\pi}_{\mu,\nu} \right)
    \end{equation}
    with equality if and only if \(h = \frac{1}{\varepsilon} ( (\fmunu - \fmumu) \oplus (\gmunu - \fnunu) )\).
\end{lemma}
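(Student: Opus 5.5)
The plan is to compute everything explicitly from the dual formulation and the Schrödinger systems. Write $f=\fmunu$, $g=\gmunu$, $a=\fmumu$, $b=\fnunu$. At optimality the exponential term in the dual~\eqref{eq:entropic_dual} vanishes because $\pi_{\mu,\nu}$ is a probability measure, so $\OT_\varepsilon(\mu,\nu)=\pair{\mu}{f}+\pair{\nu}{g}$. Likewise $\OT_\varepsilon(\mu,\mu)=2\pair{\mu}{a}$ and $\OT_\varepsilon(\nu,\nu)=2\pair{\nu}{b}$, using that the self-transport potentials are symmetric. Hence
\[
S_\varepsilon(\mu,\nu)=\pair{\mu}{f-a}+\pair{\nu}{g-b}.
\]

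Next I compute $\KL(\pi_{\mu,\nu}\vert\hat\pi_{\mu,\nu})$. The density is $\diff\pi_{\mu,\nu}/\diff\hat\pi_{\mu,\nu}=\exp(\frac1\varepsilon((f-a)\oplus(g-b)))$. Both measures have explicit masses: $\pi_{\mu,\nu}$ has mass $1$, and $\hat\pi_{\mu,\nu}$ has mass $\pair{\alpha_\mu}{\beta_\nu}=\pair{\beta_\mu}{\beta_\nu}_{\Hil_c}$ by~\eqref{eq:pihat_marginal} and the Riesz property of $H_c$. With $h(s)=s\log s-s+1$ this gives
\[
\varepsilon\KL(\pi_{\mu,\nu}\vert\hat\pi_{\mu,\nu})=\pair{\mu}{f-a}+\pair{\nu}{g-b}-\varepsilon+\varepsilon\pair{\beta_\mu}{\beta_\nu}_{\Hil_c}.
\]
Since $\norm{\beta_\mu}_{\Hil_c}=\norm{\beta_\nu}_{\Hil_c}=1$ (Theorem~\ref{theorem:alpha_beta_homeo}), we have $\frac\varepsilon2\norm{\beta_\nu-\beta_\mu}^2_{\Hil_c}=\varepsilon-\varepsilon\pair{\beta_\mu}{\beta_\nu}_{\Hil_c}$. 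Adding the two identities yields the claimed formula. The only care needed here is checking that $\pi_{\mu,\nu}\ll\hat\pi_{\mu,\nu}$, which holds because both have positive continuous densities with respect to $\mu\otimes\nu$.

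For~\eqref{eq:lemma:Seps_KL_reformulation:lower_bound}, I use the Donsker--Varadhan / Young-type variational inequality for $\KL$ between finite positive measures:
\[
\KL(\pi\vert\hat\pi)\ge\iint h\diff\pi-\iint(e^h-1)\diff\hat\pi .
\]
This follows from the pointwise Fenchel inequality $s\log s-s+1\ge sh-(e^h-1)$ for $s\ge0$ and $h\in\R$, applied with $s=\diff\pi/\diff\hat\pi$ and integrated against $\hat\pi$. Equality holds pointwise iff $h=\log s$, which gives $\hat\pi$-a.e.\ equality iff $h=\frac1\varepsilon((f-a)\oplus(g-b))$. Integrability is not an issue since all the potentials are bounded; for a general $h$ that is only $\mu\otimes\nu$-integrable, the right-hand side may be $-\infty$, in which case the inequality is trivial. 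I expect no real obstacle; the main point to verify carefully is the mass identity for $\hat\pi_{\mu,\nu}$, which links the $\KL$ constant terms to $\norm{\beta_\nu-\beta_\mu}_{\Hil_c}^2$.
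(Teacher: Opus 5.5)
Your proof is correct and follows essentially the same route as the paper. Both arguments write $S_\varepsilon(\mu,\nu)=\varepsilon\iint\log(\diff\pi_{\mu,\nu}/\diff\hat\pi_{\mu,\nu})\diff\pi_{\mu,\nu}$ (you use the dual value $\pair{\mu}{\fmunu}+\pair{\nu}{\gmunu}$ also for the cross term, the paper uses the primal formulation there), identify the mass of $\hat\pi_{\mu,\nu}$ with $\pair{\beta_\mu}{\beta_\nu}_{\Hil_c}$, and derive the lower bound from the pointwise Fenchel inequality; integrating that inequality against $\hat\pi_{\mu,\nu}$, as you do, is the right choice (the paper's text says $\pi_{\mu,\nu}$, which is a slip).
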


\noindent Note that we recover~\eqref{eq:Seps_lower_bound_Hc} thanks to the non-negativity of $\KL$. 

\begin{proof}
    Recalling that $\pi_{\mu,\nu} = k_{\mu,\nu} \cdot (\mu \otimes \nu)$ and given the explicit expression of $k_{\mu,\nu}$~\eqref{eq:kmunu}, we have the well-known identity $\OT_\varepsilon(\mu, \nu) = \int_X \fmunu \diff \mu + \int_X \gmunu  \diff \nu$. Using it for the self-transports, but the original formulation~\eqref{eq:OTeps} for the ``cross''-transport,
    \begin{align*}
        S_\varepsilon(\mu,\nu) & = \iint_{X \times X} c \diff \pi_{\mu,\nu} + \varepsilon \KL(\pi_{\mu,\nu}| \mu \otimes \nu) - \int_X \fmumu \diff \mu - \int_X \fnunu \diff \nu  \\
        & =  \iint_{X \times X} (c - (\fmumu \oplus \fnunu))  \diff \pi_{\mu,\nu} + \varepsilon \KL(\pi_{\mu,\nu}| \mu \otimes \nu) \\
        &= \varepsilon \iint_{X \times X} - \log \left( \frac{\diff \hat{\pi}_{\mu,\nu}}{\diff \mu \otimes \nu} \right) \diff \pi_{\mu,\nu} + \varepsilon \iint_{X \times X} \log \left( \frac{\diff {\pi}_{\mu,\nu}}{\diff \mu \otimes \nu} \right) \diff \pi_{\mu,\nu} \\
        &= \varepsilon \iint_{X \times X} \log \left( \frac{\diff \pi_{\mu,\nu}}{\diff \hat{\pi}_{\mu,\nu}} \right) \diff \pi_{\mu,\nu} 
        = \varepsilon \KL(\pi_{\mu,\nu} | \hat{\pi}_{\mu,\nu}) + \varepsilon - \varepsilon \hat{\pi}_{\mu,\nu}(X \times X).
    \end{align*}
    In the last line we used the definition of $\KL$ for measures which are not necessarily of unit mass, see~\eqref{eq:def_kl}.
    On the other hand, using that \(\beta_\mu,\beta_\nu\) lie in the unit sphere of $\Hil_c$ and \(\pair{H_c[\eta]}{H_c[\sigma]}_{\Hil_c} = \pair{\eta \otimes \sigma}{k_c}\) for any \(\eta, \sigma \in \calM(X)\) we have
    \begin{align*}
        \norm{ \beta_\mu - \beta_\nu }_{\Hil_c}^2 
        &= 2 - 2 \langle H_c[ \exp(\fmumu/\varepsilon) \mu, H_c[ \exp(\fnunu/\varepsilon) \nu \rangle_{\Hil_c} \\
        &= 2 - 2 \iint_{X \times X} \exp((\fmumu \oplus \fnunu - c) / \varepsilon) \diff \mu \otimes \nu 
        = 2 - 2 \hat{\pi}_{\mu,\nu}(X \times X). 
    \end{align*}
    The first result follows. The second result is a convexity inequality: for any $y$, we have $p \log p - p + 1 \geq y p - (e^y - 1)$, with equality if and only if $y = \log p$. Substituting for $y$ any \(\mu \otimes \nu\)-integrable function \(h \colon X \times X \to \R\), for $p$ the density $\diff \pi_{\mu,\nu}/\diff \hat{\pi}_{\mu,\nu}$, and integrating this inequality with respect to $\pi_{\mu,\nu}$ gives~\eqref{eq:lemma:Seps_KL_reformulation:lower_bound}. Note that boundedness of \(k_{\mu,\nu} = \frac{\diff {\pi}_{\mu,\nu}}{\diff \mu \otimes \nu}\) ensures that the middle summand in~\eqref{eq:lemma:Seps_KL_reformulation:lower_bound} is finite, while the last term might become \(- \infty\), which makes the lower bound trivial. As $\pi_{\mu,\nu} = k_{\mu,\nu} \cdot (\mu \otimes \nu)$ one can check that the function $h$ corresponds to the \(\log\)-density of $\pi_{\mu,\nu}$ with respect to $\hat{\pi}_{\mu, \nu}$, thus attaining equality. 
\end{proof}

If $\mu$ and $\nu$ are very close, the expansion of the Schrödinger potentials (see Proposition~\ref{prop:time_derivative_of_potentials}) and Theorem~\ref{theorem:metric_tensor_in_betadot} yield 
\begin{equation*}
    f_{\mu, \nu} - \fmumu \approx  \varepsilon (\id - K_\mu^2)\inv H_\mu[f_{\nu,\nu} - f_{\mu,\mu}] \approx  \varepsilon (\id - K_\mu)\inv [(\beta_\nu - \beta_\mu) / \beta_\mu],      
\end{equation*}
though we will not try to make this expansion rigorous. In this case the optimal function $h$ in~\eqref{eq:lemma:Seps_KL_reformulation:lower_bound} should be approximated by 
\begin{equation*} 
    h(x,y) \approx  \psi(y) - \psi(x), \qquad \psi(x) \assign  (\id - K_\mu)\inv [(\beta_\nu - \beta_\mu) / \beta_\mu],
\end{equation*}
which is an ansatz the reader can recognize below. 

\begin{proof}[Proof of Theorem~\ref{theorem:estimates_with_Wc}: estimate~\eqref{eq:Seps_asymp_lbound_dS}]
We apply the lower bound~\eqref{eq:lemma:Seps_KL_reformulation:lower_bound} from Lemma~\ref{lemma:Seps_KL_reformulation} with \(h = (-\psi) \oplus \psi\) for \(\psi = (\id - K_{\mu})\inv [(\beta_\nu - \beta_\mu) / \beta_\mu]\). Note that \(\psi\) is only defined in \(\Cont(X)/\R\) but \(h\) is unambiguous. 

Recall the boundedness of \((\id - K_\mu)\inv\) on \(\Hil_\mu / \R\) from~\cite[Thm.~4.3]{RGSD}, and that division by $\beta_\mu$ is an isometry between $\Hil_c$ and $\Hil_\mu$. Thus, using~\eqref{eq:Lipc_Hmu_bound}, 
we can choose a representative of $\psi$ with
\begin{equation*}
\| \psi \|_{\Lip_c} \lesssim \| \psi \|_{\Hil_\mu} =  \| \psi \|_{\Hil_\mu / \R} \lesssim \| (\beta_\nu - \beta_\mu) / \beta_\mu \|_{\Hil_{\mu}} = \| \beta_\nu - \beta_\mu \|_{\Hil_c} \lesssim \dS(\mu,\nu).
\end{equation*}
In particular $\| h \|_\infty = \calO(\dS(\mu,\nu))$.
Using the marginal properties~\eqref{eq:pihat_marginal} and \(\exp(h) - 1 = h + h^2/2 + \calO(h^3)\) gives
\begin{equation*}
\iint_{X \times X} (\exp(h) - 1) \diff \hat{\pi}_{\mu,\nu} = \int_X \psi \diff (\alpha_\nu \beta_\mu - \alpha_\mu \beta_\nu) + \frac{1}{2} \iint_{X \times X} ((-\psi) \oplus \psi)^2 \diff \hat{\pi}_{\mu,\nu} + \calO(\dS^3(\mu,\mu)).
\end{equation*}
Plugging this in the lower bound~\eqref{eq:lemma:Seps_KL_reformulation:lower_bound} and with the marginal properties of $\pi_{\mu,\nu}$, 
\begin{multline}
\label{eq:zz_lower_bound1st_step}
\frac{1}{\varepsilon} S_\varepsilon(\mu,\nu) \geq \frac{1}{2} \norm{\beta_\nu - \beta_\mu}_{\Hil_c}^2 \\ + \int_X \psi \diff (\nu - \mu + \alpha_\mu \beta_\nu - \alpha_\nu \beta_\mu)  - \frac{1}{2} \iint_{X \times X} ((-\psi) \oplus \psi)^2 \diff \hat{\pi}_{\mu,\nu} + \calO(\dS^3(\mu,\mu)).
\end{multline}
It is \(\mu - \nu + \alpha_\mu \beta_\nu - \alpha_\nu \beta_\mu = (\alpha_\mu + \alpha_\nu) (\beta_\nu - \beta_\mu) = 2 \alpha_\mu (\beta_\nu - \beta_\mu) + o(\dS(\mu,\nu))\) in \(\Lip_c(X)^*\) by~\eqref{eq:p:dS_asymp:control_reminder}. Integrating against $\psi$ whose Lipschitz norm is $\calO(\dS(\mu,\nu))$, and as $\alpha_\mu = \mu / \beta_\mu$ we have 
\begin{equation}
\label{eq:zz_lower_bound_1st_integral}
\int_X \psi \diff (\nu - \mu + \alpha_\mu \beta_\nu - \alpha_\nu \beta_\mu) = 2 \int_X \psi (\beta_\nu - \beta_\mu) / \beta_\mu \diff \mu + o(\dS^2(\mu,\nu)). 
\end{equation}
For the second integral, note that
the map \((\mu,\nu) \mapsto \hat{\pi}_{\mu,\nu}\) is \textweakstar-to-\textweakstar~continuous. Hence it is \(\norm{\hat{\pi}_{\mu,\nu} - \pi_{\mu,\mu}}_{\Lip_c(X \times X)^*} = o(1)\) as \(\dS(\mu,\nu) \to 0\) uniformly over \(\mu,\nu \in \prm(X)\). Moreover $\| ((-\psi) \oplus \psi)^2 \|_{\Lip_c} = \calO(\dS^2(\mu,\nu))$ by~\eqref{eq:Lipc_bound_product}. Thus we have 
\begin{align}
\notag
\frac{1}{2} \iint_{X \times X} ((-\psi) \oplus \psi)^2 \diff \hat{\pi}_{\mu,\nu} & = \frac{1}{2} \iint_{X \times X} ((-\psi) \oplus \psi)^2 \diff \pi_{\mu,\mu} + o(\dS^2(\mu,\nu)) \\
\label{eq:zz_lower_bound_2nd_integral}
& = \pair{\psi}{(\id - K_{\mu})[\psi]}_{L^2(X,\mu)} + o(\dS^2(\mu,\nu)), 
\end{align}
the last equality following from the binomial formula and the marginal condition of \(\pi_{\mu,\mu}\).
Plugging~\eqref{eq:zz_lower_bound_1st_integral} and~\eqref{eq:zz_lower_bound_2nd_integral} into~\eqref{eq:zz_lower_bound1st_step}, and given our explicit choice of $\psi$,
\begin{align*}
\frac{1}{\varepsilon} S_\varepsilon&(\mu,\mu) \\
& \geq \frac{1}{2} \norm{\beta_\nu - \beta_\mu}_{\Hil_c}^2 + 2 \int_X \psi (\beta_\nu - \beta_\mu) / \beta_\mu \diff \mu - \pair{\psi}{(\id - K_{\mu})[\psi]}_{L^2(X,\mu)} + o(\dS^2(\mu,\mu)) \\
&= \frac{1}{2} \norm{\beta_\nu - \beta_\mu}_{\Hil_c}^2 + \pair{(\id - K_{\mu})\inv [(\beta_\nu - \beta_\mu) / \beta_\mu]}{(\beta_\nu - \beta_\mu) / \beta_\mu}_{L^2(X,\mu)} + o(\dS^2(\mu,\mu)) \\ 
&= \frac{1}{\varepsilon} \tg_{\mu} (\beta_\nu - \beta_\mu, \beta_\nu - \beta_\mu) + o(\dS^2(\mu,\mu)).
\end{align*}
The conclusion follows as $\tg_{\mu} (\beta_\nu - \beta_\mu, \beta_\nu - \beta_\mu) = \dS^2(\mu,\mu) + o(\dS^2(\mu,\mu))$, see Theorem~\ref{theorem:expansion_dS}.
\end{proof}

\subsection{Approximate geodesics in our metric}

We take the occasion to comment more on the meaning of Theorem~\ref{theorem:expansion_dS} and in particular~\eqref{eq:approximate_geodesics}.
In general, take a Riemannian manifold \(\calX\) with metric \(d_\calX\) and metric tensor \(\g\). We call a class of paths \(\setgiven{(\gamma^{x,y}_t)_t}{x,y \in \calX}\) approximately geodesic if 
\begin{equation}
	d_\calX(x,y)^2 = \g_{\gamma^{x,y}_t}(\dot\gamma^{x,y}_t,\dot\gamma^{x,y}_t) + o(d_\calX(x,y)^2) \quad \text{as } d_\calX(x,y) \to 0
\end{equation}
uniformly in \((x,y) \in \calX \times \calX\), \(t \in [0,1]\). 
Our estimates in Theorem~\ref{theorem:expansion_dS} precisely show that arc interpolation and vertical interpolations generate approximate geodesics. Note that it does not imply that they \emph{look} like the exact geodesic, as illustrated in the following example. 

\begin{example}[Constant-speed translation]\label{ex:comparison_exact_geodesics}
	For the squared distance cost function on \(\R^d\), translations \(\mu_t = (\id + t v)_\# \mu\) for \(v \in \R^d\) are geodesics of \(\dS\) with \(\dS(\mu_0,\mu_1) = \norm{v}\) (\cite[Thm.~5.16]{RGSD}). Here $\id+tv$ denotes the map $x \mapsto x + tv$. In particular, the \(\dS\)-geodesic between the Dirac measures \(\mu = \delta_{0}\) and \(\nu = \delta_{r}\) is a point mass traveling along the straight line \(x_t = t r\), which equals the Wasserstein-2 geodesic. In contrast, the support of the vertical and arc interpolation is always supported on the union of the supports \(\supp(\mu) \cup \supp(\nu)\) (see~\eqref{eq:explicit_formula_arc} for the arc interpolation). Despite this qualitative difference, both arc and vertical interpolation are approximately geodesic for $\dS$ as $r \to 0$.
\end{example}

In the previous example the Wasserstein geodesic is the exact geodesic for $\dS$. This is not always the case: Wasserstein geodesics are not necessarily approximate geodesics for $\dS$, even for the quadratic cost, as shown in the following example.

\begin{figure}
	\begin{center}
		
		\includegraphics[scale=0.48]{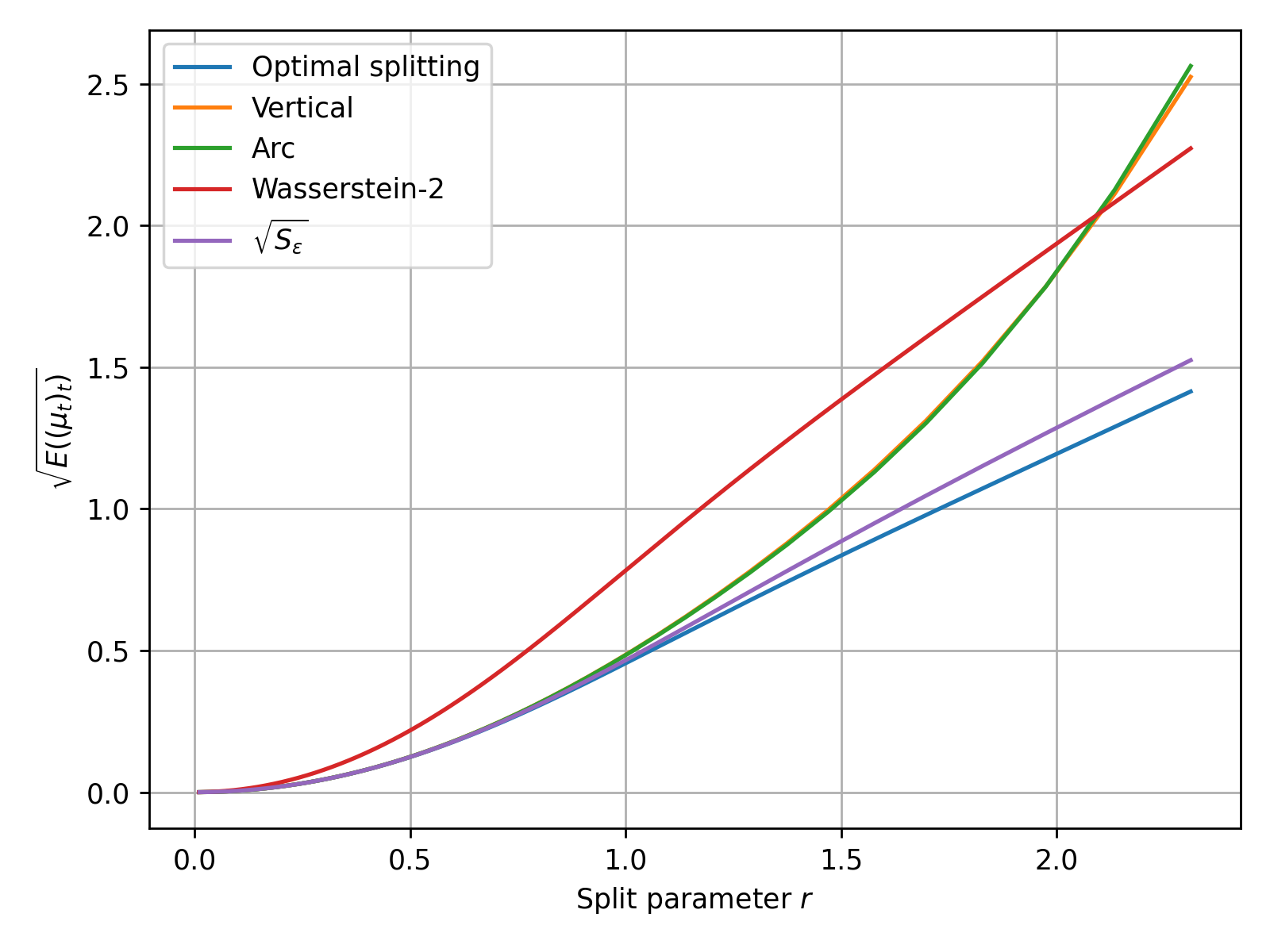}
		\includegraphics[scale=0.48]{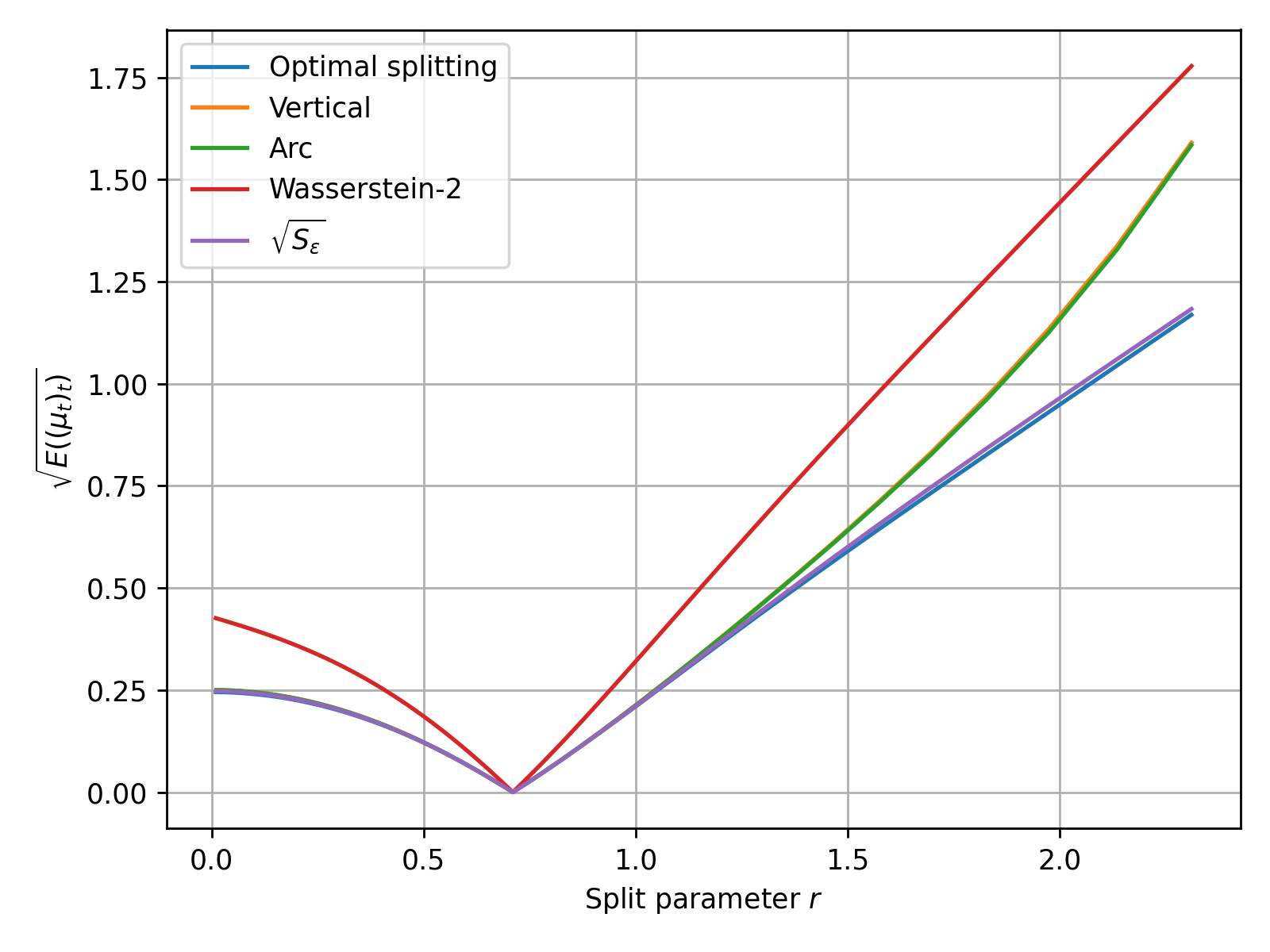}
		
		\caption{\label{fig:split_dirac_approx_energy} Splitting Dirac from Example~\ref{exm:W2_not_dS_geodesic_splitting_dirac}. Left: Upper bounds on distance by $\sqrt{E((\mu_t)_t)}$ between $\mu_0 = \delta_0$ and $\mu_1 = \nu_r = \frac{1}{2} (\delta_{-r} + \delta_r)$ for varying parameter $r > 0$ estimated from different paths for the cost function \(c(x,y) = \abs{x-y}^2\) and \(\varepsilon=1\). Blue: Optimal path of the form $\mu_t = \nu_{r(t)}$. Orange/green: Vertical/arc interpolation (almost perfect overlap). Red: Wasserstein-2 geodesic. Purple: Sinkhorn divergence.
			Right: Same comparison for $\mu_0 = \nu_{r_0}$ with $r_0 = 1/\sqrt{2}$.
			Vertical and arc interpolation give almost identical results, matching the Sinkhorn divergence and the optimal splitting curve asymptotically, while the Wasserstein-2 geodesic has a significantly higher energy. 
		}
	\end{center}
\end{figure}

\begin{example}[Splitting Dirac] \label{exm:W2_not_dS_geodesic_splitting_dirac}
	Consider \(X \subset \R\) with the cost \(c(x,y) = \abs{x-y}^2\). We take $\mu = \delta_0$ and $\nu_r \assign \frac{1}{2} (\delta_{-r} + \delta_r)$. It was already considered in \cite[Sec.~7.2]{RGSD}. In addition to $(\mu^A_t)_t$ and $(\mu^V_t)_t$ the arc and vertical interpolations, denote by $\mu^W_t = \nu_{r t}$ for $t \in  [0,1]$ the Wasserstein geodesic. We claim that as $r \to 0$ it is
	\begin{align}
		\label{eq:S_expansion_example}
		S_\varepsilon(\mu,\nu_r) & = \frac{1}{\varepsilon} \cdot r^4 + \calO(r^8), \\
		\label{eq:dS_expansion_example}
		\dS(\mu,\nu_r)^2 & = \frac{1}{\varepsilon} \cdot r^4 + o(r^4), \\
		\label{eq:Wasserstein_expansion_example}
		E((\mu^W_t)_t) & = \frac{4}{3 \varepsilon} \cdot r^4 +  \calO(r^6), \\
		\label{eq:Wc_example}
		W_c(\mu,\nu_r)^2 & = \frac{2}{\varepsilon} \cdot r^2 + \calO(r^4). 
	\end{align}
	It shows that $(\mu^W_t)_t$ is not an approximate geodesic as $r \to 0$. It also shows that we cannot expect a bound of the type $W_c \lesssim \dS$ to hold. 
	
	The computation of $S_\varepsilon(\nu_{s},\nu_r)$ can be found in \cite[Lem.~7.5]{RGSD}. When $s = 0$ it simplifies to 
	\begin{equation*}
		S_\varepsilon(\mu,\nu_r) = \frac{\varepsilon}{2} \log \cosh \left( \frac{2r^2}{\varepsilon} \right).
	\end{equation*}
	The expansion~\eqref{eq:S_expansion_example} follows. To prove~\eqref{eq:dS_expansion_example} we use Theorem~\ref{theorem:expansion_dS}. Indeed, we have $f_{\mu,\mu}(x) = x^2$ so that $k_{\mu,\mu}(x,y) = \exp(2 xy/\varepsilon)$. In particular $K_\mu[\phi] = \phi(0) \ones_X$, meaning $K_\mu = 0$ and $(\id - K_\mu^2)\inv = \id$ on $\Cont(X)/\R$. We deduce, as $r \to 0$ 
	\begin{align*}
		\g_\mu(\nu_r - \mu, \nu_r - \mu) & = \frac{\varepsilon}{2} \langle \nu_r - \mu, H_\mu[\nu_r - \mu] \rangle \\ 
		& = \frac{\varepsilon}{2} \iint_{X \times X} k_{\mu,\mu}(x,y) \diff (\nu_r(x) - \mu(x)) \diff (\nu_r(y) - \mu(y)) \\
		& = \frac{\varepsilon}{2} \left( \cosh \left( \frac{2 r^2}{\varepsilon} \right) - 1 \right).  
	\end{align*}
	With~\eqref{eq:expansion_dS_mu} we deduce the expansion~\eqref{eq:dS_expansion_example}.
	On the other hand consider the Wasserstein geodesic: it reads $\mu^W_t = \nu_{r t}$ for $t \in  [0,1]$. The evaluation of the metric tensor for this derivative was already done in \cite[7.3]{RGSD}: with \(\kappa = k_c(r,-r) = \exp(-4r^2/\varepsilon)\), we have 
	\begin{equation*}
		\g_{\nu_r} \left( \frac{1}{2} (\delta'_{r} - \delta'_{-r}), \frac{1}{2} (\delta'_{r} - \delta'_{-r}) \right) 
		= 1 + \frac{\kappa}{1+\kappa} \left( \frac{1}{1+\kappa}\frac{8 r^2}{\varepsilon} - 2 \right) = \frac{4 r^2}{\varepsilon} + \calO(r^4) \quad \text{as } r \to 0.
	\end{equation*}
	Thus as $\mudot^W_{t} = r \frac{1}{2} ( \delta_{rt}' - \delta_{-rt}')$ we have $\g_{\mu^W_t}(\mudot^W_{t},\mudot^W_{t}) = \frac{4}{ \varepsilon} \cdot r^4 t^2 + \calO(r^6)$. In particular the latter expression is not (approximately) constant in time, thus it cannot look like a geodesic. We confirm it by computing 
	\begin{equation*}
		E((\mu^W_t)_t) = \int_0^1 \g_{\mu^W_t}(\mudot^W_{t},\mudot^W_{t})  \diff t = \frac{4}{3 \varepsilon} \cdot r^4 +  \calO(r^6),
	\end{equation*}
	which is the third claim~\eqref{eq:Wasserstein_expansion_example}. The last claim~\eqref{eq:Wc_example} is immediate as we can use the Kantorovich formulation of the one-Wasserstein distance: $W_c(\mu,\nu_r) = d_c(r,0)$ with $d_c$ as in~\eqref{eq:def_dc}. 
	
	Interestingly, we can build a curve which still ``moves'' mass around and is asymptotically optimal for $\dS$. For $\gamma : [0,1] \to [0,1]$, consider $\mu^\gamma_t = \nu_{\gamma_t r}$: this is the Wasserstein geodesic traveled at a non-constant speed. The same computation as in the Wasserstein case leads to 
	\begin{equation*}
		E((\mu^\gamma_t)_t) = \int_0^1 \g_{\mu^\gamma_t}(\mudot^\gamma_{t},\mudot^\gamma_{t})  \diff t = \frac{4 r^4}{\varepsilon} \int_0^1 \gamma_t^2 \dot{\gamma}_t^2 \diff t +  \calO(r^6).
	\end{equation*}
	This quantity can be optimized in $\gamma$, leading to $\gamma_t = \sqrt{t}$. For this optimal $\gamma$, $E((\mu^\gamma_t)_t) = \frac{1}{\varepsilon} r^4 + \calO(r^6)$, which is equivalent to $\dS(\mu,\nu_r)^2$. 
	We refer to Figure~\ref{fig:split_dirac_approx_energy} for an illustration.
\end{example}

\section{Gamma convergence of Sinkhorn chain}  \label{sec:Gamma_convergence}

In this section we want to relate the functionals \eqref{eq:intro:dS} and \eqref{eq:intro:Sinkhorn_chain} and their minimizers,
\begin{equation*}
	E((\mu_t)_t) = \int_0^1 \g_{\mu_t}(\mudot_t,\mudot_t) \diff t, 
	\quad \tn{and} \quad \ol{E}^N(\mu_0,\mu_1,\dots,\mu_N) \assign N \cdot \sum_{k=0}^{N-1} S_\varepsilon(\mu_k, \mu_{k+1})
\end{equation*}
for a path \((\mu_t)_{t \in [0,1]} \in \Pall\) (as in Definition~\ref{def:distance}) and an \((N+1)\)-tuple \((\mu_k)_{k=0}^N\) in \(\prm(X)\). 

We rely on the framework of $\Gamma$-convergence to guarantee convergence of minima and minimizers of the functions, see~\cite{Braides2005} for a thorough introduction on the topic. We recall the setting: take $Z$ a first countable topological space and \(F, F^N \colon Z \to \R \cup \{\infty\}\) for \(N \in \N\). Then \(F^N \xrightarrow{\Gamma} F\) as \(N \to \infty\) if the following two conditions are satisfied:
\begin{enumerate}
	\item Whenever \(z^N \to z\) in \(Z\) as \(N \to \infty\), then \(F(z) \leq \liminf_{N\to\infty} F^N(z^N)\),
	\item For every \(z \in Z\) there is a sequence of \(z^N \in Z\), called the recovery sequence, with \(z^N \to z\) in \(Z\) as \(N \to \infty\) such that \(F(z) \geq \limsup_{N\to\infty} F^N(z^N)\).
\end{enumerate}    
The notion of \(\Gamma\)-convergence requires all functionals to be defined on the same space, which is a priori not the case for $E$ and $\ol{E}^N$. Therefore, for \((\mu_t)_t \in \Pall\) we write 
\begin{equation*}
	E^N((\mu_t)_t) \assign \ol{E}^N\left((\mu_\frac{k}{N})_{k=0}^N \right) = N \cdot \sum_{k=0}^{N-1} S_\varepsilon(\mu_{\frac{k}{N}}, \mu_{\frac{k+1}{N}})
\end{equation*}
when for all \(k = 0, \dots, N-1\) the segment \((\mu_t)_{t \in [k/N,(k+1)/N]}\) in \(\prm(X)\) corresponds to the arc interpolation on the unit sphere in \(\Hil_c\) between \(\beta_{k/N}\) and \(\beta_{(k+1)/N}\) (see Definition~\ref{def:vert_arc}), and we set \(E^N((\mu_t)_t) = \infty\) otherwise. Similarly, when \((\mu_t)_t \not\in \Pall\) we set \(E((\mu_t)_t) = \infty\).

\begin{theorem} \label{theorem:gamma_convergence}
	There holds \(E^N \xrightarrow{\Gamma} E\) on  \(\Cont([0,1];\prm(X))\). Furthermore, any sequence $(\mu^N_t)_t$ with $\sup_N E^N((\mu^N_t)_t) < \infty$ is relatively compact in \(\Cont([0,1];\prm(X))\), and the recovery sequence to $(\mu_t)_t \in \Pall$ can be chosen such that \(\mu^N_0 = \mu_0\) and \(\mu^N_1 = \mu_1\) for all \(N \in \N\).
\end{theorem}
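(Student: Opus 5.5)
Compactness and the $\Gamma$-liminf come first, since both follow from the lower bound \eqref{eq:Seps_asymp_lbound_dS} and the equivalences \eqref{eq:bounds_distance}. If $E^N((\mu^N_t)_t)$ is finite, then on each $[k/N,(k+1)/N]$ the curve $(\beta^N_t)_t$ is a spherical arc. Its speed is constant and of order $N\|\beta^N_{(k+1)/N}-\beta^N_{k/N}\|_{\Hil_c}$. By \eqref{eq:Seps_lower_bound_Hc},
\[
\|\betadot^N\|^2_{L^2((0,1);\Hil_c)} \lesssim N\sum_k \|\beta^N_{(k+1)/N}-\beta^N_{k/N}\|^2_{\Hil_c} \lesssim E^N((\mu^N_t)_t).
\]
A uniform bound on $E^N$ therefore gives a uniform bound on $E$, through \eqref{eq:tg_equiv_Hc}. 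Theorem~\ref{theorem:coercivity} then yields relative compactness in $\Cont([0,1];\prm(X))$.

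For the liminf, take $\mu^N\to\mu$ with $\liminf E^N<\infty$ and pass to a subsequence with bounded energy. The bound above makes each step small: $\dS(\mu^N_{k/N},\mu^N_{(k+1)/N})^2\lesssim E^N/N\to0$ uniformly in $k$. Two facts now combine. First, \eqref{eq:Seps_asymp_lbound_dS} gives $S_\varepsilon\ge(1-o(1))\dS^2$. Second, \eqref{eq:approximate_geodesics} shows the arc energy on each piece is $\dS^2+o(\dS^2)$, after the $1/N$ time rescaling. Hence
\[
E^N((\mu^N_t)_t)\ge(1-o(1))\,E((\mu^N_t)_t).
\]
Theorem~\ref{theorem:Energy_lsc}, together with the weak $\Hil^1$ convergence from Theorem~\ref{theorem:coercivity}, concludes.

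The limsup is the main obstacle, because there is no upper bound $S_\varepsilon\le\dS^2+o(\dS^2)$. The available upper bound \eqref{eq:Seps_asymp_ubound_Wc} has a $W_c$ remainder, and $W_c$ is not controlled by $\dS$ (Example~\ref{exm:W2_not_dS_geodesic_splitting_dirac}). I would first treat a finite base space $X_M$. There, $\prm(X_M)$ is a finite-dimensional simplex, and on it all norms ($W_c$, $\|\cdot\|_{\TV}$, $\|\beta_\mu-\beta_\nu\|_{\Hil_c}$) are equivalent. The constant may depend on $M$, but the comparison should still be checked near the boundary of the simplex. The intended mechanism is that $B$ is a homeomorphism of a compact finite-dimensional set, and the constant-rank cone structure controls the inverse. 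Given this, \eqref{eq:Seps_asymp_ubound_Wc} becomes $S_\varepsilon\le\dS^2+o_M(\dS^2)$ on $\prm(X_M)$. For an admissible path $(\mu_t)_t$ in $\prm(X_M)$, the recovery sequence is the piecewise-arc interpolation of the nodes $\mu_{k/N}$. The bound gives
\[
E^N\le(1+o(1))\,N\sum_k\dS^2(\mu_{k/N},\mu_{(k+1)/N})\le(1+o(1))\,E((\mu_t)_t),
\]
since $\dS^2(\mu_{k/N},\mu_{(k+1)/N})\le\frac1N\int_{k/N}^{(k+1)/N}\tg_{\mu_t}(\betadot_t,\betadot_t)\diff t$. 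Convergence of the interpolation to $(\mu_t)_t$ in $\Cont([0,1];\prm(X))$ is immediate from the uniform continuity of $(\beta_t)_t$ and the bound $\|\betadot^N\|_{L^2}\lesssim\|\betadot\|_{L^2}$.

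For a general $(\mu_t)_t\in\Pall$, I would use a diagonal argument. Corollary~\ref{crl:cv_energy_M} gives $E((R_M(\mu_t))_t)\to E((\mu_t)_t)$, and Theorem~\ref{theorem:unif_conv_RM} gives $R_M(\mu_t)\to\mu_t$ uniformly. Pick $M(N)\to\infty$ slowly enough that the finite-space construction applied to $(R_{M(N)}(\mu_t))_t$ keeps its energy error small. The endpoints need extra care, because $R_M(\mu_0)\ne\mu_0$. I would replace the first and last nodes by $\mu_0$ and $\mu_1$, and bound the two modified terms with \eqref{eq:Seps_bound_Wc}:
\[
N\,S_\varepsilon\bigl(\mu_0,R_M(\mu_{1/N})\bigr)\lesssim N\,W_c^2\bigl(\mu_0,R_M(\mu_{1/N})\bigr).
\]
This requires $W_c(\mu_0,R_M(\mu_0))=o(N^{-1/2})$, together with the $W_c$-closeness of the first two nodes. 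The choice of $M(N)$ is also what secures the first of these. I would control $W_c(\mu,R_M(\mu))$ via the transport $\proj_{X_M}$ from Lemma~\ref{lemma:PM_properties}, which gives an estimate of order $\delta_M$. This endpoint modification is the step I expect to be most delicate.
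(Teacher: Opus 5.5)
Your compactness and $\Gamma$-liminf arguments coincide with the paper's, and so does the finite-space limsup. On $\prm(X_M)$ the comparison $W_c\lesssim\|\mu-\nu\|_{\TV}\lesssim\|\beta_\mu-\beta_\nu\|_{\Hil_c}$ follows from invertibility of the Gram matrix $(k_c(x_i,x_j))_{i,j}$ together with $\mu=\alpha_\mu\beta_\mu$, so nothing degenerates at the boundary of the simplex.

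The gap is the endpoint correction in the general case. Set $w_M=W_c(\mu_0,R_M(\mu_0))$. Replacing a single node by $\mu_0$ and paying $N\,S_\varepsilon\lesssim N\,W_c^2$ forces $N\,w_{M(N)}^2\to0$, so $M(N)$ must grow \emph{fast} relative to $N$. The diagonal argument, however, needs $M(N)$ to grow \emph{slowly}. The finite-space convergence $\ol{E}^N((R_M(\mu_{k/N}))_k)\to E((R_M(\mu_t))_t)$ rests on $W_c\le C_M\,\dSM{M}$ and on an expansion with $M$-dependent modulus. It therefore comes with no estimate uniform in $M$, and $M(N)$ may have to grow arbitrarily slowly. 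Your other requirement, on the first two nodes, is of the same slow type: it comes from $N\,W_c^2\le C_M^2\,N\,\dSM{M}^2(R_M(\mu_0),R_M(\mu_{1/N}))\lesssim C_M^2\int_0^{1/N}\|\betadot_t\|_{\Hil_c}^2\diff t$. Since $w_M$ may tend to $0$ arbitrarily slowly, the two demands are incompatible in general.

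Moreover, the rate $W_c(\mu,R_M(\mu))=\calO(\delta_M)$ is not justified. $R_M(\mu)$ is not the push-forward of $\mu$ by the nearest-neighbour map; in Lemma~\ref{lemma:PM_properties} that map only supplies a competitor in the cone to bound $\|\beta-P_M(\beta)\|_{\Hil_c}$. Closeness of the $\beta$'s in $\Hil_c$ controls $W_c$ only through a non-Lipschitz modulus; in Example~\ref{exm:W2_not_dS_geodesic_splitting_dirac}, $W_c$ is of order $\dS^{1/2}$. All you can use is $w_M\to0$, with no rate.

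The missing idea is to spread the correction over many time steps. The paper's construction has three parts:
\begin{enumerate}
\item It joins $\mu_0$ to $R_M(\mu_0)$ by $K$ steps of vertical interpolation, each with $W_c\le w_M/K$. By \eqref{eq:Seps_bound_Wc} this block costs $N\sum_{k<K}S_\varepsilon\lesssim N w_M^2/K$.
\item It runs the projected chain on the middle $N-2K$ steps, at the price of a factor $N/(N-2K)$ on the finite-space chain energy at resolution $N-2K$.
\item It does the same near $t=1$.
\end{enumerate}
One first fixes $M_N\to\infty$ as slowly as the diagonal argument requires. Only then does one choose $K_N$ with $N w_{M_N}^2\ll K_N\ll N$, e.g.\ $K_N=\lceil w_{M_N}N\rceil$. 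The boundary blocks then cost at most of order $w_{M_N}\to0$, the prefactor tends to $1$, and no rate on $w_M$ is needed. Convergence of the piecewise arc interpolations to $(\mu_t)_t$ follows from the compactness of the liminf part and from $\dS(R_M(\mu_t),\mu_t)\to0$ uniformly (Theorem~\ref{theorem:unif_conv_RM}), since the reparametrization $k/N\mapsto(k-K_N)/(N-2K_N)$ tends to the identity.
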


\begin{remark}
	The topology on \(\Cont([0,1];\prm(X))\) is that of uniform convergence in \((\prm(X),\dS)\). Since \(\prm(X)\) is compact, all metrics metrizing the \textweakstar~topology, be it \(\dS\), or \(W_2\), are uniformly continuous with respect to one another and thus yield the same notion of uniform convergence. Equivalently, this is the compact-open topology. 
\end{remark}

\begin{remark}
	The restriction to piecewise arc interpolation in the definition of \(E^N\) is somewhat arbitrary. One could, for instance, also use piecewise geodesic interpolation (with respect to \(\dS\)), or vertical interpolation.
	The important point is that either of these forms of interpolations preserves uniform convergence. All of our results could therefore be expressed just the same with any of the listed forms of interpolation. One could even use piecewise constant interpolation with appropriate modification of the functional space where $E$ and $E^N$ are defined.
\end{remark}

\begin{corollary} \label{cor:chain_energy_approx}
	The Sinkhorn distance \(\dS\) can be obtained as the limit of the \(S_\varepsilon\)-chain approximation: for any \(\mu,\nu \in \prm(X)\)
	\begin{equation}
		\label{eq:chain_approx}
		\dS^2(\mu,\nu) = \lim_{N \to \infty} \inf \, \setgiven{N \cdot \sum_{k=0}^{N-1} S_\varepsilon(\mu_k,\mu_{k+1})}{\mu_0,\ldots,\mu_N \in \prm(X), \, \mu_0=\mu,\, \mu_N = \nu}
	\end{equation}
	Further, if for \(N \in \N\), \((\mu^N_k)_{k=0}^N\) are discrete paths with \(\mu^N_0 = \mu\), \(\mu^N_N = \nu\) and $\ol{E}^N((\mu^N_k)_k)$ converges to $\dS^2(\mu,\nu)$, 
	then the piecewise arc interpolation of \((\mu^N_k)_{k=0}^N\) converges (up to subsequence) in \(\Cont([0,1];\prm(X))\) to a path \((\mu_t)_t\) which is a geodesic between $\mu$ and $\nu$.
\end{corollary}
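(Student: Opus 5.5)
The plan is to deduce the corollary from Theorem~\ref{theorem:gamma_convergence} by the standard argument that $\Gamma$-convergence plus equicoercivity gives convergence of minima and minimizers. Two adjustments are needed. The endpoints must be constrained, and the infimum in~\eqref{eq:chain_approx} runs over arbitrary tuples rather than over paths. For the second point, note that for any tuple $(\mu_k)_{k=0}^N$ the piecewise arc interpolation $(\mu_t)_t$ is admissible and satisfies $E^N((\mu_t)_t) = \ol{E}^N((\mu_k)_k)$. Hence the inner infimum in~\eqref{eq:chain_approx} equals $\inf\{E^N(\gamma) : \gamma \in \Cont([0,1];\prm(X)),\ \gamma_0=\mu,\ \gamma_1=\nu\}$. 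Call this $m_N$.

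For the upper bound, take a geodesic $(\mu_t)_t \in \Pall(\mu,\nu)$; it exists by Theorem~\ref{theorem:dS_metric}. Theorem~\ref{theorem:gamma_convergence} provides a recovery sequence $(\mu^N_t)_t$ with the same endpoints, so $\limsup_N m_N \le \limsup_N E^N((\mu^N_t)_t) \le E((\mu_t)_t) = \dS^2(\mu,\nu)$. This is the step that needs the boundary-preserving recovery sequence; without it the endpoint constraint would break the argument.

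For the lower bound, take almost-minimizers $\gamma^N$ with $E^N(\gamma^N) \le m_N + 1/N$. Fix a subsequence realizing $\liminf_N m_N$, which is finite by the upper bound. Along it $\sup_N E^N(\gamma^N)<\infty$, so the compactness statement of Theorem~\ref{theorem:gamma_convergence} yields a further subsequence converging uniformly to some $\gamma$. Uniform convergence preserves the endpoints, so $\gamma_0=\mu$ and $\gamma_1=\nu$. The $\Gamma$-liminf gives $E(\gamma) \le \liminf_N m_N$. In particular $E(\gamma)<\infty$, so $\gamma\in\Pall(\mu,\nu)$, and therefore $\dS^2(\mu,\nu)\le E(\gamma)$. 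Together with the upper bound this proves~\eqref{eq:chain_approx}.

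The second claim follows from the same argument. Apply it to the piecewise arc interpolations $\gamma^N$ of the given discrete paths, for which $E^N(\gamma^N)=\ol{E}^N((\mu^N_k)_k)\to\dS^2(\mu,\nu)$ by assumption. Compactness gives a subsequence converging to some $\gamma$ with endpoints $\mu$ and $\nu$. The liminf inequality then gives $\dS^2(\mu,\nu)\le E(\gamma)\le \lim_N E^N(\gamma^N)=\dS^2(\mu,\nu)$, so $\gamma$ is a geodesic. No step is genuinely hard once Theorem~\ref{theorem:gamma_convergence} is available; the only care needed is in the bookkeeping between tuples and interpolated paths and in preserving the endpoints under the limit.
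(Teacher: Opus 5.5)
Your proposal is correct and follows essentially the same route as the paper. The boundary-preserving recovery sequence of Theorem~\ref{theorem:gamma_convergence} gives $\Gamma$-convergence on the endpoint-constrained class, and equicoercivity together with the $\Gamma$-liminf then yields convergence of the minimal values and shows that limits of (almost) minimizers are geodesics. The only difference is that you write out explicitly the standard $\Gamma$-convergence argument and the identification of tuples with piecewise arc interpolations, where the paper simply cites them.
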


\begin{proof}
	As the recovery sequence in Theorem~\ref{theorem:gamma_convergence} can be chosen to have the prescribed start and end points, we also get $\Gamma$-convergence of $E^N$ to $E$ on the set $\Pall(\mu,\nu) \subseteq \Cont([0,1],\prm(X))$ (i.e.~those connecting \(\mu\) to \(\nu\)).
	
	From the $\Gamma$-limsup, we know that $\inf E^N$ is uniformly bounded from above. Thus, if $((\mu^N_t)_t)_N$ are curves with $E^N((\mu^N_t)_t) - \inf_{\Pall(\mu,\nu)} E^N \to 0$ as $N \to \infty$, from the relative compactness (again Theorem~\ref{theorem:gamma_convergence}) we know that we can extract a subsequence which converges. The theory of $\Gamma$-convergence implies both convergence of the minimal values (this is~\eqref{eq:chain_approx}) and that the limit of any converging subsequence of $(\mu^N_t)_t$ is a geodesic (proof of~\cite[Prop.~7.5]{santambrogio2023course}).  
\end{proof}

\begin{remark}
	Alternatively to the Sinkhorn chain \(E^N\), one could also work directly with metric tensors and define
	\begin{align*}
		\calE^N ( \mu_0, \mu_1, \dots, \mu_N ) & \assign N \cdot \sum_{k=0}^{N-1} \g_{\mu_k} ( \mu_{k+1} - \mu_k, \mu_{k+1} - \mu_k ), \\
		\tilde{\calE}^N ( \mu_0, \mu_1, \dots, \mu_N ) & \assign N \cdot \sum_{k=0}^{N-1} \tg_{\mu_k} ( \beta_{k+1} - \beta_k, \beta_{k+1} - \beta_k ).
	\end{align*}
	We believe that one can obtain \(\Gamma\)-convergence of (a continuous time version of) these functionals to \(E\) in the topology of \(\Cont([0,1];\prm(X))\) by mimicking the approach of~\cite[Thm.~4.8]{RumpfWirth2015}. The key point is that the remainders in Theorem~\ref{theorem:expansion_dS} are better behaved than the one in Theorem~\ref{theorem:estimates_with_Wc}, which would simplify the technical aspects. 
	However, likely $\calE$ and $\tilde{\calE}^N$ are numerically significantly less tractable than the Sinkhorn chain \(E^N\) due to the dependence of the metric tensors on the footpoint.
\end{remark}

Sections~\ref{sec:gamma_liminf} and~\ref{sec:gamma_limsup} are dedicated to the proof of this theorem. The \(\Gamma\)-liminf is shown via direct estimates, while the proof for the \(\Gamma\)-limsup relies on the spatial discretization step from Section~\ref{sec:projection} that allows us to reduce the problem to a finite base space.

\begin{remark}[Joint limit]
	\label{rmk:joint_limit}
	Interestingly our proof techniques enable us to prove the joint spatial and temporal limit without any constraint on the ratio between the spatial and temporal step size. Take $(x_m)_m$ a sequence dense in $X$ and set $X_M = \{ x_1, \ldots, x_M \}$. Consider $\mu^M$, $\nu^M$ probabilities over $X_M$ which converge to $\mu,\nu \in \prm(X)$ as $M \to \infty$. We consider the problem
	\begin{equation}
		\label{eq:joint_limit}
		\min \left\{ \left. N \cdot \sum_{k=0}^{N-1} S_\varepsilon(\mu_k,\mu_{k+1}) \; \right| \; \mu_0,\ldots,\mu_N \in \prm(X_M), \, \mu_0=\mu^M,\, \mu_N = \nu^M \right\},
	\end{equation}
	indexed by $M$ the spatial resolution and $N$ the temporal resolution. We claim that the value of the problem~\eqref{eq:joint_limit} converges to $\dS(\mu,\nu)$ as $N,M \to \infty$ jointly, and suitable interpolations of the minimizers of~\eqref{eq:joint_limit} converge to geodesics. 
	
	Indeed the $\Gamma$-liminf step is identical: Theorem~\ref{theorem:gamma_liminf} below still applies. For the $\Gamma$-limsup, assuming that we take the limit $N \to \infty$ and $M = M'_N \to \infty$ (at an arbitrary rate), then in the proof of Theorem~\ref{theorem:gamma_limsup} it is enough to take $M_N \leq M'_N$ (the sequence $(M_N)_N$ in this proof can grow at an arbitrarily slow rate) to guarantee that the recovery sequence is supported on $\prm(X_{M_N}) \subseteq \prm(X_{M'_N})$.   
\end{remark}

\subsection{Gamma-liminf} \label{sec:gamma_liminf}

\begin{theorem}[{Liminf in Theorem~\ref{theorem:gamma_convergence}}] \label{theorem:gamma_liminf}
	For any \(N \in \N\) let \((\mu^N_k)_k\) be in \(\prm(X)^{N+1}\) and assume that \(\sup_{N \in \N} \ol{E}^N((\mu^N_k)_k) < \infty\). Then, up to a subsequence, the piecewise arc interpolation of \((\mu^N_k)_k\) converges in \(\Cont([0,1];\prm(X))\) to a curve \((\mu_t)_t\) and 
	\begin{equation*}
		E((\mu_t)_t) \leq \liminf_{N \to \infty} \ol{E}^N((\mu^N_k)_k).
	\end{equation*}
\end{theorem}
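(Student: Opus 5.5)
Let $(\mu^N_t)_t$ denote the piecewise arc interpolation of $(\mu^N_k)_k$, with $(\beta^N_t)_t$ the corresponding path in $\calB$. The plan is to compare $\ol{E}^N$ with $E((\mu^N_t)_t)$ link by link, using the lower bound~\eqref{eq:Seps_asymp_lbound_dS} of Theorem~\ref{theorem:estimates_with_Wc} and the arc estimate~\eqref{eq:approximate_geodesics} of Theorem~\ref{theorem:expansion_dS}. Compactness and lower semicontinuity will then be taken from Theorem~\ref{theorem:coercivity} and Theorem~\ref{theorem:Energy_lsc}.

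\emph{Step 1: the links are uniformly small.} Set $C_0 = \sup_N \ol{E}^N((\mu^N_k)_k) < \infty$. By~\eqref{eq:bounds_distance}, $\dS^2(\mu^N_k,\mu^N_{k+1}) \lesssim S_\varepsilon(\mu^N_k,\mu^N_{k+1}) \leq C_0/N$. Hence $\delta_N \assign \max_k \dS(\mu^N_k,\mu^N_{k+1}) \lesssim N^{-1/2} \to 0$, uniformly in $k$.

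\emph{Step 2: linkwise comparison.} The rescaled arc interpolation on $[k/N,(k+1)/N]$ has energy $N \cdot E(\text{arc between } \mu^N_k,\mu^N_{k+1})$, where the latter energy is computed on $[0,1]$. By~\eqref{eq:approximate_geodesics}, this equals $N\big(\dS^2(\mu^N_k,\mu^N_{k+1}) + o(\dS^2(\mu^N_k,\mu^N_{k+1}))\big)$. By~\eqref{eq:Seps_asymp_lbound_dS}, $S_\varepsilon(\mu^N_k,\mu^N_{k+1}) \geq (1-\eta(\delta_N))\,\dS^2(\mu^N_k,\mu^N_{k+1})$ for a function $\eta$ with $\eta(r)\to 0$ as $r \to 0$. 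Summing over $k$ gives
\begin{equation*}
E((\mu^N_t)_t) \leq \frac{1+\eta'(\delta_N)}{1-\eta(\delta_N)}\, \ol{E}^N((\mu^N_k)_k).
\end{equation*}
The uniformity of the little-$o$'s over $\prm(X)^2$ is essential here, because each $o$ is then controlled by $\delta_N$ independently of $k$. In particular $\sup_N E((\mu^N_t)_t) < \infty$ and $\limsup_N \big( E((\mu^N_t)_t) - \ol{E}^N \big) \leq 0$.

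\emph{Step 3: compactness and lower semicontinuity.} Each interpolated curve is admissible: on each segment the arc is smooth in $\Hil_c$, so its concatenation lies in $\Hil^1((0,1);\Hil_c)$. Since the energies are uniformly bounded, Theorem~\ref{theorem:coercivity} yields a subsequence with $(\beta^N_t)_t$ converging weakly in $\Hil^1((0,1);\Hil_c)$ and strongly in $\Cont([0,1];\Hil_c)$. It also gives $(\mu^N_t)_t \to (\mu_t)_t$ in $\Cont([0,1];\prm(X))$. Theorem~\ref{theorem:Energy_lsc} then gives
\begin{equation*}
E((\mu_t)_t) \leq \liminf_N E((\mu^N_t)_t) \leq \liminf_N \ol{E}^N((\mu^N_k)_k).
\end{equation*}

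The only delicate point is Step 2, specifically checking that the uniform remainders of Theorems~\ref{theorem:expansion_dS} and~\ref{theorem:estimates_with_Wc} can be summed over $N$ links. This works because each remainder is relative, that is, a factor times $\dS^2$ of the same link, and the factors are uniformly small by Step 1.
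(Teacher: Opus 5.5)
Your proposal is correct and follows essentially the same route as the paper. The paper likewise uses \eqref{eq:bounds_distance} to show $\max_k \dS(\mu^N_k,\mu^N_{k+1}) \to 0$, then chains the uniform lower bound \eqref{eq:Seps_asymp_lbound_dS} with the rescaled arc estimate \eqref{eq:approximate_geodesics} to get $\ol{E}^N \geq (1-\omega_2(\max_k \dS))\,E$ of the interpolation, and concludes with Theorem~\ref{theorem:coercivity} and Theorem~\ref{theorem:Energy_lsc}.
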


\begin{proof}
Let \((\tilde{\mu}^N_t)_t\) be the arc interpolation of $\mu^N_k$. We use first the lower bound~\eqref{eq:Seps_asymp_lbound_dS} in Theorem~\ref{theorem:estimates_with_Wc}, followed by the estimate~\eqref{eq:approximate_geodesics} in Theorem~\ref{theorem:expansion_dS}, with the appropriate time-rescaling: we get two modulus of continuity \(\omega_1\), $\omega_2$ such that
\begin{align*}
\ol{E}^N((\mu^N_k)_k)  = N \sum_{k=0}^{N-1} S_\varepsilon(\mu^N_{k},\mu^N_{k+1})  & \geq \left( 1 - \omega_1 \left( \max_k \dS(\mu^N_{k},\mu^N_{k+1} ) \right) \right) N \sum_{k=0}^{N-1} \dS^2(\mu^N_{k},\mu^N_{k+1})  \\
& \geq \left( 1 - \omega_2 \left( \max_k \dS(\mu^N_{k},\mu^N_{k+1}) \right) \right) E((\tilde{\mu}^N_t)_t).
\end{align*}
Moreover, recalling the estimate~\eqref{eq:bounds_distance}, we have $\ol{E}^N((\mu^N_k)_k) \gtrsim N \dS(\mu^N_{k},\mu^N_{k+1})^2$ for any $k$. Thus $\max_k \dS(\mu^N_{k},\mu^N_{k+1}) \to 0$, so that  
\begin{equation*} 
\infty > \liminf_{N \to \infty} \ol{E}^N((\mu^N_k)_k) \geq  \liminf_{N \to \infty} E((\tilde{\mu}^N_t)_t).
\end{equation*}
Thus, up to a subsequence, Theorem~\ref{theorem:coercivity} shows that \((\tilde{\mu}^N_t)_t\) converges in \(\Cont([0,1];\prm(X))\) to some path \((\mu_t)_t\) and the corresponding path \((\tilde{\beta}^N_t)_t\) converges weakly in \(\Hil^1((0,1);\Hil_c)\) to the corresponding \((\beta_t)_t\). Weak lower semi-continuity of the energy functional \(E\) from Theorem~\ref{theorem:Energy_lsc} shows that the right hand side in the last display is greater than $E((\mu_t)_t)$.
\end{proof}

\subsection{Gamma-limsup} \label{sec:gamma_limsup}

\begin{theorem}[{Limsup in Theorem~\ref{theorem:gamma_convergence}}] \label{theorem:gamma_limsup}
	For any admissible path \((\mu_t)_t \in \Pall\), we can build discrete curves \((\mu^N_k)_k\) in \(\prm(X)^{N+1}\) for \(N \in \N\) with \(\mu^N_0 = \mu_0\), \(\mu^N_N = \mu_1\), such that the piecewise arc interpolation of \((\mu^N_k)_k\) converges in \(\Cont([0,1];\prm(X))\) and
	\begin{equation*}
		E((\mu_t)_t) \geq \lim_{N \to \infty} N \cdot \sum_{k=0}^{N-1} S_\varepsilon(\mu^N_k, \mu^N_{k+1}).
	\end{equation*} 
\end{theorem}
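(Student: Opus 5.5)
The plan is to prove the statement first when the base space is finite and then transfer it to general $X$ via the projection $R_M$ of Section~\ref{sec:projection}, using a diagonal choice $M = M_N \to \infty$ slow enough that all the errors vanish.

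\emph{Step 1: finite base space $X_M$.} Take $(\mu_t)_t \in \Pall$ with values in $\prm(X_M)$ and use the samples $\mu^N_k = \mu_{k/N}$. The key observation is that on $X_M$ the map $\beta \mapsto \mu$ is Lipschitz, with a constant depending on $M$. Indeed, $\alpha$ is recovered from $\beta \in \cone^M$ by inverting the Gram matrix $(k_c(x_i,x_j))_{i,j}$, which is invertible because $k_c$ is positive definite, and then $\mu = \alpha \beta$. Hence
\begin{equation*}
W_c(\mu,\nu) \le C_M \norm{\beta_\mu - \beta_\nu}_{\Hil_c} \lesssim C_M\, \dS(\mu,\nu) \qquad \text{for } \mu,\nu \in \prm(X_M),
\end{equation*}
by Theorem~\ref{theorem:dS_metric}. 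Plugging this into~\eqref{eq:Seps_asymp_ubound_Wc} gives $S_\varepsilon(\mu,\nu) \le (1 + \omega_M(\dS(\mu,\nu)))\, \dS^2(\mu,\nu)$ for some modulus $\omega_M$. Since $\dS^2(\mu_s,\mu_t) \le (t-s) \int_s^t \g_{\mu_r}(\mudot_r,\mudot_r) \diff r$, summing over the links yields
\begin{equation*}
N \sum_{k=0}^{N-1} S_\varepsilon(\mu_{k/N},\mu_{(k+1)/N}) \le \Bigl(1 + \omega_M\bigl(\max_k \dS(\mu_{k/N},\mu_{(k+1)/N})\bigr)\Bigr) E((\mu_t)_t).
\end{equation*}
The maximal step tends to $0$ by uniform continuity of the path. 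The piecewise arc interpolation converges uniformly to $(\mu_t)_t$, since each arc stays within $\calO(\dS(\mu_{k/N},\mu_{(k+1)/N}))$ of its endpoints.

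\emph{Step 2: general $X$ with fixed endpoints.} Given $(\mu_t)_t \in \Pall$, let $(\mu^{M,N}_t)_t$ be the path $(R_M(\mu_t))_t$ reparametrized linearly from $[0,1]$ onto $[1/N, 1-1/N]$. Its energy is $(1-2/N)^{-1} E((R_M(\mu_t))_t)$, which tends to $E((\mu_t)_t)$ as $N, M \to \infty$ by Corollary~\ref{crl:cv_energy_M}. Build the chain
\begin{equation*}
\mu_0,\ R_M(\mu_0) = \mu^{M,N}_{1/N},\ \mu^{M,N}_{2/N},\ \dots,\ \mu^{M,N}_{(N-1)/N} = R_M(\mu_1),\ \mu_1.
\end{equation*}
The interior links, all in $\prm(X_M)$, are handled by Step 1 applied to the reparametrized path. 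The two boundary links are bounded by~\eqref{eq:Seps_bound_Wc}:
\begin{equation*}
N\, S_\varepsilon(\mu_0,R_M(\mu_0)) \le C N\, W_c^2(\mu_0,R_M(\mu_0)),
\end{equation*}
and similarly at $t=1$. By Theorem~\ref{theorem:unif_conv_RM}, $R_M(\mu) \to \mu$ weakly, so $W_c(\mu_0,R_M(\mu_0)) \to 0$ as $M \to \infty$.

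\emph{Step 3: diagonal choice of $M_N$.} I expect this to be the main obstacle, because the modulus $\omega_M$ from Step 1 degenerates as $M \to \infty$ through $C_M$. I would choose $M_N \to \infty$ slowly enough that three things hold simultaneously:
\begin{itemize}
\item $N\, W_c^2(\mu_i, R_{M_N}(\mu_i)) \to 0$ for $i = 0,1$, the boundary-link cost;
\item $\omega_{M_N}$ evaluated at the maximal step of $(R_{M_N}(\mu_t))_t$ on the grid tends to $0$, which is possible because for each fixed $M$ this quantity tends to $0$ as $N \to \infty$;
\item the energy bounds are uniform, which holds since~\eqref{eq:bound_betadot_nonasymp} gives equicontinuity of the projected paths uniformly in $M$.
\end{itemize}
Then the limsup of the chain energy is at most $E((\mu_t)_t)$. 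Uniform convergence of the piecewise arc interpolation follows from Theorem~\ref{theorem:unif_conv_RM} together with the uniform-in-$M$ equicontinuity of the paths $(R_M(\mu_t))_t$.
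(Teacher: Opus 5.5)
Your Step~1 is correct and is essentially the paper's Lemma~\ref{lemma:limsup_discrete}; you obtain $W_c \le C_M \dS$ on $\prm(X_M)$ through the inverse Gram matrix rather than through the total variation norm. Your overall architecture (projection by $R_M$, then a diagonal choice of $M_N$) is also the paper's. The gap is in how you attach the true endpoints.

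With a single link $\mu_0 \to R_M(\mu_0)$, the cost is $C N W_c^2(\mu_0,R_M(\mu_0))$. So your first bullet requires $W_c(\mu_0,R_{M_N}(\mu_0)) = o(N^{-1/2})$. This is a \emph{lower} bound on the growth of $M_N$: the spatial resolution must improve fast enough relative to the time step, and choosing $M_N$ ``slowly enough'' cannot enforce it. Your second bullet is an \emph{upper} bound on the growth of $M_N$. The reason is that your modulus has the form $\omega_M(s) \approx C_M^2\,\omega(C_M s)$, where $\omega$ is the modulus in~\eqref{eq:Seps_asymp_ubound_Wc}, so it degenerates as $C_M \to \infty$. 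And $C_M$ must blow up: $W_c \lesssim \dS$ fails on $\prm(X)$ (Example~\ref{exm:W2_not_dS_geodesic_splitting_dirac}), while $\bigcup_M \prm(X_M)$ is dense. Nothing guarantees the two constraints are compatible. Even in the optimistic case $W_c(\mu_0,R_M(\mu_0)) \lesssim \delta_M$, you would need $\delta_{M_N} = o(N^{-1/2})$. Meanwhile $C_M$, governed by the inverse Gram matrix of the kernel on $X_M$, typically grows far too fast to permit that.

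The paper removes the conflict by spreading each endpoint correction over $K$ links of vertical interpolation. It sets $\tilde\mu_k = \frac{K-k}{K}\mu_0 + \frac{k}{K}R_M(\mu_0)$ for $0 \le k \le K$, does the same at $t=1$, and samples the projected path on the remaining $N-2K$ steps.
\begin{itemize}
\item By Kantorovich duality each boundary link has $W_c \le w_M/K$, where $w_M = \max_{i=0,1} W_c(\mu_i,R_M(\mu_i))$.
\item By~\eqref{eq:Seps_bound_Wc} the two boundary pieces therefore cost at most $2CNw_M^2/K$.
\item The middle part contributes $\frac{N}{N-2K}E_{N-2K,M}$.
\end{itemize}
Taking $K_N = \lceil w_{M_N} N\rceil$ gives $N w_{M_N}^2 \ll K_N \ll N$ for \emph{every} sequence $M_N \to \infty$. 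The boundary cost then vanishes and the prefactor tends to $1$, so $M_N$ is free to grow as slowly as the interior estimate demands.

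With this change, the rest of your argument goes through, including the uniform convergence of the arc interpolation. The boundary segments occupy the shrinking time windows $[0,K_N/N]$ and $[1-K_N/N,1]$, and every vertical interpolant lies within $W_c$-distance $w_{M_N}$ of the corresponding endpoint.
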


The proof is divided into two steps: we first show that it is the case if $(\mu_t)_t$ is supported on a discrete space \(X_M \subseteq X\). Section~\ref{sec:projection} provides a suitable way to project \(\prm(X)\) to \(\prm(X_M)\). The estimates from Section~\ref{sec:local_expansion} allow us to modify the discrete curves to preserve the start and end points.

\begin{lemma} \label{lemma:limsup_discrete}
	Suppose that \(X = \{x_1,\dots,x_M\}\) is finite and $(\mu_t)_t \in \Pall$. 
	Then it holds \(\lim_{N \to \infty} \ol{E}^N((\mu_{k/N})_k) = E((\mu_t)_t)\).
\end{lemma}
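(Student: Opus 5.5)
The plan is to exploit that on a finite space $X$ the Sinkhorn divergence is a smooth function on the finite-dimensional manifold $\prm(X)$, or at least on its relative interior. Its Hessian on the diagonal is $\g_\mu$, and it has the two-sided expansion that we lack in general. By Theorem~\ref{theorem:estimates_with_Wc}, we have \eqref{eq:Seps_asymp_ubound_Wc}. On a finite space all norms on $\calM_0(X)$ are equivalent, so $W_c(\mu,\nu)\lesssim \norm{\mu-\nu}_{\TV}$. The missing link is therefore a bound $W_c(\mu,\nu)\lesssim \dS(\mu,\nu)$ on finite $X$. Once we have it, $S_\varepsilon(\mu,\nu)=\dS^2(\mu,\nu)+o(\dS^2(\mu,\nu))$ holds uniformly. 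Combined with \eqref{eq:Seps_asymp_lbound_dS}, this reduces the lemma to showing
\[
N\sum_{k}\dS^2(\mu_{k/N},\mu_{(k+1)/N})\to E((\mu_t)_t),
\]
since $\max_k \dS(\mu_{k/N},\mu_{(k+1)/N})\to 0$ by uniform continuity of the path.

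\emph{Step 1: $W_c\lesssim\dS$ on finite $X$.} On finite $X$, $\beta_\mu=H_c[\alpha_\mu]$, and $H_c$ is an invertible linear map $\R^M\to\Hil_c$ (its Gram matrix $k_c$ is positive definite). Hence $\norm{\alpha_\mu-\alpha_\nu}\lesssim\norm{\beta_\mu-\beta_\nu}_{\Hil_c}$. Moreover $\beta_\mu$ is bounded uniformly, and $\mu=\alpha_\mu\beta_\mu$ is a bilinear expression. This gives
\[
\norm{\mu-\nu}_{\TV}\lesssim\norm{\alpha_\mu-\alpha_\nu}+\norm{\beta_\mu-\beta_\nu}_\infty\lesssim\norm{\beta_\mu-\beta_\nu}_{\Hil_c}\asymp\dS(\mu,\nu)
\]
by Theorem~\ref{theorem:dS_metric}. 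Together with $W_c\lesssim\norm{\cdot}_{\TV}$ (as $d_c$ is bounded), we get $S_\varepsilon=\dS^2(1+o(1))$ uniformly on $\prm(X)^2$.

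\emph{Step 2: the sum of squared distances converges to the energy.} The upper bound is straightforward. For each $k$, the restriction of $(\mu_t)_t$ to $[k/N,(k+1)/N]$ is an admissible competitor. After rescaling time, this gives
\[
N\,\dS^2(\mu_{k/N},\mu_{(k+1)/N})\le N^2\cdot\frac1N\int_{k/N}^{(k+1)/N}\tg_{\mu_t}(\betadot_t,\betadot_t)\diff t\cdot\frac1N\cdot N,
\]
that is, $N\dS^2\le \int_{k/N}^{(k+1)/N}\tg_{\mu_t}(\betadot_t,\betadot_t)\diff t$ by Cauchy--Schwarz. Summing over $k$ yields $\limsup\le E$. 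For the lower bound, which gives the equality claimed, I would apply \eqref{eq:expansion_dS_beta} with footpoint $\ol\mu=\mu_{k/N}$:
\[
N\dS^2(\mu_{k/N},\mu_{(k+1)/N})=(1+o(1))\,N\,\tg_{\mu_{k/N}}(\beta_{(k+1)/N}-\beta_{k/N},\beta_{(k+1)/N}-\beta_{k/N}).
\]
The difference quotients $N(\beta_{(k+1)/N}-\beta_{k/N})$, viewed as piecewise constant functions of $t$, converge to $\betadot$ strongly in $L^2((0,1);\Hil_c)$; this is the standard Lebesgue differentiation result for Bochner--Sobolev functions. By uniform continuity of $t\mapsto\tg_{\mu_t}$ in operator norm (Proposition~\ref{prop:metric_tensor_beta_smooth} and compactness), the Riemann-type sum converges to $\int_0^1\tg_{\mu_t}(\betadot_t,\betadot_t)\diff t=E$.

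\emph{Main obstacle.} I expect Step 1 to be the delicate point, specifically the uniform constants. Near the boundary of the simplex the map $\mu\mapsto\beta_\mu$ remains well behaved, because $f_{\mu,\mu}$ is uniformly bounded by \eqref{eq:sup_bound_potentials}. However, I must check that the inversion of $H_c$ and the product estimate do not degenerate there; because $X$ is finite and fixed, all constants depend only on $X$, $c$ and $\varepsilon$. The rest is routine once the uniform two-sided expansion $S_\varepsilon\sim\dS^2$ is available.
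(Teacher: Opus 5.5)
Your proposal is correct. The first half matches the paper's proof. Both arguments combine \eqref{eq:Seps_asymp_ubound_Wc} and \eqref{eq:Seps_asymp_lbound_dS} with a bound $W_c\lesssim\dS$ on finite $X$, which gives $S_\varepsilon=\dS^2(1+o(1))$ uniformly. Both also get $N\dS^2(\mu_{k/N},\mu_{(k+1)/N})\le\int_{k/N}^{(k+1)/N}\tg_{\mu_t}(\betadot_t,\betadot_t)\diff t$ by restricting and reparametrizing the path. Your intermediate display there carries a spurious factor $N$, and no Cauchy--Schwarz is needed, but the inequality you then state is the right one.

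Your proof of $W_c\lesssim\dS$ is more explicit than the paper's. You use invertibility of the Gram matrix of $k_c$, the product identity $\mu=\alpha_\mu\beta_\mu$, and Theorem~\ref{theorem:dS_metric}. The paper instead asserts that the metric tensor is uniformly equivalent to the TV norm on the finite-dimensional manifold $\prm(X)$, and leaves implicit the passage from the tensor to the distance.

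The genuine difference is the lower bound. The paper gets $E((\mu_t)_t)\le\liminf_N\ol{E}^N$ directly from the $\Gamma$-liminf (Theorem~\ref{theorem:gamma_liminf}), noting that the piecewise arc interpolation of the samples converges to $(\mu_t)_t$. You instead show directly that $N\sum_k\tg_{\mu_{k/N}}(\Delta_k,\Delta_k)\to E((\mu_t)_t)$, where $\Delta_k=\beta_{(k+1)/N}-\beta_{k/N}$. This uses three ingredients:
\begin{itemize}
\item \eqref{eq:expansion_dS_beta} with footpoint $\mu_{k/N}$;
\item strong $L^2((0,1);\Hil_c)$ convergence of the piecewise-constant averages $N\int_{k/N}^{(k+1)/N}\betadot_s\diff s$ to $\betadot$;
\item uniform operator-norm continuity of $t\mapsto\tg_{\mu_t}$, from Proposition~\ref{prop:metric_tensor_beta_smooth} and compactness.
\end{itemize}
Your route makes the lemma independent of the lower-semicontinuity and compactness machinery behind the $\Gamma$-liminf. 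Combined with the two-sided expansion of $S_\varepsilon$, it gives both inequalities at once, so your separate upper-bound step becomes redundant. The cost is a few routine verifications. The paper's route is shorter because the $\Gamma$-liminf has to be proved anyway.
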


\begin{proof}
	Using the expansion of the Sinkhorn divergence in Theorem~\ref{theorem:estimates_with_Wc}, the equivalence of the metric tensor with $\dS$ in Theorem~\ref{theorem:expansion_dS}, and the comparison between distances in~\eqref{eq:bounds_distance},
	\begin{equation*}
		S_\varepsilon(\mu,\nu) = \dS^2(\mu,\nu) + o(W_c(\mu,\nu)^2)
	\end{equation*}
	as $W_c(\mu,\nu) \to 0$, with a small $o$ uniform over $\prm(X)^2$. Now $W_c \leq C \dS$ with $C$ depending on $c$, $X$ and $\varepsilon$. Indeed, in this case $W_c$ is bounded, up to a constant, by the total variation norm \cite[Thm.~6.15]{Villani2009}. Moreover, still in this case, the total variation norm is equivalent to the norm induced by the metric tensor, uniformly in the footpoint, and therefore also equivalent to the induced Riemannian distance $\dS$ (we have a finite-dimensional Riemannian manifold!). Thus, on this discrete space we have the stronger expansion $S_\varepsilon(\mu,\nu) = \dS^2(\mu,\nu) + o(\dS^2(\mu,\nu))$. We obtain 
	\begin{equation*} 
		N \sum_{k=0}^{N-1} S_\varepsilon(\mu_\frac{k}{N}, \mu_\frac{k+1}{N}) = N \sum_{k=0}^{N-1}  \dS^2(\mu_\frac{k}{N}, \mu_\frac{k+1}{N}) \left( 1   +     \omega(\dS(\mu_\frac{k}{N}, \mu_\frac{k+1}{N}) ) \right) 
	\end{equation*}
	with $\omega$ the rate function of the \(o\)-term. 
	As \(N \cdot \dS(\mu_\frac{k}{N}, \mu_\frac{k+1}{N})^2 \leq \int_{k/N}^{(k+1)/N} \g_{\mu_t}(\mudot_t,\mudot_t) \diff t\), summing over \(N\) yields
	\begin{equation*}
		N \sum_{k=0}^{N-1} S_\varepsilon(\mu_\frac{k}{N}, \mu_\frac{k+1}{N})  \leq \int_0^1 \g_{\mu_t}(\mudot_t,\mudot_t) \diff t \cdot \left(1 + \omega \left( \max_{\abs{t-s} \leq \frac{1}{N}} \dS(\mu_s,\mu_t)^2 \right) \right). 
	\end{equation*}
	One obtains \(\limsup_{N \to \infty} E^N((\mu_t)_t) \leq E((\mu_t)_t)\) from uniform continuity of the path \((\mu_t)_t\) with respect to \(\dS\). The \(\Gamma\)-liminf condition shown in Theorem~\ref{theorem:gamma_liminf} yields equality.
\end{proof}

\begin{proof}[{Proof of Theorem~\ref{theorem:gamma_limsup}}]
	Let \((\mu_t)_t \in \Pall\) with corresponding \((\beta_t)_t \in \Hil^1((0,1);\Hil_c)\). We construct the recovery sequence for the \(\Gamma\)-limsup condition. Take $(x_m)_m$ a sequence dense in $X$ and set $X_M = \{ x_1, \ldots, x_M \}$. 
	
	\emph{1st step: building the sequence.}
	Define the renormalized projected path $\mu^M_t = R_M(\mu_t)$ as in Section~\ref{sec:projection}.  
	The measures \(\mu^M_t\) are supported on \(X_M = \{x_1,x_2,\dots,x_M\}\).
	By defining 
	\begin{equation*}
		E_{N,M} \assign \ol{E}^{N}(\mu^M_0, \mu^M_{1/N}, \dots, \mu^M_{N/N}),    
	\end{equation*}
	Lemma~\ref{lemma:limsup_discrete} (for the limit in $N$) and Corollary~\ref{crl:cv_energy_M} (for the limit in $M$) show that
	\begin{equation}
		\label{eq:joint_limit_ENM}
		\lim_{M \to \infty} \lim_{N \to \infty} E_{N,M} = E((\mu_t)_t),
	\end{equation}
	where the limits are taken in this order. We modify the path to make the start and end points line up with \(\mu_0\) and \(\mu_1\), while preserving the convergence of the energy. We will do it by using a vertical interpolation near the endpoints. 
	For parameters $N$, $M$ and $K \leq N/2$ we define 
	\begin{equation} \label{eq:p:gamma_limsup_path}
		\begin{cases}
			\tilde{\mu}^{N,K,M}_k = \frac{K - k}{K} \mu_0 + \frac{k}{K} \mu^{M}_0 & \tn{for } 0 \leq k \leq K, \\[10pt]
			\tilde{\mu}^{N,K,M}_{k} = \mu^{M}_{(k - K) / (N - 2 K)} & \tn{for } K \leq k \leq N - K, \\[10pt]
			\tilde{\mu}^{N,K,M}_{k} = \frac{N - k}{K} \mu^{M}_1 + \frac{k - (N - K)}{K} \mu_1 & \tn{for } N - K \leq k \leq N.
		\end{cases}
	\end{equation}
	That is, we squeeze the spatially discrete curve $\mu^{M}$ only in $\{K,\ldots,N-K\}$, and use $\{1,\ldots,K\}$ and $\{N-K,\ldots,N\}$ to put a vertical interpolation between $\mu_0$ and $R_M(\mu_0)$ on one end, $R_M(\mu_1)$ and $\mu_1$ at the other end. 
	
	\emph{2nd step: Energy of the recovery sequence.}
	Splitting the sum defining the discrete energy \(\ol{E}^N(\tilde{\mu}^{N,K,M}_0,\dots,\tilde{\mu}^{N,K,M}_{N})\) according to~\eqref{eq:p:gamma_limsup_path}, we notice that the middle part recovers \(E^{N-2K,M}\) up to a scaling factor $N/(N-2K)$.
	For the endpoints let us write $w_M = \max \{W_c(\mu_0,\mu^M_0), W_c(\mu_1, \mu^M_1)\}$. By the convergence $\mu^M_0 \to \mu_0$ and $\mu^M_1 \to \mu_1$ in $\prm(X)$, we have $w_M \to 0$ as $M \to \infty$. We use the estimate \(W_c(\tilde{\mu}^{N,K,M}_k, \tilde{\mu}^{N,K,M}_{k+1}) =  W_c(\mu_0,\mu^{M}_0)/K \leq w_M / K\) which comes from the Kantorovich duality~\eqref{eq:Kantorovich_duality}. Thus with Theorem~\ref{theorem:estimates_with_Wc}, we can bound
	\begin{equation*}
		N \sum_{k=0}^{K - 1} S_\varepsilon( \tilde{\mu}^{N,K,M}_k, \tilde{\mu}^{N,K,M}_{k+1} ) \leq \frac{N w_M^2}{K} .
	\end{equation*}
	An analogous estimate holds at the other end.
	Putting the three pieces together, we have
	\begin{equation*}
		\ol{E}^N(\tilde{\mu}^{N,K,M}_0,\dots,\tilde{\mu}^{N,K,M}_{N}) \leq \frac{ 2N w_M^2}{K} + \frac{N}{N-2K} E_{N-2K,M}. 
	\end{equation*}
	We choose $M$ and $K$ as a function of $N$. Specifically, from~\eqref{eq:joint_limit_ENM} we can find $(M_N)_{N}$ a sequence diverging to $\infty$ slowly enough such that $E_{A_N,M_N} \to E((\mu_t)_t)$ as $N \to \infty$ for any sequence $(A_N)_N$ larger than $N/2$. Second, we take a sequence $(K_N)_{N}$ with $N w_{M_N}^2 \ll K \ll N$, e.g. $K_N = \lceil w_{M_N}  N \rceil$, which works (for $N$ large enough) as $w_{M_N} \to 0$. With this asymptotic behavior, as $A_N = N - 2K_N$ is larger than $N/2$ for $N$ large enough, in the previous estimate we obtain 
	\begin{equation}
		\label{eq:estimate_limsup_EN}
		\limsup_{N \to \infty} \ol{E}^N(\tilde{\mu}^{N,K_N,M_N}_0,\dots,\tilde{\mu}^{N,K_N,M_N}_{N}) \leq E((\mu_t)_t).
	\end{equation}
	The piecewise arc interpolation of $(\tilde{\mu}^{N,K_N,M_N}_k)_k$ is our candidate recovery sequence.
	
	\emph{3rd step: the recovery sequence converges to the original one.}
	Let us write $(\hat{\mu}^{N}_t)_t$ the piecewise arc interpolation of $(\tilde{\mu}^{N,K_N,M_N}_k)_k$. We need to show that it converges to $(\mu_t)_t$ in $\Cont([0,1];\prm(X))$ as $N \to \infty$. From the estimate~\eqref{eq:estimate_limsup_EN} and the coercivity result of the $\Gamma$-liminf (Theorem~\ref{theorem:gamma_liminf}), we know that $(\hat{\mu}^{N}_t)_t$ converges, up to extraction, to some limit $(\hat{\mu}_t)_t \in \Cont([0,1];\prm(X))$. It is enough to show that the limit coincides with the original path $(\mu_t)_t$. Fix $t \in (0,1)$, and consider $(k_N)_N$ such that $k_N/N \to t$. As $K_N / N \to 0$, we have $K_N\leq k_N \leq N- K_N$ for $N$ large enough. We write 
	\begin{multline*}
		\dS(\hat{\mu}_t,\mu_t) \leq \dS(\hat{\mu}_t,\hat{\mu}_{k_N/N}) + \dS(\hat{\mu}_{k_N/N},\hat{\mu}^N_{k_N/N} ) \\
		+ \dS(\hat{\mu}^N_{k_N/N}, \mu_{(k_N - K_N)/(N - 2K_N)} ) + \dS(\mu_{(k_N - K_N)/(N - 2K_N)}, \mu_t).
	\end{multline*}
	The first and last summands converge to $0$ by continuity (with respect to $t$) of $(\hat{\mu}_t)_t$ and $(\mu_t)_t$, respectively. The second summand converges to $0$ because $(\hat{\mu}^{N}_t)_t$ converges uniformly to $(\hat{\mu}_t)_t$. Eventually the third summand is by definition (see~\eqref{eq:p:gamma_limsup_path}) 
	\begin{equation*}
		\dS(\hat{\mu}^N_{k_N/N}, \mu_{(k_N - K_N)/(N - 2K_N)} ) = \dS(R_{M_N}(\mu_{(k_N - K_N)/(N - 2K_N)}), \mu_{(k_N - K_N)/(N - 2K_N)} ),
	\end{equation*}
	thus it also converges to $0$ as $N \to \infty$ by Theorem~\ref{theorem:unif_conv_RM}. We conclude that $\hat{\mu}_t = \mu_t$ for all $t \in (0,1)$, thus for all $t \in [0,1]$ by continuity. 
\end{proof}

\section{Numerical computation of \texorpdfstring{$\dS$}{the Sinkhorn distance}}

We discuss how to numerically optimize the Sinkhorn chain
\begin{equation*}
	\ol{E}^N({\mu}_0,{\mu}_1,\dots,{\mu}_N) \assign N \cdot \sum_{k=0}^{N-1} S_\varepsilon({\mu}_k, {\mu}_{k+1}),
\end{equation*}
as its minimizers approximate geodesics in $\dS$ (Corollary~\ref{cor:chain_energy_approx}). We provide some numerical examples to illustrate the behavior of the geometry of $\dS$. 

While $\ol{E}^N$ discretizes the temporal variable, we also have to discretize the measures spatially. For this, there are two established approaches. Both encode a measure as a collection of point masses \(\mu = \sum_{i=1}^M m_i \delta_{x_i}\). \emph{Eulerian} discretization uses a fixed discretized base space \(X_M = \{x_1, \dots, x_M\}\), where a measure \(\mu\) is given by its weights \(m_i \assign \mu(\{x_i\})\). Convergence of this approach in the limit of increasingly finer position sets is provided by the Gromov--Hausdorff convergence studied in Section~\ref{sec:Gromov_Hausdorff}. Alternatively, \emph{Lagrangian} discretization fixes the weights \(m_1, \dots, m_M\), typically \(m_i \assign 1/M\) for all \(1 \leq i \leq M\), and a measure is represented by the positions \(x_i \in \R^d\). Of course one may also optimize over both weights and positions but we only consider the two separate variants for simplicity. 

While Eulerian discretization has better convexity properties in the context of classical Wasserstein problems, it will be impractical on high-dimensional base spaces. Lagrangian discretization destroys this convexity structure and the convergence of the optimization algorithm is usually more challenging to analyze, but given an equal amount of points, it can in principle approximate a larger set of measures reasonably well. In practice one also observes that numerical optimization with Lagrangian methods is well behaved. For a brief discussion of both approaches and a theoretical result that explains the strong practical performance of Lagrangian discretization in the context of the classical Wasserstein distance, we refer to \cite[Section 4.3.3]{Feydy2020} and \cite{MerigotSantambrogioSarrazin}.

\subsection{Eulerian discretization: method}

We first discuss the Eulerian case, where the support of the measures is fixed and only the weights are optimized. 

\paragraph{Partial convexity and alternating minimization.}
The entire objective function \(\ol{E}^N({\mu}_0,{\mu}_1,\dots,{\mu}_N)\) is not convex in its input measures \cite[Sec.~7.2]{RGSD}. Recall, however, that the Sinkhorn divergence is convex in each of its arguments separately (Theorem~\ref{theorem:FeydyEtAl}). Here convexity is considered with respect to linear (vertical) interpolation in \(\prm(X)\). For any \(1 \leq i \leq N-1\) the contribution of \(\mu_i\) to the Sinkhorn chain score is given by
\begin{equation} \label{eq:chain_score_element}
	S_\varepsilon(\mu_{i-1},\mu_i) + S_\varepsilon(\mu_i,\mu_{i+1})
\end{equation}
which is convex in \(\mu_i\).

We see that from the perspective of the collection of measures $(\mu_{i})_{i\,:\,i\tn{ even}}$ for all even indices $i$, the functional $\ol{E}^N$ decomposes into a sum of independent convex functionals (and likewise for the odd indices). We can therefore perform alternating convex minimization on odd and even indices. This will monotonically decrease the objective and all fixed points are critical points of the energy.

\paragraph{The Sinkhorn barycenter problem.}
The barycenter problem of finding the minimizer \(\bar{\mu}_i\) of~\eqref{eq:chain_score_element} in Eulerian discretization has been studied in the literature. Using the primal formulation for the cross-transport terms \(\OT_\varepsilon(\mu_{i \pm 1},\mu_i)\) and the dual formulation for the subtracted self-transport term \(\OT_\varepsilon(\mu_i,\mu_i)\) provides a convex optimization problem that can be written as the sum of a \(\KL\) divergence and a kernel norm~\cite[Prop.~5]{Janati20}. Alternating minimization of these two parts leads to a Sinkhorn-like algorithm for the Sinkhorn barycenter problem~\cite[Alg.~1]{Janati20}. It is formulated with respect to the uniform reference measure, but since the Sinkhorn divergence is identical when defined with respect to the uniform or product reference measure (see~\cite[Eq.~9]{Janati20}), this does not affect the resulting barycenter. 

\paragraph{Refining the chain.}
Given fixed source and target measures \(\mu^\mathrm{S}, \mu^\mathrm{T}\), we can begin with \(N = 2\), set \(\mu_0 = \mu^\mathrm{S}\), \(\mu_2 = \mu^\mathrm{T}\) and optimize \(\mu_1\) to approach the barycenter. We can then refine the temporal discretization inserting new elements within each time step. This leads to a new chain with \(2 N\) many steps. We then apply alternating optimization on the refined chain, where we begin by optimizing over the newly added measures. This scheme of refinement and alternating optimization can be repeated until a desired level of temporal resolution is reached. We conjecture that incremental refinement of the chain will converge to high-quality candidates faster, since in a poorly initialized chain of high temporal resolution it will take a long time until information from the end points propagates through the whole chain by alternating optimization. Eventually, when the temporal resolution is sufficiently fine, by Theorem~\ref{theorem:expansion_dS} one can expect vertical interpolation to give a good initialization for the barycenter problem of the newly added chain elements.

\begin{algorithm}
	\caption{Sinkhorn chain optimization} \label{alg:chain_alternating}
	\begin{algorithmic}
		\State \textbf{Input:} Source $\mu^\mathrm{S}$, target $\mu^\mathrm{T}$, resolution $N = 2^R$ 
		\State \textbf{Output:} Chain $(\mu_k)_{k=0}^N$ 
		\State Initialize $\mu^0_0 = \mu^\mathrm{S}, \mu^0_1 = \mu^\mathrm{T}$
		\For{$r = 1, \dots, R$}
		\For{$k = 0, \dots, 2^{r-1}$}
		\Comment{Initialize the refined chain}
		\State $\mu^r_{2k} \gets \mu^{r-1}_k$
		\EndFor
		\Repeat
		\Comment{Alternating minimization}
		\For{$k = 0, \dots, 2^{r-1}-1$}
		\Comment{over new chain links}
		\State $\mu^r_{2k+1} \gets \mathrm{Sinkhorn\_Barycenter}(\mu^{r}_{2k},\mu^{r}_{2k+2})$
		\EndFor
		\For{$k = 1, \dots, 2^{r-1}-2$}
		\Comment{and old chain links}
		\State $\mu^r_{2k} \gets \mathrm{Sinkhorn\_Barycenter}(\mu^{r}_{2k-1},\mu^{r}_{2k+1})$
		\EndFor
		\Until{convergence}
		\EndFor
	\end{algorithmic}
\end{algorithm}

\begin{figure}
	\begin{center}
		
		\includegraphics[scale=0.45]{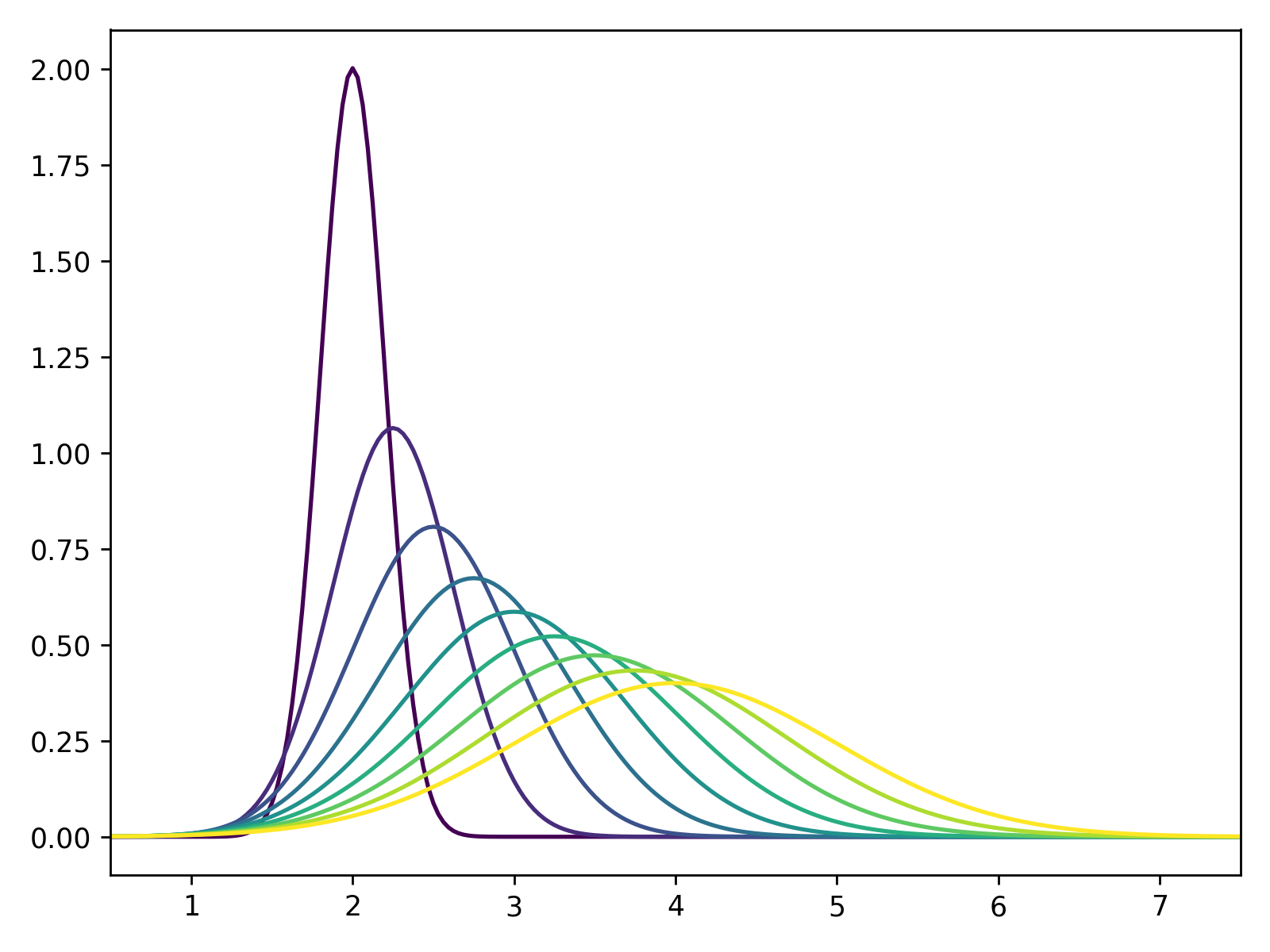}
		\includegraphics[scale=0.6]{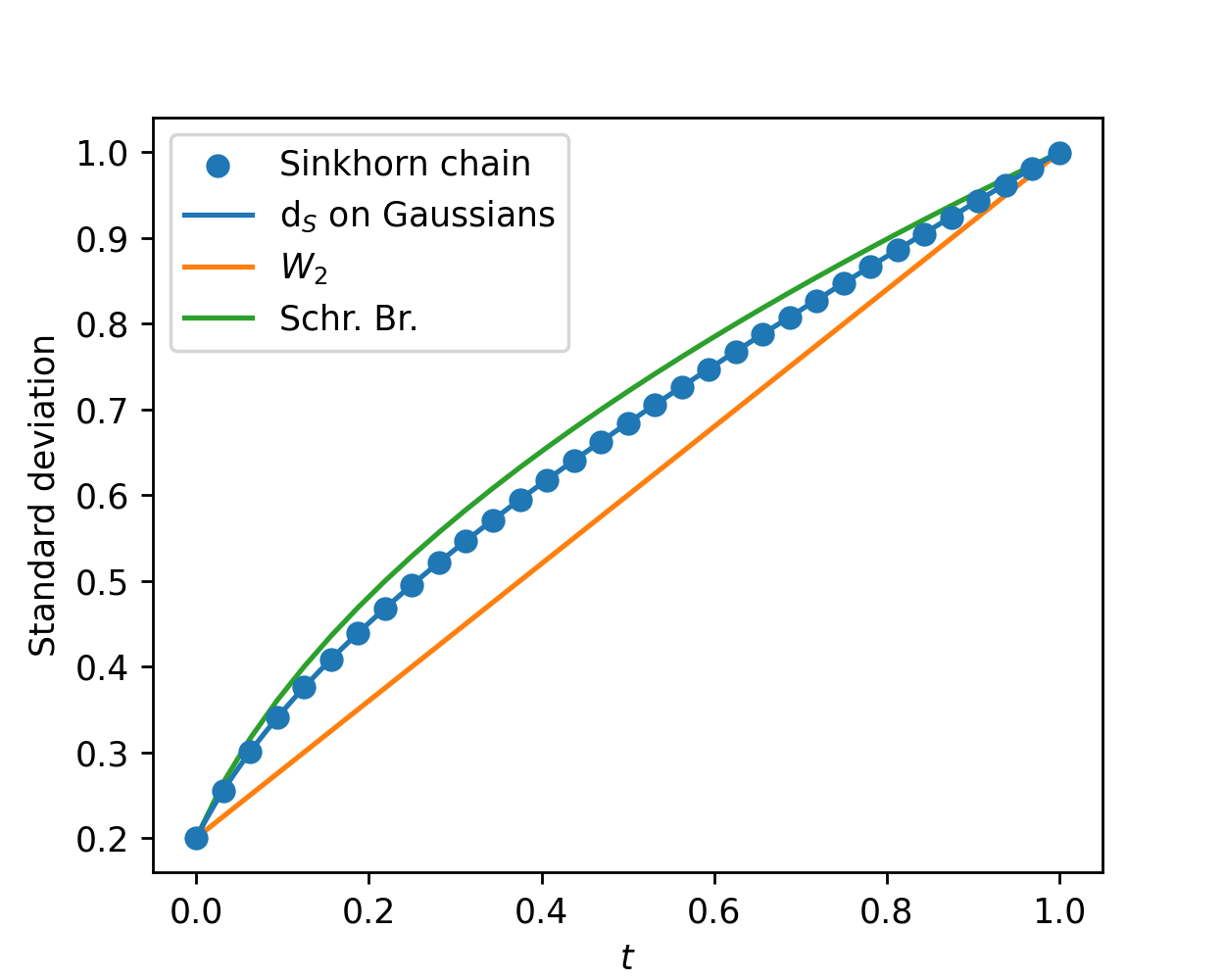}
		
		\caption{\label{fig:Gaussian} Left: Lebesgue density of the Eulerian chain between two Gaussians with standard deviations \(0.2\) and \(1\) in \(\R\) for the cost function \(c(x,y) = \frac{1}{2} \abs{x-y}^2\) with \(\varepsilon=1\). Computed in Eulerian discretization of \(X = [0,8]\) with \(M = 257\) equi-spaced points with \(N = 32\) time steps (showing eight). The movement decomposes into a constant shift of the mean and the change of the variance. Right: Corresponding standard deviations. Wasserstein-2 geodesic and Schrödinger bridge (also with $\varepsilon=1$) for comparison. Numerical results align with the theoretical prediction from~\cite[Thm.~6.2]{RGSD}. }
	\end{center}
\end{figure}

\paragraph{Numerical implementation.}
Our implementation follows the described alternating minimization scheme. 
We iterate the barycenter problem until the marginal conditions and the optimality condition between the self-transport potentials and the inner cross-transport potentials are satisfied up to some threshold. After the refinement step, the Sinkhorn barycenter algorithm~\cite[Alg.~1]{Janati20} is initialized at the uniform measure. In subsequent iterations we perform a warm start with the previous barycenter and Sinkhorn scaling factors. We alternate between even and odd chain links until the same consistency conditions are satisfied on the entire chain, not only the part that was last updated (necessarily with a slightly looser threshold), or a maximum number of alternating steps is reached.

\subsection{Lagrangian discretization: method}

Recall that Lagrangian discretization has measures given as \(\mu = \sum_{i=1}^{M} m_i \delta_{x_i}\) with varying positions \(x_i \in \R^d\) and fixed masses \(m_i\). In order to be able to numerically capture mass splitting, we can encode each particle of the source and target measures by multiple lighter particles (with a very small amount of spatial noise for symmetry breaking). 
For computations we lose the partial convexity of each chain link. Hence there is no clear benefit to the alternating optimization from Algorithm~\ref{alg:chain_alternating} and instead we minimize the entire chain at once using gradient descent. 

\paragraph{Computing the gradients.}
Gradients of the entropic optimal transport score can be obtained efficiently without needing a backward pass of an automatic differentiation engine through the entire Sinkhorn algorithm~\cite{Ablin2020}, since the derivative of the score function (defined as a supremum over functions of \((\mu,\nu)\) in its dual formulation~\eqref{eq:entropic_dual}) equals the derivative of the maximizing function, resulting in \(\nabla \OT_\varepsilon(\mu,\nu) = (\fmunu,\gmunu)\). This can be implemented by first running the Sinkhorn loop until convergence with automatic differentiation disabled, and then performing one last Sinkhorn step that sets the gradients. As moving points encounter spatial gradients of the potentials in the gradient descent formula, we can only expect this approach to work well when the cost function is at least $C^1$-differentiable.

\paragraph{Encoding and refining the chain.}
This can also be done in the Lagrangian approach. We consider two strategies of encoding the positions of the measures \((\mu_{k})_{k=1}^N-1\) in between \(\mu^\mathrm{S} = \sum_{i=1}^{M} m_i \delta_{x^\mathrm{S}_i}\) and \(\mu^\mathrm{S} = \sum_{i=1}^{M} m_i \delta_{x^\mathrm{S}_i}\). We can directly encode \(\mu^k\) by its positions \(x^k_i \in \R^d\) (and then initialize these randomly, for instance). The Sinkhorn chain at any finite temporal resolution is agnostic as to which particle in one time step corresponds to which particle in the next time step, hence there is no clear identification of \(x^k_i\) and \(x^l_i\) for \(k \neq l\). Not even the number of particles of \(\mu^k\) needs to be fixed. 

However, we conjecture that in the continuous time limit of the restriction to measures with a fixed number of particles \(M\), each particle will trace a clear path. Thus we may also encode \(\mu_k\) by velocities as \(x^k_i = x^\mathrm{S}_i + \frac{1}{N} \sum_{l=1}^k v^l_i\) optimizing over \(v^l_i \in \R^d\) (initialized as zero). In our experiments, this leads to more visually consistent paths when the number of particles is relatively low. We can also use this to easily initialize the new measures during refinement of the chain by placing particles at the midpoints \(x^k_i + \frac{1}{2 N} v^{k+1}_i\).

\begin{figure}
	\begin{center}
		
		\includegraphics[scale=0.4]{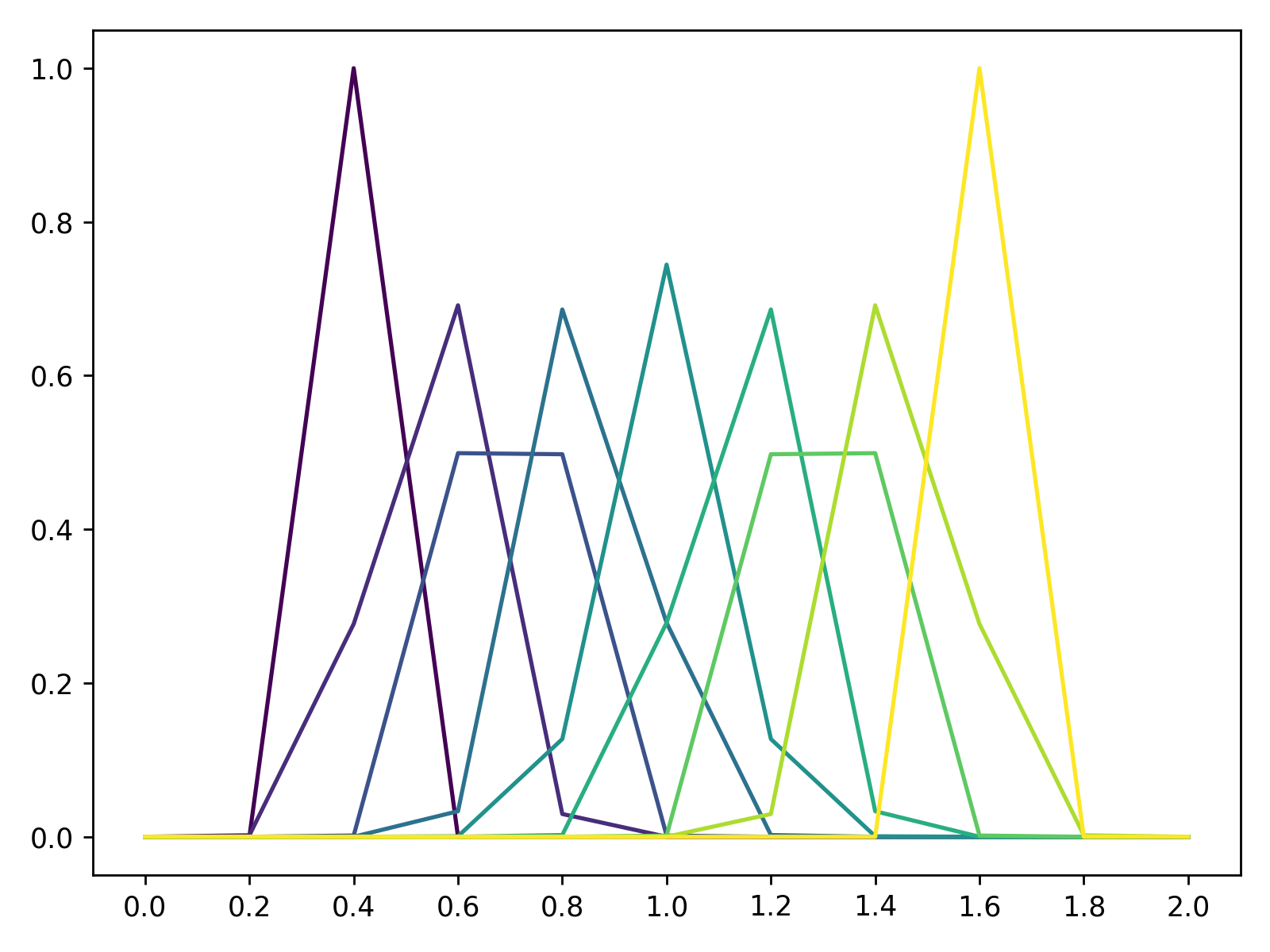}
		\includegraphics[scale=0.44]{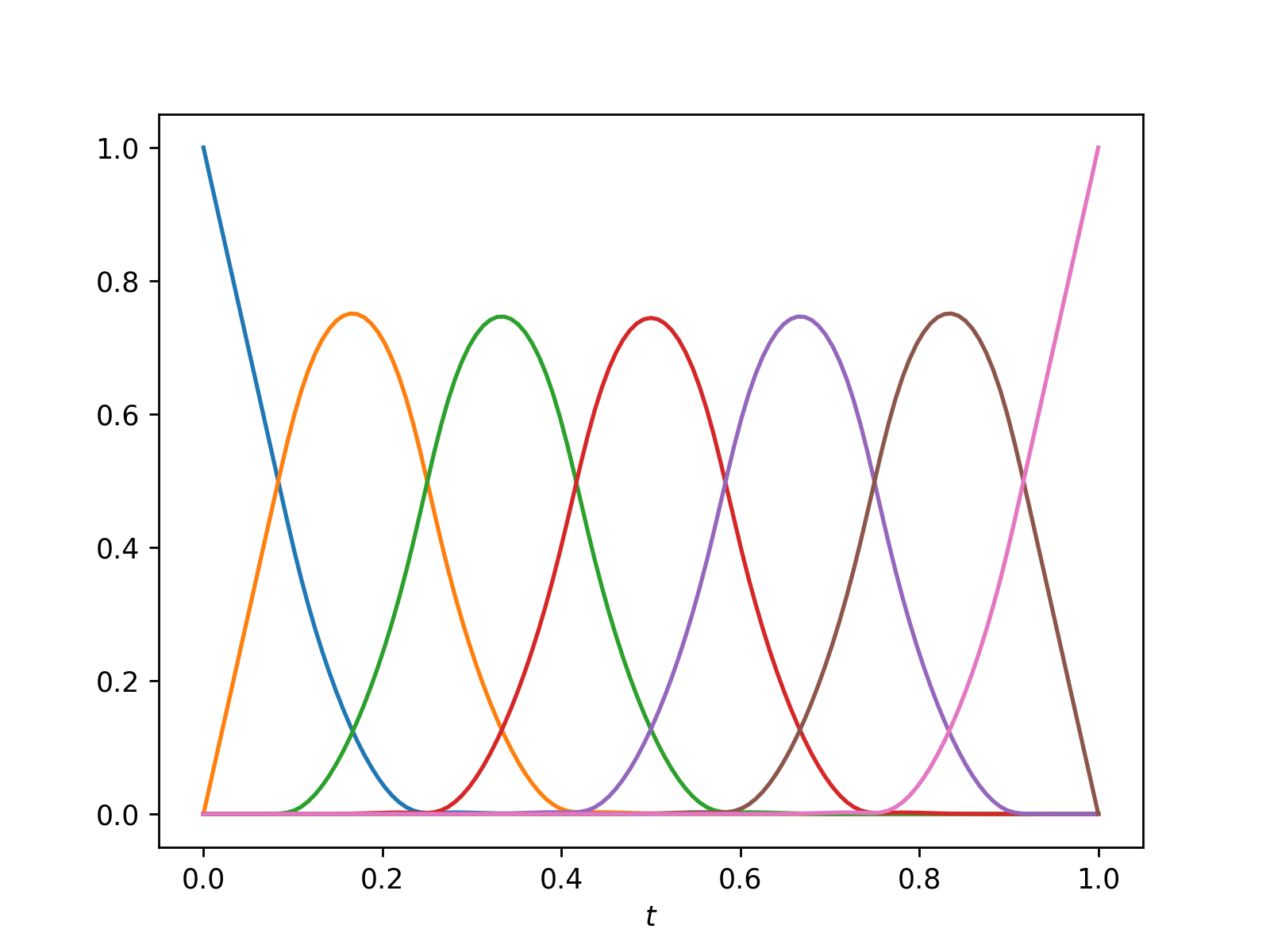}
		
		\includegraphics[scale=0.4]{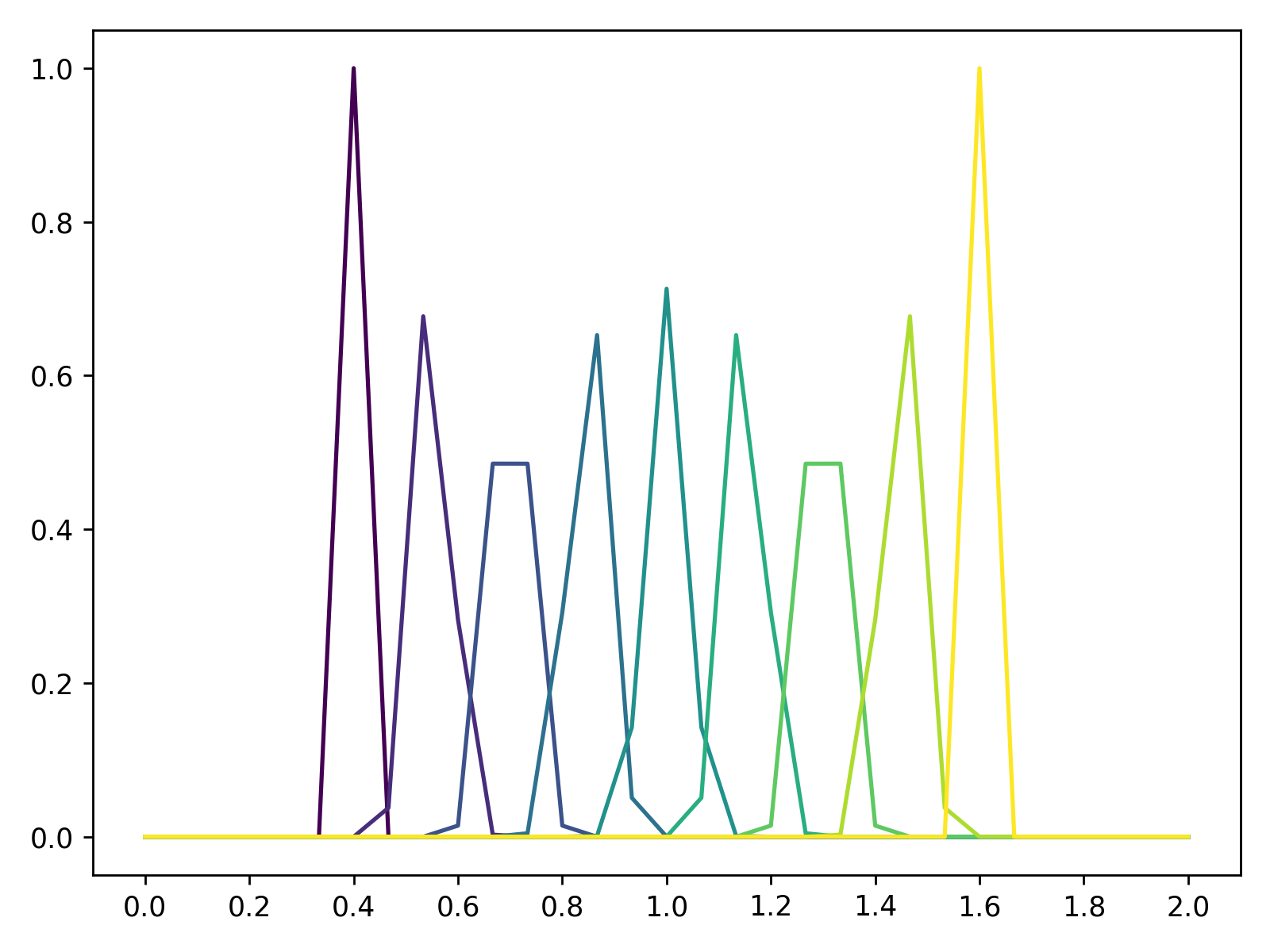}
		\includegraphics[scale=0.44  ]{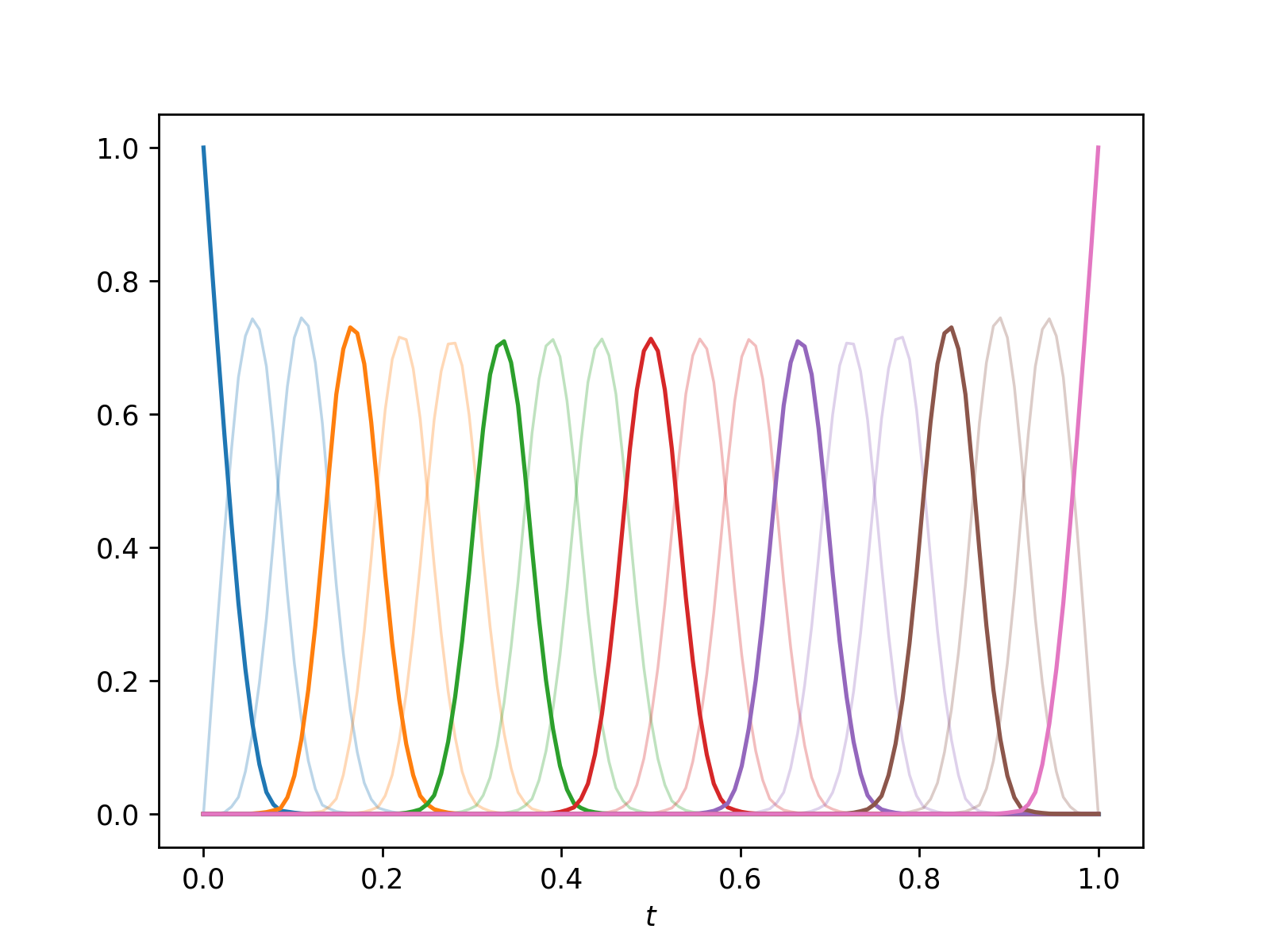}
		
		\caption{\label{fig:Eulerian_Dirac} Left: Eulerian chain between Dirac measures at \(x=0.4\) and \(x=1.6\) with the cost function \(\frac{1}{2} \abs{x-y}^2\) with \(\varepsilon=1\). Computed in Eulerian equi-spaced discretization of \([0,2]\) to \(M=11\) (top), or \(M=31\) (bottom) points, and \(N = 128\) time steps (showing eight). 
			Right: Mass at the discrete positions over time (blue at \(x=0.4\) to pink at \(x=1.6\)).
		}
	\end{center}
\end{figure}

\subsection{Examples} 

\begin{figure}
	\begin{center}
		
		\includegraphics[width=0.49\textwidth]{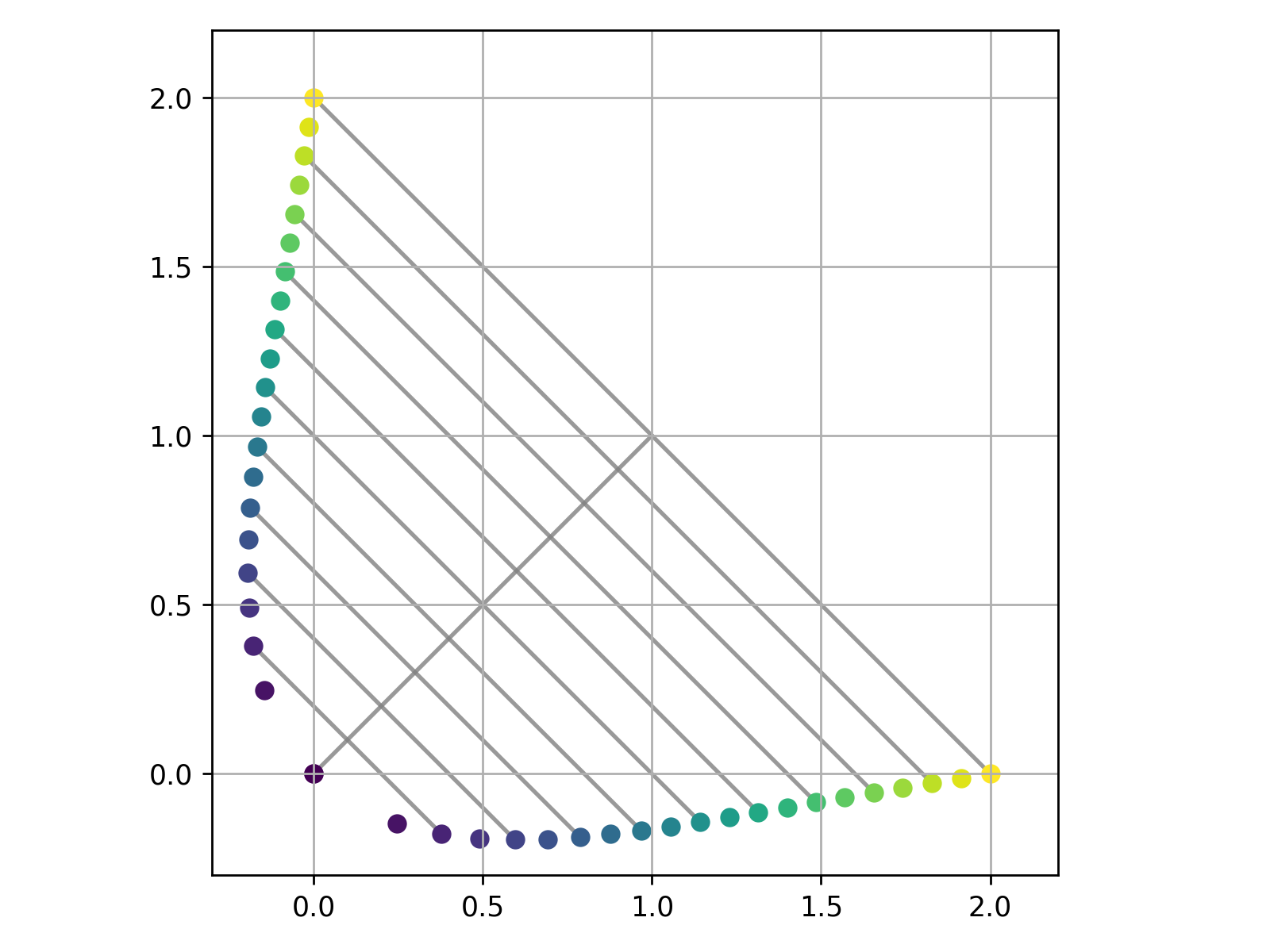}
		\includegraphics[width=0.49\textwidth]{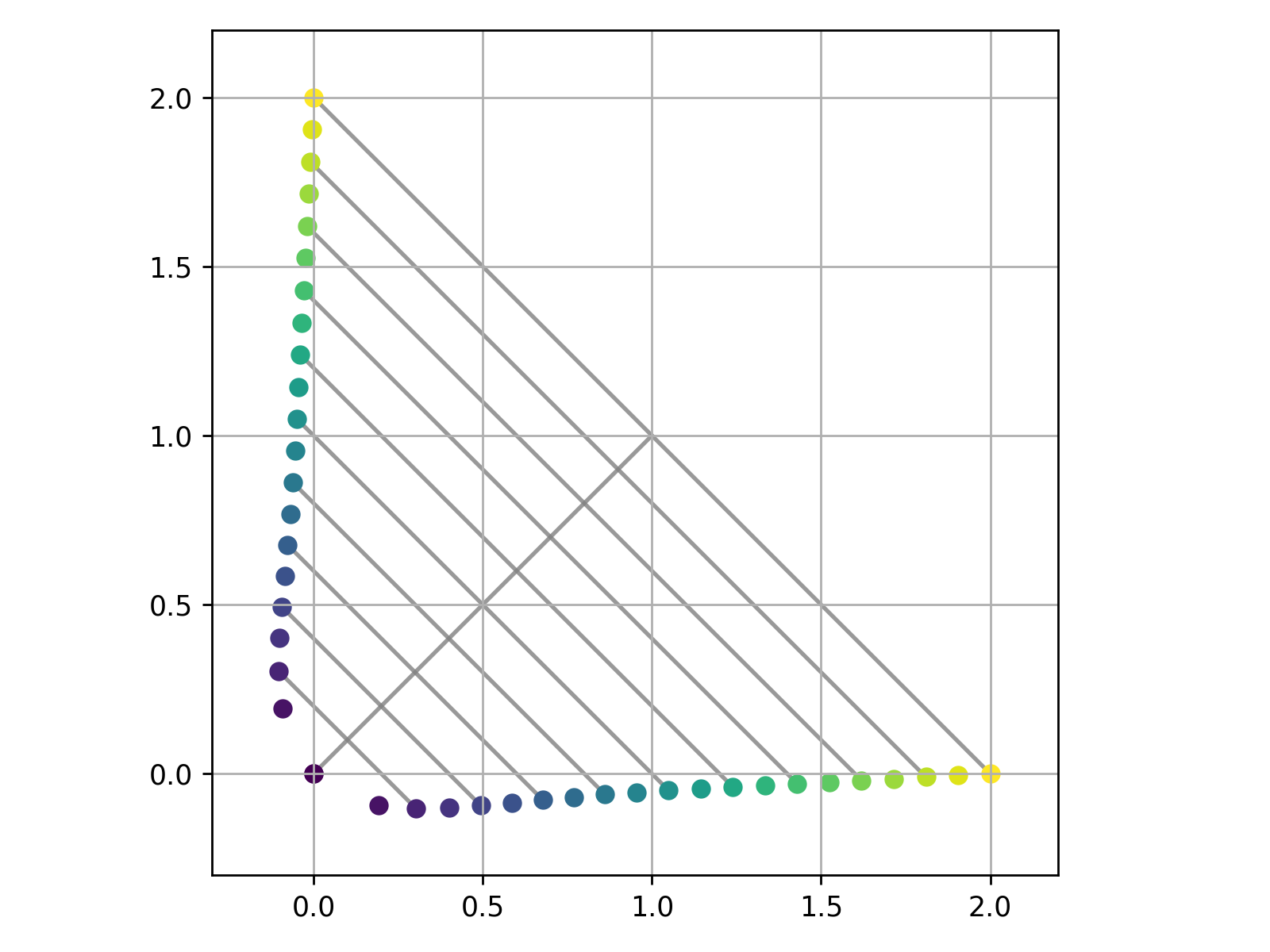}

		\caption{\label{fig:split_dirac_2d} Lagrangian chain between $\mu^\mathrm{S} = \delta_{(0,0)}$ and $\mu^\mathrm{S} = \frac{1}{2} (\delta_{(2,0)} + \delta_{(0,2)})$ in \(\R^2\) for the cost function \(c(x,y) = \frac{1}{2} \abs{x-y}^2\) and \(\varepsilon = 1\) (left), or \(\varepsilon=0.2\) (right). The movement decomposes into a constant shift of the mean along the diagonal and the one dimensional splitting Dirac along the antidiagonal (visualized by grey auxiliary lines). For smaller $\varepsilon$ the effect of rapid initial splitting is less pronounced and the curve is more similar to the classical Wasserstein geodesic.}
	\end{center}
\end{figure} 

\paragraph{Geodesics between Gaussians.} 
Due to their non-compact support, normal distributions are not within the technical scope of assumptions of \cite{RGSD} and the present article. Nevertheless, \cite[Section 6.1]{RGSD} formally derived the expression for the metric tensor on the subset of one-dimensional Gaussian distributions and studied the corresponding geodesic restriction of the Sinkhorn distance.

Figure~\ref{fig:Gaussian} shows an approximated geodesic between two normal distributions in Eulerian discretization and finds that the evolution of the variance agrees with the behaviour derived in \cite[Section 6.1]{RGSD}. 

This provides some numerical evidence that the set of one-dimensional normal distributions might be geodesically convex with respect to the Sinkhorn distance. Note that \cite[Thm.~3]{Janati20} shows that the Sinkhorn barycenter between two normal distributions is itself a normal distribution. Hence our Algorithm~\ref{alg:chain_alternating} will always retain a purely Gaussian chain when initialized as such. However, it is unclear whether this is the minimum, or merely a saddle point of the energy functional \(\bar{E}^N\). 
Further research is required to settle this question.

\paragraph{Discretization of translations.} 
By \cite[Thm.~5.16]{RGSD} constant-speed translations of measures on bounded subsets of $\R^d$ are geodesic with respect to the Sinkhorn distance. By our Gromov--Hausdorff convergence result (Theorem \ref{theorem:unif_conv_RM}) this behaviour should be approximately recovered on fine Eulerian discretizations.
This is shown in Figure~\ref{fig:Eulerian_Dirac} for a geodesic between two Dirac masses on \(X = [0,2]\). 
As the grid is refined, one seems to recover the limiting curve of a moving Dirac mass. Note however, that the support of the intermediate measures always seems to be more than one point.
Notably, even though the middle position \(x=1\) is part of the discretized space, the midpoint measure along the chain is not solely supported on $\{1\}$.

\paragraph{Mass splitting.} 
In Example~\ref{exm:W2_not_dS_geodesic_splitting_dirac} we studied splitting Dirac measures, paths between source $\mu^\mathrm{S} = \delta_0$ and target $\mu^\mathrm{S} = \frac{1}{2} (\delta_r + \delta_{-r})$.
The optimal path from \(\mu^\mathrm{S}\) to \(\mu^\mathrm{S}\) along measures of this form shoots apart very quickly initially, settling into more linear motion later on. See also Figure~\ref{fig:split_dirac_2d} for a numerical approximation in Lagrangian discretization. The same effect also happens between measures with densities, as can be seen in Figure~\ref{fig:Gaussian_split} (left). Near $t=0$, quickly shooting mass to both sides equally is so cheap in the Sinkhorn metric tensor, that the initial split happens rapidly, forming two spikes of high concentration.
This is because the spatial smoothing built into the Sinkhorn metric tensor approximately cancels these two opposite contributions.
Once these spikes are further apart than the blur scale \(\sqrt{\varepsilon}\) the cancellation effects subside and the movement becomes more gradual.

\subsection{Comments on the optimization}

\paragraph{Barycenter spikes.} 
In our numerical experiments we frequently observe that Sinkhorn barycenters between non-Gaussian measures exhibit strong concentrated spikes, approximately separated by distance \(\sqrt{\varepsilon}\), Figure~\ref{fig:Gaussian_split} (right) for \(N=2\).
As the Sinkhorn chain is refined, this effect seems to become weaker, Figure~\ref{fig:Gaussian_split} (right).
We also observe however, that minima of the chain energy are very shallow as we discuss below and the emergence or disappearance of spikes is associated with minuscule changes in the energy and therefore an immensely large number of iterations may be required to properly resolve the appearance of the true geodesic.

\begin{figure}
	\begin{center}
		
		\includegraphics[height=0.42\textwidth]{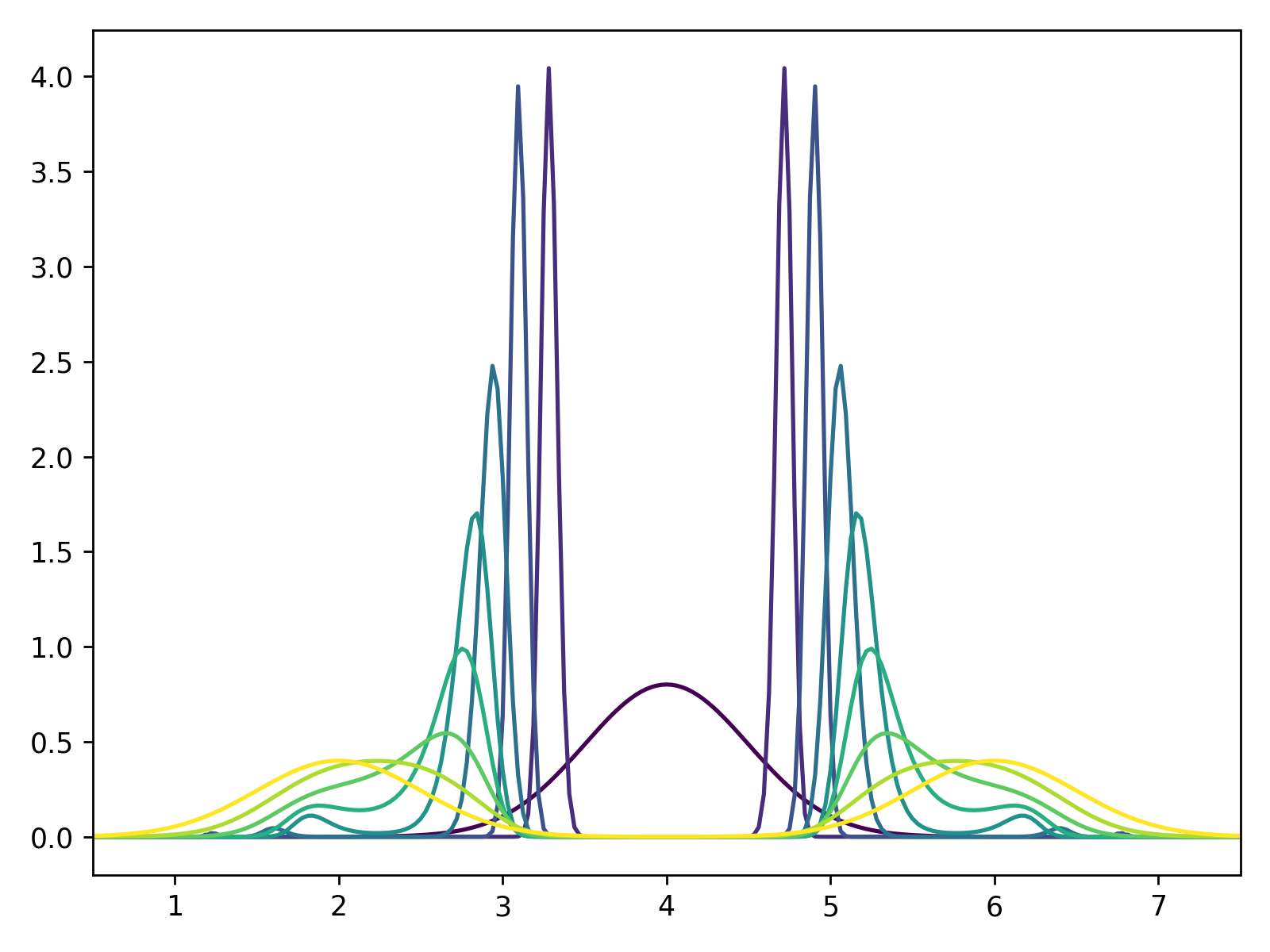}
		\includegraphics[height=0.42\textwidth]{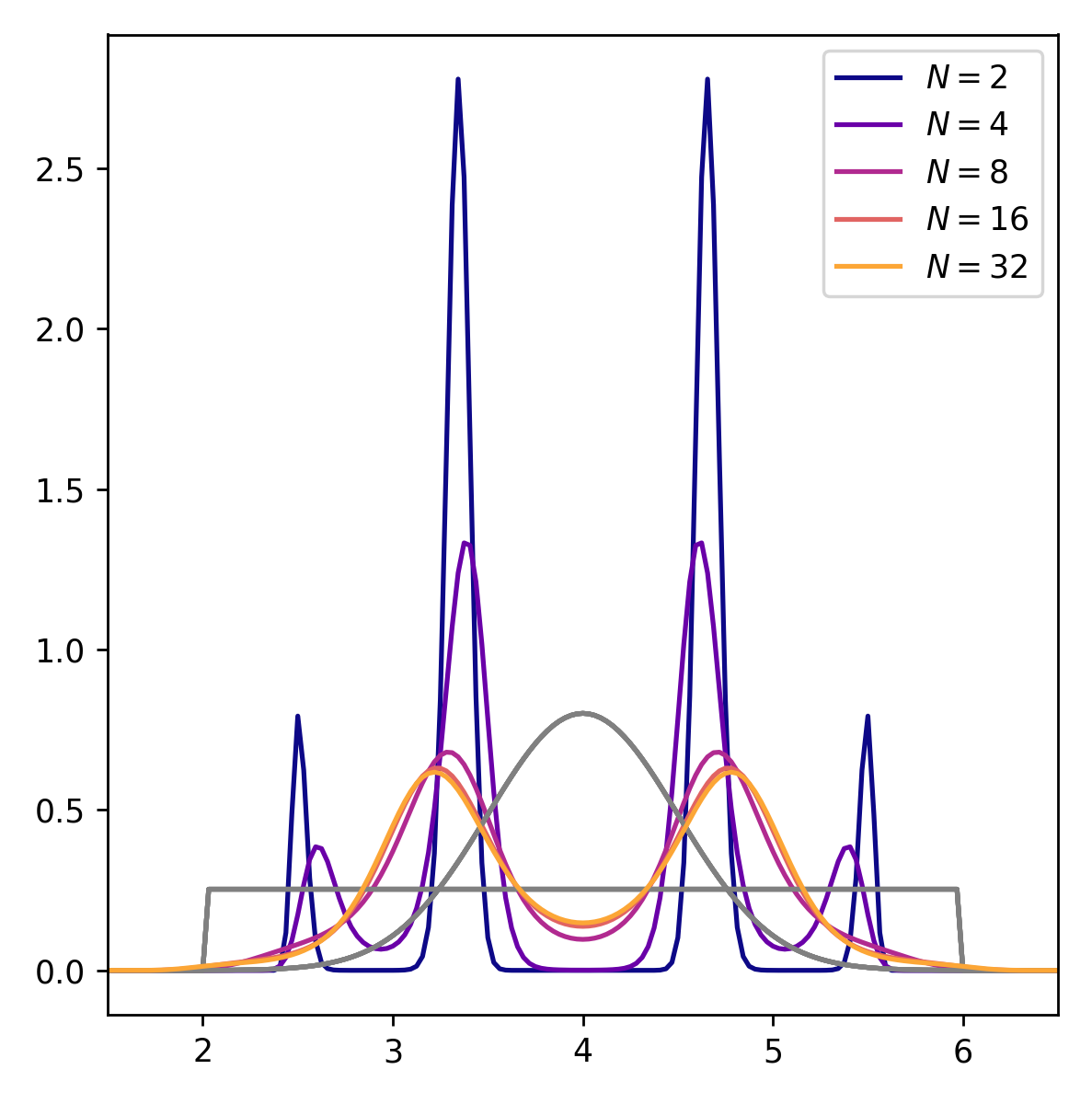}
		
		\caption{\label{fig:Gaussian_split} 
			Lebesgue densities along the Sinkhorn chain in \(\R\) for the cost function \(c(x,y) = \frac{1}{2} \abs{x-y}^2\) with \(\varepsilon=1\). Computed in Eulerian discretization of \(X = [0,8]\) with \(M = 257\) equi-spaced points.
			Left: Chain between a Gaussian with standard deviation \(0.5\) (purple) and a mixture of two Gaussians (yellow) with \(N = 32\) time steps (showing eight). We observe behaviour similar to the splitting Dirac from Figure~\ref{fig:split_dirac_2d}. The mass shoots apart rapidly in the beginning, forming two concentrated spikes before relaxing to the target configuration. Also notice that at intermediate times additional weaker modes form.
			Right: Midpoint along the chain of temporal resolution \(N\) between the same Gaussian and the uniform measure on an interval (both endpoints in grey). For \(N=2\) we get the Sinkhorn barycenter with multiple strong spikes. As \(N\) increases, the spikes disappear.
		}
	\end{center}
\end{figure}

\begin{figure}[t]
	\begin{center}
		
		\includegraphics[scale=0.7]{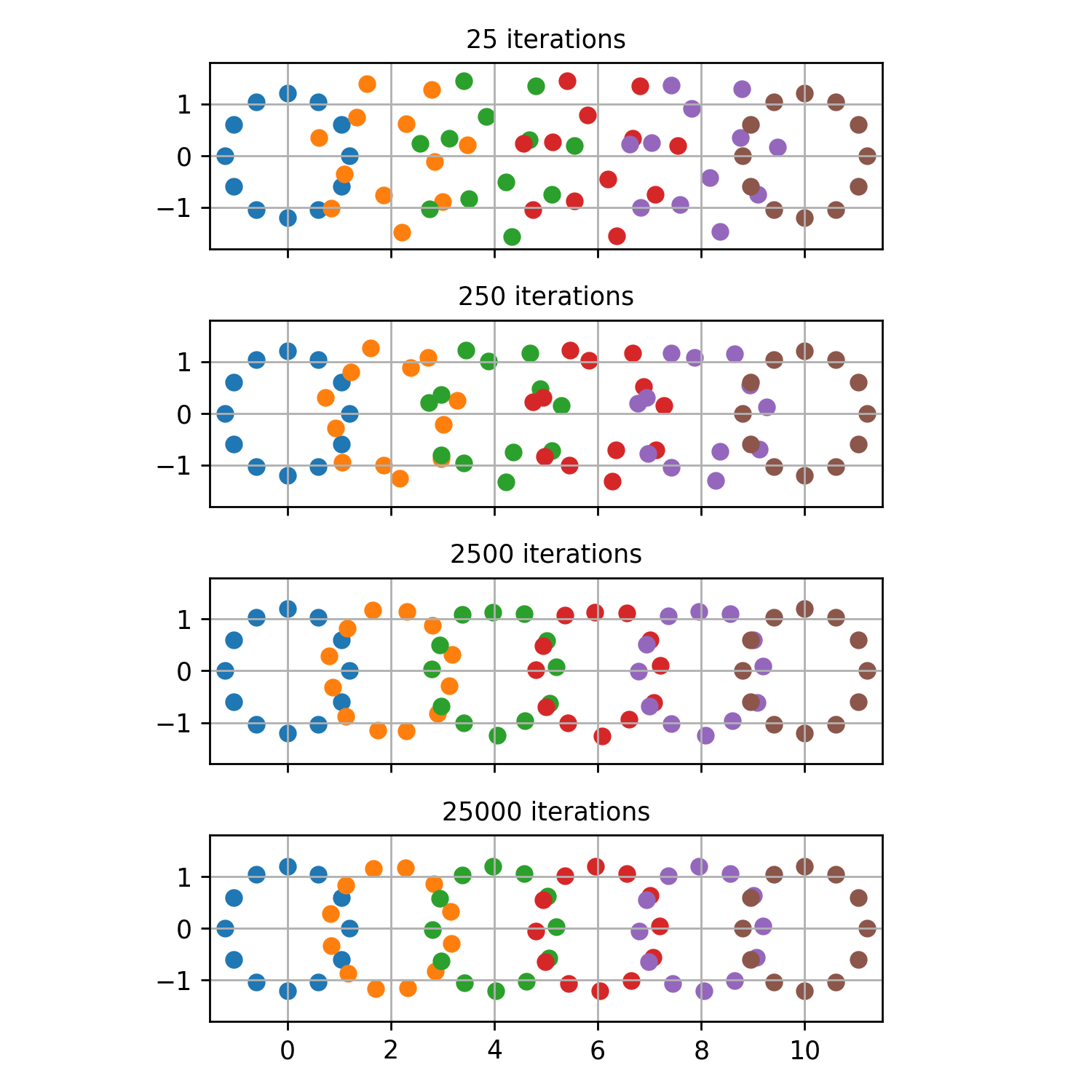}
		
		\caption{\label{fig:circle_alignment} Lagrangian chain with four intermediate measures between two circles of radius \(1.2\) discretized as \(M=12\) equispaced points for the cost function \(c(x,y) = \frac{1}{2} \abs{x-y}^2\) and \(\varepsilon=1\). Convergence of gradient descent over time with step size parameter \(1\) for position parametrization. The exact geodesic is known to be the constant speed translation, giving \(\dS(\mu^\mathrm{S},\mu^\mathrm{S})^2 = 50\). This figure demonstrates that the minimum is very shallow. Already after 25 iterations the score is \(\approx 50.027\), yet the points are ``far'' away from the optimal translation, for instance in terms of the unregularized Wasserstein metric.}
	\end{center}
\end{figure} 

	\paragraph{Shallow minima.} 
	In our experiments the objective value of the Sinkhorn chain often approaches the minimal value well before the chain visually stabilizes. In particular, it takes many iterations for the chain to align to finer details of the source and target measures when initializing randomly in Lagrangian discretization with position parametrization.
	This suggests that minima of the path energy are rather shallow.
	This is because the Sinkhorn distance does not discriminate strongly between a measure and a slightly blurred version of it: in the setting of Remark~\ref{rmk:diff_setting}, in particular for $c(x,y) = \|x-y \|^2$, 
	we have $\dS(\mu,\nu) \leq C_m \| \mu - \nu \|^2_{\Cont^m(X)^*}$~\cite[Prop.~5.9]{RGSD} for any $m \geq 1$. Any perturbation which is small in the space $\Cont^m(X)^*$ does not influence the energy much, in particular it applies to blurring, de-blurring, or splitting in opposite directions.
	The fact that splitting mass in opposing directions is so cheap leads to the fast initial splitting in Figures \ref{fig:Gaussian_split} and \ref{fig:split_dirac_2d} (see also Example \ref{exm:W2_not_dS_geodesic_splitting_dirac}).
	
	Figure~\ref{fig:circle_alignment} illustrates the shallow nature of the minimum of the Sinkhorn chain energy. Here again, the energy approaches the true squared distance quickly, while the points take many more iterations to align to a coherent path. Notice though, that even after 25000 iterations, the orange circle is still rotated by half a discretization angle, something which the Sinkhorn divergence (at the given blur scale) barely notices.
	
	With few particles moving around, we obtain clearly traced paths of each particle, even when starting from pure noise in the position parametrization. 
	In cases with more particles moving around however, this alignment is more difficult (see Figure~\ref{fig:circle_alignment}). The entropic regularization breaks the clear correspondence between particles that one gets for instance in Wasserstein-2 transport and which leads to its Benamou--Brenier formulation~\eqref{eq:Benamou_Brenier}.
	
	Performing the same experiment as in Figure~\ref{fig:circle_alignment} in velocity parametrization quickly recovers the geodesic given by the constant speed translation. Also in more complex settings, it allows gradient information to pass through the entire chain at once, not only within one link. In our experiments it always yielded equal or better results than position encoding. The faster convergence seems to outweigh possible bias towards clearly traced paths.

	\section*{Acknowledgments}
    J.L.~and B.S.~were supported by the Deutsche Forschungsgemeinschaft (DFG) through the Emmy Noether Programme (project 403056140) and project 569461875.
    No AI tool was used for the conception and mathematical content of this project, except for fixing a few English and typographic errors.
	
	\bibliographystyle{abbrv}
	{\small
		\bibliography{references}}    
\end{document}